\begin{document}

\newcommand{\N}{\mathbb{N}}
\newcommand{\R}{\mathbb{R}}
\newcommand{\C}{\mathbb{C}}
\newcommand{\ZygSymb}{\mathscr{C}}
\newcommand{\BesovSymb}{\mathscr{B}}
\newcommand{\Manifold}{\mathfrak{M}}
\newcommand{\NManifold}{\mathfrak{M}}
\newcommand{\Ball}{\mathbb{B}}
\newcommand{\Dist}{\mathscr{D}'}
\newcommand{\loc}{\mathrm{loc}}
\newcommand{\IntProd}[1]{\iota_{#1}}
\newcommand{\Lie}[1]{\mathrm{Lie}_{#1}}
\newcommand{\mywedge}{{\textstyle \bigwedge }}
\newcommand{\Lap}{\bigtriangleup}
\newcommand{\codiff}{\vartheta}
\newcommand{\SchwartzSymb}{\mathscr{S}}
\newcommand{\psih}{\hat{\psi}}
\newcommand{\phih}{\hat{\phi}}
\newcommand{\phit}{\tilde{\phi}}
\newcommand{\rhot}{\tilde{\rho}}
\newcommand{\omegat}{\tilde{\omega}}
\newcommand{\tmu}{\tilde{\mu}}

\newcommand{\As}{\mathscr{A}}
\newcommand{\Bs}{\mathscr{B}}
\newcommand{\cfr}{\mathfrak{c}}
\newcommand{\Df}{\mathfrak{D}}
\newcommand{\Dc}{\mathcal{D}}
\newcommand{\Ec}{\mathcal{E}}
\newcommand{\Lc}{\mathcal{L}}
\newcommand{\Tc}{\mathcal{T}}
\newcommand{\Rc}{\mathcal{R}}
\newcommand{\Sp}{\mathbb{S}}
\newcommand{\Nanifold}{\mathfrak{N}}
\newcommand{\Mbb}{\mathbb{M}}
\newcommand{\FrP}{\mathfrak{P}}
\newcommand{\FrR}{\mathfrak{R}}
\newcommand{\eps}{\varepsilon}
\newcommand{\Co}{\mathscr{C}}
\newcommand{\Xs}{\mathscr{X}}
\newcommand{\Z}{\mathbb{Z}}
\newcommand{\supp}{\operatorname{supp}}
\newcommand{\dist}{\operatorname{dist}}
\newcommand{\id}{\mathrm{id}}
\newcommand{\Green}{\mathcal{G}}
\newcommand{\Coorvec}[1]{\frac\partial{\partial#1}}
\newcommand{\ZygVF}[2]{\widetilde{\Co^{#1}_{#2,\loc}}}

\newtheorem{question}{Question}

\newtheorem{thm}{Theorem}[section]
\newtheorem{cor}[thm]{Corollary}
\newtheorem{prop}[thm]{Proposition}
\newtheorem{lemma}[thm]{Lemma}
\newtheorem{conj}[thm]{Conjecture}
\newtheorem{prob}[thm]{Problem}
\newtheorem{sublem}{Sublemma}[thm]

\theoremstyle{remark}
\newtheorem{rmk}[thm]{Remark}

\theoremstyle{definition}
\newtheorem{defn}[thm]{Definition}
\newtheorem{conv}[thm]{Convention}

\newtheorem{note}[thm]{Notation}

\theoremstyle{definition}
\newtheorem{assumption}[thm]{Assumption}

\theoremstyle{remark}
\newtheorem{example}[thm]{Example}

\numberwithin{equation}{section}

\title{Improving the Regularity of Vector Fields}

\author{Brian Street\footnote{The first author was partially supported by National Science Foundation Grant No.\ 1764265.}\:  and Liding Yao\footnote{Corresponding Author. \color{blue}\url{lyao26@wisc.edu}}}
\date{}

\maketitle

\begin{abstract}
    Let $\alpha>0$, $\beta>\alpha$, and let $X_1,\ldots, X_q$ be $\mathscr{C}^{\alpha}_{\mathrm{loc}}$ vector fields on a $\mathscr{C}^{\alpha+1}$ manifold which span the tangent space at every point, where $\mathscr{C}^{s}$ denotes the Zygmund-H\"older space of order $s$.  We give necessary and sufficient conditions for when there is a $\mathscr{C}^{\beta+1}$ structure on the manifold, compatible with its $\mathscr{C}^{\alpha+1}$ structure, with respect to which $X_1,\ldots, X_q$ are $\mathscr{C}^{\beta}_{\mathrm{loc}}$.  This strengthens previous results of the first author which dealt with the setting $\alpha>1$, $\beta>\max\{ \alpha, 2\}$.
\end{abstract}

\section{Introduction}
Fix $\alpha>0$ and let $X_1,\ldots, X_q$ be $\ZygSymb^{\alpha}_{\loc}$ vector fields on a $\ZygSymb^{\alpha+1}$ manifold $\Manifold$ of dimension $n$, which span the tangent space at every point, where
$\ZygSymb^{\alpha}$ denotes the Zygmund-H\"older space of order $\alpha$.\footnote{For $m\in \N$ and $\alpha\in (0,1)$, $\ZygSymb^{m+\alpha}$ coincides with the usual H\"older space $C^{m,\alpha}$.  For $\alpha\in \{0,1\}$, these spaces differ:  $C^{m+1,0}\subsetneq C^{m,1}\subsetneq \ZygSymb^{m+1}$.}
In this paper, we investigate the following:
\begin{question}\label{Question::Intro::Global}
Fix $\beta\in [\alpha,\infty)$.  When is there a $\ZygSymb^{\beta+1}$ manifold structure on $\Manifold$,
compatible with its $\ZygSymb^{\alpha+1}$ structure, respect to which $X_1,\ldots, X_q$ are $\ZygSymb^{\beta}_{\loc}$?
\end{question}
Question \ref{Question::Intro::Global} is local in nature, so we focus instead on the following local version:
\begin{question}\label{Question::Intro::Local}
Fix $\beta\in [\alpha,\infty)$ and $x\in \Manifold$.  When is there a neighborhood $U\subseteq \Manifold$
of $x$ and a $\ZygSymb_{\loc}^{\alpha+1}$ diffeomorphism $\Phi:\Ball^n\xrightarrow{\sim} U$, such that
$\Phi^{*}X_1,\ldots, \Phi^{*}X_q$ are $\ZygSymb^{\beta}$ vector fields on $\Ball^n$?
Here, $\Ball^n$ denotes the open unit ball in $\R^n$.
\end{question}
We give necessary and sufficient conditions on $X_1,\ldots, X_q$ for when Question \ref{Question::Intro::Local} has an affirmative answer;
and therefore give necessary and sufficient conditions for when
Question \ref{Question::Intro::Global} has an affirmative answer.

When $\alpha>1$ and $\beta>2$, Questions \ref{Question::Intro::Global} and \ref{Question::Intro::Local}
were completely answered in work of the first author and Stovall \cite{StovallStreetI,StovallStreetII,StovallStreetIII}; which also proved stronger
quantitative results (see Section \ref{Section::Intro::QualVsQuant} for our distinction between quantitative and qualitative results).  
In this paper, by focusing only on the qualitative Questions \ref{Question::Intro::Global} and \ref{Question::Intro::Local} we are able to prove results for all $\alpha>0$, $\beta\geq \alpha$,
and the proof is simpler.  Our methods can also be used
to improve the quantitative results of \cite{StovallStreetI,StovallStreetII,StovallStreetIII}:
see Section \ref{Section::Quant}.

    \subsection{Informal Statement of Results}\label{Section::Intro::ResultsInTheSmoothSetting}
    Much of the difficulty in this paper comes from working with $\alpha$ and $\beta$ small.
In this section, we informally describe the results without worrying about such difficulties.

We begin with the case when $\beta=\infty$ and $X_1,\ldots, X_q$
are $C^1_{\loc}$ vector fields on a
$C^2$ manifold $\Manifold$ of dimension $n$, which span the tangent space at every point.  
This is a special cases of results in \cite{StovallStreetII}.


Because $X_1,\ldots, X_q$ are $C^1$ vector fields which span the tangent space at every point,
we may write
\begin{equation*}
    [X_i, X_j]=\sum_{k=1}^q c_{i,j}^k X_k, \quad c_{i,j}^k:\Manifold\rightarrow \R\text{ is continuous}.
\end{equation*}

\begin{thm}[\cite{StovallStreetII}]\label{Thm::IntroSmooth::SmoothThm}
Fix $x\in \Manifold$.  The following are equivalent:
\begin{enumerate}[(i)]
    \item There is a neighborhood $U\subseteq \Manifold$ of $x$ and a $C^2_{\loc}$ diffeomoprhism
    $\Phi:\Ball^n\xrightarrow{\sim} U$, such that $\Phi^{*}X_1,\ldots, \Phi^{*}X_q$
    are $C^\infty_{\loc}$ vector fields on $\Ball^n$.
    \item\label{Item::IntroSmooth::CinftyX} 
    The functions $c_{i,j}^k$ can be chosen so that the following holds.
    There is a neighborhood $V\subseteq \Manifold$ of $x$ such that 
    for every $L\in \N$ and every list $l_1,\ldots, l_L\in \{1,\ldots, q\}$, the functions
    $X_{l_1}X_{l_2}\cdots X_{l_L} c_{i,j}^k\big|_{V}:V\rightarrow \R$ are continuous.
    We will write this condition as $c_{i,j}^k\big|_{V}\in C^\infty_X(V)$.
\end{enumerate}
\end{thm}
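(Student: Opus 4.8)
\noindent\emph{(i)$\Rightarrow$(ii)} is routine. Given a $C^2_\loc$ diffeomorphism $\Phi\colon\Ball^n\xrightarrow{\sim}U$ with $\widehat X_j:=\Phi^*X_j\in C^\infty_\loc(\Ball^n)$, the $\widehat X_j$ span $T\Ball^n$ everywhere, so Cramer's rule produces $C^\infty$ functions $\widehat c^{\,k}_{i,j}$ with $[\widehat X_i,\widehat X_j]=\sum_k\widehat c^{\,k}_{i,j}\widehat X_k$; put $c^k_{i,j}:=\widehat c^{\,k}_{i,j}\circ\Phi^{-1}$ on $U$. Iterating the naturality identity $\Phi^*(Yf)=(\Phi^*Y)(\Phi^*f)$ gives $(X_{l_1}\cdots X_{l_L}c^k_{i,j})\circ\Phi=\widehat X_{l_1}\cdots\widehat X_{l_L}\widehat c^{\,k}_{i,j}\in C^\infty$, and composing with the $C^2$ map $\Phi^{-1}$ shows every iterated $X$-derivative of $c^k_{i,j}$ is continuous on $U$; thus $c^k_{i,j}\big|_U\in C^\infty_X(U)$, and we take $V=U$.

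For \emph{(ii)$\Rightarrow$(i)} I fix a choice of $c^k_{i,j}$ and a neighborhood $V$ of $x$ with $c^k_{i,j}\big|_V\in C^\infty_X(V)$. After relabeling so that $X_1,\dots,X_n$ form a $C^1_\loc$ frame near $x$ --- the coefficients expressing the remaining $X_j$ in this frame again lie in $C^\infty_X$, as one reads off from the structure equations, so I may also take $c^m_{l,k}=0$ for $m>n$ near $x$ --- the plan is to use the canonical exponential coordinates
\[
\Phi(t)=e^{\,t_1X_1+\cdots+t_nX_n}(x),\qquad t\in\Ball^n\ \text{(rescaled small)},\qquad \Psi(r,t):=\Phi(rt).
\]
Since $X_1,\dots,X_n\in C^1_\loc$, $\Phi$ is $C^1$, and $d\Phi_0=[\,X_1(x)\mid\cdots\mid X_n(x)\,]$ is invertible, so $\Phi$ is a $C^1$ diffeomorphism onto a neighborhood $U\ni x$. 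The whole problem then reduces to showing $\widetilde X_j:=\Phi^*X_j\in C^\infty_\loc(\Ball^n)$ for every $j$: granting this, the upgrade of $\Phi$ to a $C^2_\loc$ diffeomorphism follows readily by passing to a $C^2$ chart $\kappa$ near $x$, where $F:=\kappa\circ\Phi$ satisfies $dF=(M\circ F)\,N^{-1}$ with $N$ the $C^\infty$ (and, near $0$, invertible) frame matrix of $\widetilde X_1,\dots,\widetilde X_n$ and $M$ the $C^1$ frame matrix of $X_1,\dots,X_n$ read in the chart; the right-hand side is a composition of $C^1$ maps, so $F\in C^2$.

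To get $\widetilde X_j\in C^\infty$ I would set up an ODE. For fixed $t$, writing $W_t=\sum_k t_kX_k$ and $\Theta^t_r=e^{rW_t}$ (so $\Psi(r,t)=\Theta^t_r x$), differentiation of the time-one flow in the parameter $t$ gives the representation
\[
\widetilde X_j(t)=\sum_{k=1}^n\big[\,Q(t)^{-1}P_j(1;t)\,\big]_k\,\frac{\partial}{\partial t_k},\qquad Q(t)=\int_0^1\big[\,P_1(r;t)\mid\cdots\mid P_n(r;t)\,\big]\,dr,
\]
where, via $[W_t,X_k]=\sum_{l,m}t_l\,c^m_{l,k}X_m$, the curves $P_k(r;t):=(d\Theta^t_r)^{-1}X_k(\Theta^t_r x)$ solve the linear system in $r$
\[
\frac{d}{dr}P_k(r;t)=\sum_{m}b^m_k(r;t)\,P_m(r;t),\qquad b^m_k(r;t):=\sum_l t_l\,c^m_{l,k}\big(\Psi(r,t)\big),\qquad P_k(0;t)=X_k(x).
\]
Since $b^m_k(r;0)=0$ one has $Q(0)=[\,X_1(x)\mid\cdots\mid X_n(x)\,]$, invertible; hence as soon as the coefficients $b^m_k$ are known to be $C^\infty$ jointly in $(r,t)$, smooth dependence of linear ODEs on parameters makes $P_k$ and $Q$ smooth, $Q$ stays invertible near $0$, and $\widetilde X_j\in C^\infty$ near $0$ --- which, after rescaling, is the claim (the $X_j$ with $j>n$, expanded in the frame with $C^\infty_X$ coefficients, are carried along).

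I expect the main obstacle to be the one remaining point: that $f\circ\Psi\in C^\infty$ for every $f\in C^\infty_X(V)$ --- equivalently that the $b^m_k$ are $C^\infty$ --- which is circular on its face. Radial derivatives do no harm: $\frac{d}{dr}f(\Psi(r,t))=\sum_l t_l\,(X_l f)(\Psi(r,t))$ and $X_l f\in C^\infty_X$, so every $\partial_r^N(f\circ\Psi)$ is a jointly continuous sum of $t$-monomials times $(X_{l_1}\cdots X_{l_N}f)\circ\Psi$. But a $t$-derivative forces $\partial_{t_p}f(\Psi(r,t))=r\sum_j a_{pj}(rt)\,(X_jf)(\Psi(r,t))$, where $[a_{pj}]$ is the inverse frame matrix of $\widetilde X_1,\dots,\widetilde X_n$ --- precisely the unknown. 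The way out is an induction on $N$: assuming $f\circ\Psi\in C^N$ for all $f\in C^\infty_X$, one gets $b^m_k\in C^N$, hence (by the ODE above) $P_k,Q\in C^N$ and $\widetilde X_j\in C^N$, hence $[a_{pj}]\in C^N$ near $0$; then the displayed formula, together with $X_jf\in C^\infty_X$ and the inductive hypothesis, yields $f\circ\Psi\in C^{N+1}$, the base case $N=0$ being the continuity of $f$ and of $\Psi$. Once this bootstrap is in hand, the remaining ingredients --- smooth dependence of ODE solutions on parameters, matrix inversion, and the $C^2$-upgrade --- are all standard.
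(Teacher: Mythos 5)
Your proposal is correct in outline, but it takes a genuinely different route from the paper (which in fact only quotes this theorem from \cite{StovallStreetII}; the closest in-paper analogues are the proof of Theorem \ref{Thm::TheMainResult} and Section \ref{Section::Quant}). You stay entirely in canonical coordinates $\Phi(t)=e^{t\cdot X}(x)$ and close a bootstrap: the hypothesis $H_N$ (``$f\circ\Psi\in C^N$ for all $f\in C^\infty_X$'') makes the coefficients $b^m_k$ of the linear ODE for $P_k(r;t)=(d\Theta^t_r)^{-1}X_k(\Theta^t_rx)$ of class $C^N$, hence $\widetilde X_j=Q^{-1}P_j(1;\cdot)$ and the inverse frame matrix $(a_{pj})$ are $C^N$, and then $\partial_{t_p}(f\circ\Psi)=r\sum_j a_{pj}(rt)(X_jf)(\Psi(r,t))$ together with $X_jf\in C^\infty_X$ upgrades $H_N$ to $H_{N+1}$; the ODE itself is legitimate at the $C^1$ level (it is the same one the paper imports from \cite[Lemma 9.6]{StovallStreetI} in Lemma \ref{Lemma::Quant::Phi0}). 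The paper instead proves the general finite-regularity theorem by a Malgrange-type argument: solve the divergence-form elliptic system \eqref{Eqn::Key::PDEforR} to get a near-identity correction after which the coframe satisfies the elliptic system \eqref{Eqn::Key::PDEforB} and gains one derivative, then iterate. The reason for that detour is made precise by Lemma \ref{Lemma::CanonicalCoords}: canonical coordinates lose exactly one derivative at each finite regularity level, so your method cannot yield the sharp $\Co^\beta$ statement and works here only because $\infty-1=\infty$. What your approach buys is elementarity (ODE dependence on parameters instead of elliptic regularity and paraproducts), at the price of being confined to the $\beta=\infty$ endpoint. Two steps you should make explicit rather than assert: (1) that $\langle\lambda^l,X_k\rangle\in C^\infty_X$ for $k>n$ does follow from the structure equations (one computes $X_i\langle\lambda^l,X_k\rangle$ as a polynomial in the $c^m_{i,j}$ and these coefficients and inducts), but it is a lemma, not a parenthetical; and (2) that $f\in C^\infty_X$ implies $f\in C^1$ classically (via $\partial_{x^j}f=\sum_i b^i_j X_i f$ with $(b^i_j)$ the $C^1$ inverse frame matrix), which is what licenses the chain rule behind your formula for $\partial_{t_p}(f\circ\Psi)$.
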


In this paper, $X_1,\ldots, X_q$ are $\ZygSymb^{\alpha}_{\loc}$ vector fields
on a $\ZygSymb^{\alpha+1}$ manifold, $\Manifold$.  Informally, we wish our main result to
say that the following are equivalent:
\begin{enumerate}[(a)]
    \item There is a neighborhood $U\subseteq \Manifold$ of $x$ and a $\ZygSymb^{\alpha+1}_{\loc}$ diffeomoprhism
    $\Phi:\Ball^n\xrightarrow{\sim} U$, such that $\Phi^{*}X_1,\ldots, \Phi^{*}X_q$
    are $\ZygSymb^{\beta}_{\loc}$ vector fields on $\Ball^n$.
    \item\label{Item::IntroSmooth::MainCondition} The $c_{i,j}^k$ can be chosen such that $c_{j,k}^l\big|_{V}\in \ZygSymb^{\beta-1}_{X}(V)$ for some neighborhood $V\subseteq \Manifold$ of $x$, where $\ZygSymb^{\beta-1}_X(V)$
    is an appropriate Zygmund-H\"older space with respect to the vector fields $X_1,\ldots, X_q$.
\end{enumerate}

When $\alpha>1$ and $\beta>2$, this equivalence was proved in \cite{StovallStreetII}; the main result of this paper gives an extension of this result to all $\beta\geq \alpha>0$.  Unfortunately,
for $\alpha>0$ small, the commutator of $[X_j,X_k]$ does not even immediately make sense
and many of the usual operations on $\Manifold$ do not make immediate sense.  Thus, much
of this paper is devoted to making sense of conditions similar to \ref{Item::IntroSmooth::MainCondition}
in such low regularity.  As we will see, this is a bit easier when $\alpha>1/2$ and so our results
take a different form depending on whether $\alpha\in (0,1/2]$ or $\alpha>1/2$.

    \subsection{Relation to Results of DeTurck and Kazdan}
    The results in this paper may be reminiscent of the celebrated results of DeTurck and Kazdan
\cite{DeTurckKazdan} regarding a coordinate system in which a Riemannian metric tensor has optimal regularity.
It seems that there are no direct implications between our results and their
results; however there are many similarities.
We present this in more detail in Section \ref{Section::DeTurck};
there the following ideas are discussed.

DeTurck and Kazdan showed that a Riemannian metric tensor has optimal
regularity in harmonic coordinates \cite[Lemma 1.2]{DeTurckKazdan}.
Analogously, we present a natural Riemannian metric associated to
vector fields $X_1,\ldots, X_n$ (which form a basis of the tangent
space at every point)
such that $X_1,\ldots, X_n$ have optimal regularity in harmonic
coordinates with respect to this metric.

DeTurck and Kazdan also showed that a Riemannian metric tensor
may not have optimal regularity in geodesic normal
coordinates \cite[Example 2.3]{DeTurckKazdan}.
Analogously, we show that vector fields may not have optimal
regularity in canonical coordinates of the first kind.

However, the heart of our main result is not just to provide a coordinate
system in which vector fields have optimal regularity.  Instead,
we provide a test to determine what that optimal regularity is.
This test can be carried out in any coordinate system and does
not require solving any differential equations.

Both this work and \cite{DeTurckKazdan} 
use methods introduced by Malgrange \cite{MalgrangeSurLIntegrabiliteDesStructures}. 

\begin{rmk}
It may be somewhat unexpected that vector fields may not have optimal
regularity in canonical coordinates of the first kind.  Indeed,
there is a long history of writing vector fields in these coordinates
because they provide a coordinate system in which the vector fields
are often particularly easy to study.  In the theory of Lie groups
this is classical (see, for example, \cite[page 115]{ChevalleyTheoryOfLieGroups}).
Outside of the setting of Lie groups, canonical coordinates have been
used in the quantitative study of sub-Riemannian geometry,
beginning with the work of Nagel, Stein, and Wainger \cite{NagelSteinWaingerBallsAndMetricsDefinedByVectorFields},
and later used by Tao and Wright \cite{TaoAndWrightLpImprovingBoundsForAveragesAlongCurves},
the first author \cite{StreetMultiparameterCarnotCaratheodoryBalls},
Montanari and Morbidelli \cite{MontanariMorbidellliNonsmoothHormanderVectorFields},
and the first author and Stovall \cite{StovallStreetI}, among others.
In \cite{StovallStreetII}, the first author moved beyond canonical coordinates
to strengthen these theories.  We see now that this is necessary:
sharp results like the ones in this paper and in \cite{StovallStreetII}
cannot be obtained using canonical coordinates.
\end{rmk}

\begin{rmk}
If $X$ is a nonzero $C^1$ vector field on a one dimensional manifold $\Manifold$,
then the canonical coordinate system with respect to $X$, near the point $x_0$,
is the map $\Phi_{x_0}(t)=e^{tX}x_0$.  Since $\Phi_{x_0}^{*}X=\partial_t$,
we see that canonical coordinate system do provide optimal regularity in this simple
setting.  However, once we move to two dimensions, with two vector fields,
Lemma \ref{Lemma::CanonicalCoords} shows that canonical coordinates may not
give the optimal regularity.
\end{rmk}
    
    \subsection{Qualitative versus Quantitative}\label{Section::Intro::QualVsQuant}
    Most of the results in this paper are \textit{qualitative} in the following sense.
We give necessary and sufficient conditions so that a map $\Phi$ as in Question \ref{Question::Intro::Local}
exists.  If one traces through the proof, the $\ZygSymb^{\beta}$ norms of the coefficients of
$\Phi^{*}X_1,\ldots, \Phi^{*}X_q$ depend on   (among other things) quantities like:
\begin{itemize}[parsep=-0.3ex]
    \item Upper bounds for $\ZygSymb^{\alpha}$ norms of the coefficients of $X_1,\ldots, X_q$ in some fixed coordinate system
    near $x$.
    \item A lower bound, $>0$, for the quantity:
    \begin{equation}\label{Eqn::Intro::QualVsQuant::LowerBoundDet}
        \max_{j_1,\ldots, j_n\in \{1,\ldots q\}} \mleft| \det \mleft( X_{j_1}(x)|\ldots| X_{j_n}(x)\mright) \mright|,
    \end{equation}
    where in the above expression $X_1,\ldots, X_q$ are written as column vectors in the same fixed coordinate system near $x$.
\end{itemize}
Unfortunately, both above quantities depend on the choice of the original coordinate system.
Thus, if the vector fields are given in a coordinate system where the above upper and lower bounds
are bad, the estimates our proof gives are bad; even if there exists a different (unknown) choice of coordinate
system where the above estimates are better.  Thus, while our results are qualitatively optimal
(we give necessary and sufficient conditions for each $\beta$),
the estimates which follow from our proofs may be far from optimal unless one happens to know a good coordinate
system in which to write the vector fields in the first place.

In the papers \cite{StovallStreetI,StovallStreetII,StovallStreetIII}, the first author and Stovall
give such estimates on $\Phi^{*}X_1,\ldots, \Phi^{*} X_q$ in terms of quantities which are invariant under arbitrary $C^2$ diffeomorphisms (we call such estimates quantitative).
Thus, they do not depend on any choice of coordinate system.
This is useful for questions from partial differential equations and harmonic analysis, where
$\Phi$ can be used as a scaling map.  Such scaling maps originated in the smooth setting
in the foundational work of Nagel, Stein, and Wainger \cite{NagelSteinWaingerBallsAndMetricsDefinedByVectorFields} and were later worked on
by Tao and Wright \cite{TaoAndWrightLpImprovingBoundsForAveragesAlongCurves},
the first author \cite{StreetMultiparameterCarnotCaratheodoryBalls}, and in the 
above-mentioned series of papers by the first author and Stovall \cite{StovallStreetI,StovallStreetII,StovallStreetIII}.  Similar scaling in a non-smooth
setting was studied by Montanari and Morbidelli \cite{MontanariMorbidellliNonsmoothHormanderVectorFields},
though they do not address questions like the ones in this paper.

In Section \ref{Section::Quant}, we use the main methods of this paper,
combined with the methods of \cite{StovallStreetI,StovallStreetII}
to improve the main quantitative result of \cite{StovallStreetII} (and also the main
quantitative result of \cite{StreetSubHermitian}).

\section{Function Spaces}\label{Section::Func}

To state our main result, we need to introduce several function spaces related to the classical
Zygmund-H\"older spaces.  Because we are working in low regularity, some care is needed in the
definitions.

    
    \subsection{Classical Zygmund-H\"older spaces}
    In this section, we describe the classical Zygmund-H\"older spaces, and the corresponding
spaces on a manifold; see Section \ref{Section::FuncRevisited} for proofs of the results stated here.  

In what follows, $U$ will either be equal to $\R^n$ or equal to {an bounded open set with smooth boundary in $\R^n$}--we will usually be interested in the case when $U$ is either $\R^n$ or
an open ball in $\R^n$. 
We define the Zygmund-H\"older space
$\ZygSymb^s(U):=\BesovSymb^s_{\infty,\infty}(U)$, where $\BesovSymb^s_{\infty,\infty}$
denotes the classical Besov space (see, \cite[Section 2.3]{TriebelTheoryOfFunctionSpacesI}
for $\BesovSymb^s_{\infty,\infty}(U)$ 
when $U=\R^n$, and \cite[Chapter 5]{TriebelTheoryOfFunctionSpacesII} or {\cite[Chapter 1.11]{TriebelTheoryOfFunctionSpacesIII}} when 
$U$ is an bounded smooth domain).\footnote{Many results concerning $\ZygSymb^s(U)$, where $U$ is a bounded smooth domain, follow from the corresponding
results concerning $\ZygSymb^s(\R^n)$ via the theory described in \cite[Chapter 5]{TriebelTheoryOfFunctionSpacesII}.}
  We similarly define the vector valued space $\ZygSymb^s(U;\R^m)$.
The space $\ZygSymb^s(U)$ has some particularly concrete characterizations:

\begin{rmk}\label{Rmk::FuncSpace::CharofZyg}
For $U=\R^n$ or $U$ an bounded open set with smooth boundary in $\R^n$, we have
\begin{enumerate}[parsep=-0.3ex,label=(\roman*)]
    \item\label{Item::FuncSpace::CharofZyg::02} $s\in (0,2)$: $\ZygSymb^s(U)$ consists of those continuous functions $f:U\rightarrow \R$ such that the following norm is finite
    \begin{equation*}
         \sup_{x\in U} |f(x)| + \sup_{\substack{x\in U \\ h\in \R^n, h\ne 0 \\ x+h, x+2h\in U}} |h|^{-s} |f(x+2h)-2f(x+h)+f(x)|.
    \end{equation*}
    Moreover, the above expression gives a norm equivalent to $\| f\|_{\ZygSymb^s(U)}$.
    See \cite[Theorem 2.5.7 (ii)]{TriebelTheoryOfFunctionSpacesI} and \cite[(3.4.2/6)]{TriebelTheoryOfFunctionSpacesI}.

    \item\label{Item::FuncSpace::CharofZyg::01} $s\in (0,1)$: $\ZygSymb^s(U)$ consists of those continuous functions $f:U\rightarrow \R$ such that the following norm is finite
    \begin{equation*}
         \sup_{x\in U} |f(x)| + \sup_{\substack{x,y\in U \\ x\neq y}} |x-y|^{-s} |f(x)-f(y)|.
    \end{equation*}
    Moreover, the above expression gives a norm equivalent to $\| f\|_{\ZygSymb^s(U)}$.
    See \cite[Remark 2.2.2/3]{TriebelTheoryOfFunctionSpacesI} and \cite[(3.4.2/6)]{TriebelTheoryOfFunctionSpacesI}.
    
    \item\label{Item::FuncSpace::CharofZyg::>1} $s\in (1,\infty]$: $\ZygSymb^s(U)$ consists of those continuous $f:U\rightarrow \R^n$, such that
    $f, \partial_{x_j} f\in \ZygSymb^{s-1}(U)$, $1\leq j\leq n$.  We have
    $\| f\|_{\ZygSymb^s(U)}\approx \| f\|_{\ZygSymb^{s-1}(U)} + \sum_{j=1}^n \| \partial_{x_j} f\|_{\ZygSymb^{s-1}(U)}$.
    See \cite[Theorem 2.5.7 (ii)]{TriebelTheoryOfFunctionSpacesI} and \cite[Theorem 3.3.5(i)]{TriebelTheoryOfFunctionSpacesI}.
    \item\label{Item::FuncSpace::CharofZyg::<0} $s\in (-\infty,0]$:  $\ZygSymb^s(U)$ consists of those distributions $f\in \Dist(U)$
    such that $f=g_0+\sum_{j=1}^n \partial_{x_j} g_j$ for some $g_0,\ldots, g_n\in \ZygSymb^{s+1}(U)$.
    We have $\|f\|_{\ZygSymb^{s}(U)}\approx \inf \sum_{j=0}^n \|g_j\|_{\ZygSymb^{s+1}(U)}$, where the
    infimum is taken over all such choices of $g_0,\ldots, g_n$.
    When $U=\R^n$, this can be seen by letting $g_0=(I+\Lap)^{-1}f$ and $g_j=-\partial_{x_j} (I+\Lap)^{-1}f$, for $j=1,\ldots n$. See \cite[Theorem 2.3.8]{TriebelTheoryOfFunctionSpacesI}.
    \item When $m\in \N$ and $r\in (0,1)$, then
    $\ZygSymb^{m+r}(U)=C^{m,r}(U)$, with equivalence of norms.
    See \cite[Theorem 1.118 (i)]{TriebelTheoryOfFunctionSpacesIII}.
    However, when $r\in \{0,1\}$ these spaces differ.
\end{enumerate}
\end{rmk}

\begin{lemma}\label{Lemma::FuncSpace::Product}
Let $r,s\in \R$ with $r+s>0$, $r\geq s$.  
The product map $(f,g)\mapsto fg$ can be defined as a continuous map $\ZygSymb^{r}(U)\times \ZygSymb^{s}(U)\rightarrow \ZygSymb^{s}(U)$.
\end{lemma}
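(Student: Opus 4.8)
The plan is to reduce the statement to the model case $U=\R^n$ by an extension argument, and then on $\R^n$ to carry out Bony's paraproduct decomposition. First I would fix a linear extension operator $E$ that is bounded $\ZygSymb^t(U)\to\ZygSymb^t(\R^n)$ for $t=r$ and for $t=s$ and satisfies $(Eu)|_U=u$; such an operator is provided by the theory in \cite[Chapter 5]{TriebelTheoryOfFunctionSpacesII} (see Section \ref{Section::FuncRevisited}). Since the restriction map $\ZygSymb^t(\R^n)\to\ZygSymb^t(U)$ has operator norm at most $1$, one may then \emph{define} $fg:=(Ef\cdot Eg)|_U$ and obtain $\|fg\|_{\ZygSymb^s(U)}\le\|Ef\cdot Eg\|_{\ZygSymb^s(\R^n)}$; and because $(Ef)|_U=f$ and $(Eg)|_U=g$, this definition agrees with the ordinary pointwise product as soon as $f,g$ are continuous. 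So it suffices to prove the bilinear bound $\|FG\|_{\ZygSymb^s(\R^n)}\lesssim\|F\|_{\ZygSymb^r(\R^n)}\|G\|_{\ZygSymb^s(\R^n)}$.

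On $\R^n$ I would use the Littlewood--Paley characterization of $\ZygSymb^t(\R^n)=\BesovSymb^t_{\infty,\infty}(\R^n)$, namely $\|u\|_{\ZygSymb^t(\R^n)}\approx\sup_{j\ge0}2^{jt}\|\Delta_ju\|_{L^\infty}$ for every $t\in\R$ (see \cite[Section 2.3]{TriebelTheoryOfFunctionSpacesI}), where $\{\Delta_j\}_{j\ge0}$ is a dyadic partition of unity and $S_j:=\sum_{i<j}\Delta_i$. Observe that $r\ge s$ together with $r+s>0$ forces $r>0$, hence $\ZygSymb^r\hookrightarrow L^\infty$ and $\ZygSymb^r\hookrightarrow\ZygSymb^s$. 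For a suitable fixed integer $N_0$ I would split
\[
FG=\sum_{j}S_{j-N_0}F\cdot\Delta_jG+\sum_{j}\Delta_jF\cdot S_{j-N_0}G+\sum_{|j-k|\le N_0}\Delta_jF\cdot\Delta_kG=:\Pi_1+\Pi_2+\Pi_3
\]
and bound each term using two standard reassembly facts (both immediate from the Littlewood--Paley characterization): (A) if $h=\sum_jh_j$ with $\widehat{h_j}$ supported in an annulus $|\xi|\sim2^j$ and $\sup_j2^{jt}\|h_j\|_{L^\infty}<\infty$, then $h\in\ZygSymb^t(\R^n)$ for every $t\in\R$, with comparable norm; (B) the same holds when $\widehat{h_j}$ is only supported in a ball $|\xi|\lesssim2^j$, \emph{provided} $t>0$. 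The blocks of $\Pi_1$ and $\Pi_2$ have spectrum in an annulus $\sim2^j$. For $\Pi_1$, $\|S_{j-N_0}F\cdot\Delta_jG\|_{L^\infty}\lesssim\|F\|_{L^\infty}2^{-js}\|G\|_{\ZygSymb^s}\lesssim2^{-js}\|F\|_{\ZygSymb^r}\|G\|_{\ZygSymb^s}$, so (A) gives $\Pi_1\in\ZygSymb^s$. For $\Pi_2$ one bounds $\|S_{j-N_0}G\|_{L^\infty}$ by $\|G\|_{\ZygSymb^s}$ when $s>0$ (so $\Pi_2\in\ZygSymb^r\hookrightarrow\ZygSymb^s$, which is where $r\ge s$ is used), by a geometric sum $\lesssim2^{-js}\|G\|_{\ZygSymb^s}$ when $s<0$ (so $\Pi_2\in\ZygSymb^{r+s}\hookrightarrow\ZygSymb^s$, using $r\ge0$), and by $\lesssim\langle j\rangle\|G\|_{\ZygSymb^0}$ when $s=0$ (so $\Pi_2\in\ZygSymb^{r'}\hookrightarrow\ZygSymb^0$ for any $0<r'<r$). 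For $\Pi_3$, each block $\Delta_jF\cdot\Delta_kG$ with $|j-k|\le N_0$ has spectrum only in a ball $|\xi|\lesssim2^j$, but $\|\Delta_jF\cdot\Delta_kG\|_{L^\infty}\lesssim2^{-jr}2^{-ks}\|F\|_{\ZygSymb^r}\|G\|_{\ZygSymb^s}\lesssim2^{-j(r+s)}\|F\|_{\ZygSymb^r}\|G\|_{\ZygSymb^s}$ and $r+s>0$, so (B) applies and $\Pi_3\in\ZygSymb^{r+s}\hookrightarrow\ZygSymb^s$. Summing the three contributions yields $FG\in\ZygSymb^s(\R^n)$ with the desired estimate, and continuity of the bilinear map follows from this bound.

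I expect the main obstacle to be the low-regularity regime $s\le0$: there $g$ need not be a function, so $fg$ is really \emph{defined} by the decomposition (equivalently, through the extension), and one must verify that the high--low piece $\Pi_2$ converges and is estimated correctly — this is where the geometric summation and the hypothesis $r+s>0$ enter — and that the high--high piece $\Pi_3$ lands in $\ZygSymb^s$, which forces the use of $r+s>0$ since ball-supported blocks can only be reassembled into a space of strictly positive order. By comparison, the treatment of the finitely many low-frequency terms and the transfer of the result from $\R^n$ back to the domain $U$ are routine.
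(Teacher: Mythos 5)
Your argument is correct, but it is worth pointing out that the paper does not prove this lemma at all: it simply cites \cite[Theorem 2.8.2(i)]{TriebelTheoryOfFunctionSpacesI} for $U=\R^n$ and \cite[Theorem 3.3.2(ii)]{TriebelTheoryOfFunctionSpacesI} for bounded smooth domains. What you have written is, in effect, the standard proof of those cited results: reduce to $\R^n$ by a universal extension operator, then run the Bony decomposition $FG=\Pi_1+\Pi_2+\Pi_3$ and reassemble using the two Littlewood--Paley synthesis facts (annulus-supported blocks for any order, ball-supported blocks only for positive order). Your case analysis is sound — the observation that $r\geq s$ and $r+s>0$ force $r>0$, the three subcases for $\Pi_2$ according to the sign of $s$ (with the logarithmic loss at $s=0$ absorbed by $\ZygSymb^{r'}\hookrightarrow\ZygSymb^0$ for $0<r'<r$), and the use of $r+s>0$ exactly where the high--high term must be resummed from ball-supported blocks. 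This is also consistent with the paraproduct machinery ($\FrP$, $\FrR$) that the paper introduces later in Lemma \ref{Lemma::FuncRevis::ParaProdResults}, citing \cite{BahouriCheminDanchin}; your $\Pi_1,\Pi_2,\Pi_3$ are the scalar versions of those operators. The only point I would ask you to be slightly more careful about is the definition of $fg$ when $s\leq 0$: defining $fg:=(Ef\cdot Eg)|_U$ for one fixed extension operator $E$ does produce a continuous bilinear map, which is all the lemma asserts, but if you want this to be canonical you should also check that the restriction to $U$ does not depend on the choice of extensions (this locality is part of what the Triebel citation packages up). What your route buys is a self-contained proof making the role of each hypothesis ($r>0$, $r\geq s$, $r+s>0$) completely explicit; what the paper's route buys is brevity.
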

\begin{proof}
This is a special case of \cite[Theorem 2.8.2(i)]{TriebelTheoryOfFunctionSpacesI} {when $U=\R^n$} and \cite[Theorem 3.3.2(ii)]{TriebelTheoryOfFunctionSpacesI} {when $U$ is bounded open set with smooth boundary}.
\end{proof}

\begin{defn}
Let $U\subseteq \R^n$ be an open set. For $s\in \R$, we define $\ZygSymb^s(U; TU)$, to be the space of vector fields (with distribution
coefficients)
$Y=\sum_{j=1}^n a_j \partial_{x_j}$, where $a_j\in \ZygSymb^s(U)$.  We identify
$Y$ with the distribution $(a_1,\ldots, a_n)\in \ZygSymb^s(U; \R^n)$ and define
\begin{equation*}
    \| Y\|_{\ZygSymb^s(U)}:= \| (a_1,\ldots, a_n)\|_{\ZygSymb^s(U;\R^n)}.
\end{equation*}
\end{defn}

\begin{defn}
Let $U\subseteq \R^n$ be an open set. For $s\in \R$ and $k\in \N$, we define $\ZygSymb^s\mleft(U; \mywedge^k T^{*}U\mright)$,
to be the space of $k$-forms (with distribution coefficients)
$\omega=\sum_{1\leq i_1<i_2<\cdots<i_k\leq n} \omega_{i_1,\ldots, i_k} dx_1\wedge \cdots\wedge dx_k$,
where $\omega_{i_1,\ldots, i_k}\in \ZygSymb^s(U)$.  We identify $\omega$ with the
distribution $(\omega_{i_1,\ldots, i_k})_{1\leq i_1<i_2<\ldots<i_k\leq n}$, and define
\begin{equation*}
    \| \omega\|_{\ZygSymb^s(U)} := \| (\omega_{i_1,\ldots, i_k})\|_{\ZygSymb^s(U; \R^{Q_{n,k}})},
\end{equation*}
where $Q_{n,k}=\dim \mywedge^k \R^n$.
\end{defn}

\begin{defn}\label{Defn::FuncRn:DefnofZygloc}
Let $U\subseteq \R^n$ be an open set. For $s\in\R$, we define $\ZygSymb^s_{\loc}(U)$ to be the space of distributions $f\in \Dist(U)$, such that
for every $x\in U$, there exists an open ball $U'\subseteq U$ containing $x$ with 
$f\big|_{U'}\in \ZygSymb^s(U')$.  We similarly define $\ZygSymb^s_{\loc}(U; TU)$ and
$\ZygSymb^s_{\loc}\mleft(U; \mywedge^k T^{*}U\mright)$.
\end{defn}

For $\alpha>0$, we define $\ZygSymb^{\alpha+1}$ manifolds in the usual way:  the transition
functions are assumed to be $\ZygSymb^{\alpha+1}_{\loc}$ (see \cite[Section 5.4]{StovallStreetII}
for some comments on this).  Such manifold are, in particular, $C^1$ manifolds, and so it
makes sense to talk about, for example, vector fields on such manifolds.

\begin{rmk}\label{Rmk::FuncRn::DefinedOnMfld}
On a $\ZygSymb^{\alpha+1}$ manifold $\Manifold$, it makes sense to talk about functions
in $\ZygSymb^s_{\loc}(\Manifold)$ for $s\in (-\alpha,\alpha+1]$, vector fields
in $\ZygSymb^s_{\loc}(\Manifold; T\Manifold)$ for $s\in (-\alpha,\alpha]$,
and $k$-forms in $\ZygSymb^s_{\loc}\mleft(\Manifold;\mywedge^k T^{*}\Manifold\mright)$ for $s\in (-\alpha,\alpha]$.
See Lemma \ref{Lemma::FuncRevis::PushForwardFuncSpaces} {and Definition \ref{Defn::FuncRevis::ObjonManifolds}}.
\end{rmk}

Let $\IntProd{Y}$ denote the interior product with respect to the vector field $Y$
and let $\Lie{Y}$ denote the Lie derivative with respect to $Y$.  

\begin{prop}\label{Prop::FuncRn::LieOnManiofold}
Let $\Manifold$ be a $\ZygSymb^{\alpha+1}$ manifold for some
$\alpha>0$.
\begin{enumerate}[parsep=-0.3ex,label=(\roman*)]
\item\label{Item::FuncRn::IntProdCont} For $\beta\in (-\alpha,\alpha]$, the map
$(Y,\omega)\mapsto \IntProd{Y}\omega$ is a continuous map
\begin{equation*}
 \ZygSymb^\alpha_{\loc}(\Manifold;T\Manifold)\times \ZygSymb^\beta_{\loc}\mleft(\Manifold; \mywedge^k T^*\Manifold\mright)\rightarrow \ZygSymb_{\loc}^{\beta} \mleft(\Manifold; \mywedge^{k-1}T^{*}\Manifold\mright).
\end{equation*}

\item\label{Item::FuncRn::WedgeProdCont} For $\beta\in (-\alpha,\alpha]$, the map
$(\eta,\omega)\mapsto \eta\wedge\omega$ is a continuous map
\begin{equation*}
 \ZygSymb^\alpha_{\loc}\mleft(\Manifold;\mywedge^lT^*\Manifold\mright)\times \ZygSymb^\beta_{\loc}\mleft(\Manifold; \mywedge^k T^*\Manifold\mright)\rightarrow \ZygSymb_{\loc}^{\beta} \mleft(\Manifold; \mywedge^{k+l}T^{*}\Manifold\mright).
\end{equation*}

\item\label{Prop::FuncRn::dCont} If $\alpha>1/2$ and $\beta\in (-\alpha+1,\alpha]$, the map $\omega\mapsto d\omega$ is continuous 
\begin{equation*}
    \ZygSymb^\beta_{\loc}\mleft(\Manifold; \mywedge^k T^*\Manifold\mright)\rightarrow 
    \ZygSymb^{\beta-1}_{\loc}\mleft(\Manifold; \mywedge^{k+1} T^*\Manifold\mright).
\end{equation*}

\item\label{Item::FuncRn::VectActionCont} For $\beta\in (-\alpha+1,\alpha+1]$, the map
\begin{equation*}
(Y,f)\mapsto Yf =:\Lie{Y}f
\end{equation*}
is continuous
\begin{equation*}
\ZygSymb^\alpha_{\loc}(\Manifold;T\Manifold)\times \ZygSymb^\beta_{\loc}(\Manifold)\rightarrow \ZygSymb^{\beta-1}_{\loc}(\Manifold).
\end{equation*}


\item\label{Item::FuncRn::CommutatorCont} If $\alpha>1/2$, then for $\beta\in (-\alpha+1, \alpha]$, the map
\begin{equation*}
    (Y,Z)\mapsto [Y,Z]=:\Lie{Y}Z
\end{equation*}
is continuous
\begin{equation*}
    \ZygSymb^\alpha_{\loc}(\Manifold;T\Manifold)\times \ZygSymb^\beta_{\loc}(\Manifold;T\Manifold)\rightarrow \ZygSymb^{\beta-1}_{\loc}(\Manifold;T\Manifold).
\end{equation*}
\item\label{Item::FuncRn::LieFormsCont} If $\alpha>1/2$, then for $\beta\in (-\alpha+1,\alpha]$, the map
\begin{equation*}
    (Y,\omega)\mapsto d\IntProd{Y}\omega+\IntProd{Y}d\omega=:\Lie{Y}\omega
\end{equation*}
is continuous
\begin{equation*}
    \ZygSymb^\alpha_{\loc}(\Manifold;T\Manifold)\times \ZygSymb^\beta_{\loc}(\Manifold;T\Manifold)\rightarrow \ZygSymb^{\beta-1}_{\loc}(\Manifold;T\Manifold).
\end{equation*}
\end{enumerate}
\end{prop}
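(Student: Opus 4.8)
The plan is to reduce every assertion to a statement in a single coordinate chart and then read it off from Lemma \ref{Lemma::FuncSpace::Product} together with the boundedness of the partial derivatives $\partial_{x_j}$ on Zygmund--H\"older spaces. Recall from Remark \ref{Rmk::FuncRn::DefinedOnMfld} (and Lemma \ref{Lemma::FuncRevis::PushForwardFuncSpaces} and Definition \ref{Defn::FuncRevis::ObjonManifolds}, on which it rests) that, in the stated ranges of exponents, the spaces $\ZygSymb^s_{\loc}(\Manifold)$, $\ZygSymb^s_{\loc}(\Manifold;T\Manifold)$ and $\ZygSymb^s_{\loc}(\Manifold;\mywedge^k T^{*}\Manifold)$ are well defined; that an object lies in such a space if and only if, in each $\ZygSymb^{\alpha+1}$ chart $U\subseteq\R^n$, its coefficients lie in the corresponding space over $U$; and that the topology on $\Manifold$ is the one induced chart by chart, with $\ZygSymb^{\alpha+1}$ chart transitions acting continuously. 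Since all the operations in \ref{Item::FuncRn::IntProdCont}--\ref{Item::FuncRn::LieFormsCont} are local, it therefore suffices to establish: (A) the stated boundedness in a fixed coordinate ball $U\subseteq\R^n$, where every object is given by its coefficient functions and every operation by the classical coordinate formula; and (B) that these coordinate formulas are independent of the chosen $\ZygSymb^{\alpha+1}$ chart. In every case continuity then follows from Lemma \ref{Lemma::FuncSpace::Product} (which already asserts continuity of the product) together with the fact that composites and sums of continuous (multi)linear maps are continuous.

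For step (A) I would first record that $\partial_{x_j}\colon\ZygSymb^t(U)\to\ZygSymb^{t-1}(U)$ is bounded for every $t\in\R$; this is Remark \ref{Rmk::FuncSpace::CharofZyg}\ref{Item::FuncSpace::CharofZyg::>1} when $t>1$ and follows in general by iterating it together with Remark \ref{Rmk::FuncSpace::CharofZyg}\ref{Item::FuncSpace::CharofZyg::<0}. Then: for \ref{Item::FuncRn::IntProdCont} and \ref{Item::FuncRn::WedgeProdCont}, each coefficient of $\IntProd{Y}\omega$, resp.\ of $\eta\wedge\omega$, is a finite sum of products of a coefficient of the $\ZygSymb^\alpha$-object with a coefficient of the $\ZygSymb^\beta$-object, and since $\alpha\ge\beta$ and $\alpha+\beta>0$, Lemma \ref{Lemma::FuncSpace::Product} with $(r,s)=(\alpha,\beta)$ puts each such product in $\ZygSymb^\beta(U)$. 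For \ref{Prop::FuncRn::dCont}, the coefficients of $d\omega$ are signed sums of $\partial_{x_j}\omega_I$ with $\omega_I\in\ZygSymb^\beta(U)$, hence lie in $\ZygSymb^{\beta-1}(U)$, and the hypotheses $\alpha>1/2$ and $\beta\in(-\alpha+1,\alpha]$ are precisely what make $\beta-1$ land in the admissible window $(-\alpha,\alpha]$. For \ref{Item::FuncRn::VectActionCont}, in coordinates $Yf=\sum_j a_j\,\partial_{x_j}f$ with $a_j\in\ZygSymb^\alpha(U)$ and $\partial_{x_j}f\in\ZygSymb^{\beta-1}(U)$, and Lemma \ref{Lemma::FuncSpace::Product} with $(r,s)=(\alpha,\beta-1)$ applies because $\alpha\ge\beta-1$ (i.e.\ $\beta\le\alpha+1$) and $\alpha+\beta-1>0$ (i.e.\ $\beta>-\alpha+1$). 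For \ref{Item::FuncRn::CommutatorCont}, the $k$-th coefficient of $[Y,Z]$ is $\sum_j(Y^j\,\partial_{x_j}Z^k-Z^j\,\partial_{x_j}Y^k)$: the terms $Y^j\,\partial_{x_j}Z^k$ are of the type just handled, while $Z^j\,\partial_{x_j}Y^k$ is a product of a $\ZygSymb^\beta(U)$ function and a $\ZygSymb^{\alpha-1}(U)$ function, so by Lemma \ref{Lemma::FuncSpace::Product} (using $\alpha+\beta-1>0$) it lies in $\ZygSymb^{\min(\beta,\alpha-1)}(U)\subseteq\ZygSymb^{\beta-1}(U)$, the inclusion holding since $\alpha\ge\beta$; again $\alpha>1/2$ is exactly what makes the exponent window nonempty. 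Finally \ref{Item::FuncRn::LieFormsCont} is purely formal: by \ref{Item::FuncRn::IntProdCont} one has $\IntProd{Y}\omega\in\ZygSymb^\beta_{\loc}$ and then $d\IntProd{Y}\omega\in\ZygSymb^{\beta-1}_{\loc}$ by \ref{Prop::FuncRn::dCont}; and $d\omega\in\ZygSymb^{\beta-1}_{\loc}$ by \ref{Prop::FuncRn::dCont}, whence $\IntProd{Y}d\omega\in\ZygSymb^{\beta-1}_{\loc}$ by \ref{Item::FuncRn::IntProdCont} (legitimate since $\beta-1\in(-\alpha,\alpha]$); summing gives the result.

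For step (B) --- which I expect to be the substantive point --- each of these operations agrees on smooth inputs with the classical one, which is chart-independent. Given inputs of regularity $\alpha$ and $\beta$, mollification (carried out chart by chart with a partition of unity) produces smooth approximants converging to them in $\ZygSymb^{\alpha'}_{\loc}$ and $\ZygSymb^{\beta'}_{\loc}$ for every $\alpha'<\alpha$ and $\beta'<\beta$. I would choose $\alpha'\nearrow\alpha$ and $\beta'\nearrow\beta$ so that every inequality used in (A) persists for $(\alpha',\beta')$; then the boundedness from (A), together with the continuity of $\ZygSymb^{\alpha+1}$-pullback on the relevant Zygmund--H\"older spaces (Lemma \ref{Lemma::FuncRevis::PushForwardFuncSpaces}), shows that the two chart-representations of any output are both limits of the same sequence of chart-independent smooth objects, hence equal. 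Thus each operation is well defined on $\Manifold$, and its mapping property there follows from the chart-wise property via the chart-wise description of these spaces recalled above. The only real obstacle I anticipate is the bookkeeping: tracking which exponents remain inside the admissible windows $(-\alpha,\alpha]$, $(-\alpha+1,\alpha]$ and $(-\alpha+1,\alpha+1]$ under the perturbation $\beta\mapsto\beta'$ and under the intermediate operations. This is exactly where the precise hypotheses --- in particular the requirement $\alpha>1/2$ in \ref{Prop::FuncRn::dCont}, \ref{Item::FuncRn::CommutatorCont} and \ref{Item::FuncRn::LieFormsCont}, which keeps those windows nonempty --- get used; once the windows are arranged, the estimates themselves are immediate from Lemma \ref{Lemma::FuncSpace::Product} and standard mollification bounds.
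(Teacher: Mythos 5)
Your step (A) is exactly the paper's argument: the proposition is proved chart by chart via Definition \ref{Defn::FuncRevis::ObjonManifolds}, with each coordinate-level estimate coming from Lemma \ref{Lemma::FuncSpace::Product} applied to the coefficient functions, and your exponent bookkeeping for each of \ref{Item::FuncRn::IntProdCont}--\ref{Item::FuncRn::LieFormsCont} is correct. Where you diverge is step (B). The paper does not mollify: it proves the pullback-commutation identities $\Phi^*(\IntProd{Y}\omega)=\IntProd{\Phi^*Y}\Phi^*\omega$, $\Phi^*d\omega=d\Phi^*\omega$, etc., directly for the low-regularity objects (Lemma \ref{Lemma::FuncRn::PullbackComm}), by writing out the coordinate formulas and checking at each stage that the products and compositions are defined via Lemmas \ref{Lemma::FuncSpace::Product} and \ref{Lemma::FuncRevis::PushForwardFuncSpaces}; the gluing of the chart-wise collections then follows from \eqref{Eqn::FuncRevis::TransitiofVF}--\eqref{Eqn::FuncRevis::TransitionForms}. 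Your density argument can be made to work and avoids redoing the algebra at low regularity, but it underestimates one point: the assertion that chart-independence on smooth inputs is ``classical'' is not quite right, because the transition maps are only $\ZygSymb^{\alpha+1}_{\loc}$, hence for $\alpha\le 1$ not $C^2$. For the operations involving derivatives ($d$, $Yf$, $[Y,Z]$, $\Lie{Y}$), the usual proof of invariance uses symmetry of second derivatives of the transition map, so even with smooth inputs you must justify the cancellation of the $\partial^2\Phi$ terms distributionally --- this is precisely the content of the computation in Lemma \ref{Lemma::FuncRn::PullbackComm} \ref{Item::FuncRn::dComm} (the identity $\sum_l\frac{\partial\phi^j}{\partial x^l}dx^l=d\phi^j$ making \eqref{Eqn::FuncRevis::PullbackDComm::Tmp1} and \eqref{Eqn::FuncRevis::PullbackDComm::Tmp2} agree), and it cannot simply be cited as classical. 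Once that is supplied, your mollification-plus-limit passage (with the $\alpha'\nearrow\alpha$, $\beta'\nearrow\beta$ perturbation to keep all windows open) closes the argument; what the paper's direct route buys is that it never needs the approximation machinery at all, at the cost of verifying that every intermediate product in the identity is defined at the actual regularities $\alpha,\beta$.
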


As can be seen in Section \ref{Section::Intro::ResultsInTheSmoothSetting}, our main results
are in terms of the commutators of vector fields:  i.e., the Lie derivative of one
vector field with respect to another.  Proposition \ref{Prop::FuncRn::LieOnManiofold} \ref{Item::FuncRn::CommutatorCont} shows
that  such Lie derivatives, $\Lie{Y}$, only make sense
when $Y\in \ZygSymb^{\alpha}_{\loc}(\Manifold;T\Manifold)$ for $\alpha>1/2$.  Because
of this, when $\alpha>1/2$, the characterizations in our main result can be made somewhat simpler.
However, it is still possible to make sense of some of these ideas when $\alpha\in (0,1/2]$,
as we now make precise.

\begin{prop}\label{Prop::FuncRn::dOfLowRegularityForm}
Let $\alpha>0$ and $\beta, \gamma\in (-\alpha,\alpha+1]$. 
Let $U, V\subseteq \R^n$ be open and let $F:U\xrightarrow{\sim} V$ be a $\ZygSymb^{\alpha+1}_{\loc}$ diffeomorphism.  Fix a $k$-form $\theta\in \ZygSymb^{\gamma}_{\loc}\mleft(U; \mywedge^{k} T^{*}U\mright)$.
Then, the following are equivalent:
\begin{enumerate}[parsep=-0.3ex,label=(\roman*)]
    \item\label{Item::FuncRn::dOfLowRegularityForm::dthetasmooth} $d\theta\in \ZygSymb^{\beta-1}_{\loc}\mleft(U; \mywedge^{k+1} T^{*}U\mright)$.
    \item\label{Item::FuncRn::dOfLowRegularityForm::dFthetasmooth} $d(F_{*}\theta)\in \ZygSymb^{\beta-1}_{\loc}\mleft(V; \mywedge^{k+1} T^{*}V\mright)$.
\end{enumerate}
Moreover, in this case, for all $p\in U$, there is a neighborhood $V'\subseteq V$ of $F(p)$
and $\tau\in \ZygSymb^{\beta}_{\loc}\mleft(V'; \mywedge^{k} T^{*}V'\mright)$ such that
$d(F_{*} \theta)\big|_{V'}=d\tau$.
\end{prop}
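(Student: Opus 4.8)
The plan is to reduce the statement to two facts: the commutation of $d$ with the push-forward $F_{*}$, and a compensated-compactness (``div--curl'') estimate for Zygmund spaces. Since $F^{-1}$ is again a $\ZygSymb^{\alpha+1}_{\loc}$ diffeomorphism and $(F^{-1})_{*}F_{*}\theta=\theta$, while $F_{*}\theta\in\ZygSymb^{\min(\alpha,\gamma)}_{\loc}(V;\mywedge^{k}T^{*}V)$ with $\min(\alpha,\gamma)\in(-\alpha,\alpha+1]$ (composition preserves $\ZygSymb^{\gamma}_{\loc}$ for $\gamma\in(-\alpha,\alpha+1]$, and multiplying by the $\ZygSymb^{\alpha}_{\loc}$ Jacobian minors caps the regularity at $\alpha$ via Lemma \ref{Lemma::FuncSpace::Product}), it is enough to prove \ref{Item::FuncRn::dOfLowRegularityForm::dthetasmooth}$\Rightarrow$\ref{Item::FuncRn::dOfLowRegularityForm::dFthetasmooth} together with the ``moreover'' clause; the converse then follows by applying the resulting implication to $F^{-1}$ and $F_{*}\theta$.

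The first ingredient is: for every distributional form $\eta$ (of regularity in the range where $F_{*}$ is defined) and every $\ZygSymb^{\alpha+1}_{\loc}$ diffeomorphism $F$, one has $F_{*}(d\eta)=d(F_{*}\eta)$. This is classical when $\eta$ and $F$ are smooth; in general it follows by approximating $F$ in $C^{1}_{\loc}$ by smooth diffeomorphisms and $\eta$ by smooth forms (mollification commutes with $d$), since $C^{0}_{\loc}$--convergence of coefficients implies convergence in $\Dist$, on which $d$ and $F_{*}$ are continuous. Now fix $p\in U$ and a small ball $B\subseteq U$ about $p$ with $d\theta\in\ZygSymb^{\beta-1}(B)$ and $F\in\ZygSymb^{\alpha+1}(B)$. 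Since $d\theta$ is closed, the classical Poincar\'e homotopy operator on $B$ is a smoothing operator of order $-1$, hence bounded $\ZygSymb^{s}_{\loc}\to\ZygSymb^{s+1}_{\loc}$ for every $s\in\R$, and applying it to $d\theta$ produces $\sigma\in\ZygSymb^{\beta}_{\loc}(B;\mywedge^{k}T^{*}B)$ with $d\sigma=d\theta$ on $B$. Then $\theta-\sigma$ is closed on $B$ and lies in $\ZygSymb^{\min(\gamma,\beta)}_{\loc}(B)$ — a range in which $F_{*}$ commutes with $d$ — so $F_{*}(\theta-\sigma)$ is closed on $F(B)$; that is, $d(F_{*}\theta)=d(F_{*}\sigma)$ on $F(B)$.

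It remains to prove $d(F_{*}\sigma)\in\ZygSymb^{\beta-1}_{\loc}(F(B))$. With $G=F^{-1}$ and $\sigma=\sum_{|I|=k}\sigma_{I}\,dx^{I}$, $\sigma_{I}\in\ZygSymb^{\beta}_{\loc}(B)$, we have $F_{*}\sigma=G^{*}\sigma=\sum_{I}(\sigma_{I}\circ G)\,dG^{I}$ with $dG^{I}=dG^{i_{1}}\wedge\dots\wedge dG^{i_{k}}$; each $dG^{i_{j}}$ is a closed $\ZygSymb^{\alpha}_{\loc}$ one-form, so $\eta_{I}:=dG^{I}$ is closed with $\ZygSymb^{\alpha}_{\loc}$ coefficients, and $g_{I}:=\sigma_{I}\circ G\in\ZygSymb^{\beta}_{\loc}(F(B))$ since $\beta\in(-\alpha,\alpha+1]$. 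Because $d\eta_{I}=0$,
\begin{equation*}
 d(F_{*}\sigma)=\sum_{I}d(g_{I}\eta_{I})=\sum_{I}dg_{I}\wedge\eta_{I}.
\end{equation*}
The proof is thus reduced to the estimate: if $g\in\ZygSymb^{b}_{\loc}$, $\eta\in\ZygSymb^{a}_{\loc}$ is closed, $a>0$, $a+b>0$, $b\le a+1$, then $d(g\eta)=dg\wedge\eta\in\ZygSymb^{b-1}_{\loc}$. For $b\le a$ this is immediate from $g\eta\in\ZygSymb^{b}_{\loc}$; the content is $b>a$, where $dg\wedge\eta$ is not covered by Lemma \ref{Lemma::FuncSpace::Product}. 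Here one uses Bony's decomposition $g\eta=T_{g}\eta+T_{\eta}g+R(g,\eta)$ and the fact that $\Delta_{q}\eta$ and $S_{q-1}\eta$ are again closed: a short computation gives $d(T_{g}\eta)=\sum_{q}S_{q-1}(dg)\wedge\Delta_{q}\eta$ and $d(T_{\eta}g)=\sum_{q}S_{q-1}(\eta)\wedge\Delta_{q}(dg)$ — both lying in $\ZygSymb^{\min(a,\,a+b-1)}_{\loc}\subseteq\ZygSymb^{b-1}_{\loc}$ using $a\ge0$ and $b\le a+1$ — and $d(R(g,\eta))=\sum_{q}\Delta_{q}(dg)\wedge\tilde\Delta_{q}\eta=d\bigl(\sum_{q}\Delta_{q}g\,\tilde\Delta_{q}\eta\bigr)$, where $\sum_{q}\Delta_{q}g\,\tilde\Delta_{q}\eta\in\ZygSymb^{a+b}_{\loc}$ because $a+b>0$; hence $d(R(g,\eta))\in\ZygSymb^{a+b-1}_{\loc}\subseteq\ZygSymb^{b-1}_{\loc}$. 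Summing, $d(g\eta)\in\ZygSymb^{b-1}_{\loc}$. (A routine cutoff/extension reduces the localized claim to $\R^{n}$, where the Littlewood--Paley pieces make sense.) Applying this with $a=\alpha$, $b=\beta$ to each summand gives $d(F_{*}\sigma)\in\ZygSymb^{\beta-1}_{\loc}(F(B))$, and covering $V$ by the sets $F(B)$ yields \ref{Item::FuncRn::dOfLowRegularityForm::dFthetasmooth}. For the ``moreover'' clause, $d(F_{*}\theta)$ is closed and in $\ZygSymb^{\beta-1}_{\loc}(V)$, so applying the Poincar\'e operator on a ball $V'\ni F(p)$ produces $\tau\in\ZygSymb^{\beta}_{\loc}(V';\mywedge^{k}T^{*}V')$ with $d\tau=d(F_{*}\theta)|_{V'}$.

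The main obstacle is this last estimate in the regime $b>a$, i.e.\ $\beta-1\le-\alpha<0<\beta$, which can occur only when $\alpha<\tfrac12$: there each summand of $F_{*}\sigma$ has regularity only $\ZygSymb^{\alpha}_{\loc}$, so a naive bound gives merely $d(F_{*}\sigma)\in\ZygSymb^{\alpha-1}_{\loc}$, and recovering the sharp $\ZygSymb^{\beta-1}_{\loc}$ bound genuinely requires the cancellation above — the ``resonant'' part of the product being exact with a primitive in the better space $\ZygSymb^{\alpha+\beta}_{\loc}$. All the remaining ingredients (the commutation lemma, the mapping properties of the Poincar\'e operator, and the composition and product estimates for Zygmund spaces) are standard or established elsewhere in the paper.
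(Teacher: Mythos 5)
Your proposal is essentially correct and shares the paper's overall architecture --- split $\theta$ into a $\Co^{\beta}$ piece plus a piece whose pushforward is closed, then control $d$ of the pushforward of the good piece by a paraproduct cancellation --- but it executes the central estimate by a genuinely different route. The paper writes $F_{*}\rho=\sum_I\rhot_I\,d\tmu^I$, where $\tmu^I$ is an explicit $\Co^{\alpha+1}$ primitive of the closed Jacobian form $d\phit^{i_1}\wedge\cdots\wedge d\phit^{i_k}$ built from the Newtonian potential, and then trades the bad paraproduct $d\FrP(d\tmu^I,\rhot_I)$ for $(-1)^{k}d\FrP(\tmu^I,d\rhot_I)$, which lands in $\Co^{\beta-1}$. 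You instead keep $\eta_I=dG^{I}$ itself and exploit closedness of the Littlewood--Paley blocks $\Delta_q\eta_I$ and $S_{q-1}\eta_I$ directly, so that the exterior derivative always falls on the $g_I$ factor; this avoids constructing the primitive $\tmu^I$ altogether, and your exponent bookkeeping ($a>0$, $a+b>0$, $b\le a+1$, with the resonant part exact with primitive in $\Co^{a+b}$) is sound and matches the hypotheses. This is a clean reorganization of the same cancellation, arguably with less machinery.

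Two steps need shoring up. First, the \emph{classical} Cartan homotopy operator does not gain a derivative; to get $\sigma\in\Co^{\beta}_{\loc}$ with $d\sigma=d\theta$ you need either a regularized Poincar\'e operator of order $-1$ or, as in Lemma \ref{Lemma::FuncRevis::DecomposeForms}, $\sigma:=\Green*\codiff d(\chi\theta)$ together with interior elliptic regularity. Second, and more substantively, the parenthetical claim that $\min(\gamma,\beta)$ lies in ``a range in which $F_{*}$ commutes with $d$'' is unjustified: the commutation identity of Lemma \ref{Lemma::FuncRn::PullbackComm} \ref{Item::FuncRn::dComm} requires regularity above $1-\alpha$, whereas $\min(\gamma,\beta)$ may be arbitrarily close to $-\alpha$; and the mollification argument you sketch would require joint continuity of the negative-order composition $\eta_I\circ G$ in both $\eta_I$ and $G$, which is not established. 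The fix is already implicit in your own construction: with the Newtonian-potential choice of $\sigma$ one has $\theta-\sigma=d\xi$ with $\xi\in\Co^{\gamma+1}_{\loc}$ and $\gamma+1>1-\alpha$, so $d\mleft(F_{*}(\theta-\sigma)\mright)=d\mleft(dF_{*}\xi\mright)=0$ by the chain rule at a regularity where it is valid --- which is exactly how the paper handles this step.
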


For the remainder of this section, let $\Manifold$ be a $\ZygSymb^{\alpha+1}$ manifold for some $\alpha>0$.

\begin{defn}\label{Defn::FuncRn::dRegularityLow}
Let $\gamma\in (-\alpha,\alpha]$,  $\beta\in [\gamma,\alpha+1]$, and $\theta\in \ZygSymb^{\gamma}_{\loc}\mleft(\Manifold; \mywedge^{k} T^{*}\Manifold\mright)$.  We say $d\theta$ has regularity $\ZygSymb^{\beta-1}_{\loc}(\Manifold)$,
if for any $p\in \Manifold$, there is a $\ZygSymb^{\alpha+1}_{\loc}$ coordinate system $F:V\xrightarrow{\sim} U$,
where $V$ is a neighborhood of $p$ and $U\subseteq \R^n$ is open, such that $dF_{*}\theta\in \ZygSymb^{\beta-1}_{\loc}\mleft(U; \mywedge^{k+1} T^{*}U\mright)$.
\end{defn}

Proposition  \ref{Prop::FuncRn::dOfLowRegularityForm} shows that 
Definition \ref{Defn::FuncRn::dRegularityLow} is well-defined:  it does not depend on 
the choice of the coordinate system $F$.  However, we do not define the form $d\theta$ itself:
we only define its regularity.  Indeed, if $\beta-1\leq -\alpha$, the space
$\ZygSymb^{\beta-1}_{\loc}\mleft(\Manifold; \mywedge^{k+1} T^{*}\Manifold\mright)$
is not well-defined. 
However, when $\beta-1>-\alpha$, the form $d\theta$ is well-defined, as the next result shows.

\begin{lemma}\label{Lemma::FuncRn::dwelldefined}
Let $\gamma\in (-\alpha,\alpha]$ and $\beta\in (-\alpha+1,\alpha+1]$.
Suppose $\theta\in \ZygSymb^\gamma_{\loc}\mleft(\Manifold;\mywedge^k T^*\Manifold\mright)$
is such that $d\theta$ has regularity $\ZygSymb^{\beta-1}_{\loc}(\Manifold)$.
Then, $d\theta$ is given by a well-defined form in $\ZygSymb^{\beta-1}_{\loc}\mleft(\Manifold;\mywedge^{k+1} T^*\Manifold\mright)$.
I.e., there is a unique form $\tau\in \ZygSymb^{\beta-1}_{\loc}\mleft(\Manifold;\mywedge^{k+1} T^*\Manifold\mright)$ such that in every coordinate system $F:V\xrightarrow{\sim}U$, where $V\subseteq \Manifold$ and $U\subseteq \R^n$ are open, we have $F_{*} \tau = d(F_* \theta)$.  Furthermore,
this form $\tau$ is closed in the sense that in every such coordinate system, we have $d F_{*}\tau =0$.
\end{lemma}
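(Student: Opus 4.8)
The plan is to build the global form $\tau$ by gluing its coordinate representatives, using Proposition \ref{Prop::FuncRn::dOfLowRegularityForm} to guarantee consistency. First I would fix, for each point $p\in\Manifold$, a $\ZygSymb^{\alpha+1}_{\loc}$ coordinate chart $F_p:V_p\xrightarrow{\sim}U_p$ with $p\in V_p$ and, by Definition \ref{Defn::FuncRn::dRegularityLow}, with $d(F_{p*}\theta)\in\ZygSymb^{\beta-1}_{\loc}(U_p;\mywedge^{k+1}T^*U_p)$. On the overlap $V_p\cap V_q$ the transition map $F_q\circ F_p^{-1}$ is a $\ZygSymb^{\alpha+1}_{\loc}$ diffeomorphism between open subsets of $\R^n$, and one checks that $(F_q)_*\theta = (F_q\circ F_p^{-1})_*\bigl((F_p)_*\theta\bigr)$; this is just functoriality of pushforward of forms, valid since everything is at least $C^1$ and $\theta$ has distributional coefficients of positive-or-not-too-negative order $\gamma\in(-\alpha,\alpha]$ so that $(F_p)_*$ makes sense (Remark \ref{Rmk::FuncRn::DefinedOnMfld}). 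Applying the ``moreover'' part of Proposition \ref{Prop::FuncRn::dOfLowRegularityForm} with $F=F_q\circ F_p^{-1}$ shows that $d((F_q)_*\theta)$ is the pushforward under this transition map of $d((F_p)_*\theta)$ as elements of $\ZygSymb^{\beta-1}_{\loc}$; here I use $\beta-1>-\alpha$, which is exactly the hypothesis, so that the space $\ZygSymb^{\beta-1}_{\loc}$ of $(k+1)$-forms is a well-defined object on which pushforward acts (cf. Remark \ref{Rmk::FuncRn::DefinedOnMfld}, noting $\beta-1\in(-\alpha,\alpha]$ since $\beta\in(-\alpha+1,\alpha+1]$). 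Hence the local forms $\tau_p := F_p^{*}\bigl(d(F_{p*}\theta)\bigr)\in\ZygSymb^{\beta-1}_{\loc}(V_p;\mywedge^{k+1}T^*V_p)$ agree on overlaps.

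Next I would invoke the sheaf property of $\ZygSymb^{\beta-1}_{\loc}$ sections of $\mywedge^{k+1}T^*\Manifold$ (which follows from Definition \ref{Defn::FuncRn::dRegularityLow}'s local nature and the definition of $\ZygSymb^s_{\loc}$ on a manifold via charts, Remark \ref{Rmk::FuncRn::DefinedOnMfld}) to glue $\{\tau_p\}$ into a single $\tau\in\ZygSymb^{\beta-1}_{\loc}(\Manifold;\mywedge^{k+1}T^*\Manifold)$. By construction $F_{p*}\tau = d(F_{p*}\theta)$ for every $p$, and one then checks that the same identity $F_*\tau = d(F_*\theta)$ holds in an \emph{arbitrary} $\ZygSymb^{\alpha+1}_{\loc}$ chart $F:V\xrightarrow{\sim}U$: cover $V$ by the $V_p$'s, use that $F\circ F_p^{-1}$ is a $\ZygSymb^{\alpha+1}_{\loc}$ diffeomorphism and apply Proposition \ref{Prop::FuncRn::dOfLowRegularityForm} once more to propagate the identity. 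Uniqueness of $\tau$ is immediate: if $\tau'$ also satisfies $F_*\tau' = d(F_*\theta)$ in every chart, then $F_*(\tau-\tau')=0$ in every chart, so $\tau=\tau'$.

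Finally, for closedness: in any chart $F:V\xrightarrow{\sim}U$ we have $F_*\tau = d(F_*\theta)$, and $F_*\theta\in\ZygSymb^{\gamma}_{\loc}(U;\mywedge^k T^*U)$ while $d(F_*\theta)\in\ZygSymb^{\beta-1}_{\loc}$ with $\beta-1>-\alpha\geq -\alpha$, so $d(F_*\theta)$ is a genuine distributional $(k+1)$-form on $U\subseteq\R^n$. On $\R^n$ the exterior derivative of a distributional form satisfies $d\circ d=0$ (this is a purely distributional identity, verified by testing against smooth compactly supported forms and integrating by parts, or by noting $d$ is a constant-coefficient operator with $d^2=0$ on distributions). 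Hence $d(F_*\tau) = d(d(F_*\theta)) = 0$ in $U$, which is the asserted closedness. I expect the main obstacle to be bookkeeping rather than anything deep: making sure at each step that the orders $\gamma$, $\beta-1$ stay in the admissible range $(-\alpha,\alpha]$ so that pushforward of distributional forms under $\ZygSymb^{\alpha+1}_{\loc}$ diffeomorphisms is defined and behaves functorially, and correctly citing the ``moreover'' clause of Proposition \ref{Prop::FuncRn::dOfLowRegularityForm} to get not just that $d(F_*\theta)$ lands in the right space but that its transformation law under changes of chart is the pushforward — this compatibility is what lets the gluing go through.
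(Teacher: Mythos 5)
Your overall architecture (construct local representatives $d\theta_\phi$, check compatibility on overlaps, glue, then get closedness from $d^2=0$ on distributions) is the same as the paper's, and the gluing, uniqueness, and closedness steps are fine. But the heart of the lemma is the overlap compatibility
\[
(F_q\circ F_p^{-1})_*\bigl(d((F_p)_*\theta)\bigr)=d\bigl((F_q)_*\theta\bigr),
\]
and your justification of it does not work. The ``moreover'' clause of Proposition \ref{Prop::FuncRn::dOfLowRegularityForm} only asserts that, locally, $d(F_*\theta)=d\tau$ for some $k$-form $\tau$ of the \emph{higher} regularity $\ZygSymb^{\beta}_{\loc}$; it says nothing about how $d(F_*\theta)$ transforms under the change of chart, and in particular it does not identify $d((F_q)_*\theta)$ with the pushforward of $d((F_p)_*\theta)$. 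Knowing that both sides lie in the well-defined space $\ZygSymb^{\beta-1}_{\loc}$ (since $\beta-1>-\alpha$) does not make them equal. The obstruction is genuine: $\theta_\phi$ is only $\ZygSymb^{\gamma}_{\loc}$ with $\gamma$ possibly $\leq 1-\alpha$, which is exactly the regime where Lemma \ref{Lemma::FuncRn::PullbackComm} \ref{Item::FuncRn::dComm} (``$d$ commutes with pullback'') is not available, because the products appearing in $\Phi^*d\theta_\phi$ versus $d\Phi^*\theta_\phi$ are not both defined.

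The paper closes this gap with an extra idea you are missing: on a precompact neighborhood, apply Lemma \ref{Lemma::FuncRevis::DecomposeForms} to write $\theta_\phi=\rho+d\xi$ with $\rho\in\ZygSymb^{\beta}_{\loc}$ and $\xi\in\ZygSymb^{\gamma+1}_{\loc}$. Since $\beta>1-\alpha$ and $\gamma+1>1-\alpha$, Lemma \ref{Lemma::FuncRn::PullbackComm} \ref{Item::FuncRn::dComm} applies to $\rho$ and to $\xi$ separately, so $d$ commutes with $(\psi\circ\phi^{-1})^*$ on each piece, and one computes
\[
(\psi\circ\phi^{-1})^*d\theta_\phi=(\psi\circ\phi^{-1})^*d\rho=d(\psi\circ\phi^{-1})^*\rho=d(\psi\circ\phi^{-1})^*\theta_\phi-d^2(\psi\circ\phi^{-1})^*\xi=d\theta_\psi.
\]
If you insert this decomposition step in place of your appeal to the ``moreover'' clause, the rest of your argument goes through.
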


\begin{defn}\label{Defn::FuncRn::dthetaisZyg}
For $\gamma\in (-\alpha,\alpha]$, $\beta\in (-\alpha+1,\alpha+1]$, 
and $\theta \in \ZygSymb^\gamma_{\loc}\mleft(\Manifold;\mywedge^k T^*\Manifold\mright)$,
we write $d\theta\in \ZygSymb^{\beta-1}_{\loc}\mleft(\Manifold; \mywedge^{k+1} T^*\Manifold\mright)$ to mean $d\theta$ has regularity $\ZygSymb^{\beta-1}_{\loc}(\Manifold)$,
and we identify $d\theta$ with the unique closed form 
in $\ZygSymb^{\beta-1}_{\loc}\mleft(\Manifold; \mywedge^{k+1} T^*\Manifold\mright)$
given in Lemma \ref{Lemma::FuncRn::dwelldefined}.
\end{defn}



\begin{conv}\label{Conv::C^+Space}
For $\beta\in\R$, we say $\theta\in\Co^{\beta^+}_\loc\mleft(\Manifold;\mywedge^kT^*\Manifold\mright)$, if $\theta\in\Co^{\beta+\eps}_\loc\mleft(\Manifold;\mywedge^kT^*\Manifold\mright)$ for some $\eps>0$.
\end{conv}


    
    \subsection{Zygmund-H\"older spaces with respect to vector fields}
    Let $\Manifold$ be a $\ZygSymb^{\alpha+1}$ manifold for some $\alpha>0$, and let $X_1,\ldots, X_q\in \ZygSymb^{\alpha}_{\loc}(\Manifold; T\Manifold)$ be $\ZygSymb^{\alpha}_{\loc}$ vector fields which span the tangent
space to $\Manifold$ at every point.
Since $\Manifold$ is only a $\ZygSymb^{\alpha+1}$ manifold, it does not make sense to talk about whether a function on $\Manifold$ has regularity higher than $\ZygSymb^{\alpha+1}_{\loc}$.
However, we can make sense of higher regularity with respect to the vector fields $X_1,\ldots, X_q$.

\begin{defn}[$\Co^\beta_{X,\loc}$-functions]\label{Defn::FuncVF::Funregularity}
For $\beta>-\alpha$, we let $\ZygSymb^{\beta}_{X,\loc}(\Manifold)$ be the space of those functions $\Manifold \rightarrow \R$ 
defined recursively by:
\begin{itemize}[parsep=-0.3ex]
    \item If $\beta\in (-\alpha,1]$, $\ZygSymb^{\beta}_{X,\loc}(\Manifold):=\ZygSymb^{\beta}_{\loc}(\Manifold)$.
    \item If $\beta>1$,  $\ZygSymb^{\beta}_{X,\loc}(\Manifold)$ consists of those $f\in \ZygSymb^{\beta-1}_{X,\loc}(\Manifold)\bigcap C^1_{\loc}(\Manifold)$ such that 
    $\Lie{X_j} f = X_j f \in \ZygSymb^{\beta-1}_{X,\loc}(\Manifold)$, for $1\leq j\leq q$.
\end{itemize}
\end{defn}

We can make a similar definition for vector fields and forms, so long as $\alpha>1/2$:
\begin{defn}[$\Co^\beta_{X,\loc}$-vector fields]\label{Defn::FuncVF::VFregularity}
Suppose $\alpha>1/2$.  For $\beta>-\alpha$, we let $\ZygSymb^{\beta}_{X,\loc}(\Manifold; T\Manifold)$ be
be the space of those vector fields on $\Manifold$ defined recursively by:
\begin{itemize}[parsep=-0.3ex]
    \item If $\beta\in (-\alpha,\frac12]$, $\ZygSymb^{\beta}_{X,\loc}(\Manifold;T\Manifold)=\ZygSymb^{\beta}_{\loc}(\Manifold;T\Manifold)$.
    \item If $\beta>\frac12$,  $\ZygSymb^{\beta}_{X,\loc}(\Manifold; T\Manifold)$ consists
    of those $Y\in \ZygSymb^{\beta-1}_{X,\loc}(\Manifold;T\Manifold)\bigcap \ZygSymb^\frac12_{\loc}(\Manifold;T\Manifold)$ such that
    $\Lie{X_j}Y = [X_j,Y]\in \ZygSymb^{\beta-1}_{X,\loc}(\Manifold; T\Manifold)$, for $1\leq j\leq q$.
\end{itemize}
\end{defn}

\begin{defn}[$\Co^\beta_{X,\loc}$-forms]\label{Defn::FuncVf::formregularity}
Suppose $\alpha>1/2$ and $k\geq 1$.  For $\beta>-\alpha$, we let $\ZygSymb^{\beta}_{X,\loc}\mleft(\Manifold; \mywedge^k T^{*}\Manifold\mright)$
be the space of those $k$-forms on $\Manifold$ defined recursively by:
\begin{itemize}[parsep=-0.3ex]
    \item If $\beta\in (-\alpha,\frac12]$,
    $\ZygSymb^{\beta}_{X,\loc}\mleft(\Manifold; \mywedge^kT^*\Manifold\mright)=\ZygSymb^{\beta}_{\loc}\mleft(\Manifold; \mywedge^kT^*\Manifold\mright)$.
    
    \item If $\beta>\frac12$,  $\ZygSymb^{\beta}_{X,\loc}\mleft(\Manifold; \mywedge^kT^*\Manifold\mright)$ consists of those
    $\theta\in \ZygSymb^{\beta-1}_{X,\loc}\mleft(\Manifold; \mywedge^kT^*\Manifold\mright)\bigcap \ZygSymb^\frac12_{\loc}\mleft(\Manifold; \mywedge^kT^*\Manifold\mright)$ such that
    $\Lie{X_j} \theta= (d\IntProd{X_j} + \IntProd{X_j} d) \theta\in \ZygSymb^{\beta-1}_{X,\loc}\mleft(\Manifold; \mywedge^kT^*\Manifold\mright)$,
    for $1\leq j\leq q$.
\end{itemize}
\end{defn}

When $\alpha\in (0,1/2]$, we cannot use Definition \ref{Defn::FuncVf::formregularity}.  However, we can make an appropriate analog of Definition \ref{Defn::FuncRn::dRegularityLow}
with respect to the vector fields $X_1,\ldots, X_q$:
\begin{defn}[$\Co^\beta_{X,\loc}$ for differentials]\label{Defn::FuncVf::RegularityofForms}
Suppose $\alpha>0$.
Let $\beta>-\alpha$, $1\le k\le n-1$ and let $\theta\in \ZygSymb^{(-\alpha)^+}_{\loc}\mleft(\Manifold; \mywedge^kT^*\Manifold\mright)$.
We say $d\theta$ has regularity $\ZygSymb^{\beta-1}_{X,\loc}(\Manifold)$ if:
\begin{itemize}[parsep=-0.3ex]
\item If $\beta\in (-\alpha,1]$, we assume $d\theta$ has regularity
    $\ZygSymb^{\beta-1}_{\loc}(\Manifold)$. 
    \item\label{Item::FuncVf::Recursion} If $\beta\in(1,2]$, we assume $d\theta\in \Co^{0^+}_{\loc}\mleft(\Manifold; \mywedge^{k+1}T^*\Manifold\mright)$ and $\Lie{X_j} d\theta = d\IntProd{X_j} d\theta$ has regularity $\ZygSymb^{\beta-2}_{\loc}(\Manifold)$,
    for $1\leq j\leq q$.
    \item If $\beta>2$,  we assume  $d\theta$ has regularity $\ZygSymb^{\beta-2}_{X,\loc}(\Manifold)$,
    and $\Lie{X_j} d\theta = d\IntProd{X_j} d\theta$ has regularity $\ZygSymb^{\beta-2}_{X,\loc}(\Manifold)$,
    for $1\leq j\leq q$.
\end{itemize}
\end{defn}


\begin{rmk}\label{Rmk::FuncVF::Independofalpha}
Note that if $\beta\ge0$, Definitions \ref{Defn::FuncVF::Funregularity} and \ref{Defn::FuncVf::RegularityofForms} do not depend on $\alpha$. 
Similarly, when $\beta\ge-\frac12$, Definitions \ref{Defn::FuncVF::VFregularity} and {\ref{Defn::FuncVf::formregularity}} do not depend on $\alpha$. 

\end{rmk}




{
}
    
\section{The Main Result}
\begin{thm}\label{Thm::TheMainResult}
Let $\alpha,\beta>0$, and let $X_1,\ldots, X_q$ be $\ZygSymb^{\alpha}_{\loc}$
vector fields on a $\ZygSymb^{\alpha+1}$ manifold $\Manifold$ of dimension $n$, which span the tangent space
at every point.  Fix a point $p\in \Manifold$, and re-order $X_1,\ldots, X_q$
so that $X_1(p),\ldots, X_n(p)$ form a basis for $T_p\Manifold$.
Let $\lambda^1,\ldots, \lambda^n$ be the dual basis for $X_1,\ldots, X_n$, defined
on a neighborhood of $p$.  Then, the following are equivalent:
\begin{enumerate}[label=(\alph*),series=maintheoremenumeration]
    \item\label{Item::TheMainResult::ExistChart} There is a neighborhood $U\subseteq \Manifold$ of $p$ and a $\ZygSymb^{\alpha+1}_{\loc}$ diffeomorphism $\Phi:\Ball^n\xrightarrow{\sim} U$,
    such that $\Phi(0)=p$ and $\Phi^{*}X_1,\ldots, \Phi^{*} X_q\in \ZygSymb^{\beta}(\Ball^n;T\Ball^n)$.
    \item\label{Item::TheMainResult::dForm} There is a neighborhood $U$ of $p$ such that for $1\leq j\leq n$,
    $d\lambda^j$ has regularity $\ZygSymb^{\beta-1}_{X,\loc}(U)$, and 
    for $1\leq j\leq n$, $n+1\leq k\leq q$, $\langle \lambda^j, X_k\rangle \in \ZygSymb^{\beta}_{X,\loc}(U)$.
\end{enumerate}
If $\alpha>1/2$, then in addition we have the following equivalent conditions:
\begin{enumerate}[resume*=maintheoremenumeration]
    \item\label{Item::TheMainResult::1Form} There is a neighborhood $U$ of $p$ such that
    for $1\leq j\leq n$, $\lambda^j\in \ZygSymb^{\beta}_{X,\loc}\mleft(U, T^{*}U\mright)$, and for $1\leq j\leq n$, $n+1\leq k\leq q$, $\langle \lambda^j, X_k\rangle \in \ZygSymb^{\beta}_{X,\loc}(U)$.
    
    \item\label{Item::TheMainResult::VF} There is a neighborhood $U$ of $p$ such that for $1\leq j\leq q$,
    $X_j\in \ZygSymb^{\beta}_{X,\loc}\mleft(U; TU\mright)$.
\end{enumerate}
\end{thm}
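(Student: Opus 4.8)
The plan is to prove the cycle of implications $\ref{Item::TheMainResult::VF}\Rightarrow\ref{Item::TheMainResult::1Form}\Rightarrow\ref{Item::TheMainResult::dForm}\Rightarrow\ref{Item::TheMainResult::ExistChart}\Rightarrow\ref{Item::TheMainResult::VF}$ when $\alpha>1/2$, and the shorter cycle $\ref{Item::TheMainResult::dForm}\Rightarrow\ref{Item::TheMainResult::ExistChart}\Rightarrow\ref{Item::TheMainResult::dForm}$ together with the observation that $\ref{Item::TheMainResult::dForm}$ makes sense for all $\alpha>0$. The implications among \ref{Item::TheMainResult::VF}, \ref{Item::TheMainResult::1Form}, and \ref{Item::TheMainResult::dForm} are the "easy" algebraic ones, to be done by unwinding the definitions of the $\ZygSymb^\beta_{X,\loc}$ spaces: from $X_j\in\ZygSymb^\beta_{X,\loc}$ one gets, via $\langle\lambda^j,X_k\rangle$ the Kronecker-type pairings and the identity $\Lie{X_i}\lambda^j = -\sum_k\langle\lambda^j,[X_i,X_k]\rangle\lambda^k + (\text{terms involving }\Lie{X_i}\langle\lambda^j,X_k\rangle)$, that the $\lambda^j$ satisfy the recursive conditions defining $\ZygSymb^\beta_{X,\loc}$ one-forms; and $d\lambda^j$ regularity follows from $\lambda^j\in\ZygSymb^\beta_{X,\loc}$ using Cartan's formula $\Lie{X_i}=d\IntProd{X_i}+\IntProd{X_i}d$ together with the duality that already controls $\IntProd{X_i}\lambda^j$. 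Conversely $\ref{Item::TheMainResult::1Form}\Rightarrow\ref{Item::TheMainResult::VF}$ is obtained by expanding $X_j=\sum_{i=1}^n\langle\lambda^i,X_j\rangle X_i$ on a small neighborhood (after shrinking so that $X_1,\dots,X_n$ remain a frame) and applying Proposition \ref{Prop::FuncRn::LieOnManiofold} plus Lemma \ref{Lemma::FuncSpace::Product} to propagate regularity through the recursion.

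The analytic heart is $\ref{Item::TheMainResult::dForm}\Rightarrow\ref{Item::TheMainResult::ExistChart}$. Here I would use Malgrange's approach, as advertised in the introduction. Work in an arbitrary fixed $\ZygSymb^{\alpha+1}_\loc$ coordinate chart near $p$; there the $\lambda^j$ are $\ZygSymb^\alpha_\loc$ one-forms, and the hypothesis says each $d\lambda^j$ has regularity $\ZygSymb^{\beta-1}_{X,\loc}$, hence (by Lemma \ref{Lemma::FuncRn::dwelldefined} when $\beta>1-\alpha$, which we may arrange, and by the $\ZygSymb^{\beta-1}_\loc$ definition otherwise) is a genuine closed form of that regularity. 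Since $d\lambda^j$ is closed and we are on a ball, write $d\lambda^j = d\tau^j$ with $\tau^j$ one smoother order (Proposition \ref{Prop::FuncRn::dOfLowRegularityForm}); then $\mu^j:=\lambda^j-\tau^j$ is closed, so $\mu^j = d\psi^j$ for functions $\psi^j$. The map $\Phi^{-1}:=(\psi^1,\dots,\psi^n)$ (arranged to vanish at $p$) is then a candidate chart; one checks its differential at $p$ is invertible because $d\psi^j(p)=\lambda^j(p)$ form a dual basis, so after shrinking it is a $\ZygSymb^{\alpha+1}_\loc$ diffeomorphism. The content is a bootstrapping argument: one shows $\Phi^*X_j\in\ZygSymb^{\beta}$ by inducting on $\beta$ in steps of size $1$ (resp.\ using the $\langle\lambda^j,X_k\rangle\in\ZygSymb^\beta_{X,\loc}$ hypothesis to handle the redundant vector fields $k>n$), at each stage using that the already-constructed chart pushes the $\ZygSymb^{\cdot}_{X,\loc}$ regularity of $d\lambda^j$ and of the structure functions to genuine Euclidean Zygmund regularity, and that elliptic regularity for $d$ (i.e.\ solving $d\tau=\omega$ with a gain of one derivative) upgrades the one-forms correspondingly. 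The reverse $\ref{Item::TheMainResult::ExistChart}\Rightarrow\ref{Item::TheMainResult::dForm}$ is then essentially coordinate-invariance of the $\ZygSymb^\beta_{X,\loc}$ notions (Definition \ref{Defn::FuncRn::dRegularityLow} and Proposition \ref{Prop::FuncRn::dOfLowRegularityForm}), pulled back through $\Phi$: in the chart given by $\Phi$ the $\Phi^*X_j$ are $\ZygSymb^\beta$, so their dual one-forms and the pairings are manifestly $\ZygSymb^\beta_{X,\loc}$ there, and this is a chart-independent statement.

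I expect the main obstacle to be the low-regularity bookkeeping in the bootstrap, specifically at the base of the induction when $\alpha\le 1/2$ and when $\beta-1\le-\alpha$, where neither $d\lambda^j$ nor the Lie derivatives $\Lie{X_j}$ of one-forms are a priori defined as honest distributions, so one must work throughout with the "regularity of $d\theta$" formalism of Definitions \ref{Defn::FuncRn::dRegularityLow} and \ref{Defn::FuncVf::RegularityofForms} rather than with the forms themselves, and verify that the Malgrange-type potential-theoretic step (solving $d\tau=d\lambda^j$ with a one-derivative gain) is compatible with this formalism. A secondary technical point is ensuring that the choice of structure functions $c_{i,j}^k$ — equivalently the choice of the dual frame $\lambda^j$ — can be made consistently so that all the recursive regularity conditions close up simultaneously; this is where the hypothesis is phrased as "the $c_{i,j}^k$ can be chosen" and where one must be careful that shrinking the neighborhood $U$ at each inductive step does not cost regularity. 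Everything else — the product and composition estimates, the coordinate-invariance lemmas, the solvability of $d\tau=\omega$ on a ball — is available from Section \ref{Section::Func} or is classical.
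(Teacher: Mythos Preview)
Your outline of the algebraic implications and of \ref{Item::TheMainResult::ExistChart}$\Rightarrow$\ref{Item::TheMainResult::dForm} is broadly in line with the paper, but there is a genuine gap in your construction of the chart for \ref{Item::TheMainResult::dForm}$\Rightarrow$\ref{Item::TheMainResult::ExistChart}. Writing $\lambda^j=\tau^j+d\psi^j$ with $\tau^j\in\Co^\beta$ and taking $\Phi^{-1}=(\psi^1,\dots,\psi^n)$ does \emph{not} gain regularity: since $d\psi^j=\lambda^j-\tau^j\in\Co^\alpha$, the map $\Phi$ is only $\Co^{\alpha+1}$, so in that chart $\Phi^*\lambda^j = dy^j + \Phi^*\tau^j$, and pulling a $\Co^\beta$ one-form back through a $\Co^{\alpha+1}$ diffeomorphism yields only $\Co^{\min(\alpha,\beta)}=\Co^\alpha$ in the interesting case $\beta>\alpha$. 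Iterating does not help --- the new frame is again $\Co^\alpha$, its differential again $\Co^{\beta-1}$ (by Proposition~\ref{Prop::FuncRn::dOfLowRegularityForm}), the new potentials again $\Co^{\alpha+1}$, and you are back where you started. Your ``elliptic regularity for $d$'' is just the Poincar\'e-type step that produced $\tau^j$; it cannot by itself manufacture a chart in which the $\lambda^j$ become $\Co^\beta$.

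What the paper actually does under the name ``Malgrange method'' is different. The chart $F$ is determined by solving the second-order divergence-form elliptic PDE \eqref{Eqn::Key::PDEforR} for $R=F-\id$, taken with respect to the metric $g=\sum_i\lambda^i\cdot\lambda^i$ --- a harmonic-coordinate choice in the spirit of DeTurck--Kazdan. The pushforward coefficient matrix $B$ in $F_*\lambda=(I+B)\,dy$ then satisfies the nonlinear elliptic system \eqref{Eqn::Key::PDEforB}; combining this with $d\eta^j\in\Co^{\beta-1}$ (which \emph{does} transfer under $\Co^{\alpha+1}$ diffeomorphisms, Proposition~\ref{Prop::FuncRn::dOfLowRegularityForm}), a contraction-mapping argument run simultaneously in $\Co^\alpha$ and $\Co^\beta$ (Proposition~\ref{Prop::Key::RegularityPDE}) forces $B\in\Co^\beta$. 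It is this second elliptic system --- not $d$ --- that supplies the gain from $\Co^\alpha$ to $\Co^\beta$. The induction on $\lceil\beta\rceil$ you mention is indeed how the paper organizes \ref{Item::TheMainResult::dForm}$\Rightarrow$\ref{Item::TheMainResult::ExistChart}, but each step invokes this PDE construction (packaged as Corollary~\ref{Cor::Key::BasicCaseforMainThm}), not the Poincar\'e lemma.
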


\begin{rmk}
    \ref{Item::TheMainResult::ExistChart} $\Leftrightarrow$ \ref{Item::TheMainResult::VF} is the conclusion alluded to in Section \ref{Section::Intro::ResultsInTheSmoothSetting}.
\end{rmk}


\section{Classical Function Spaces, Revisited}\label{Section::FuncRevisited}

In this section, we prove the basic results we require about Zygmund-H\"older spaces.  In particular, we prove the results from Section \ref{Section::Func}.  We begin by discussing the main
result we need to help understand 
the various objects under consideration on $\ZygSymb^{\alpha+1}$-manifolds.

\begin{lemma}\label{Lemma::FuncRevis::PushForwardFuncSpaces}
    Fix $\alpha>0$,
    let $U,V\subseteq \R^n$ be open sets, and let 
    $\Phi:U\xrightarrow{\sim} V$ be a $\ZygSymb^{\alpha+1}_{\loc}$ diffeomorphism.
    \begin{enumerate}[parsep=-0.3ex,label=(\roman*)]
        \item\label{Item::FuncRevis::PullBackFunc} For $\beta\in (-\alpha,\alpha+1]$ and
        $f\in \ZygSymb^{\beta}_{\loc}(V)$, 
        $f\circ \Phi$
        is defined as a distribution and we have $f\circ \Phi\in \ZygSymb^{\beta}_{\loc}(U)$.
        
        \item\label{Item::FuncRevis::PullBackVect}  For $\beta\in (-\alpha,\alpha]$ and $X\in \ZygSymb^{\beta}_{\loc}(V; TV)$, $\Phi^{*} X$
        is defined and 
        $\Phi^{*}X\in \ZygSymb^{\beta}_{\loc}(U; TU)$.
        
        \item\label{Item::FuncRevis::PullBackForm} For $\beta\in (-\alpha,\alpha]$ and
        $\omega\in \ZygSymb^{\beta}_{\loc}\mleft(V; \mywedge^{k} T^{*}V\mright)$, then $\Phi^{*}\omega$ is defined
        and $\Phi^{*}\omega\in \ZygSymb^{\beta}_{\loc}\mleft(U; \mywedge^{k} T^{*}U\mright)$
    \end{enumerate}
\end{lemma}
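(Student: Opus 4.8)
The plan is to reduce everything to part \ref{Item::FuncRevis::PullBackFunc}, since the pullback of a vector field or form is built out of compositions with $\Phi$ and multiplications by the entries of the Jacobian matrix $D\Phi$ (respectively its inverse $(D\Phi)^{-1}$), and these entries lie in $\ZygSymb^{\alpha}_{\loc}(U)$ because $\Phi\in\ZygSymb^{\alpha+1}_{\loc}$. So the heart of the matter is: for a $\ZygSymb^{\alpha+1}_{\loc}$ diffeomorphism $\Phi$ and $\beta\in(-\alpha,\alpha+1]$, the map $f\mapsto f\circ\Phi$ is well-defined on distributions and bounded $\ZygSymb^\beta_{\loc}(V)\to\ZygSymb^\beta_{\loc}(U)$. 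Since the statement is local, I would first cover $V$ by small balls and work on each one, reducing to the case $U,V$ bounded and $\Phi\in\ZygSymb^{\alpha+1}(U;V)$ with $\ZygSymb^{\alpha+1}$ inverse.

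For part \ref{Item::FuncRevis::PullBackFunc} I would split into three regimes according to the sign of $\beta$. When $\beta\in(0,\alpha+1]$: if $\beta\le 1$ use the Hölder characterization in Remark \ref{Rmk::FuncSpace::CharofZyg}\ref{Item::FuncSpace::CharofZyg::01} (or the second-difference characterization \ref{Item::FuncSpace::CharofZyg::02} for $\beta\in(1,2)$) and the fact that $\Phi$ is bi-Lipschitz (even $\ZygSymb^{\alpha+1}$), so the relevant seminorms are preserved up to constants; for $\beta>1$ differentiate via the chain rule, writing $\partial_{x_i}(f\circ\Phi)=\sum_k\big((\partial_{y_k}f)\circ\Phi\big)\,\partial_{x_i}\Phi^k$, and induct on $\lfloor\beta\rfloor$ using Remark \ref{Rmk::FuncSpace::CharofZyg}\ref{Item::FuncSpace::CharofZyg::>1}: each factor $\partial_{y_k}f\circ\Phi\in\ZygSymb^{\beta-1}_{\loc}$ by the inductive hypothesis (legitimate since $\beta-1>-\alpha$ still, and $\beta-1\le\alpha$), each $\partial_{x_i}\Phi^k\in\ZygSymb^\alpha_{\loc}$, and the product lands in $\ZygSymb^{\beta-1}_{\loc}$ by Lemma \ref{Lemma::FuncSpace::Product} (applicable because $(\beta-1)+\alpha>0$ and $\alpha\ge\beta-1$ when $\beta\le\alpha+1$). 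When $\beta=0$ I would treat $\ZygSymb^0$ by bootstrapping from $\beta>0$ via duality or by a direct Littlewood–Paley/interpolation argument. For $\beta\in(-\alpha,0)$, use the characterization in Remark \ref{Rmk::FuncSpace::CharofZyg}\ref{Item::FuncSpace::CharofZyg::<0}: write $f=g_0+\sum_j\partial_{y_j}g_j$ with $g_j\in\ZygSymb^{\beta+1}_{\loc}(V)$, and define $f\circ\Phi$ by the only consistent formula, namely $g_0\circ\Phi+\sum_j$ (the distributional derivatives, via the inverse Jacobian, of the $g_j\circ\Phi$); one checks this is independent of the decomposition by density (smooth $f$ are dense and the formula agrees with genuine composition there), and that the resulting object lies in $\ZygSymb^\beta_{\loc}(U)$ using the already-established case $\beta+1\in(-\alpha+1,1]\subseteq(-\alpha,\alpha+1]$ for $g_j\circ\Phi$, the chain rule for the distributional derivative (entries of $(D\Phi)^{-1}$ are $\ZygSymb^\alpha_{\loc}$), and Lemma \ref{Lemma::FuncSpace::Product} once more.

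For parts \ref{Item::FuncRevis::PullBackVect} and \ref{Item::FuncRevis::PullBackForm}, with $\beta\in(-\alpha,\alpha]$: write $X=\sum_j a_j\partial_{y_j}$ with $a_j\in\ZygSymb^\beta_{\loc}(V)$; then $\Phi^{*}X=\sum_{i,j}\big((a_j\circ\Phi)\cdot(\partial_{y_j}(\Phi^{-1})^i\circ\Phi)\big)\partial_{x_i}$ (equivalently $(D\Phi)^{-1}$ evaluated appropriately). Each $a_j\circ\Phi\in\ZygSymb^\beta_{\loc}(U)$ by part \ref{Item::FuncRevis::PullBackFunc}; each component of the transformed Jacobian is a composition of a $\ZygSymb^\alpha_{\loc}$ function with $\Phi$, hence $\ZygSymb^\alpha_{\loc}(U)$ again by part \ref{Item::FuncRevis::PullBackFunc}; and the product lies in $\ZygSymb^\beta_{\loc}(U)$ by Lemma \ref{Lemma::FuncSpace::Product}, whose hypotheses $\beta+\alpha>0$, $\alpha\ge\beta$ hold exactly on the stated range. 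Forms are identical, using instead the minors of $D\Phi$ (which are products of $\ZygSymb^\alpha_{\loc}$ functions, hence $\ZygSymb^\alpha_{\loc}$ by the product law), so $\Phi^{*}\omega$ has coefficients that are finite sums of products of a $\ZygSymb^\beta_{\loc}$ composition with $\ZygSymb^\alpha_{\loc}$ factors.

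The main obstacle I anticipate is rigorously \emph{defining} $f\circ\Phi$ as a distribution for negative $\beta$ and checking consistency — the composition is not a priori meaningful, and one must verify the definition is independent of the chosen representation $f=g_0+\sum\partial_j g_j$ and agrees with the classical notion under mollification; establishing this robustly (and the analogous well-definedness of $\Phi^{*}X$, $\Phi^{*}\omega$ for $\beta\le 0$) is where the real care lies, whereas the positive-regularity cases and all the product estimates are routine given Lemma \ref{Lemma::FuncSpace::Product} and Remark \ref{Rmk::FuncSpace::CharofZyg}. A secondary technical point is handling the endpoint $\beta=0$ (and $\beta=\alpha$, $\beta=\alpha+1$), where some Besov characterizations degenerate and one leans on the product law and interpolation rather than pointwise difference conditions.
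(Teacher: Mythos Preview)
Your approach is essentially the paper's: handle $\beta>0$ classically, then for $\beta\le 0$ use Remark \ref{Rmk::FuncSpace::CharofZyg}\ref{Item::FuncSpace::CharofZyg::<0} to write $f=g_0+\sum_j\partial_{y^j}g_j$ with $g_j\in\ZygSymb^{\beta+1}$, define $f\circ\Phi$ via the chain-rule formula, and reduce parts \ref{Item::FuncRevis::PullBackVect}--\ref{Item::FuncRevis::PullBackForm} to \ref{Item::FuncRevis::PullBackFunc} plus Lemma \ref{Lemma::FuncSpace::Product}.

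There is one small gap. In your negative-$\beta$ step you invoke ``the already-established case $\beta+1\in(-\alpha+1,1]$'' for the $g_j\circ\Phi$, but when $\alpha>1$ this interval contains nonpositive numbers, so a single application does not land you back in the positive-$\beta$ regime. The paper handles this by \emph{induction}: it proves the result for $\beta$ in successive unit-length intervals stepping down from $\beta>0$, so that at each stage $\beta+1$ lies in the previously established range. Your argument becomes correct once you phrase it inductively in the same way (or, equivalently, iterate the decomposition $\lceil -\beta\rceil$ times). The separate treatment you propose for $\beta=0$ is then unnecessary---it is absorbed into the first inductive step. Your density remark for well-definedness is fine in spirit (mollify the $g_j$ and pass to the limit), though the paper simply asserts this is ``easy to see''.
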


\begin{proof}
    We give $U$ coordinates $x^1,\ldots, x^n$ and $V$ coordinates
    $y^1,\ldots, y^n$.
    We begin with \ref{Item::FuncRevis::PullBackFunc}.  For $\beta>0$,
    $f$ is a continuous function and the regularity
    of $f\circ \Phi$ is classical.  See \cite[Lemma 5.15]{StovallStreetII}
    for a discussion of this classical case.
    For nonpositive $\beta$, we proceed by induction.
    We prove the result for $\beta\in (-l+1,-l]\cap (-\alpha,\alpha+1]$,
    for $l= -1,0,1,2,\ldots$.  The base case, $l=-1$, follows
    from the above discussion for $\beta>0$.  For $l\in \N$,
    we assume the result for $l-1$ and prove it for $l$.
    
    Fix a point $y_0\in V$, and let $B_{y_0}\subseteq V$ be an open ball
    centered at $y_0$ with $\overline{B_{y_0}}\subseteq V$.
    By Remark \ref{Rmk::FuncSpace::CharofZyg} \ref{Item::FuncSpace::CharofZyg::<0}, 
     we may write
    $f=g_0+\sum_{j=1}^n \partial_{y^j} g_j$, where
    $g_0,\ldots, g_n\in \ZygSymb^{\beta+1}(B_{y_0})$.
    Letting $\Psi=(\psi_1,\ldots \psi_n):=\Phi^{-1}$, we have
    \begin{equation*}
        (\partial_{y^j} g_j)\circ \Phi = \sum_{k=1}^n \mleft( \partial_{x^j} (g_j\circ \Phi)\mright) \mleft(\mleft(\partial_{y^j} \psi_k\mright)\circ \Phi\mright).
    \end{equation*}
    By the already proved case $\beta=\alpha$, we have
    $\mleft(\partial_{y^k} \psi_k\mright)\circ \Phi\in \ZygSymb^{\alpha}_{\loc}(\Psi(B_{y_0}))$ and by the inductive
    hypothesis $\partial_{x^k} (g_j\circ \Phi)\in \ZygSymb^{\beta}_{\loc}( \Psi(B_{y_0}))$.
    Also, by the inductive hypothesis $g_0\circ \Phi \in \ZygSymb^{\beta+1}_{\loc}(\Psi(B_{y_0}))\subseteq \ZygSymb^{\beta}_{\loc}(\Psi(B_{y_0}))$.  Using Lemma \ref{Lemma::FuncSpace::Product}, we conclude
    $f\circ \Phi= g_0\circ\Phi + \sum (\partial_{y^j} g_j) \circ \Phi\in \ZygSymb^{\beta}_{\loc}(\Psi(B_{y_0}))$.  It is easy to see
    that the distribution obtained in this way does not depend on
    the choice of $g_0,\ldots, g_n$ with $f=g_0+\sum_{j=1}^n \partial_{y^j} g_j$.
    Since $y_0\in V$ was arbitrary, this completes the proof of \ref{Item::FuncRevis::PullBackFunc}.
    
    {For \ref{Item::FuncRevis::PullBackVect}, write
    $X=\sum_{j=1}^n a_j \partial_{y^j}$, where $a_j\in \ZygSymb^{\beta}_{\loc}(V)$.  Then, if $\Psi=(\psi_1,\ldots, \psi_n)=\Phi^{-1}$,
    $$\Phi^{*} X = \sum_{k=1}^n \sum_{j=1}^n \mleft( a_j \circ \Phi\mright)\mleft(( \partial_{y^j} \psi_k)\circ \Phi\mright) \partial_{x^k}.$$
    By  \ref{Item::FuncRevis::PullBackFunc}, 
    $a_j\circ \Phi\in \ZygSymb^{\beta}_{\loc}(U)$,
    and $(\partial_{x^j}\phi_k)\circ \Phi\in \ZygSymb^{\alpha}_{\loc}(U)$.  Since $\beta\in  (-\alpha,\alpha]$, by hypothesis,
    Lemma \ref{Lemma::FuncSpace::Product} implies
    $\Phi^{*}X\in \ZygSymb^{\beta}_{\loc}(U; TU)$.}
    
    The proof of \ref{Item::FuncRevis::PullBackForm} is very
    similar to the proof of \ref{Item::FuncRevis::PullBackVect},
    and follows easily by combining \ref{Item::FuncRevis::PullBackFunc}
    with Lemma \ref{Lemma::FuncSpace::Product}.  We leave the details to the reader.
\end{proof}

Lemma \ref{Lemma::FuncRevis::PushForwardFuncSpaces} 
establishes Remark \ref{Rmk::FuncRn::DefinedOnMfld}:
On a $\ZygSymb^{\alpha+1}$ manifold $\Manifold$, it makes sense to talk about functions
in $\ZygSymb^s_{\loc}(\Manifold)$ for $s\in (-\alpha,\alpha+1]$, vector fields in $\ZygSymb^s_{\loc}(\Manifold; T\Manifold)$ for $s\in (-\alpha,\alpha]$,
and $k$-forms in $\ZygSymb^s_{\loc}\mleft(\Manifold;\mywedge^k T^{*}\Manifold\mright)$ for $s\in (-\alpha,\alpha]$.
This is because these properties are invariant under $\ZygSymb^{\alpha+1}_{\loc}$ diffeomorphisms.  By a similar proof,
one can show that the more general concept of a $\ZygSymb^{\beta}_{\loc}$ tensor
is well-defined for $\beta\in (-\alpha,\alpha]$, though the only
tensors we use in this paper are vector fields and forms.

For completeness we put the definition of functions, vector fields and differential forms on manifold as below,
which is the obvious analog of 
the standard definitions (see, for example, \cite[Definition 6.3.3]{Hormander}):

\begin{defn}\label{Defn::FuncRevis::ObjonManifolds}
Let $\alpha>0$, $\beta\in(-\alpha,\alpha]$ and $\gamma\in(-\alpha,\alpha+1]$. Let  $\Manifold$ be a $n$-dimensional $\Co^{\alpha+1}$-manifold equipped with
the maximal $\Co^{\alpha+1}$-atlas $\As=\{\phi:U_\phi\subseteq\Manifold\to\R^n\}$. {Namely, each $\phi\in\As$ is a homeomorphism $\phi:U_\phi\xrightarrow{\sim}\phi(U_\phi)\subseteq\R^n$; $\phi\circ\psi^{-1}\in\Co^{\alpha+1}_\loc(\psi(U_\psi\cap U_\phi);\R^n)$ whenever $\phi,\psi\in\As$ satisfy $U_\psi\cap U_\phi\neq \varnothing$; and $\As$ is maximal with these properties.}
\begin{itemize}[parsep=-0.3ex]
    \item A $\Co^\gamma_\loc$-function is a collection $f=\{f_\phi\in\Co^\gamma_\loc(\phi(U_\phi))\}_{\phi\in\As}$, such that 
    \begin{equation*}\label{Eqn::FuncRevis::TransitionFun}
        f_\phi=f_\psi\circ(\psi\circ\phi^{-1}),\quad\text{on }\phi(U_\phi\cap U_\psi)\text{ whenever }U_\phi\cap U_\psi\neq\varnothing.
    \end{equation*}
    \item A $\Co^\beta_\loc$-vector field is a collection $X=\{X_\phi\in\Co^\beta_\loc(\phi(U_\phi);T\R^n)\}_{\phi\in\As}$, such that
    \begin{equation}\label{Eqn::FuncRevis::TransitiofVF}
        X_\phi=(\psi\circ\phi^{-1})^*X_\psi,\quad\text{on }\phi(U_\phi\cap U_\psi)\text{ whenever }U_\phi\cap U_\psi\neq\varnothing.
    \end{equation}
    \item Let $1\le k\le n$. A $\Co^\beta_\loc$ $k$-form is a collection $\omega=\mleft\{\omega_\phi\in\Co^\beta_\loc\mleft(\phi(U_\phi);\mywedge^kT^*\R^n\mright)\mright\}_{\phi\in\As}$, such that
    \begin{equation}\label{Eqn::FuncRevis::TransitionForms}
        \omega_\psi=(\psi\circ\phi^{-1})^*\omega_\phi,\quad\text{on }\phi(U_\phi\cap U_\psi)\text{ whenever }U_\phi\cap U_\psi\neq\varnothing.
    \end{equation}

\end{itemize}
\end{defn}

\begin{rmk}\label{Rmk::FuncRevis::ObjonManifolds}
    By Lemma \ref{Lemma::FuncRevis::PushForwardFuncSpaces} we are able to pullback functions, vector fields, and differential forms using $\Co^{\alpha+1}$-transition maps. Thus, the above objects are well-defined.
    
    
\end{rmk}

To prove Proposition \ref{Prop::FuncRn::LieOnManiofold} we use the following:

\begin{lemma}\label{Lemma::FuncRn::PullbackComm}
Let $\alpha>0$, let $U,V\subseteq\R^n$ be two open sets and let $\Phi:U\xrightarrow{\sim}V$ be a $\Co^{\alpha+1}$-diffeomorphism. 
\begin{enumerate}[parsep=-0.3ex,label=(\roman*)]
    \item\label{Item::FuncRn::IntProdComm} For $\beta\in(-\alpha,\alpha]$, if $Y\in\Co^\alpha_\loc(V;TV)$ and $\omega\in\Co^\beta_\loc\mleft(V;\mywedge^kT^*V\mright)$ then $\Phi^*(\IntProd{Y}\omega)=\IntProd{\Phi^*Y}\Phi^*\omega$, and their common value is in $\Co^\beta_\loc\mleft(U;\mywedge^{k-1}T^*U\mright)$.
    \item\label{Item::FuncRn::WedgeProdComm} For $\beta\in(-\alpha,\alpha]$, if $\sigma\in\Co^\alpha_\loc\mleft(V;\mywedge^lT^*V\mright)$ and $\omega\in\Co^\beta_\loc\mleft(V;\mywedge^kT^*V\mright)$ then $\Phi^*(\sigma\wedge\omega)=\Phi^*\sigma\wedge\Phi^*\omega$, and their common value is in  $\Co^\beta_\loc\mleft(U;\mywedge^{k+l}T^*U\mright)$.
    \item\label{Item::FuncRn::dComm} For $\alpha>\frac12$ and $\beta\in(1-\alpha,\alpha]$, if $\omega\in\Co^\beta_\loc\mleft(V;\mywedge^kT^*V\mright)$ then $\Phi^*d\omega=d\Phi^*\omega$, and their common value is in  $\Co^{\beta-1}_\loc\mleft(U;\mywedge^{k+1}T^*U\mright)$.
    \item\label{Item::FuncRn::VectActionComm} For $\beta\in(1-\alpha,\alpha+1]$, if $Y\in\Co^\alpha_\loc(V;TV)$ and $f\in\Co^\beta_\loc(V)$ then $\Phi^*(Yf)=(\Phi^*Y)(\Phi^*f)$,  and their common value is in  $\Co^{\beta-1}_\loc(U)$.
    \item If $\alpha>\frac12$ and $\beta\in(1-\alpha,\alpha]$, if $Y\in\Co^\alpha_\loc(V;TV)$ and $Z\in\Co^\beta_\loc(V;TV)$ then $\Phi^*[Y,Z]=[\Phi^*Y,\Phi^*Z]$ , and their common value is in  $\Co^{\beta-1}_\loc(U;TU)$.
    \item If $\alpha>\frac12$ and $\beta\in(1-\alpha,\alpha]$, if $Y\in\Co^\alpha_\loc(V;TV)$ and $\omega\in\Co^\beta_\loc\mleft(V;\mywedge^kT^*V\mright)$ then $\Phi^*\Lie Y\omega=\Lie{\Phi^*Y}\Phi^*\omega$,  and their common value is in $\Co^{\beta-1}_\loc\mleft(U;\mywedge^kT^*U\mright)$.
\end{enumerate}
\end{lemma}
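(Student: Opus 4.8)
\emph{Plan.} The statement is local, so I would first pass to coordinate charts and reduce to the case where $U,V\subseteq\R^n$ are balls on which we may use genuine (non-local) $\Co^s$ norms; after a harmless shrinking there is also a standard mollifier available on a slightly smaller ball. I would then split the proof into a ``regularity'' part and an ``identity'' part.

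\emph{Regularity of the common value.} The plan is to write every object on the right-hand sides in the $x$-coordinates on $U$ using the pullback formulas from the proof of Lemma~\ref{Lemma::FuncRevis::PushForwardFuncSpaces} (so each $\Phi^*$ produces terms of the form $f\circ\Phi$ for a coefficient $f$ of the input, entries of the Jacobians of $\Phi,\Phi^{-1}$, and their $\partial_{x^j}$-derivatives). In every one of the six cases the resulting coefficients are finite sums of products $a\cdot b$, where $a$ is built from $\Phi,\Phi^{-1}$ and their first derivatives (so $a\in\Co^{\alpha+1}_\loc$ or $\Co^{\alpha}_\loc$), or is a $\partial_{x^j}$ of such ($a\in\Co^{\alpha-1}_\loc$), and $b$ is a coefficient of the input or a $\partial_{x^j}$ of one ($b\in\Co^{\beta}_\loc$ or $\Co^{\beta-1}_\loc$), using Remark~\ref{Rmk::FuncSpace::CharofZyg}\ref{Item::FuncSpace::CharofZyg::>1},\ref{Item::FuncSpace::CharofZyg::<0} for the $\partial_{x^j}$'s and Lemma~\ref{Lemma::FuncRevis::PushForwardFuncSpaces} for the precompositions. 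Ordering each product so the larger Zygmund index comes first, I would check case by case that the hypotheses on $\alpha,\beta$ force the pair of indices to have positive sum, so Lemma~\ref{Lemma::FuncSpace::Product} places the product — hence the common value — in the space asserted (or a slightly better one). This is exactly where the hypothesis $\alpha>\tfrac12$ in \ref{Item::FuncRn::dComm} and in the last two items is used: it makes $(1-\alpha,\alpha]$ nonempty and lets a $\Co^{\alpha-1}_\loc$ factor (coming from $\partial^2\Phi$, or from $\partial a_j$ in a commutator) be multiplied against a $\Co^{\beta}_\loc$ factor.

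\emph{The identities.} Here the plan is a uniform mollification argument. When the tensor arguments are $C^\infty$ on $V$ and $\Phi$ is merely the given $\Co^{\alpha+1}_\loc$ diffeomorphism — in particular $\Phi\in C^1$ — each identity holds by a direct coordinate computation using only the chain rule and the symmetry of distributional mixed partials, the latter entering precisely as $d\,d\Phi^l=0$ (each $d\Phi^l$ being a $\Co^{\alpha}_\loc$ one-form), which is what makes the $d$-, bracket-, and Lie-derivative identities close up. Given general tensor arguments of the stated regularity, I would mollify their coefficients to obtain $C^\infty$ tensors converging to them in $\Co^{\bullet-\delta}_\loc$ for every $\delta>0$ with uniformly bounded $\Co^{\bullet}_\loc$ norms, apply the $C^\infty$ identity, and let the mollification parameter go to $0$: both sides converge in a common $\Co^{\bullet-\delta}_\loc$ topology (equivalently in $\Dist$) by continuity of multiplication (Lemma~\ref{Lemma::FuncSpace::Product}), of $\partial_{x^j}$, and of $\Phi^*$ (Lemma~\ref{Lemma::FuncRevis::PushForwardFuncSpaces}, whose proof is an iteration of these), the indices staying admissible for $\delta$ small. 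Thus the identity holds as distributions, and since the regularity part puts both sides in the sharper space $\Co^{\bullet}_\loc$, it holds there as well, giving also the regularity of the common value. The last item can alternatively be deduced from \ref{Item::FuncRn::IntProdComm} and \ref{Item::FuncRn::dComm} via Cartan's formula $\Lie Y\omega=d\IntProd Y\omega+\IntProd Y d\omega$, after checking that $\IntProd Y\omega$ (index $\beta\in(1-\alpha,\alpha]$) and $d\omega$ (index $\beta-1\in(-\alpha,\alpha]$) lie in the ranges where \ref{Item::FuncRn::IntProdComm} and \ref{Item::FuncRn::dComm} apply.

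\emph{Main obstacle.} I expect the hard part to be not any single estimate but the bookkeeping of indices across all six cases together with making the limiting argument precise: the topology in which one passes to the limit must be weak enough to be reached by mollification yet strong enough that both sides — which apply precomposition by the fixed, only $\Co^{\alpha+1}_\loc$, map $\Phi$ to tensors of possibly negative regularity — converge to the intended limits. This hinges on the continuity, not merely the boundedness, of $\Phi^*$ on $\Co^s_\loc$ for $s\in(-\alpha,\alpha+1]$, which is implicit in the proof of Lemma~\ref{Lemma::FuncRevis::PushForwardFuncSpaces}: for $s\le0$ one uses the representation $g=g_0+\sum_j\partial_{y^j}g_j$ with $g_i\in\Co^{s+1}_\loc$ and induces on $\lfloor-s\rfloor$, each step being a continuous product.
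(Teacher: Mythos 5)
Your proposal is correct, and the regularity half coincides with the paper's: both reduce to coordinates, expand each pullback into products of the form (coefficient of the input or its derivative) times (entries of $\nabla\Phi$, $\nabla\Phi^{-1}$, their compositions with $\Phi$, or their derivatives), and invoke Lemma \ref{Lemma::FuncSpace::Product} together with Lemma \ref{Lemma::FuncRevis::PushForwardFuncSpaces} after checking that the two Zygmund indices in each product sum to something positive. Where you diverge is the identity half. The paper does \emph{not} mollify: it carries out the formal differential-geometric computation directly on the rough objects (it writes out \ref{Item::FuncRn::IntProdComm} and \ref{Item::FuncRn::dComm} explicitly and declares the rest similar), the point being that once every product and composition appearing in the computation is known to be defined, the algebraic manipulations — including the distributional chain rule $\partial_{x^l}(f\circ\Phi)=\sum_j(\partial_{x^j}f\circ\Phi)\,\partial_{x^l}\phi^j$ and $d\,d\phi^j=0$ — are legitimate identities of distributions. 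Your density argument buys a cleaner justification of exactly those manipulations (in particular the chain rule for $f$ of regularity below $C^1$, which the paper uses without comment), at the price of the limiting bookkeeping you describe; since all the index constraints in the lemma are strict inequalities except at the harmless upper endpoints, a small loss $\delta$ keeps every product and pullback continuous, so the limit passage goes through. Your observation that the last item follows from \ref{Item::FuncRn::IntProdComm} and \ref{Item::FuncRn::dComm} via Cartan's formula is also valid (the paper defines $\Lie{Y}\omega$ by that formula, and the intermediate indices $\beta$ and $\beta-1$ land in the ranges of those two items), and is a tidier route than repeating the computation.
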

\begin{proof}The formal computations are standard in differential geometry. What we need to be careful is that the products and compositions are defined, due to the low regularity of the objects involved.

We only prove \ref{Item::FuncRn::IntProdComm} and \ref{Item::FuncRn::dComm}, since the arguments for the others are similar.
We endow $V\subseteq\R^n$ with the standard coordinate system $(x^1,\dots,x^n)$, and write $\Phi=:(\phi^1,\dots,\phi^n)$ where $\phi^j\in\Co^{\alpha+1}_\loc(U)$. 

\ref{Item::FuncRn::IntProdComm}: We  write $Y=\sum_{i=1}^na^i\Coorvec{x^i}$ and $\omega_{j_1\dots j_k}:=\omega(\Coorvec{x^{j_1}},\dots,\Coorvec{x^{j_k}})$ for $1\le j_1,\dots,j_k\le n$. By Lemma \ref{Lemma::FuncSpace::Product}, $a^i\omega_{ij_1\dots j_{k-1}}\in\Co^{\beta}_\loc(V)$ and therefore
\begin{equation}\label{Eqn::FuncRevis::IntProdCoordinate}
    \IntProd{Y}\omega=\frac1{(k-1)!}\sum_{i=1}^n\sum_{j_1,\dots,j_{k-1}=1}^na^i\omega_{ij_1\dots j_{k-1}}dx^{j_1}\wedge\dots\wedge dx^{j_{k-1}}\in\Co^{\beta}_\loc\mleft(U;\mywedge^{k-1}T^*U\mright).
\end{equation}

Note that $\Coorvec{\phi^i}=\Phi^*\Coorvec{x^i}$, $i=1,\dots,n$, are $\Co^\alpha$-vector fields on $U$ that satisfy\footnote{We write $\delta_i^j$ for the Kronecker delta functions (see \eqref{Eqn::Key::Krnoecker}).} $d\phi^j(\Coorvec{\phi^i})=\frac{\partial\phi^j}{\partial\phi^i}=\delta_i^j$ for $1\le i,j\le n$. By Lemma \ref{Lemma::FuncRevis::PushForwardFuncSpaces} \ref{Item::FuncRevis::PullBackFunc}, $a^i\circ\Phi,\omega_{ij_1\dots j_{k-1}}\circ\Phi,(a^i\omega_{ij_1\dots j_{k-1}})\circ\Phi\in\Co^{\beta}_\loc(U)$ are all defined. Therefore we have the following, where all the products and compositions are defined.
\begin{align*}
    &\IntProd{\Phi^*Y}(\Phi^*\omega)=\frac1{k!}\sum_{j_0,\dots,j_{k-1}=1}^n\IntProd{\sum_{i=1}^n(a^i\circ\Phi)\Coorvec{\phi^i}}\left((\omega_{j_0\dots j_{k-1}}\circ\Phi)\cdot d\phi^{j_0}\wedge\dots\wedge d\phi^{j_{k-1}}\right)
    \\&=\frac1{k!}\sum_{i=1}^n\sum_{ j_0,j_1,\dots,j_{k-1}=1}^n\sum_{\rho=0}^{k-1}(a^i\circ\Phi)\cdot(\omega_{j_0\dots\widehat{j_\rho}\dots j_k}\circ\Phi)\cdot(-1)^\rho \frac{\partial\phi^{j_\rho}}{\partial\phi^i}d\phi^{j_0}\wedge\dots\wedge d\phi^{j_{\rho-1}}\wedge d\phi^{j_{\rho+1}}\wedge\dots\wedge d\phi^{j_{k-1}}
    \\&=\frac1{(k-1)!}\sum_{i=1}^n\sum_{j_1,\dots,j_{k-1}=1}^n((a^i\omega_{j_1\dots j_{k-1}})\circ\Phi)\cdot d\phi^{j_1}\wedge\dots\wedge d\phi^{j_{k-1}}=\Phi^*(\IntProd{Y}\omega).
\end{align*}
The equality holds in $\Co^{\beta}_\loc\mleft(V;\mywedge^{k-1}T^*V\mright)$, completing the proof.

\medskip
\ref{Item::FuncRn::dComm}: By passing to linear combinations it suffices to consider the form $\omega=fdx^{i_1}\wedge\dots\wedge dx^{i_k}$ where $f\in\Co^\beta_\loc(V)$. 
   
    By Lemma \ref{Lemma::FuncRevis::PushForwardFuncSpaces}, $f\circ\Phi\in\Co^{\beta}_\loc(U)$ and $\frac{\partial f}{\partial x^j}\circ\Phi\in\Co^{\beta-1}_\loc(U)$.
    Since we also have $d\phi^{i_1},\dots, d\phi^{i_k}\in\Co^\alpha_\loc(U)$, 
     Lemma \ref{Lemma::FuncSpace::Product} shows that all below products are defined.  
    We have,
    \begin{equation}\label{Eqn::FuncRevis::PullbackDComm::Tmp1}
        \Phi^*d(fdx^{i_1}\wedge\dots\wedge dx^{i_k}  )=\Phi^*\sum_{j=1}^n\frac{\partial f}{\partial x^j}dx^j\wedge dx^{i_1}\wedge\dots\wedge dx^{i_k}=\sum_{j=1}^n\Big(\frac{\partial f}{\partial x^j}\circ\Phi\Big)d\phi^j\wedge d\phi^{i_1}\wedge\dots\wedge d\phi^{i_k},
    \end{equation}
    \begin{equation}\label{Eqn::FuncRevis::PullbackDComm::Tmp2}
        \begin{split}
            &d\Phi^*(f dx^{i_1}\wedge\dots\wedge dx^{i_k})=d(f\circ\Phi)\wedge d\phi^{i_1}\wedge\dots\wedge d\phi^{i_k}=\sum_{l=1}^n\frac{\partial(f\circ\Phi)}{\partial x^l}\wedge d\phi^{i_1}\wedge\dots\wedge d\phi^{i_k}
        \\&=\sum_{l,j=1}^n\Big(\frac{\partial f}{\partial x^j}\circ\Phi\Big)\frac{\partial\phi^j}{\partial x^l}dx^l\wedge d\phi^{i_1}\wedge\dots\wedge d\phi^{i_k}.
        \end{split}
    \end{equation}
    Since $\sum_{l=1}^n\frac{\partial\phi^j}{\partial x^l}dx^l=d\phi^j$, we have that
    \eqref{Eqn::FuncRevis::PullbackDComm::Tmp1} and
    \eqref{Eqn::FuncRevis::PullbackDComm::Tmp2} are equal,
    completing the proof.
\end{proof}

\begin{proof}[Proof of Proposition \ref{Prop::FuncRn::LieOnManiofold}]
We only prove \ref{Item::FuncRn::IntProdCont}, since the arguments for the other parts are similar.

By Definition \ref{Defn::FuncRevis::ObjonManifolds}, we can write $Y=\{Y_\phi\in\Co^\alpha_\loc(\phi(U_\phi);T\R^n)\}_{\phi\in\As}$ and $\omega=\mleft\{\omega_\phi\in\Co^\beta_\loc\mleft(\phi(U_\phi);\mywedge^kT^*\R^n\mright)\mright\}_{\phi\in\As}$, where $\As$ is the maximal $\Co^{\alpha+1}$-atlas for $\Manifold$. 

For each $\phi\in\As$, by applying Lemma \ref{Lemma::FuncSpace::Product} on the coordinate components of $Y_\phi$ and $\omega_\phi$, we see that the map $(Y_\phi,\omega_\phi)\mapsto \IntProd{Y_\phi}\omega_\phi$ is a continuous map $\Co^\alpha_\loc(\phi(U_\phi);T\R^n)\times \Co^\beta_\loc\mleft(\phi(U_\phi);\mywedge^kT^*\R^n\mright)\to \Co^\beta_\loc\mleft(\phi(U_\phi);\mywedge^{k-1}T^*\R^n\mright)$.

By \eqref{Eqn::FuncRevis::TransitiofVF} and \eqref{Eqn::FuncRevis::TransitionForms} we have $(\psi\circ\phi^{-1})^*Y_\psi=Y_\phi$ and $(\psi\circ\phi^{-1})^*\omega_\psi=\omega_\phi$ on $\phi(U_\phi\cap U_\psi)$.
By Lemma \ref{Lemma::FuncRn::PullbackComm} \ref{Item::FuncRn::IntProdComm} we see that $$(\psi\circ\phi^{-1})^*(\IntProd{Y_\psi}\omega_\psi)=\IntProd{(\psi\circ\phi^{-1})^*Y_\psi}\left((\psi\circ\phi^{-1})^*\omega_\psi\right)=\IntProd{Y_\phi}\omega_\phi,\quad\text{on }\phi(U_\phi\cap U_\psi),\text{ whenever }U_\phi\cap U_\psi\neq\varnothing.$$

Therefore $\{\IntProd{Y_\phi}\omega_\phi\}_{\phi\in\As}$ is a collection of $(k-1)$-forms satisfying \eqref{Eqn::FuncRevis::TransitionForms}, and therefore defines a $(k-1)$-form on $\Manifold$, which is denoted by $\IntProd{Y}\omega$.

Finally, the continuity of $(Y,\omega)\mapsto \IntProd{Y}\omega$ comes from the fact that $(Y_\phi,\omega_\phi)\mapsto \IntProd{Y_\phi}\omega_\phi$ is continuous for each $\phi\in\As$.
\end{proof}





We now turn to the proofs of Proposition \ref{Prop::FuncRn::dOfLowRegularityForm}  and Lemma \ref{Lemma::FuncRn::dwelldefined}.
For these, we require several standard objects and results.

\begin{note}\label{Note::FuncRevis::Codiff}
We use the co-differential, $\codiff=\codiff_{\R^n}$, which is a linear operator
taking $k$ forms to $k-1$ forms, satisfying for $1\leq i_1<i_2<\cdots<i_k\leq n$,
\begin{equation*}
    \codiff \mleft(f dx^{i_1}\wedge \cdots \wedge dx^{i_k}\mright)
    =\sum_{l=1}^k \frac{\partial f}{\partial x^{i_l}} (-1)^{l} dx^{i_1}\wedge \cdots \wedge dx^{i_{l-1}}\wedge dx^{i_{l+1}}\wedge \cdots \wedge dx^{i_k}.
\end{equation*}

\end{note}
In particular on 1-forms, $-\codiff$ is the divergence operator, namely
\begin{equation*}
    \text{For }\theta=\sum_{i=1}^n\theta_idx^i,\quad\codiff\theta=-\sum_{i=1}^n \frac{\partial\theta_i}{\partial x^i}.
\end{equation*}

For any form $\omega$, we have $d(\codiff\omega)+\codiff(d\omega)=\Lap \omega$, where
$\Lap=-\sum_{i=1}^n\partial_{x^i}^2$ is the positive Laplacian  acting on the components of $\omega$; in this
setting $\Lap$ is called the Hodge Laplacian.

We will often convolve functions with $k$-forms.
Formally, if $\omega=\sum_{1\leq i_1<\cdots<i_k\leq n} \omega_{i_1,\ldots, i_k}dx^{i_k}\wedge \cdots \wedge dx^{i_k}$ is a $k$-form, and $\phi$ is a function, we set $\phi*\omega=\sum_{1\leq i_1<\cdots<i_k\leq n} \mleft(\phi*\omega_{i_1,\ldots, i_k}\mright)dx^{i_k}\wedge \cdots \wedge dx^{i_k}$.

We will make use of the classical Newtonian potential.
Let
\begin{equation}\label{Eqn::FuncRevis::GreensFunction}
    \Green(x):=
    \begin{cases}-\frac{|x|}2,&n=1,\\-\frac1{2\pi}\log|x|,&n=2,\\|\mathbb S^{n-1}|^{-1}|x|^{2-n},&n\ge3.\end{cases}
\end{equation}

\begin{lemma}\label{Lemma::FuncRevis::GreensOp}
    Let $\omega=\sum_{1\leq i_1<\cdots<i_k\leq n} \omega_{i_1,\ldots, i_k}dx^{i_k}\wedge \cdots \wedge dx^{i_k}$ be a $k$-form
    where each $\omega_{i_1,\ldots, i_k}$ is a compactly supported distribution on $\R^n$.
    Then, $\sigma:=\Green*\omega$ is a distribution on $\R^n$ satisfying $\Lap \sigma=\omega$.
    Moreover, if for some open set $U\subseteq \R^n$ and $\beta\in \R$ we have
    $\omega\big|_U\in \ZygSymb_{\loc}^{\beta}\mleft(U;\mywedge^k T^{*}U\mright)$, then $\sigma\big|_U \in \ZygSymb_{\loc}^{\beta+2}\mleft(U;\mywedge^k T^*U\mright)$.
\end{lemma}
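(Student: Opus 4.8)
The plan is to treat the two assertions separately: first that $\sigma := \Green * \omega$ is a well-defined distribution solving $\Lap \sigma = \omega$, and then the local elliptic regularity gain of two derivatives in the Zygmund-H\"older scale. For the first assertion, since $\Green$ is locally integrable on $\R^n$ (for all $n \geq 1$, including the logarithmic case $n=2$ and the $|x|/2$ case $n=1$) and each component $\omega_{i_1,\ldots,i_k}$ is a compactly supported distribution, the convolution $\Green * \omega_{i_1,\ldots,i_k}$ is a well-defined distribution on $\R^n$; we convolve component by component as described just before the statement. The identity $\Lap \Green = \delta_0$ is classical, so $\Lap(\Green * \omega_{i_1,\ldots,i_k}) = (\Lap \Green) * \omega_{i_1,\ldots,i_k} = \delta_0 * \omega_{i_1,\ldots,i_k} = \omega_{i_1,\ldots,i_k}$, using that $\Lap$ commutes with convolution and that convolution of a compactly supported distribution with $\delta_0$ is the identity. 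Assembling the components gives $\Lap \sigma = \omega$ as forms.

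For the regularity statement, the work is entirely at the level of scalar components, so fix a component $u := \omega_{i_1,\ldots,i_k}$ and $v := \Green * u$, and suppose $u|_U \in \ZygSymb^\beta_\loc(U)$; I want $v|_U \in \ZygSymb^{\beta+2}_\loc(U)$. Fix $x_0 \in U$ and choose a cutoff $\chi \in C_c^\infty(U)$ with $\chi \equiv 1$ on a ball $B'$ around $x_0$. Write $u = \chi u + (1-\chi) u$, so $v = \Green * (\chi u) + \Green * ((1-\chi)u)$. The second term is smooth on $B'$: its Laplacian is $(1-\chi)u$, which vanishes on $B'$, so $\Green * ((1-\chi)u)$ is harmonic on $B'$, hence $C^\infty$ there by Weyl's lemma, in particular in $\ZygSymb^{\beta+2}_\loc(B')$. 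For the first term, $\chi u$ is a compactly supported distribution lying in $\ZygSymb^\beta(\R^n)$ (cutting off by a smooth compactly supported function preserves the global Zygmund-H\"older space, which on $\R^n$ is $\BesovSymb^\beta_{\infty,\infty}$; this is the standard mapping property of multiplication by a Schwartz function on Besov spaces, a special case of Lemma~\ref{Lemma::FuncSpace::Product} once one notes $\chi \in \ZygSymb^r$ for all $r$). Then $\Green * (\chi u)$ is the Newtonian potential of a global $\ZygSymb^\beta$ function, and the classical two-derivative gain $(I+\Lap)^{-1} : \ZygSymb^\beta(\R^n) \to \ZygSymb^{\beta+2}(\R^n)$ — equivalently the fact that the Fourier multiplier $|\xi|^{-2}$ smooths by order $2$ on $\BesovSymb^\beta_{\infty,\infty}$ away from the origin — gives $\Green * (\chi u) \in \ZygSymb^{\beta+2}_\loc(\R^n)$, modulo the low-frequency / polynomial ambiguity inherent in the Newtonian potential (which is harmless locally, being smooth). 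Summing the two pieces, $v \in \ZygSymb^{\beta+2}_\loc(B')$, and since $x_0 \in U$ was arbitrary, $v|_U \in \ZygSymb^{\beta+2}_\loc(U)$.

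The main obstacle is bookkeeping the distinction between the true Newtonian potential $\Green * (\cdot)$ and the nicely-behaved Bessel-type operator $(I+\Lap)^{-1}$: the former is not literally a bounded operator on $\ZygSymb^\beta(\R^n)$ (it introduces growth / a polynomial at infinity, and in $n=1,2$ the kernel itself grows), so the cleanest route is the cutoff-and-localize argument above, reducing the global estimate to the standard statement that $\Lap u \in \ZygSymb^{\beta}_\loc$ forces $u \in \ZygSymb^{\beta+2}_\loc$ via interior elliptic regularity (which in turn follows from the $\BesovSymb^\beta_{\infty,\infty}$ mapping properties of the Riesz/Bessel potentials cited in Remark~\ref{Rmk::FuncSpace::CharofZyg}~\ref{Item::FuncSpace::CharofZyg::<0} and the surrounding Triebel references). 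Everything else — commuting $\Lap$ past convolution, Weyl's lemma for the harmonic remainder, and the multiplier-preserves-$\ZygSymb^r$ fact — is routine.
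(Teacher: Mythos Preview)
Your proposal is correct and, at its core, matches the paper's proof: both establish $\Lap\sigma=\omega$ via $\Lap\Green=\delta_0$ and then invoke interior elliptic regularity for the Laplacian to get the two-derivative gain. The paper simply cites this regularity fact (\cite[Proposition 4.1]{TaylorPDE3}) in one line, whereas you unpack a standard proof of it via the cutoff decomposition $u=\chi u+(1-\chi)u$, Weyl's lemma for the harmonic remainder, and Bessel/Fourier-multiplier mapping properties on $\BesovSymb^\beta_{\infty,\infty}$ for the localized piece. Your detour is valid but unnecessary here, and you yourself note in the final paragraph that the cleanest route is exactly the one-line citation the paper uses.
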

\begin{proof}
    The convolution $\Green*\omega$ makes sense because $\omega_{i_1,\ldots, i_k}$ is a compactly
    supported distribution and $\Green$ is a distribution.
    Since $\Green$ is well-known to be a fundamental solution for
    the Laplacian $\Lap$, we have $\Lap(\Green\ast \omega) = (\Lap \Green)*\omega = \omega$.
    Since $\Lap \sigma=\omega$, the classical interior regularity for elliptic
    equations shows that if $\omega\big|_U\in \ZygSymb_{\loc}^{\beta}\mleft(U;\mywedge^k T^{*}U\mright)$, then $\sigma\big|_U \in \ZygSymb_{\loc}^{\beta+2}(U;\mywedge^k T^{*} U)$; see, for example, \cite[Proposition 4.1]{TaylorPDE3}.
\end{proof}

\begin{lemma}\label{Lemma::FuncRevis::NewtonianBoundedness}
    Let $0\le k\le n$, $\gamma\in\R$ and let $U\subset\R^n$ be a bounded open set, then there is a $C_{U,\gamma}>0$ such that $\|\Green\ast \omega\|_{\Co^{\gamma+2}(U;\wedge^kT^*U)}\le C_{U,\gamma}\|\omega\|_{\Co^\gamma(U;\wedge^kT^*U)}$ for all compactly supported $k$-forms $\omega\in\Co_c^\gamma\mleft(U;\mywedge^kT^*U\mright)$. 
\end{lemma}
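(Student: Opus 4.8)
The plan is to reduce to a single scalar estimate and then bootstrap the \emph{qualitative} statement of Lemma \ref{Lemma::FuncRevis::GreensOp} into a quantitative one via the Closed Graph Theorem, the only genuine work being a uniform zero-extension bound. First I would reduce to the scalar case: a $k$-form has $Q_{n,k}$ scalar components, $\|\cdot\|_{\Co^{s}(U;\wedge^{k}T^{*}U)}$ is by definition the $\R^{Q_{n,k}}$-valued Zygmund norm of its component vector, and $\Green\ast$ acts componentwise; so it suffices to produce $C_{U,\gamma}$ with $\|\Green\ast f\|_{\Co^{\gamma+2}(U)}\le C_{U,\gamma}\|f\|_{\Co^{\gamma}(U)}$ for every $f\in\Co^{\gamma}_{c}(U)$.

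Next I would run the closed graph argument. Fix a bounded open set $\tilde U$ with smooth boundary and $\overline U\subseteq\tilde U$, and put $Y:=\{g\in\Co^{\gamma}(\R^{n}):\supp g\subseteq\overline U\}$. Since $\Co^{\gamma}(\R^{n})\hookrightarrow\Dist(\R^{n})$ and the support condition is preserved under $\Dist$-limits, $Y$ is a closed subspace of the Banach space $\Co^{\gamma}(\R^{n})$, hence Banach. For $g\in Y$ the coefficients are compactly supported distributions and $g|_{\tilde U}\in\Co^{\gamma}_{\loc}(\tilde U)$, so Lemma \ref{Lemma::FuncRevis::GreensOp} gives $(\Green\ast g)|_{\tilde U}\in\Co^{\gamma+2}_{\loc}(\tilde U)$, and restricting to the relatively compact smooth subdomain $U$ (an elementary localization via a cutoff and Lemma \ref{Lemma::FuncSpace::Product}, as in the proof of Lemma \ref{Lemma::FuncRevis::PushForwardFuncSpaces}) yields $Tg:=(\Green\ast g)|_{U}\in\Co^{\gamma+2}(U)$. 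The linear map $T:Y\to\Co^{\gamma+2}(U)$ has closed graph: if $g_{n}\to g$ in $Y$ and $Tg_{n}\to h$ in $\Co^{\gamma+2}(U)$, then $g_{n}\to g$ in $\Dist(\R^{n})$ with supports in the fixed compact $\overline U$, hence $\Green\ast g_{n}\to\Green\ast g$ in $\Dist(\R^{n})$, so $Tg_{n}\to Tg$ in $\Dist(U)$; comparing with $Tg_{n}\to h$ in $\Co^{\gamma+2}(U)\hookrightarrow\Dist(U)$ forces $Tg=h$. By the Closed Graph Theorem, $\|(\Green\ast g)|_{U}\|_{\Co^{\gamma+2}(U)}\le C\|g\|_{\Co^{\gamma}(\R^{n})}$ for all $g\in Y$. (Alternatively one may bypass Lemma \ref{Lemma::FuncRevis::GreensOp}: $\Green\ast f$ agrees on a neighborhood of $\overline U$ with $\Psi\ast f$, where $\Psi=\phi\Green$ is the fixed compactly supported distribution obtained from $\phi\in C_{c}^{\infty}$ with $\phi\equiv1$ on a large ball; $\widehat\Psi$ is a smooth Fourier symbol of order $-2$, so $\Psi\ast(\cdot)$ maps $\Co^{\gamma}(\R^{n})\to\Co^{\gamma+2}(\R^{n})$ boundedly.)

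Then I would conclude. Given $f\in\Co^{\gamma}_{c}(U)$, its zero-extension $E_{0}f$ to $\R^{n}$ lies in $Y$: it is supported in $\supp f\subseteq\overline U$, and it is locally $\Co^{\gamma}$ on all of $\R^{n}$—it equals $f$ on $U$ and vanishes on a neighborhood of $\R^{n}\setminus U$ because $\supp f$ is a compact subset of the open set $U$—hence $E_{0}f\in\Co^{\gamma}(\R^{n})$ (partition of unity together with Lemma \ref{Lemma::FuncSpace::Product}). Applying the previous step with $g=E_{0}f$ gives $\|\Green\ast f\|_{\Co^{\gamma+2}(U)}=\|T(E_{0}f)\|_{\Co^{\gamma+2}(U)}\le C\|E_{0}f\|_{\Co^{\gamma}(\R^{n})}$. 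It remains to bound $\|E_{0}f\|_{\Co^{\gamma}(\R^{n})}\le C_{U,\gamma}'\|f\|_{\Co^{\gamma}(U)}$ with $C_{U,\gamma}'$ independent of $\supp f$. For $\gamma\in(0,1)$ this follows from the Hölder characterization in Remark \ref{Rmk::FuncSpace::CharofZyg}\ref{Item::FuncSpace::CharofZyg::01}: the only difference quotients not already controlled by $\|f\|_{\Co^{\gamma}(U)}$ involve $x\in\supp f$, $y\notin U$, and there $|f(x)-f(y)|=|f(x)|\le\|f\|_{\Co^{\gamma}(U)}\,\dist(x,U\setminus\supp f)^{\gamma}\le\|f\|_{\Co^{\gamma}(U)}\,\dist(x,\partial U)^{\gamma}\le\|f\|_{\Co^{\gamma}(U)}|x-y|^{\gamma}$, using $\partial U\subseteq\overline{U\setminus\supp f}$; for noninteger $\gamma>1$ one differentiates (the derivatives of $E_0f$ up to order $\lfloor\gamma\rfloor$ are the zero-extensions of those of $f$, since $f$ is flat near $\partial U$) and reduces to the previous case; for integer $\gamma$ one argues similarly with the second-difference characterization in Remark \ref{Rmk::FuncSpace::CharofZyg}\ref{Item::FuncSpace::CharofZyg::02}; and for $\gamma\le0$ this is a standard property of Besov spaces on bounded smooth domains. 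Combining the two displayed inequalities and summing over the $Q_{n,k}$ components yields the lemma.

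I expect the last step—the uniform zero-extension estimate, particularly for $\gamma\le0$ where $f$ is a genuine distribution—to be the only delicate point: one must verify that the constant does not degenerate as $\supp f$ approaches $\partial U$. Everything else is either soft (the Closed Graph Theorem) or reduces to results quoted above, and the low-regularity bookkeeping (that all the convolutions, restrictions, and products are well defined) is covered by Lemma \ref{Lemma::FuncRevis::GreensOp}, Lemma \ref{Lemma::FuncSpace::Product}, and Lemma \ref{Lemma::FuncRevis::PushForwardFuncSpaces}.
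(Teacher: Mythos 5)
Your argument follows the same route as the paper's: reduce to scalar components, feed the qualitative regularity of Lemma \ref{Lemma::FuncRevis::GreensOp} into the Closed Graph Theorem on a Banach space of distributions supported in $\overline U$, and read off a uniform constant. Your verification that the graph is closed (via continuity of $g\mapsto\Green\ast g$ on distributions with supports in a fixed compact set) is a detail the paper omits, and it is correct. Up through the inequality $\|(\Green\ast g)|_U\|_{\Co^{\gamma+2}(U)}\le C\|g\|_{\Co^\gamma(\R^n)}$ for $g\in Y$, your proof is complete and is essentially the paper's proof (the paper's $\mathscr X$ is, in effect, the closure of the compactly supported forms inside $\Co^\gamma(\R^n)$).

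The problem is the final step, which you have correctly singled out as the delicate one: the uniform bound $\|E_0f\|_{\Co^\gamma(\R^n)}\le C'_{U,\gamma}\|f\|_{\Co^\gamma(U)}$ for the zero extension. Your arguments for $\gamma>0$ are fine, and for $\gamma\in(-1,0)$ the claim is indeed the known identification of $\{g\in\Co^\gamma(\R^n):\supp g\subseteq\overline U\}$ with $\Co^\gamma(U)$ on smooth domains. But for $\gamma<-1$ it is \emph{false}, because the quotient norm $\|f\|_{\Co^\gamma(U)}$ can exploit cancellation between $f$ and mass placed just outside $U$. Concretely, take $n=1$, $U=(-1,1)$, $\gamma\in(-2,-1)$, and $f_N:=N\delta_{1-1/N}$. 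The extension $g_N:=N(\delta_{1-1/N}-\delta_{1})=N\,\chi_{[1-1/N,1)}'$ restricts to $f_N$ on $U$ and satisfies $\|g_N\|_{\Co^{\gamma}(\R)}\lesssim N\|\chi_{[1-1/N,1)}\|_{\Co^{\gamma+1}(\R)}\lesssim N^{2+\gamma}$, so $\|f_N\|_{\Co^\gamma(U)}\lesssim N^{2+\gamma}$ with $2+\gamma<1$, whereas $\|E_0f_N\|_{\Co^\gamma(\R)}\approx N$. Worse, $\Green\ast f_N=-\tfrac N2|x-(1-1/N)|$ has $\Co^{\gamma+2}(U)$-norm $\approx N$, so for such $\gamma$ no proof of the estimate with the quotient norm on the right can succeed; and this range of $\gamma$ is used (e.g.\ $\gamma=\beta-2$ with $\beta<1$ in \eqref{Eqn::FuncRevis::QuantdOfLowRegularityForm::RhoisBoundedbyDTheta}). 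This is not a defect you introduced: the paper's own proof silently identifies the two norms in the line ``$\|\omega\|_{\mathscr X}=\|\omega\|_{\Co^\gamma}$,'' and in every application the compactly supported form arises as a fixed cutoff times something, so its $\Co^\gamma(\R^n)$-norm is what is actually controlled and fed in. The correct repair is therefore not to complete your last step but to delete it: read $\|\omega\|_{\Co^\gamma(U)}$ in the statement as the $\Co^\gamma(\R^n)$-norm of the compactly supported form, which is exactly what your closed-graph step bounds.
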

\begin{proof}
    Set $\mathscr X$ to be the completion of $\Co_c^\gamma\mleft(U;\mywedge^kT^*U\mright)$ under the $\Co^\gamma$-norm. Thus, $\mathscr X$ is a closed subspace of $\Co^\gamma\mleft(\R^n;\mywedge^kT^*\R^n\mright)$ and $\|\omega\|_{\mathscr X}=\|\omega\|_{\Co^\gamma}$ for all $\omega\in\mathscr X$.
    
    When $\omega\in\mathscr X$, we have that $\supp\omega\subseteq\overline{U}$ so $\omega\in\Co^\gamma_c\mleft(\R^n;\mywedge^kT^*\R^n\mright)$. By Lemma \ref{Lemma::FuncRevis::GreensOp},  $\Green\ast \omega\in\Co^{\gamma+2}_\loc\mleft(\R^n;\mywedge^kT^*\R^n\mright)$ is well-defined. By restricting it to $U$ we get $(\Green\ast\omega)\big|_U\in\Co^{\gamma+2}\mleft(U;\mywedge^kT^*U\mright)$.
    
    By the closed graph theorem we have $\|\Green\ast\omega\|_{\Co^{\gamma+2}(U;\wedge^kT^*U)}\le C\|\omega\|_{\mathscr X}=C\|\omega\|_{\Co^\gamma(U; \wedge^k T^{*}U)}$ for some $C$ that does not depend on $\omega$.
    
    Therefore, for the same constant $C$ we have $\|\Green\ast \omega\|_{\Co^{\gamma+2}(U;\wedge^kT^*U)}\le C\|\omega\|_{\Co^\gamma(U;\wedge^kT^*U)}$ for all $\omega\in\Co_c^\gamma\mleft(U;\mywedge^kT^*U\mright)$.
\end{proof}

\begin{lemma}\label{Lemma::FuncRevis::DecomposeForms}
    Let $0\le k\le n$, $\gamma\in\R$, $\beta>\gamma-1$ and let $U'\Subset U\subseteq\R^n$ be two open sets.\footnote{Here, and in the rest of the paper, $A\Subset B$ denotes that $A$ is a relatively compact subset of $B$.}
    Suppose $\theta\in\Co^\gamma_\loc\mleft(U,\mywedge^kT^*U\mright)$ satisfies $d\theta\in \Co^{\beta-1}_\loc\mleft(U,\mywedge^{k+1}T^*U\mright)$.
    Then, there exist $\rho\in \ZygSymb^{\beta}_{\loc}\mleft(U'; \mywedge^{k}T^{*}U\mright)$ and $\xi\in \ZygSymb^{\gamma+1}_{\loc}\mleft(U'; \mywedge^{k-1}T^{*}U'\mright)$ such that 
\begin{equation*}
    \theta\big|_{U'}=\rho+d\xi.
\end{equation*}
\end{lemma}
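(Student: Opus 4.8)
The plan is to use the Hodge-type decomposition furnished by the Newtonian potential $\Green$ from Lemma~\ref{Lemma::FuncRevis::GreensOp}, together with a cutoff to reduce to compactly supported forms. First I would fix an intermediate open set $U''$ with $U'\Subset U''\Subset U$ and choose $\chi\in C_c^\infty(U'')$ with $\chi\equiv 1$ on a neighborhood of $\overline{U'}$. The form $\chi\theta$ is a compactly supported distributional $k$-form on $\R^n$ whose coefficients lie in $\ZygSymb^\gamma$ globally (after extending by zero), so $\sigma:=\Green*(\chi\theta)$ is well-defined and satisfies $\Lap\sigma=\chi\theta$. Using the Hodge identity $\Lap=d\codiff+\codiff d$ (Notation~\ref{Note::FuncRevis::Codiff}), I write
\begin{equation*}
\chi\theta = \Lap\sigma = d(\codiff\sigma)+\codiff(d\sigma),
\end{equation*}
so on $U'$ (where $\chi\equiv1$) we have $\theta = d\xi + \rho$ with $\xi := \codiff\sigma$ a $(k-1)$-form and $\rho := \codiff(d\sigma) = \codiff\,d\,\Green*(\chi\theta)$ a $k$-form.

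Next I would check the regularity of the two pieces. Since $\chi\theta\in\ZygSymb^\gamma_c(\R^n;\mywedge^kT^*\R^n)$, Lemma~\ref{Lemma::FuncRevis::GreensOp} (applied with the open set $U''$, on which $\chi\theta$ agrees with $\theta$ and hence lies in $\ZygSymb^\gamma_{\loc}$) gives $\sigma\in\ZygSymb^{\gamma+2}_{\loc}(U'';\mywedge^kT^*U'')$; $\codiff$ drops one derivative, so $\xi=\codiff\sigma\in\ZygSymb^{\gamma+1}_{\loc}(U'';\mywedge^{k-1}T^*U'')$, and restricting to $U'$ gives the claimed $\xi\in\ZygSymb^{\gamma+1}_{\loc}(U';\mywedge^{k-1}T^*U')$. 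For $\rho$, the key observation is that $d$ commutes with convolution, so on $U''$ we have $d\sigma = d\,\Green*(\chi\theta) = \Green*(\chi\,d\theta)+\Green*(d\chi\wedge\theta)$. The term $\Green*(\chi\,d\theta)$ has a coefficient gain: $\chi\,d\theta\in\ZygSymb^{\beta-1}_c$, so $\Green*(\chi\,d\theta)\in\ZygSymb^{\beta+1}_{\loc}$, and applying $\codiff$ yields a contribution to $\rho$ in $\ZygSymb^{\beta}_{\loc}(U';\mywedge^kT^*U')$. The term $\Green*(d\chi\wedge\theta)$ involves $d\chi\wedge\theta$, which is smooth times $\ZygSymb^\gamma$, hence in $\ZygSymb^{\gamma}_c$; but crucially $d\chi$ is supported in $U''\setminus U'$, so this term is $C^\infty$ on $U'$ (convolution of a smooth kernel—away from its singularity—with a distribution supported away from $U'$, using that $\Green$ is smooth off the origin), and in particular lies in $\ZygSymb^{\beta}_{\loc}(U')$ since $\beta\le\gamma$ is allowed or $\beta>\gamma$—either way $C^\infty$ beats it. Summing, $\rho\in\ZygSymb^{\beta}_{\loc}(U';\mywedge^kT^*U')$.

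The main obstacle is the bookkeeping around the cutoff term $\Green*(d\chi\wedge\theta)$: one must justify that it is smooth on $U'$ despite $\theta$ being only a low-regularity distribution, which comes down to the standard fact that if $\psi\in C^\infty$ and a distribution $u$ has $\supp u\cap \supp\psi\subseteq$ some set $K$ disjoint from $U'$, then $(\Green*u)\big|_{U'}$ is smooth because the singularity of $\Green$ is only at $0$ and the relevant integration is over $K$ which keeps $x-y$ bounded away from $0$ for $x\in U'$. A secondary point is making sure all convolutions make sense at the distributional level—this is exactly what Lemma~\ref{Lemma::FuncRevis::GreensOp} is set up to handle once one has arranged compact support via $\chi$. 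Everything else (commuting $d$ and $\codiff$ with convolution, the Hodge identity, the one-derivative mapping properties of $d$ and $\codiff$ on Zygmund spaces via Remark~\ref{Rmk::FuncSpace::CharofZyg}) is routine.
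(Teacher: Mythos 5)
Your proposal is correct and follows essentially the same route as the paper: the decomposition $\theta|_{U'}=\rho+d\xi$ with $\xi=\codiff\,\Green*(\chi\theta)$ and $\rho=\codiff d\,\Green*(\chi\theta)$ via the Hodge identity is exactly the paper's construction. The only (minor, equally valid) difference is in establishing $\rho\in\ZygSymb^{\beta}_{\loc}(U')$: you use the Leibniz rule $d(\chi\theta)=\chi\,d\theta+d\chi\wedge\theta$ together with pseudolocality of $\Green$ to handle the cutoff error term, whereas the paper observes directly that $\Lap\widehat\rho|_{U'}=\codiff d\theta|_{U'}\in\ZygSymb^{\beta-2}_{\loc}$ and invokes interior elliptic regularity.
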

Note that the case $\beta\le\gamma-1$ holds automatically if we pick $\rho:=\theta$ and $\xi:=0$.
\begin{proof}
Let $\chi\in C^\infty_c(U)$ satisfy $\chi\equiv 1$ on a neighborhood
over $\overline{U'}$.
Define 
\begin{equation*}
    \widehat\rho:=\Green*\codiff d (\chi\theta), \quad \widehat\xi:=\Green*\codiff(\chi\theta), \qquad \rho:=\widehat\rho\big|_{U'},\quad\xi:=\widehat\xi\big|_{U'}.
\end{equation*}
Since $\chi \theta\in \ZygSymb^{\gamma}$, Lemma \ref{Lemma::FuncRevis::GreensOp}
shows $\widehat\rho\in \ZygSymb^{\gamma}_{\loc}\mleft(\R^n; \mywedge^{k}T^{*}\R^n\mright)$ and $\widehat\xi\in \ZygSymb^{\gamma+1}_{\loc}\mleft(\R^n; \mywedge^{k-1}T^{*}\R^n\mright)$. Thus, $\xi\in\ZygSymb^{\gamma+1}_\loc\mleft(U'; \mywedge^{k-1}T^{*}U'\mright)$.
Also, $\Lap \widehat\rho\big|_{U'} = \codiff d(\chi\theta)\big|_{U'} = \codiff d\theta\big|_{U'}\in \ZygSymb_{\loc}^{\beta-2}$, by hypothesis.
Thus, by the interior regularity of elliptic PDEs (see  \cite[Proposition 4.1]{TaylorPDE3}), we have $ \rho=\widehat{\rho}\big|_{U'}\in \ZygSymb^{\beta}_{\loc}$. We also have,
\begin{equation*}
    \theta\big|_{U'} = \Lap (\Green*(\chi\theta))\big|_{U'}
    =(\codiff d + d\codiff) (\Green*(\chi\theta))\big|_{U'}
    = \Green*\codiff d(\chi\theta)\big|_{U'} + d \Green*\codiff(\chi\theta)\big|_{U'}
    =\widehat\rho\big|_{U'}+ d\widehat\xi\big|_{U'}=\rho+d\xi,
\end{equation*}
as desired.
\end{proof}

Finally, we require paraproduct decompositions.
Let $\psi_0\in \SchwartzSymb(\R^n)$ be a Schwartz function whose
Fourier transform, $\psih_0(\xi)=\int \psi_0(x) e^{2\pi i x\cdot \xi}\: dx$,
satisfies $\supp\psih_0\subseteq \{ \xi : |\xi|<8/3\}$ and
$\psih_0(\xi)\equiv 1$ for $|\xi|\leq 3/2$.  Set
\begin{equation*}
    \psi_j(x):=
    \begin{cases}
        2^{nj}\psi_0(2^j x) & j>0,\\
        0 & j\leq -1.
    \end{cases}
\end{equation*}
Associated to $\psi_0$, we define two bilinear operators each taking a
$k$ form $\sigma$ and an $l$-form $\omega$ 
and outputting a $(k+l)$-form,
\begin{equation*}
    \FrP(\sigma,\omega):=\sum_{j=0}^\infty \mleft((\psi_j-\psi_{j-1})*\sigma\mright)\wedge (\psi_{j-2}*\omega),
    \quad\FrR(\sigma,\omega):=\sum_{|j-k|\leq 1}\mleft( (\psi_j-\psi_{j-1})*\sigma \mright)\wedge \mleft((\psi_k-\psi_{k-1})*\omega\mright).
\end{equation*}

\begin{lemma}\label{Lemma::FuncRevis::ParaProdResults}
    We have the following properties of $\FrP$ and $\FrR$.  Fix $k,l\in \{0,\ldots n\}$.
    \begin{enumerate}[parsep=-0.3ex,label=(\roman*)]
        \item\label{Item::FuncRevis::FrPCont1} For $\alpha\in \R$, $\FrP$ defines a continuous
        bilinear map $\FrP:\ZygSymb^{\alpha}\mleft(\R^n; \mywedge^k T^{*}\R^n\mright)\times L^\infty\mleft(\R^n; \mywedge^l T^{*}\R^n\mright)\rightarrow \ZygSymb^{\alpha}\mleft(\R^n; \mywedge^{k+l} T^{*}\R^n\mright)$.
        
        \item\label{Item::FuncRevis::FrPCont2} For $\alpha\in \R$ and $\beta<0$, $\FrP$ defines a continuous
        bilinear map $\FrP:\ZygSymb^{\alpha}\mleft(\R^n; \mywedge^k T^{*}\R^n\mright)\times \ZygSymb^{\beta}\mleft(\R^n; \mywedge^l T^{*}\R^n\mright)\rightarrow \ZygSymb^{\alpha+\beta}\mleft(\R^n; \mywedge^{k+l} T^{*}\R^n\mright)$.
        
        \item\label{Item::FuncRevis::FrRCont} For $\alpha,\beta\in \R$ with $\alpha+\beta>0$, 
        $\FrR$ defines a continuous bilinear map
        $\FrR:\ZygSymb^{\alpha}\mleft(\R^n; \mywedge^k T^{*}\R^n\mright)\times \ZygSymb^{\beta}\mleft(\R^n; \mywedge^l T^{*}\R^n\mright)\rightarrow \ZygSymb^{\alpha+\beta}\mleft(\R^n; \mywedge^{k+l} T^{*}\R^n\mright)$.
        
        \item\label{Item::FuncRevis::ParaProdDecomp} $\sigma \wedge \omega = \FrP(\sigma, \omega) +(-1)^{kl}\FrP(\omega,\sigma)+\FrR(\omega,\sigma)$ holds
        for $\sigma\in \ZygSymb^{\alpha}\mleft(\R^n; \mywedge^k T^{*}\R^n\mright)$
        and $\omega\in \ZygSymb^{\beta}\mleft(\R^n; \mywedge^{l} T^{*}\R^n\mright)$, where $\alpha+\beta>0$.
        
        \item\label{Item::FuncRevis::ProductRule} $\FrP$ satisfies $d\FrP(\sigma, \omega) =\FrP(d\sigma,\omega)+ (-1)^k \FrP(\sigma, d\omega)$,  for $k$-forms $\sigma$ and $l$-forms $\omega$.
    \end{enumerate}
\end{lemma}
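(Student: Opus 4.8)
The plan is to reduce all five parts to the Littlewood--Paley characterization of $\ZygSymb^s(\R^n)=\BesovSymb^s_{\infty,\infty}(\R^n)$ together with two elementary quasi-orthogonality summation lemmas. Write $\Delta_j:=(\psi_j-\psi_{j-1})*\,\cdot\,$ and $S_j:=\psi_j*\,\cdot\,$, so that $S_j=\sum_{m\le j}\Delta_m$ and $\psi_{j-2}*\omega=S_{j-2}\omega$. By the choice of $\psih_0$, for $j\ge 1$ the Fourier transform of $\Delta_j g$ is supported in an annulus $\{c_0 2^{j}\le|\xi|\le c_1 2^{j}\}$ and that of $S_j g$ in a ball $\{|\xi|\le c_1 2^{j}\}$, with $c_0,c_1>0$ absolute; moreover $S_{j-2}\omega$ has Fourier support well inside the inner radius $c_0 2^j$ of the $j$-th annulus, so the product $(\Delta_j\sigma)\wedge(S_{j-2}\omega)$ (of a $k$-form and an $l$-form) again has Fourier support in an annulus $|\xi|\sim 2^j$, whereas a product of two forms each with Fourier support at frequency $\sim 2^j$ only has Fourier support in a ball $|\xi|\lesssim 2^{j+C}$. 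I will use the following standard facts: (a) $\|g\|_{\ZygSymb^s}\approx\sup_{j\ge 0}2^{js}\|\Delta_j g\|_{L^\infty}$ for every $s\in\R$, hence $\|\Delta_j g\|_{L^\infty}\lesssim 2^{-js}\|g\|_{\ZygSymb^s}$; (b) $\|S_j g\|_{L^\infty}\lesssim\|g\|_{L^\infty}$, and, if $s<0$, summing the geometric series gives $\|S_j g\|_{L^\infty}\lesssim 2^{-js}\|g\|_{\ZygSymb^s}$; (c) convolution with $\psi_j$ or with $\psi_j-\psi_{j-1}$ is bounded on $L^\infty$ uniformly in $j$; (d) if $f_j$ has Fourier support in $\{|\xi|\le 2^{j+C}\}$ and $\sup_j 2^{js}\|f_j\|_{L^\infty}<\infty$ for some $s>0$, then $\sum_j f_j$ converges in $\Dist$ to an element of $\ZygSymb^s$ with $\|\sum_j f_j\|_{\ZygSymb^s}\lesssim\sup_j 2^{js}\|f_j\|_{L^\infty}$, while if each $f_j$ has Fourier support in an annulus $\{2^{j-C}\le|\xi|\le 2^{j+C}\}$ the same conclusion holds for \emph{every} $s\in\R$. (The statements in (d) are the near-orthogonality of dyadic frequency blocks.)

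For \ref{Item::FuncRevis::FrPCont1} and \ref{Item::FuncRevis::FrPCont2}: the $j$-th summand of $\FrP(\sigma,\omega)$ is $(\Delta_j\sigma)\wedge(S_{j-2}\omega)$, which by the frequency remark above has Fourier support in an annulus $|\xi|\sim 2^j$. For \ref{Item::FuncRevis::FrPCont1}, bound its $L^\infty$ norm by $\|\Delta_j\sigma\|_{L^\infty}\|S_{j-2}\omega\|_{L^\infty}\lesssim 2^{-j\alpha}\|\sigma\|_{\ZygSymb^\alpha}\|\omega\|_{L^\infty}$ using (a)--(c), and conclude via the annulus case of (d), which is valid for all real $\alpha$. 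For \ref{Item::FuncRevis::FrPCont2}, with $\beta<0$, use instead $\|S_{j-2}\omega\|_{L^\infty}\lesssim 2^{-j\beta}\|\omega\|_{\ZygSymb^\beta}$ from (b), so that the summand is $\lesssim 2^{-j(\alpha+\beta)}\|\sigma\|_{\ZygSymb^\alpha}\|\omega\|_{\ZygSymb^\beta}$, and the annulus case of (d) again gives the conclusion.

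For \ref{Item::FuncRevis::FrRCont}: group the sum defining $\FrR$ by its larger index, $\FrR(\sigma,\omega)=\sum_j g_j$ with $g_j:=\sum_{|k-j|\le 1}(\Delta_j\sigma)\wedge(\Delta_k\omega)$. Here both factors sit at frequency $\sim 2^j$, so $g_j$ only has Fourier support in a ball $|\xi|\lesssim 2^{j+C}$; but $\|g_j\|_{L^\infty}\lesssim 2^{-j\alpha}2^{-j\beta}\|\sigma\|_{\ZygSymb^\alpha}\|\omega\|_{\ZygSymb^\beta}=2^{-j(\alpha+\beta)}\|\sigma\|_{\ZygSymb^\alpha}\|\omega\|_{\ZygSymb^\beta}$ by (a), and since $\alpha+\beta>0$ the ball case of (d) applies and yields $\FrR(\sigma,\omega)\in\ZygSymb^{\alpha+\beta}$ with the claimed bound. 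This is precisely where the hypothesis $\alpha+\beta>0$ enters and where the ``ball'' summation is required rather than the ``annulus'' one; it is the main (and essentially the only) delicate point of the lemma.

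For \ref{Item::FuncRevis::ParaProdDecomp}: since $S_N g\to g$ as $N\to\infty$, the telescoping identities $\sigma=\sum_j\Delta_j\sigma$ and $\omega=\sum_k\Delta_k\omega$ give $\sigma\wedge\omega=\sum_{j,k}(\Delta_j\sigma)\wedge(\Delta_k\omega)$, the series converging as distributions by the estimates above (together with the trivial $L^\infty$ bound on $S_{j-2}$ when $\beta\ge 0$). Splitting the double sum into the three regions $k\le j-2$, $|j-k|\le 1$, $k\ge j+2$, and using the graded commutativity $\mu\wedge\nu=(-1)^{kl}\nu\wedge\mu$ for a $k$-form $\mu$ and an $l$-form $\nu$ to bring each region into the normal form appearing in the definitions of $\FrP$ and $\FrR$, one identifies the three regions with $\FrP(\sigma,\omega)$, $\FrR(\omega,\sigma)$, and $(-1)^{kl}\FrP(\omega,\sigma)$ respectively, which is the stated identity. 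Finally, for \ref{Item::FuncRevis::ProductRule}: $d$ commutes with convolution by a scalar function, hence with $\Delta_j$ and $S_{j-2}$; so by the graded Leibniz rule each summand satisfies $d\big((\Delta_j\sigma)\wedge(S_{j-2}\omega)\big)=(\Delta_j d\sigma)\wedge(S_{j-2}\omega)+(-1)^k(\Delta_j\sigma)\wedge(S_{j-2}d\omega)$, and summing over $j$, using the continuity of $d$ on $\Dist$ to exchange $d$ with the sum, gives $d\FrP(\sigma,\omega)=\FrP(d\sigma,\omega)+(-1)^k\FrP(\sigma,d\omega)$.
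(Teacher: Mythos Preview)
Your proof is correct and is essentially a self-contained unpacking of what the paper does by citation: the paper's own proof merely refers to \cite[Theorems 2.82 and 2.85]{BahouriCheminDanchin} for (i)--(iii), notes that (iv) follows from $\psi_N*\omega\to\omega$, and says (v) is immediate from the definitions. Your Littlewood--Paley argument via the annulus/ball quasi-orthogonality lemmas is exactly the content of those cited theorems, so the two approaches coincide in substance.

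One small wrinkle in (iv): the natural splitting of $\sum_{j,k}(\Delta_j\sigma)\wedge(\Delta_k\omega)$ gives $\FrR(\sigma,\omega)$ for the diagonal region $|j-k|\le 1$, not $\FrR(\omega,\sigma)$; these differ by a factor $(-1)^{kl}$ via graded commutativity. The paper's stated identity has $\FrR(\omega,\sigma)$ without this sign, which appears to be a harmless slip since every application in the paper takes $l=0$. Your write-up inherits this by asserting the middle region equals $\FrR(\omega,\sigma)$ directly; to be precise you should either write $\FrR(\sigma,\omega)$ or include the sign $(-1)^{kl}$.
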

\begin{proof}
    For $0$ forms, \ref{Item::FuncRevis::FrPCont1} and \ref{Item::FuncRevis::FrPCont2} can be found in \cite[Theorem 2.82]{BahouriCheminDanchin}
    and \ref{Item::FuncRevis::FrRCont} can be found in
      \cite[Theorem 2.85]{BahouriCheminDanchin}. 
      By passing to their coordinate components we obtain the results for arbitrary forms.
      \ref{Item::FuncRevis::ParaProdDecomp} follows easily from the fact that
      $\sum_{j=0}^{N} (\psi_j-\psi_{j-1}) *\omega=\psi_{N}*\omega \xrightarrow{N\rightarrow \infty} \omega$. \ref{Item::FuncRevis::ProductRule} follows directly from the definitions. 
\end{proof}

\begin{proof}[Proof of Proposition \ref{Prop::FuncRn::dOfLowRegularityForm}]
Let $\theta\in \ZygSymb^\gamma_{\loc}\mleft(U; \mywedge^k T^{*}U\mright)$
be such that $d\theta\in \ZygSymb^{\beta-1}_{\loc}\mleft(U; \mywedge^k T^{*}U\mright)$.  We will show, for any point $p\in U$, there is a neighborhood 
$V'\subseteq V$ of $F(p)$ and $\tau\in \ZygSymb^{\beta}_{\loc}\mleft(V'; \mywedge^k T^{*} V'\mright)$, with $d(F_{*}\theta)\big|_{V'}=d\tau$.  This will
prove \ref{Item::FuncRn::dOfLowRegularityForm::dthetasmooth}$\Rightarrow$\ref{Item::FuncRn::dOfLowRegularityForm::dFthetasmooth} and the existence of $\tau$ as claimed in the proposition.  The reverse implication follows by reversing the roles
of $F_{*}\theta$ and $\theta$.

Let $U'\Subset U$ be an open neighborhood of $p$. 
By Lemma \ref{Lemma::FuncRevis::DecomposeForms}, there are $\rho\in\Co^\beta_\loc\mleft(U',\mywedge^kT^*U'\mright)$ and $\xi\in\Co^{\gamma+1}_\loc\mleft(U',\mywedge^{k-1}T^*U'\mright)$ such that $\theta\big|_{U'}=\rho+d\xi$. By Lemma \ref{Lemma::FuncRn::PullbackComm} \ref{Item::FuncRn::dComm}, $dF_*d\xi=d^2F_*\xi=0$, so
\begin{equation}\label{Eqn::FuncRevis::DecomposeFstarTheta}
    dF_{*} \theta\big|_{F(U')}= d F_{*} \rho\big|_{F(U')}+ dF_{*}d\xi\big|_{F(U')} =d F_{*} \rho\big|_{F(U')}.
\end{equation}

Since $ \rho\in \ZygSymb^{\beta}_{\loc}\mleft(U',\mywedge^kT^*U'\mright)$ and $F$ is a $\ZygSymb^{\alpha+1}_{\loc}$ diffeomorphism, Lemma \ref{Lemma::FuncRevis::PushForwardFuncSpaces} shows $F_{*} \rho\in \ZygSymb^{\min\{\alpha, \beta\}}_{\loc}\mleft(F(U'),\mywedge^k T^*\R^n\mright)$.  If $\beta\leq \alpha$,
then we have $dF_{*} \rho\in \ZygSymb^{ \beta-1}_{\loc}$,
completing the proof with $V':=F(U')$ and $\tau:=F_{*}\rho$.

However, if $\beta>\alpha$, this does not imply the desired result.
To show $d F_{*} \rho\in \ZygSymb^{\beta-1}_{\loc}$ near $F(p)$, we construct a new $k$-form $\tau\in\Co^\beta$ such that $dF_*\rho=d\tau$ near $F(p)$. The construction requires paraproducts.


Let $U''\Subset U'$ be a smaller open neighborhood of $p$. 
We claim that there exist $\rhot_{i_1\cdots i_k}\in \ZygSymb^{\beta}(\R^n)$
and $\tmu^{i_1\cdots i_k}\in \ZygSymb^{\alpha+1}\mleft(\R^n; \mywedge^{k-1} T^{*}\R^n\mright)$, with compact supports and such that
\begin{equation}\label{Eqn::FuncRevis::BetterDReg::1}
    F_{*}\rho\big|_{F(U'')} =\sum_{1\leq i_1<\cdots<i_k\leq n} \rhot_{i_1\ldots i_k} d \tmu^{i_1\ldots i_k}\big|_{F(U'')}.
\end{equation}
Write $\rho=\sum_{1\leq i_1<\cdots<i_k\leq n} \rho_{i_1\ldots i_k} dx^{i_1}\wedge \cdots \wedge dx^{i_k}$ and denote $\Phi:=F^{-1}:V\xrightarrow{\sim}U$. Take $\chi_1\in C_c^\infty(F(U'))$  such that $\chi_1\big|_{F(U'')}\equiv1$.
Define
$(\phit^{1},\ldots, \phit^{n}):=\chi_1\Phi$, so that each $\phit^j$
is compactly supported and $\phit^j\in \ZygSymb^{\alpha+1}(\R^n)$.
Let $\rhot_{i_1\ldots i_k}:=\chi_1(\rho_{i_1\ldots_k}\circ \Phi)\in \ZygSymb^{\beta}(\R^n)$.
We have $\rho_{i_1\dots i_k}\circ\Phi\big|_{F(U'')}=\rhot_{i_1\dots i_k}\big|_{F(U'')}$ and $\phi^j\big|_{F(U'')}=\phit^j\big|_{F(U'')}$, so
\begin{equation}\label{Eqn::FuncRevis::BetterDReg::2}
    F_{*}\rho\big|_{F(U'')} = \sum_{1\leq i_1<\cdots <i_k\leq n} \rhot_{i_1\ldots i_k} d\phit^{i_1}\wedge \cdots \wedge d\phit^{i_k}\big|_{F(U'')}.
\end{equation}
Using Lemma \ref{Lemma::FuncRevis::GreensOp}, we set $\mu^{i_1\cdots i_k}:=\Green*\codiff (d\phit^{i_1}\wedge \cdots \wedge d\phit^{i_k})\in \ZygSymb_{\loc}^{\alpha+1}\mleft(\R^n; \mywedge^{k-1} T^{*}\R^n\mright)$.  Since $d\phit^{i_1}\wedge \cdots \wedge d\phit^{i_k}$ is closed, we have
\begin{equation}\label{Eqn::FuncRevis::BetterDReg::3}
\begin{split}
    &d\mu^{i_1\cdots i_k} = d\codiff \Green*(d\phit^{i_1}\wedge \cdots \wedge d\phit^{i_k}) + \codiff \Green * d(d\phit^{i_1}\wedge \cdots \wedge d\phit^{i_k})
    \\&=\Lap \Green *(d\phit^{i_1}\wedge \cdots \wedge d\phit^{i_k})
    =d\phit^{i_1}\wedge \cdots \wedge d\phit^{i_k}.
\end{split}
\end{equation}
Setting $\tmu^{i_1\cdots i_k}:= \chi_1\mu^{i_1\cdots i_k}\in \ZygSymb^{\alpha+1}\mleft(\R^n; \mywedge^{k-1} T^{*}\R^n\mright)$, 
\eqref{Eqn::FuncRevis::BetterDReg::1} follows by combining
\eqref{Eqn::FuncRevis::BetterDReg::2} and \eqref{Eqn::FuncRevis::BetterDReg::3}.

Define
\begin{equation}\label{Eqn::FuncRevis::BetterDReg::4}
    \tau:=\sum_{1\leq i_1<\cdots<i_k\leq n} \FrP( \rhot_{i_1\ldots i_k}, d\tmu^{i_1\ldots i_k}) + (-1)^k \FrP(\tmu^{i_1\ldots, i_k}, d\rhot_{i_1\ldots i_k}) +\FrR(\rhot_{i_1\ldots i_k}, d\tmu^{i_1\ldots i_k}).
\end{equation}
We will show $\tau\in \ZygSymb^{\beta}\mleft(\R^n;\mywedge^kT^*\R^n\mright)$
and $d\tau\big|_{F(U'')} = d F_{*} \rho\big|_{F(U'')}$; this will complete
the proof with $V':=F(U'')$ and  we have used \eqref{Eqn::FuncRevis::DecomposeFstarTheta}.

We turn to showing $d\tau\big|_{F(U'')} = d F_{*} \rho\big|_{F(U'')}$.
Since $\tmu^{i_1\ldots i_k}$ is a $(k-1)$-form, 
Lemma \ref{Lemma::FuncRevis::ParaProdResults} \ref{Item::FuncRevis::ProductRule}
shows $d\FrP(\tmu^{i_1\ldots i_k}, \rhot_{i_1\ldots i_k}) = \FrP( d\tmu^{i_1\ldots i_k}, \rhot_{i_1\ldots i_k}) + (-1)^{k-1} \FrP(\tmu^{i_1\ldots i_k}, d\rhot_{i_1\ldots i_k})$.  Applying differential to both sides of this equation 
and using $d^2=0$, we obtain
\begin{equation}\label{Eqn::FuncRevis::dImprov::MovedToOtherSide}
    d\FrP(d\tmu^{i_1\ldots i_k}, \rhot_{i_1\ldots i_k}) = (-1)^k d\FrP(\tmu^{i_1\ldots i_k}, d\rhot_{i_1\ldots i_k}).
\end{equation}
Using Lemma \ref{Lemma::FuncRevis::ParaProdResults} \ref{Item::FuncRevis::ParaProdDecomp} in the case $l=0$, \eqref{Eqn::FuncRevis::dImprov::MovedToOtherSide}, and \eqref{Eqn::FuncRevis::BetterDReg::1}, we have
\begin{equation*}
    \begin{split}
        &d\tau\big|_{F(U'')}
        = \sum_{1\leq i_1<\cdots<i_k\leq n} d\FrP( \rhot_{i_1\ldots i_k}, d\tmu^{i_1\ldots i_k}) + (-1)^k d\FrP(\tmu^{i_1\ldots i_k}, d\rhot_{i_1\ldots i_k}) + d\FrR(\rhot_{i_1\ldots i_k}, d\tmu^{i_1\ldots i_k})\big|_{F(U'')}
        \\&=\sum_{1\leq i_1<\cdots<i_k\leq n} d\FrP( \rhot_{i_1\ldots i_k}, d\tmu^{i_1\ldots i_k}) + d\FrP(d\tmu^{i_1\ldots i_k}, \rhot_{i_1\ldots i_k}) + d\FrR(\rhot_{i_1\ldots i_k}, d\tmu^{i_1\ldots i_k})\big|_{F(U'')}
        \\&= \sum_{1\leq i_1<\cdots<i_k\leq n} d \mleft(\rhot_{i_1\ldots i_k} d\tmu^{i_1\ldots i_k}\mright)\big|_{F(U'')}
        \\&=d F_{*}\rho\big|_{F(U'')},
    \end{split}
\end{equation*}
as desired.

Finally, we show $\tau\in \ZygSymb^{\beta}\mleft(\R^n;\mywedge^kT^*\R^n\mright)$, which will complete the proof.
Using that $\rhot_{i_1\ldots i_k}\in \ZygSymb^{\beta}(\R^n;T^*\R^n)$ and $d\tmu^{i_1\ldots i_k}\in \ZygSymb^{\alpha}\mleft(\R^n;\mywedge^kT^*\R^n\mright)\subset L^\infty$, 
Lemma \ref{Lemma::FuncRevis::ParaProdResults} \ref{Item::FuncRevis::FrPCont1}
shows $\FrP(\rhot_{i_1\ldots i_k}, d\tmu^{i_1\ldots i_k})\in \ZygSymb^{\beta}\mleft(\R^n;\mywedge^kT^*\R^n\mright)$
and Lemma \ref{Lemma::FuncRevis::ParaProdResults} \ref{Item::FuncRevis::FrRCont}
shows $\FrR(\rhot_{i_1\ldots i_k}, d\tmu^{i_1\ldots i_k})\in \ZygSymb^{\alpha+\beta}\mleft(\R^n;\mywedge^kT^*\R^n\mright)\subsetneq \ZygSymb^{\beta}$.

Thus, the proof will be complete once we show $\FrP(\tmu^{i_1\ldots i_k}, d\rhot_{i_1\ldots i_k})\in \ZygSymb^\beta\mleft(\R^n;\mywedge^kT^*\R^n\mright)$.
If $\beta>1$, then $d\rhot_{i_1\ldots i_k}\in L^\infty$.
Using that $\tmu^{i_1\ldots i_k}\in \ZygSymb^{\alpha+1}$,
Lemma \ref{Lemma::FuncRevis::ParaProdResults} \ref{Item::FuncRevis::FrPCont1} then
implies $\FrP(\tmu^{i_1\ldots i_k}, d\rhot_{i_1\ldots i_k})\in \ZygSymb^{\alpha+1}\subseteq \ZygSymb^{\beta}$.
If $\beta\leq 1$, then $d\rhot_{i_1\ldots i_k}\in \ZygSymb^{\beta}\subsetneq \ZygSymb^{\beta-1-\alpha}$.  Since $\beta-1-\alpha<0$
and $\tmu^{i_1\ldots i_k}\in \ZygSymb^{\alpha+1}$,
Lemma \ref{Lemma::FuncRevis::ParaProdResults} \ref{Item::FuncRevis::FrPCont2}
shows $\FrP(\tmu^{i_1\ldots i_k}, d\rhot_{i_1\ldots i_k}) \in \ZygSymb^{\alpha+1+\beta-1-\alpha}= \ZygSymb^{\beta}$.
This completes the proof.
\end{proof}


\begin{proof}[Proof of Lemma \ref{Lemma::FuncRn::dwelldefined}]
Let $\As=\{\phi:U_\phi\subseteq\Manifold\to\R^n\}$ be the $\Co^{\alpha+1}$-atlas of $\Manifold$. By Definition \ref{Defn::FuncRevis::ObjonManifolds}, $\theta\in\Co^\gamma_\loc\mleft(\Manifold;\mywedge^kT^*\Manifold\mright)$ is the collection $\Big\{\theta_\phi\in\Co^\gamma_\loc\mleft(U_\phi;\mywedge^kT^*U_\phi\mright)\Big\}_{\phi\in\As}$  which satisfies $(\phi\circ\psi)^*\theta_\psi\big|_{\phi(U_\phi\cap U_\psi)}=\theta_\phi\big|_{\phi(U_\phi\cap U_\psi)}$ whenever $U_\phi\cap U_\psi\neq\varnothing$.

We claim that $\big\{d\theta_\phi\big\}_{\phi\in\As}$ defines a $\Co^{\beta-1}_\loc$ $(k+1)$-form on $\Manifold$ (see Definition \ref{Defn::FuncRevis::ObjonManifolds}). Namely, we claim
\begin{equation}\label{Eqn::FuncRevis::ToShowdWellDefined}
(\phi\circ\psi^{-1})^*d\theta_\psi\big|_{\phi(U_\phi\cap U_\psi)}=d\theta_\phi\big|_{\phi(U_\phi\cap U_\psi)},\quad\text{ whenever }\phi,\psi\in\As\text{ satisfy }U_\phi\cap U_\psi\neq\varnothing,
\end{equation}
and their common value on $\phi(U_\phi\cap U_\psi)$ is $\Co^{\beta-1}_\loc$.

Indeed, once \eqref{Eqn::FuncRevis::ToShowdWellDefined} is shown, then $\tau=\{d\theta_\phi\}_{\phi\in\As}$ is the  desired $(k+1)$-form.  To see that this $\tau$ is closed in the sense
of the statement of the lemma, note that if $F\in\As$ is a $\Co^{\alpha+1}$-coordinate chart on $\Manifold$, then $F_*\tau=d\theta_F$ and therefore
$dF_{*}\tau = d^2 \theta_F=0$.

First, we claim that $d\theta_\phi$ is $\Co^{\beta}_{\loc}$ for every $\phi\in \As$.
The assumption that  $d\theta$ has regularity $\Co^{\beta-1}_\loc(\Manifold)$
(see Definition \ref{Defn::FuncRn::dRegularityLow}) says that we can find a covering of coordinate charts $\{\phi_j:U_{\phi_j}\subseteq\Manifold\to\R^n\}_{j\in I}\subseteq\As$ (that is $\bigcup_jU_{\phi_j}=\Manifold$) such that $d(\phi_j)_*\theta\in \Co^{\beta-1}_\loc\mleft(U_{\phi_j};\mywedge^{k+1}T^*(U_{\phi_j})\mright)$, i.e. $d\theta_{\phi_j}\in \Co^{\beta-1}_\loc\mleft(U_{\phi_j};\mywedge^{k+1}T^*(U_{\phi_j})\mright)$ for each $j\in I$.

Let $\psi\in\As$. For each $p\in U_\psi\subseteq\Manifold$, we can find a $j_0\in I$ such that $p\in U_{\phi_{j_0}}$. By Proposition \ref{Prop::FuncRn::dOfLowRegularityForm} we see that $d(\psi\circ \phi_{j_0})^*\theta_{\phi_{j_0}}\big|_{\psi(U_\psi\cap U_{\phi_{j_0}})}\in \Co^{\beta-1}_\loc\mleft(\psi(U_\psi\cap U_{\phi_{j_0}});\mywedge^{k+1}T^*\R^n\mright)$. By \eqref{Eqn::FuncRevis::TransitionForms}, $d\theta_\psi\big|_{\psi(U_\psi\cap U_{\phi_{j_0}})}=d(\psi\circ \phi_{j_0})^*\theta_{\phi_{j_0}}\big|_{\psi(U_\psi\cap U_{\phi_{j_0}})}$ so $d\theta_\psi $ is $\Co^{\beta-1}_\loc$ near $p\in U_\psi$. Since $p$ is arbitrary, we know $d\theta_{\psi}\in \Co^{\beta-1}_\loc\mleft(\psi(U_\psi);\mywedge^{k+1}T^*\R^n\mright)$. Therefore, $\{d\theta_\phi\}$ is a collection of $\Co^{\beta-1}_\loc$-forms.

We turn to proving \eqref{Eqn::FuncRevis::ToShowdWellDefined}.
Let $p\in U_\phi$, 
and let $V\Subset U_\phi$ be a neighborhood of $p$.
We will show:
\begin{equation}\label{Eqn::FuncRevis::ToShowChangeCoords}
    (\phi\circ\psi^{-1})^*d\theta_\psi\big|_{\phi(V\cap U_\psi)}=d\theta_\phi\big|_{\phi(V\cap U_\psi)}\text{ whenever }\psi\in\As \text{ satisfies }V\cap U_\psi\neq\varnothing.
\end{equation}


Since $\phi(V)\Subset \phi(U_\phi)$, by Lemma \ref{Lemma::FuncRevis::DecomposeForms} there exists a $\rho\in\Co^\beta_\loc\mleft(\phi(V);\mywedge^kT^*\R^n\mright)$ and $\xi\in\Co^{\gamma+1}_\loc\mleft(\phi(V);\mywedge^{k-1}T^*\R^n\mright)$ such that $\theta_\phi\big|_{\phi(V)}=\rho+d\xi$, and
therefore
$d\theta_\phi\big|_{\phi(V)}=d\rho$.

By Lemma \ref{Lemma::FuncRn::PullbackComm} \ref{Item::FuncRn::dComm}, since $\beta>1-\alpha$ and $\gamma+1>1-\alpha$,  $$d(\psi\circ\phi^{-1})^*\rho=(\psi\circ\phi^{-1})^*d\rho,\quad d(\psi\circ\phi^{-1})^*\xi=(\psi\circ\phi^{-1})^*d\xi,\qquad\text{on }\psi(V\cap U_\psi). $$

Therefore on $\psi(V\cap U_\psi)$,
$$(\psi\circ\phi^{-1})^*d\theta_\phi=(\psi\circ\phi^{-1})^*d\rho=d(\psi\circ\phi^{-1})^*\rho=d(\psi\circ\phi^{-1})^*\theta_\phi-d(\psi\circ\phi^{-1})^*d\xi=d\theta_\psi-d^2(\psi\circ\phi^{-1})^*\xi=d\theta_\psi.$$
This proves \eqref{Eqn::FuncRevis::ToShowChangeCoords}.
Since $p\in U_\phi$ was arbitrary,
\eqref{Eqn::FuncRevis::ToShowdWellDefined} follows, completing the proof.
\end{proof}

In the proof of Theorem 3.1, we need a  version of Proposition \ref{Prop::FuncRn::dOfLowRegularityForm} on 1-forms where we keep track of various estimates. 
We are concerned with the case when
$F$ is a $\Co^{\alpha+1}$-diffeomorphism on $\Ball^n$ and 
is close to the identity map, and our 1-form $\theta$ defined on $\Ball^n$ is such that $\|\theta\|_{\Co^\alpha}+\|d\theta\|_{\Co^{\beta-1}}$ is small.

\begin{prop}\label{Prop::FuncRevis::QuantdOfLowRegularityForm}
    Let $\alpha>0$ and $\beta\in[\alpha,\alpha+1]$ be two real numbers, then there is a constant $C_0=C_0(n,\alpha,\beta)>1$ satisfying the following:
    
    Suppose $R\in\Co^{\alpha+1}(\Ball^n;\R^n)$ satisfies $R\big|_{\partial\Ball^n}=0$ and $\|R\|_{\Co^{\alpha+1}}\le C_0^{-1}$, then the map $F:=\id+R:\Ball^n\to\R^n$ is a $\Co^{\alpha+1}$-diffeomorphism of $\Ball^n$. Moreover,
    \begin{enumerate}[parsep=-0.3ex,label=(\roman*)]
        \item\label{Item::FuncRevis::QuantdOfLowRegularityForm::1} Let $\Phi=(\phi^1,\dots,\phi^n):\Ball^n\xrightarrow{\sim}\Ball^n$ be the inverse map of $F$.  Then, \begin{equation}\label{Eqn::FuncRevis::QuantdOfLowRegularityForm::1}
            \|g\circ\Phi\|_{\Co^\beta(\Ball^n)}\le C_0\|g\|_{\Co^\beta(\Ball^n)},\quad\forall g\in\Co^\beta(\Ball^n).
        \end{equation}
        In particular $\|\Phi\|_{\Co^{\alpha+1}(\Ball^n;\R^n)}\le C_0$.
        \item\label{Item::FuncRevis::QuantdOfLowRegularityForm::2}  $\|\nabla\Phi-I_n\|_{\Co^{\alpha}(\Ball^n;\Mbb^{n\times n})}\le C_0\|R\|_{\Co^{\alpha+1}(\Ball^n;\R^n)}$.
        \item\label{Item::FuncRevis::QuantdOfLowRegularityForm::3} If $\theta\in\Co^\alpha_c\mleft(\Ball^n;T^*\Ball^n\mright)$ satisfies $\supp\theta\subsetneq\frac12\Ball^n$ and $d\theta\in\Co^{\beta-1}\mleft(\Ball^n;\mywedge^2T^*\Ball^n\mright)$, then $\supp F_*\theta\subsetneq\frac34\Ball^n$ and 
        \begin{equation}\label{Eqn::FuncRevis::QuantdOfLowRegularityForm::2}
            \|d(F_*\theta)\|_{\Co^{\beta-1}\mleft(\Ball^n;\wedge^2T^*\Ball^n\mright)}\le C_0\|d\theta\|_{\Co^{\beta-1}\mleft(\Ball^n;\wedge^2T^*\Ball^n\mright)}.
        \end{equation}
    \end{enumerate}
\end{prop}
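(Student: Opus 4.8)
The plan is to prove the diffeomorphism claim together with parts \ref{Item::FuncRevis::QuantdOfLowRegularityForm::1} and \ref{Item::FuncRevis::QuantdOfLowRegularityForm::2} by a standard bootstrap, and to obtain part \ref{Item::FuncRevis::QuantdOfLowRegularityForm::3} as a quantitative rerun of the proof of Proposition \ref{Prop::FuncRn::dOfLowRegularityForm}. For $C_0$ large depending only on $n,\alpha,\beta$, the hypothesis $\|R\|_{\Co^{\alpha+1}}\le C_0^{-1}$ gives $\|\nabla R\|_{C^0}\le\|\nabla R\|_{\Co^\alpha}<1$, so $F=\id+R$ is a Lipschitz map satisfying $|Fx-Fy|\ge(1-\|\nabla R\|_{C^0})|x-y|$, hence injective, and since $R|_{\partial\Ball^n}=0$ an invariance-of-domain argument shows $F:\overline{\Ball^n}\to\overline{\Ball^n}$ is a homeomorphism fixing the boundary. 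The engine for the rest is the identity $\nabla\Phi=(I+(\nabla R)\circ\Phi)^{-1}$ for $\Phi:=F^{-1}$: since $\Phi$ is Lipschitz with constant $\le2$, one has $\|(\nabla R)\circ\Phi\|_{\Co^\alpha}\lesssim\|\nabla R\|_{\Co^\alpha}\le\tfrac12$, and the Neumann series in the Banach algebra $\Co^\alpha$ (Lemma \ref{Lemma::FuncSpace::Product}) gives $\nabla\Phi\in\Co^\alpha$ with $\|\nabla\Phi-I\|_{\Co^\alpha}\lesssim\|(\nabla R)\circ\Phi\|_{\Co^\alpha}\lesssim\|R\|_{\Co^{\alpha+1}}$, which is part \ref{Item::FuncRevis::QuantdOfLowRegularityForm::2}, upgrades $\Phi$ to $\Co^{\alpha+1}$, and (for $C_0$ large enough, iterating once more when $\alpha>1$) gives $\|\Phi\|_{\Co^{\alpha+1}}\le C_0$. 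Part \ref{Item::FuncRevis::QuantdOfLowRegularityForm::1} is then the standard composition estimate $\|g\circ\Phi\|_{\Co^\beta}\le C(n,\alpha,\beta,\|\Phi\|_{\Co^{\alpha+1}})\|g\|_{\Co^\beta}$, valid since $\beta\le\alpha+1$, with the constant absorbed into $C_0$.

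For part \ref{Item::FuncRevis::QuantdOfLowRegularityForm::3}, the support statement is immediate: $\|R\|_{C^0}\le C_0^{-1}$ gives $F(\tfrac12\Ball^n)\subseteq\{|x|\le\tfrac12+C_0^{-1}\}\Subset\tfrac34\Ball^n$ once $C_0>4$, so $\supp F_*\theta=F(\supp\theta)\subsetneq\tfrac34\Ball^n$. For the estimate, extend $\theta$ by zero to $\R^n$ and use the Newtonian-potential (Hodge) decomposition: set $\rho:=\Green*\codiff d\theta$ and $\xi:=\Green*\codiff\theta$ (a function), so that, using $\Lap=d\codiff+\codiff d$ and $\Lap\Green=\delta$, one has $\theta=\rho+d\xi$ on $\R^n$ with $d\rho=d\theta$; Lemma \ref{Lemma::FuncRevis::NewtonianBoundedness} gives $\xi\in\Co^{\alpha+1}_\loc(\R^n)$ and $\|\rho\|_{\Co^\beta(\Ball^n)}\lesssim\|\codiff d\theta\|_{\Co^{\beta-2}(\Ball^n)}\lesssim\|d\theta\|_{\Co^{\beta-1}(\Ball^n)}$. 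Since $\xi$ is $C^1$, $\Phi^*d\xi=d(\xi\circ\Phi)$, hence on $\Ball^n$
\begin{equation*}
  d(F_*\theta)=d(\Phi^*\theta)=d(\Phi^*\rho)+d^2(\xi\circ\Phi)=d(\Phi^*\rho),
\end{equation*}
and it remains to show $d(\Phi^*\rho)\in\Co^{\beta-1}(\Ball^n)$ with norm $\lesssim\|\rho\|_{\Co^\beta(\Ball^n)}$, the implied constant depending only on $n,\alpha,\beta,C_0$.

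This is precisely the case ($\beta$ possibly exceeding $\alpha$) treated by paraproducts in the proof of Proposition \ref{Prop::FuncRn::dOfLowRegularityForm}, and the plan is to repeat that construction while tracking constants. Fix once and for all $\chi_1\in C_c^\infty(\Ball^n)$ with $\chi_1\equiv1$ on $\tfrac34\Ball^n$; writing $\Phi=(\phi^1,\dots,\phi^n)$ and $\rho=\sum_i\rho_i\,dx^i$, set $\phit^j:=\chi_1\phi^j$, $\rhot_i:=\chi_1(\rho_i\circ\Phi)$ (the form $\phit^j$ plays the role of the auxiliary $\mu^j=\Green*\codiff\,d\phit^j$ there, which for $1$-forms simply equals $\phit^j$), and
\begin{equation*}
  \tau:=\sum_{i=1}^n\FrP(\rhot_i,d\phit^i)-\FrP(\phit^i,d\rhot_i)+\FrR(\rhot_i,d\phit^i).
\end{equation*}
As in the proof of Proposition \ref{Prop::FuncRn::dOfLowRegularityForm}, Lemma \ref{Lemma::FuncRevis::ParaProdResults}\ref{Item::FuncRevis::ParaProdDecomp},\ref{Item::FuncRevis::ProductRule} give $d\tau=\sum_i d(\rhot_i\,d\phit^i)$ on $\R^n$, which agrees with $d(\Phi^*\rho)$ on $\{\chi_1=1\}^\circ\supseteq\tfrac34\Ball^n$; since $\supp d(\Phi^*\rho)=\supp d(F_*\theta)$ is a compact subset of $\{|x|\le\tfrac12+C_0^{-1}\}$, choosing $\chi_2\in C_c^\infty(\tfrac34\Ball^n)$ with $\chi_2\equiv1$ there yields $d(F_*\theta)=d(\Phi^*\rho)=\chi_2\,d\tau$. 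Finally $\|\tau\|_{\Co^\beta(\R^n)}\lesssim\|\rho\|_{\Co^\beta(\Ball^n)}$: by part \ref{Item::FuncRevis::QuantdOfLowRegularityForm::1}, $\|\rhot_i\|_{\Co^\beta(\R^n)}\lesssim\|\rho\|_{\Co^\beta(\Ball^n)}$, while $\|\phit^i\|_{\Co^{\alpha+1}(\R^n)}$ and $\|d\phit^i\|_{\Co^\alpha(\R^n)}$ are $\lesssim1$; estimate the first and third terms of $\tau$ by Lemma \ref{Lemma::FuncRevis::ParaProdResults}\ref{Item::FuncRevis::FrPCont1},\ref{Item::FuncRevis::FrRCont} (landing in $\Co^\beta$ and $\Co^{\alpha+\beta}\subseteq\Co^\beta$), and the middle term by \ref{Item::FuncRevis::FrPCont1} when $\beta>1$ (so $d\rhot_i\in L^\infty$, output in $\Co^{\alpha+1}\subseteq\Co^\beta$) and by \ref{Item::FuncRevis::FrPCont2} when $\beta\le1$ (so $d\rhot_i\in\Co^{\beta-1}$ with $\beta-1<0$, output in $\Co^{\alpha+\beta}\subseteq\Co^\beta$), using $\beta\le\alpha+1$ throughout. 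Chaining the bounds gives $\|d(F_*\theta)\|_{\Co^{\beta-1}(\Ball^n)}\lesssim\|d\tau\|_{\Co^{\beta-1}(\R^n)}\lesssim\|\tau\|_{\Co^\beta(\R^n)}\lesssim\|d\theta\|_{\Co^{\beta-1}(\Ball^n)}$, which is the claim after renaming $C_0$.

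The main obstacle is this last step: one must check that every implied constant in the paraproduct argument depends only on $n,\alpha,\beta$ and on $\|\Phi\|_{\Co^{\alpha+1}}\le C_0$, and the localization via $\chi_1,\chi_2$ is essential because the paraproduct construction is intrinsically local while the conclusion must hold on all of $\Ball^n$ — it is the support hypothesis on $\theta$ that confines $d(\Phi^*\rho)$ to a fixed region where the construction applies. Everything else is routine bookkeeping, the diffeomorphism/bootstrap being standard once the composition estimate \ref{Item::FuncRevis::QuantdOfLowRegularityForm::1} is in hand.
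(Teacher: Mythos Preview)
Your proposal is correct and follows essentially the same route as the paper: the diffeomorphism and parts \ref{Item::FuncRevis::QuantdOfLowRegularityForm::1}, \ref{Item::FuncRevis::QuantdOfLowRegularityForm::2} via a Neumann-series bootstrap on $\nabla\Phi=(I+(\nabla R)\circ\Phi)^{-1}$ (the paper outsources this to \cite[Lemmas 5.8, 5.9]{StovallStreetII} but the content is the same), and part \ref{Item::FuncRevis::QuantdOfLowRegularityForm::3} via the Newtonian-potential splitting $\theta=\rho+d\xi$ followed by the same paraproduct form $\tau$ built from $\tilde\rho_i=\chi_1(\rho_i\circ\Phi)$ and $\tilde\phi^i=\chi_1\phi^i$. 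Your extra cutoff $\chi_2$ to pass from the $\tfrac34\Ball^n$ estimate to $\Ball^n$ is a point the paper leaves implicit; one small slip is the endpoint $\beta=1$ in your middle-term estimate, where $\beta-1=0$ is not strictly negative so Lemma \ref{Lemma::FuncRevis::ParaProdResults}\ref{Item::FuncRevis::FrPCont2} does not apply directly --- the paper fixes this by first embedding $d\tilde\rho_i\in\Co^{\beta-1}\subset\Co^{\beta-1-\alpha}$ (using $\alpha>0$) before invoking \ref{Item::FuncRevis::FrPCont2}.
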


In the proof of Proposition \ref{Prop::FuncRevis::QuantdOfLowRegularityForm} we need to follow convention for matrix-valued functions.

\begin{conv}\label{Conv::FuncRevis::MatrixNorms}
For matrix-valued map $A=(a_i^j):\Ball^n\to\Mbb^{n\times n}$, we use the matrix norm
\begin{equation}\label{Eqn::FuncRevis::MatrixNorms}
    |A(x)|_{\Mbb^{n\times n}}:=\sup\limits_{v\in\R^n\backslash\{0\}}\frac{|A(x)\cdot v|}{|v|},\quad\|A\|_{C^0(\Ball^n;\Mbb^{n\times n})}=\sup\limits_{x\in\Ball^n}|A(x)|_{\Mbb^{n\times n}}.
\end{equation}

For Zygmund-H\"older norms of $A$ we  use the component-wise norm; namely, $\|A\|_{\Co^\alpha(\Ball^n;\Mbb^{n\times n})}:=\|(a_1^1,\dots,a_n^n)\|_{\Co^\alpha(\Ball^n;\R^{n^2})}$.
\end{conv}
\begin{rmk}
Let $\alpha>0$, and let $U\subseteq\R^n$ be an open set. It follows
from Lemma \ref{Lemma::FuncSpace::Product} that there is a $\tilde C_{U,\alpha}>0$, such that  
\begin{equation}\label{Eqn::FuncRevis::ProdConst}
    \|AB\|_{\Co^\alpha(U;\Mbb^{n\times n})}\le \tilde C_{U,\alpha}\|A\|_{\Co^\alpha(U;\Mbb^{n\times n})}\|B\|_{\Co^\alpha(U;\Mbb^{n\times n})},\quad\forall A,B\in\Co^\alpha(U;\Mbb^{n\times n}).
\end{equation}
\end{rmk}

\begin{lemma}\label{Lemma::FuncRevis::InvertingMatrix}
    Let $\alpha>0$, and let $B\subset\R^n$ be an open ball. There is a $\tilde c_{B,\alpha}>0$, such that if $\|A\|_{\Co^\alpha(B;\Mbb^{n\times n})}<\tilde c_{B,\alpha}$, then $I+A(x)$ is an invertible matrix for every $x\in B$. In addition, the map
    \begin{equation*}
        A\mapsto (I+A)^{-1}:\{M\in\Co^\alpha(B;\Mbb^{n\times n}):\|M\|_{\Co^\alpha}<\tilde c_{B,\alpha}\}\to\Co^\alpha(B;\Mbb^{n\times n}),
    \end{equation*}
    is continuous and satisfies $\|(I+A)^{-1}-I\|_{\Co^\alpha(B;\Mbb^{n\times n})}\le 2\|A\|_{\Co^\alpha(B;\Mbb^{n\times n})}$.
\end{lemma}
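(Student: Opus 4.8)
The plan is to run the classical Neumann series argument, but inside the Banach algebra $\Co^\alpha(B;\Mbb^{n\times n})$ rather than inside $C^0$. The two facts that make this work are already available: $\Co^\alpha(B)$ is complete and its norm controls the sup-norm, and by \eqref{Eqn::FuncRevis::ProdConst} (which itself comes from Lemma \ref{Lemma::FuncSpace::Product}) matrix multiplication is bounded on $\Co^\alpha(B;\Mbb^{n\times n})$ with operator norm at most $\tilde C_{B,\alpha}$. I would therefore take $\tilde c_{B,\alpha}:=\min\{1,(2\tilde C_{B,\alpha})^{-1}\}$, shrinking it further if necessary so that $\|A\|_{\Co^\alpha(B;\Mbb^{n\times n})}<\tilde c_{B,\alpha}$ forces $\|A\|_{C^0(B;\Mbb^{n\times n})}<1$, and assume $\|A\|_{\Co^\alpha(B;\Mbb^{n\times n})}<\tilde c_{B,\alpha}$ throughout.

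First, iterating \eqref{Eqn::FuncRevis::ProdConst} gives $\|A^k\|_{\Co^\alpha}\le\tilde C_{B,\alpha}^{k-1}\|A\|_{\Co^\alpha}^k$ for $k\ge1$, so $\sum_{k\ge0}\|(-A)^k\|_{\Co^\alpha}\le 1+\|A\|_{\Co^\alpha}\sum_{k\ge1}(\tilde C_{B,\alpha}\|A\|_{\Co^\alpha})^{k-1}<\infty$ because $\tilde C_{B,\alpha}\|A\|_{\Co^\alpha}<1/2$. By completeness the partial sums $S_N:=\sum_{k=0}^N(-A)^k$ converge in $\Co^\alpha(B;\Mbb^{n\times n})$ to some $S$. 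Next I would identify $S$ with $(I+A)^{-1}$: since $\|A\|_{C^0}<1$, each matrix $I+A(x)$ is invertible with $(I+A(x))^{-1}=\sum_{k\ge0}(-A(x))^k$, and since the $\Co^\alpha$-norm controls the sup-norm the sequence $S_N$ converges uniformly on $B$; hence $S(x)=(I+A(x))^{-1}$ for every $x$, that is $S=(I+A)^{-1}\in\Co^\alpha(B;\Mbb^{n\times n})$. The norm bound is then immediate from the same geometric series: $\|(I+A)^{-1}-I\|_{\Co^\alpha}=\|S-I\|_{\Co^\alpha}\le\sum_{k\ge1}\tilde C_{B,\alpha}^{k-1}\|A\|_{\Co^\alpha}^k=\frac{\|A\|_{\Co^\alpha}}{1-\tilde C_{B,\alpha}\|A\|_{\Co^\alpha}}\le 2\|A\|_{\Co^\alpha}$.

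For continuity of $A\mapsto(I+A)^{-1}$ I would use the resolvent identity $(I+A)^{-1}-(I+A')^{-1}=(I+A)^{-1}(A'-A)(I+A')^{-1}$, which holds pointwise and hence in $\Co^\alpha$, and bound the right-hand side by applying \eqref{Eqn::FuncRevis::ProdConst} twice together with the uniform bound $\|(I+A)^{-1}\|_{\Co^\alpha}\le\|I\|_{\Co^\alpha}+2\tilde c_{B,\alpha}$ coming from the previous step. This yields a Lipschitz estimate $\|(I+A)^{-1}-(I+A')^{-1}\|_{\Co^\alpha}\le C\,\|A-A'\|_{\Co^\alpha}$ with $C=C(n,B,\alpha)$, which is more than continuity. (Alternatively, one may note that $\sum_k(-A)^k$ converges locally uniformly in $A$ on the ball $\{\|A\|_{\Co^\alpha}<\tilde c_{B,\alpha}\}$, so it defines a real-analytic, in particular continuous, map there.) There is no real obstacle here; the only point to keep in mind is that $\Co^\alpha(B;\Mbb^{n\times n})$ is genuinely a Banach algebra under matrix multiplication — precisely \eqref{Eqn::FuncRevis::ProdConst} — and that a limit taken in the $\Co^\alpha$ topology automatically coincides with the pointwise matrix inverse, which is forced by the fact that the $\Co^\alpha$-norm dominates the sup-norm. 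Everything else is bookkeeping with the geometric series.
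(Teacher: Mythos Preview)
Your proof is correct and follows essentially the same Neumann series argument as the paper: both choose $\tilde c_{B,\alpha}$ so that $\tilde C_{B,\alpha}\|A\|_{\Co^\alpha}<\tfrac12$, sum the geometric series in $\Co^\alpha(B;\Mbb^{n\times n})$, and read off the bound $\|(I+A)^{-1}-I\|_{\Co^\alpha}\le 2\|A\|_{\Co^\alpha}$. The only cosmetic difference is that the paper deduces continuity directly from norm-convergence of the power series, whereas you invoke the resolvent identity (and mention the power-series alternative); both are standard and equivalent.
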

\begin{proof}
We take $\tilde c_{B,\alpha}=\frac12\min\{\tilde C_{B,\alpha}^{-1},1/2\}<\frac12$ where $\tilde C_{B,\alpha}$ is in \eqref{Eqn::FuncRevis::ProdConst}. 

When $\|A\|_{\Co^\alpha}<\tilde c_{B,\alpha}$,  we have $\|A^k\|_{\Co^\alpha}\le \tilde C_{B,\alpha}\|A\|_{\Co^\alpha}\|A^{k-1}\|_{\Co^\alpha}\le\frac12\|A^{k-1}\|_{\Co^\alpha}$ for all $k\in\Z_+$. Therefore, for such $A$ we have $\|A^k\|_{\Co^\alpha}\le 2^{1-k}\|A\|_{\Co^\alpha}\le2^{-k}$.

We know $(I+A)^{-1}=\sum_{k=0}^\infty(-1)^kA^k$ whenever the right hand side absolutely converges. This  power series has $\Co^\alpha$-norm convergent radius larger than $\tilde c_{B,\alpha}$ and is  continuous in the domain $\{\|A\|_{\Co^\alpha}<\tilde c_{B,\alpha}\}$. 

Finally we have $$\|(I+A)^{-1}-I\|_{\Co^\alpha}=\Big\|\sum_{k=1}^\infty(-1)^k A^k\Big\|_{\Co^\alpha}\le\sum_{k=1}^\infty\|A^k\|_{\Co^\alpha}\le\sum_{k=1}^\infty2^{1-k}\|A\|_{\Co^\alpha}\le2\|A\|_{\Co^\alpha}.$$
\end{proof}

\begin{proof}[Proof of Proposition \ref{Prop::FuncRevis::QuantdOfLowRegularityForm}]
We let $C_0$ be a large constant which may change from line to line.
In particular, we will choose $C_0$ large enough such that $\|R\|_{\Co^{\alpha+1}}\le C_0^{-1}$ implies $\| R\|_{C^0}+\|\nabla R\|_{C^0}\le\frac14$ and $\|\nabla R\|_{\Co^\alpha}\le \tilde c_{\Ball^n,\alpha}$, where $\tilde c_{\Ball^n,\alpha}$ is in Lemma \ref{Lemma::FuncRevis::InvertingMatrix}.

By Lemma \ref{Lemma::FuncRevis::InvertingMatrix}, $\nabla F(x)=I+\nabla R(x)$ is an invertible matrix for every $x\in\Ball^n$, and we have 
\begin{equation}\label{Eqn::FuncRevis::QuantdOfLowRegularityForm::Proof1}
\|(\nabla F)^{-1}-I\|_{\Co^\alpha}=\|(I+\nabla R)^{-1}-I\|_{\Co^\alpha}\le 2\|\nabla R\|_{\Co^\alpha}.
 \end{equation}



Since $\|\nabla R\|_{C^0}\le\frac14$, we have $|R(x_1)-R(x_2)|\le\|\nabla R\|_{C^0}|x_1-x_2|\le\frac14|x_1-x_2|$, which implies 
\begin{equation}\label{Eqn::FuncRevis::QuantdOfLowRegularityForm::Proof1.5}
    |F(x_1)-F(x_2)|\ge|x_1-x_2|-|R(x_1)-R(x_2)|\ge\textstyle\frac34|x_1-x_2|.
\end{equation}

This implies $F$ is injective. By the Inverse Function Theorem, we know $F:\Ball^n\xrightarrow{\sim} F(\Ball^n)$ is a $\Co^{\alpha+1}$-diffeomorphism.

The assumption $R\big|_{\partial\Ball^n}=0$ gives $F(\partial\Ball^n)=\partial\Ball^n$. Since $F(\Ball^n)$ is contractible and $\overline{F(\Ball^n)}\supset\partial\Ball^n$, we get that $F(\Ball^n)=\Ball^n$. We conclude $F$ is a $\Co^{\alpha+1}$-diffeomorphism on $\Ball^n$.

\medskip
\noindent\ref{Item::FuncRevis::QuantdOfLowRegularityForm::1}: 
First, we claim that there is a $C_1(n,\alpha,\beta)>0$, which does not 
depend on $R$, such that whenever $R$ satisfies the assumptions
of the proposition, we have 
\begin{equation}\label{Eqn::FuncRevis::QuantdOfLowRegularityForm::1::Tmp1}
\|\Phi \|_{\ZygSymb^{\alpha+1}(\Ball^n;\R^n)}\leq C_1.
\end{equation}

Since $\Phi$ is the inverse map of $F$, by \cite[Lemma 5.9]{StovallStreetII}, we know $\|\Phi\|_{\Co^{\alpha+1}(\Ball^n;\R^n)}$ only depends on {$n$, $\alpha$,} $\|F\|_{\Co^{\alpha+1}(\Ball^n;\R^n)}$, and $\|(\nabla F)^{-1}\|_{C^0(\Ball^n;\Mbb^{n\times n})}$. 
We will show that $\|F\|_{\Co^{\alpha+1}(\Ball^n;\R^n)}$ and $\|(\nabla F)^{-1}\|_{C^0(\Ball^n;\Mbb^{n\times n})}$ have bounds that do not depend on $R$.

We have $\|F\|_{\Co^{\alpha+1}(\Ball^n;\R^n)}\le\|\id\|_{\Co^{\alpha+1}(\Ball^n)}+\|R\|_{\Co^{\alpha+1}(\Ball^n;\R^n)}\le \|\id\|_{\Co^{\alpha+1}(\Ball^n)}+\frac14$. The right hand side of this inequality does not depend on $R$.

{By \eqref{Eqn::FuncRevis::QuantdOfLowRegularityForm::Proof1.5},} $|F(x_1)-F(x_2)|\ge\frac34|x_1-x_2|$ implies $\sup\limits_{x\in\Ball^n}|(\nabla F(x))^{-1}|\le\frac43$, so $\|(\nabla F)^{-1}\|_{C^0(\Ball^n;\Mbb^{n\times n})}\le\frac43$, which does not depend on $R$ as well.  This establishes \eqref{Eqn::FuncRevis::QuantdOfLowRegularityForm::1::Tmp1}.


By \cite[Lemma 5.8]{StovallStreetII}, we know for every $\tilde C_1>0$ there is a $C_2=C_2(\alpha,\beta,\tilde C_1)>0$ such that $\|g\circ\Phi\|_{\Co^{\beta}(\Ball^n)}\le C_2\|g\|_{\Co^\beta(\Ball^n)}$ holds when $\|\Phi\|_{\Co^{\alpha+1}(\Ball^n;\R^n)}\le \tilde C_1$. By possibly increasing $C_0$ so that $C_0\ge C_2(\alpha,\beta,C_1)$, we obtain $\|g\circ\Phi\|_{\Co^{\beta}(\Ball^n)}\le C_0\|g\|_{\Co^\beta(\Ball^n)}$, which is  \eqref{Eqn::FuncRevis::QuantdOfLowRegularityForm::1}.

\medskip
\noindent\ref{Item::FuncRevis::QuantdOfLowRegularityForm::2}: Note that the identity matrix $I$ can be viewed as a constant function defined on the unit ball. Since $\Phi$ is a $\Co^{\alpha+1}$-diffeomorphism on $\Ball^n$, we get the equality $I=I\circ\Phi$ as a matrix function on $\Ball^n$.

By the chain rule $I=\nabla(F\circ\Phi)=((\nabla F)\circ\Phi)\cdot\nabla\Phi$, so $\nabla\Phi-I=(\nabla F)^{-1}\circ\Phi-I=((\nabla F)^{-1}-I)\circ\Phi$. By \eqref{Eqn::FuncRevis::QuantdOfLowRegularityForm::Proof1}
and \ref{Item::FuncRevis::QuantdOfLowRegularityForm::1}, we have
\begin{equation*}
    \| \nabla \Phi-I \|_{\Co^\alpha(\Ball^n;\Mbb^{n\times n})} = \|((\nabla F)^{-1}-I)\circ \Phi\|_{\Co^\alpha} \leq C_0 \|(\nabla F)^{-1}-I\|_{\Co^\alpha}
    \leq 2C_0 \| \nabla R\|_{\Co^{\alpha}(\Ball^n;\Mbb^{n\times n})},
\end{equation*}
and we obtain \ref{Item::FuncRevis::QuantdOfLowRegularityForm::2} by
replacing $C_0$ with $2C_0$.


\medskip
\noindent\ref{Item::FuncRevis::QuantdOfLowRegularityForm::3}: Let $\theta\in\Co_c^\alpha(\Ball^n;T^*\Ball^n)$ be as in the assumption of \ref{Item::FuncRevis::QuantdOfLowRegularityForm::3}. In particular, $\supp\theta\subsetneq\frac12\Ball^n$.

By the assumption $\|R\|_{C^0}\le\frac14$, we have $F(\frac12\Ball^n)\subseteq\frac12\Ball^n+\frac14\Ball^n=\frac34\Ball^n$, so $\supp F_*\theta=F(\supp\theta)\subseteq\frac34\Ball^n$.

Similar to the proof of Lemma \ref{Lemma::FuncRevis::DecomposeForms}, we define a 1-form $\rho$ and a function $\xi$ by (see \eqref{Eqn::FuncRevis::GreensFunction})
$$\rho=\sum_{i=1}^n\rho_idx^i:=\Green\ast \codiff d\theta,\quad \xi:=\Green\ast \codiff\theta.$$
$\rho$ and $\xi$ are globally defined in $\R^n$ because $\theta$ is compactly supported in $\frac12\Ball^n$.

By Lemma \ref{Lemma::FuncRevis::GreensOp}, we have $\rho\in\Co^\beta_\loc\mleft(\R^n;T^*\R^n\mright)$, $\xi\in\Co^{\alpha+1}_\loc(\R^n)$, and $\rho+d\xi=\Green\ast (\codiff d+d\codiff)\theta=\theta$. 
{Moreover, by Lemma \ref{Lemma::FuncRevis::NewtonianBoundedness} with $\gamma=\beta-2$ and the assumption $\supp\codiff d\theta\subseteq\supp\theta\subseteq\frac34\Ball^n\Subset\Ball^n$, we have 
\begin{equation}\label{Eqn::FuncRevis::QuantdOfLowRegularityForm::RhoisBoundedbyDTheta}
    \|\rho\|_{\Co^\beta(\Ball^n;T^*\Ball^n)}\lesssim_{\beta}\|\codiff d\theta\|_{\Co^{\beta-2}}\lesssim_\beta\|d\theta\|_{\Co^{\beta-1}}.
\end{equation}}

By Lemma \ref{Lemma::FuncRn::PullbackComm} \ref{Item::FuncRn::dComm}, $d(F_*d\xi)=d^2F_*\xi=0$ on $\Ball^n$, so we have that 
\begin{equation}\label{Eqn::FuncRevis::QuantdOfLowRegularityForm::RestrictionOn1Ball}
    d(F_*\theta)=d\mleft(F_*\rho\big|_{\Ball^n}\mright)+d\mleft(F_*d\xi\big|_{\Ball^n}\mright)=d\mleft(F_*\rho\big|_{\Ball^n}\mright)=d\left(\sum_{i=1}^n(\rho_i\circ\Phi)d\phi^i\right),\quad\text{on }\Ball^n.
\end{equation} 
Thus, to prove \eqref{Eqn::FuncRevis::QuantdOfLowRegularityForm::2}, {by \eqref{Eqn::FuncRevis::QuantdOfLowRegularityForm::RhoisBoundedbyDTheta} }it suffices to show 
\begin{equation}\label{Eqn::FuncRevis::QuantdOfLowRegularityForm::Proof2}
    \|d(F_*\theta)\|_{\Co^{\beta-1}(\frac34\Ball^n;\wedge^2T^*\R^n)}\lesssim\|\rho\|_{\Co^\beta\mleft(\Ball^n;T^*\Ball^n\mright)}.
\end{equation}

Fix $\chi\in C_c^\infty(\Ball^n)$ such that $\chi|_{\frac34\Ball^n}\equiv1$. For each $1\le i\le n$, set $$\tilde\rho_i:=\chi(\rho_i\circ\Phi),\quad\tilde\phi^i:=\chi\phi^i.$$
So $\tilde\rho_i\in\Co^\beta_c(\Ball^n)$ and $d\tilde\phi^i\in\Co^\alpha_c\mleft(\Ball^n;T^*\Ball^n\mright)$ are globally defined 1-forms for each $i$, such that 
\begin{equation}\label{Eqn::FuncRevis::QuantdOfLowRegularityForm::RestrictionOn3/4Ball}
    \sum_{i=1}^n(\tilde\rho_id\tilde\phi^i)\big|_{\frac34\Ball^n}=\sum_{i=1}^n\left((\rho_i\circ\Phi)d\phi^i\right)\big|_{\frac34\Ball^n}=(F_*\rho)\big|_{\frac34\Ball^n}.
\end{equation}

By \eqref{Eqn::FuncRevis::QuantdOfLowRegularityForm::RestrictionOn3/4Ball} and \eqref{Eqn::FuncRevis::QuantdOfLowRegularityForm::RestrictionOn1Ball} we have $$\|d(F_*\theta)\|_{\Co^{\beta-1}(\frac34\Ball^n;\wedge^2T^*\R^n)}=\Big\|\sum_{i=1}^nd(\tilde\rho_id\tilde\phi^i)\Big\|_{\Co^{\beta-1}(\frac34\Ball^n;\wedge^2T^*\R^n)}\le\sum_{i=1}^n\|d(\tilde\rho_id\tilde\phi^i)\|_{\Co^{\beta-1}\mleft(\R^n;\wedge^2T^*\R^n\mright)}.$$
By Lemma \ref{Lemma::FuncSpace::Product} we have $\|\tilde\rho_i\|_{\Co^{\beta}(\R^n)}\lesssim_\beta\|\chi\|_{\Co^\beta(\Ball^n)}\|\rho_i\circ\Phi\|_{\Co^\beta(\Ball^n)}$ and $\|\tilde\phi^i\|_{\Co^{\alpha+1}(\R^n)}\lesssim_\alpha\|\chi\|_{\Co^{\alpha+1}(\Ball^n)}\|\Phi\|_{\Co^{\alpha+1}(\Ball^n)}$, by \eqref{Eqn::FuncRevis::QuantdOfLowRegularityForm::1} we have $\|\rho_i\circ\Phi\|_{\Co^\beta(\Ball^n)}\lesssim\|\rho_i\|_{\Co^\beta(\Ball^n)}$ and $\|\Phi\|_{\Co^{\alpha+1}(\Ball^n;\R^n)}\lesssim1$. Combining them we get
\begin{equation}\label{Eqn::FuncRevis::QuantdOfLowRegularityForm::BddTrhoTphi}
    \|\tilde\rho_i\|_{\Co^\beta(\R^n)}\lesssim_{\alpha,\beta}\|\rho_i\|_{\Co^\beta(\Ball^n)},\quad\|\tilde\phi^i\|_{\Co^{\alpha+1}(\R^n)}\lesssim_\alpha1,\quad1\le i\le n.
\end{equation}

Thus, to obtain \eqref{Eqn::FuncRevis::QuantdOfLowRegularityForm::Proof2} and complete the proof, it 
suffices to show
\begin{equation}\label{Eqn::FuncRevis::QuantdOfLowRegularityForm::Proof2::Tmp1}
\|d(\tilde\rho_id\tilde\phi^i)\|_{\Co^{\beta-1}\mleft(\R^n;\wedge^2T^*\R^n\mright)}\lesssim_{\alpha,\beta}\|\tilde\rho_i\|_{\Co^\beta(\Ball^n)},\quad 1\le i\le n. 
\end{equation}


Similar to \eqref{Eqn::FuncRevis::BetterDReg::4}, we define 1-form $\tau_i$ on $\R^n$ by $$\tau_i:=\sum_{i=1}^n\FrP(\tilde\rho_i,d\tilde\phi^i)-\FrP(\tilde\phi^i,d\tilde\rho_i)+\FrR(\tilde\rho_i,d\tilde\phi^i),\quad1\le i\le n.$$

By Lemma \ref{Lemma::FuncRevis::ParaProdResults} \ref{Item::FuncRevis::ParaProdDecomp} and \ref{Item::FuncRevis::ProductRule}, we have $\tilde\rho_id\tilde\phi^i=\FrP(\tilde\rho_i,d\tilde\phi^i)+\FrP(d\tilde\phi^i,\tilde\rho_i)+\FrR(\tilde\rho_i,d\tilde\phi^i)$ and $d\FrP(\tilde\phi^i,\tilde\rho_i)=\FrP(d\tilde\phi^i,\tilde\rho_i)+\FrP(\tilde\phi^i,d\tilde\rho_i)$. Therefore for $1\le i\le n$,
\begin{equation}\label{Eqn::FuncRevis::QuantdOfLowRegularityForm::Proof2::Tmp2}
d\tau_i=d\FrP(\tilde\rho_i,d\tilde\phi^i)-d\FrP(\tilde\phi^i,d\tilde\rho_i)+d\FrR(\tilde\rho_i,d\tilde\phi^i)=d\FrP(\tilde\rho_i,d\tilde\phi^i)+d\FrP(d\tilde\phi^i,\tilde\rho_i)+d\FrR(\tilde\rho_i,d\tilde\phi^i)=d(\tilde\rho_id\tilde\phi^i),\quad\text{on }\R^n.
\end{equation}

We claim
\begin{equation}\label{Eqn::FuncRevis::QuantdOfLowRegularityForm::Proof2::Tmp3}
    \| \tau_i\|_{\ZygSymb^{\beta}(\R^n;T^*\R^n)}\lesssim
    \|\tilde\rho_i\|_{\Co^\beta(\R^n;T^*\R^n)},\quad 1\le i\le n.
\end{equation}
By Lemma \ref{Lemma::FuncRevis::ParaProdResults} \ref{Item::FuncRevis::FrPCont1} and \ref{Item::FuncRevis::FrRCont}, along with the fact that $\|\tilde\phi^i\|_{\Co^{\alpha+1}}\lesssim1$, we get $\|\FrP(\tilde\rho_i,d\tilde\phi^i)\|_{\Co^\beta}\lesssim\|\tilde\rho_i\|_{\Co^\beta}$ and $\|\FrR(\tilde\rho_i,d\tilde\phi^i)\|_{\Co^\beta}\lesssim\|\FrR(\tilde\rho_i,d\tilde\phi^i)\|_{\Co^{\alpha+\beta}}\lesssim\|\tilde\rho_i\|_{\Co^\beta}$. 

\medskip

To complete the proof of \eqref{Eqn::FuncRevis::QuantdOfLowRegularityForm::Proof2::Tmp3}, we
need to show 
\begin{equation}\label{Eqn::FuncRevis::QuantdOfLowRegularityForm::Proof2::Tmp4}
    \|\FrP(\tilde\phi^i,d\tilde\rho_i)\|_{\Co^\beta(\R^n;T^*\R^n)}\lesssim_{\alpha,\beta}\|\tilde\rho_i\|_{\Co^\beta(\R^n;T^*\R^n)},\quad1\le i\le n.
\end{equation}
We separate the proof of \eqref{Eqn::FuncRevis::QuantdOfLowRegularityForm::Proof2::Tmp4} into two cases:  $\beta>1$ and $0<\beta\leq 1$.

For the case $\beta>1$, \eqref{Eqn::FuncRevis::QuantdOfLowRegularityForm::Proof2::Tmp4} follows from Lemma \ref{Lemma::FuncRevis::ParaProdResults} \ref{Item::FuncRevis::FrPCont1} with \eqref{Eqn::FuncRevis::QuantdOfLowRegularityForm::BddTrhoTphi} that $\tilde\phi^i\in\Co^{\alpha+1}\subseteq\Co^\beta$ and $d\tilde\rho_i\in\Co^{\beta-1}\subsetneq L^\infty$. 
For the case $\beta\le1$, our assumption $0<\alpha\le\beta$ implies that $\alpha\in(0,1]$. So $\beta-\alpha-1<\beta-1\leq 0$ and we have $d\tilde\rho_i\in\Co^{\beta-1}\subsetneq \Co^{\beta-\alpha-1}$. By Lemma \ref{Lemma::FuncRevis::ParaProdResults} \ref{Item::FuncRevis::FrPCont2} along with $\tilde\phi^i\in\Co^{\alpha+1}$ from \eqref{Eqn::FuncRevis::QuantdOfLowRegularityForm::BddTrhoTphi}, we get $\FrP(\tilde\phi^i,d\tilde\rho_i)\in\Co^{(\alpha+1)+(\beta-\alpha-1)}=\Co^\beta$, which gives \eqref{Eqn::FuncRevis::QuantdOfLowRegularityForm::Proof2::Tmp4} and establishes \eqref{Eqn::FuncRevis::QuantdOfLowRegularityForm::Proof2::Tmp3}.

Using \eqref{Eqn::FuncRevis::QuantdOfLowRegularityForm::Proof2::Tmp2} and \eqref{Eqn::FuncRevis::QuantdOfLowRegularityForm::Proof2::Tmp3},
we see 
$\|d(\tilde\rho_id\tilde\phi^i)\|_{\Co^{\beta-1}(\R^n;\wedge^2T^*\R^n)}=\|d\tau_i\|_{\Co^{\beta-1}}\lesssim\|\tau_i\|_{\Co^\beta}\lesssim\|\tilde\rho_i\|_{\Co^\beta}$, establishing \eqref{Eqn::FuncRevis::QuantdOfLowRegularityForm::Proof2::Tmp1}
and completing the proof (by possibly increasing $C_0$).
\end{proof}

\section{The Key Estimate}\label{Section::KeyThm}



Let $\alpha>0$ and $\beta\in [\alpha,\alpha+1]$.  Suppose $\lambda^1,\ldots, \lambda^{n}$
are $\ZygSymb^{\alpha}$ $1$-forms on an open set $U\subseteq \R^n$ which span
the cotangent space at every point of $U$.  If we know that $d\lambda^j\in \ZygSymb^{\beta-1}_{\loc}$ for each $j$, it is not necessarily true that $\lambda^j\in \ZygSymb^{\beta}_{\loc}$.  
However, it is a consequence of our main result (Theorem \ref{Thm::TheMainResult}) that near each point, one can always
change coordinates so that the forms are in $\ZygSymb^{\beta}_{\loc}$
(see also Corollary \ref{Cor::Key::BasicCaseforMainThm}
and Remark \ref{Rmk::Key::SpecialCase}).

The next result is a special case of this idea,
where we present an intital 
setting where we may find
a $\ZygSymb^{\alpha+1}$-diffemorphism such that $F_{*}\lambda^j\in \ZygSymb^{\beta}_{\loc}$.

\begin{thm}\label{Thm::Keythm}
Let $\alpha>0$ and $\beta\in [\alpha,\alpha+1]$. Let $x=(x^1,\dots,x^n)$ and $y=(y^1,\dots,y^n)$ be two coordinate systems for $\R^n$.
There exists $c=c(n,\alpha,\beta)>0$ such that the following holds.

Suppose $\lambda^i$, $i=1,\dots,n\in \ZygSymb^{\alpha}(\Ball^n; T^{*}\Ball^n)$ are 1-forms on $\Ball^n$ such that $\supp (\lambda^i-dx^i)\subsetneq\frac{1}{2}\Ball^n$ for each $i$, and
\begin{equation}\label{Eqn::Keythm::Assumption}
    \sum_{i=1}^n(\|\lambda^i-dx^i\|_{\ZygSymb^{\alpha}(\Ball^n;T^{*}\Ball^n)}+\|d\lambda^i\|_{\ZygSymb^{\beta-1}(\Ball^n;\wedge^2 T^{*}\Ball^n)})\le c.
\end{equation}
Then, there exists a $\ZygSymb^{\alpha+1}$-diffeomorphism $F:\Ball^n_x\xrightarrow{\sim}\Ball^n_y$, such that $B^n(F(0),\frac16)\subseteq F(\frac13\Ball^n)\cap\frac34\Ball^n$, $F_*\lambda^i\in\Co^\beta(\Ball^n;T^*\Ball^n)$ for $i=1,\dots,n$, and moreover
\begin{equation}\label{Eqn::Keythm::Conclusion}
\begin{aligned}
    &\|F-\id\|_{\Co^{\alpha+1}(\Ball^n;\R^n)}+\sum_{i=1}^n\|F_*\lambda^i-dy^i\|_{\Co^\beta\mleft(\Ball^n;T^*\Ball^n\mright)}
    \\&\le c^{-1}\sum_{i=1}^n\left(\|\lambda^i-dx^i\|_{\Co^\alpha\mleft(\Ball^n;T^*\Ball^n\mright)}+\|d\lambda^i\|_{\Co^{\beta-1}\mleft(\Ball^n;\wedge^2T^*\Ball^n\mright)}\right).
\end{aligned}
\end{equation}
\end{thm}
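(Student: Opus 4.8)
The plan is a Newton-type iteration that successively straightens the $\lambda^i$ by near-identity $\Co^{\alpha+1}$-diffeomorphisms, each reducing the ``error'' $\mu^i := \lambda^i - dx^i$ from size $\eps$ to size $\eps^2$. Since the statement involves one copy of $\Ball^n$ with source/target coordinate labels $x,y$, one constructs $F$ as a $\Co^{\alpha+1}$-self-diffeomorphism of $\Ball^n$ that is $\id$ near $\partial\Ball^n$; set $\eps_0 := \sum_i(\|\mu^i\|_{\Co^\alpha} + \|d\mu^i\|_{\Co^{\beta-1}}) \le c$. For one step, given $\lambda^i = dx^i + E^i$ with the $E^i$ supported in a fixed ball $\Subset \tfrac34\Ball^n$ and $\eps := \sum_i(\|E^i\|_{\Co^\alpha} + \|dE^i\|_{\Co^{\beta-1}})$ small, pick $\chi \in C_c^\infty(\Ball^n)$ equal to $1$ near $\bigcup_i \supp E^i$, put $\xi^i := \chi\cdot(\Green \ast \codiff E^i)$ and $\rho^i := E^i - d\xi^i$; since $\Green\ast\codiff E^i$ is harmonic off $\supp E^i$ one checks $\rho^i \in \Co^\beta$ with compact support, and Lemmas~\ref{Lemma::FuncRevis::GreensOp}, \ref{Lemma::FuncRevis::NewtonianBoundedness} and interior elliptic regularity give $\|\xi^i\|_{\Co^{\alpha+1}} \lesssim \|E^i\|_{\Co^\alpha}$, $\|\rho^i\|_{\Co^\beta} \lesssim \|dE^i\|_{\Co^{\beta-1}} + \|E^i\|_{\Co^\alpha} \lesssim \eps$. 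Then $G := \id + \xi$ is (for $c$ small, by Proposition~\ref{Prop::FuncRevis::QuantdOfLowRegularityForm}\ref{Item::FuncRevis::QuantdOfLowRegularityForm::1},\ref{Item::FuncRevis::QuantdOfLowRegularityForm::2}) a $\Co^{\alpha+1}$-diffeomorphism of $\Ball^n$, $\id$ near $\partial\Ball^n$, with $\|G - \id\|_{\Co^{\alpha+1}} \lesssim \eps$, inverse $\Psi = \id + \eta$, $\eta = -\xi\circ\Psi$, $\|\eta\|_{\Co^{\alpha+1}} \lesssim \eps$. As $\Psi^* dx^i = d\Psi^i = dy^i - d(\xi^i\circ\Psi)$ and $\Psi^*(d\xi^i) = d(\xi^i\circ\Psi)$, the exact contributions cancel, so writing $\rho^i = \sum_m \rho^i_m\,dx^m$,
\[
 G_*\lambda^i = dy^i + \textstyle\sum_m (\rho^i_m\circ\Psi)\,dy^m + \sum_m (\rho^i_m\circ\Psi)\,d\eta^m .
\]
By Lemma~\ref{Lemma::FuncRevis::PushForwardFuncSpaces}\ref{Item::FuncRevis::PullBackFunc} the middle term (the new ``good part'' $P^i$) lies in $\Co^\beta$ with $\|P^i\|_{\Co^\beta} \lesssim \eps$; the last term (the new error $\widehat E^i$) satisfies $\|\widehat E^i\|_{\Co^\alpha} \lesssim \|\rho^i\|_{\Co^\beta}\|\eta\|_{\Co^{\alpha+1}} \lesssim \eps^2$ by Lemma~\ref{Lemma::FuncSpace::Product}, and $d\widehat E^i = \sum_m d(\rho^i_m\circ\Psi)\wedge d\eta^m$ is a sum of products $da\wedge db$ with $a \in \Co^\beta$, $b \in \Co^{\alpha+1}$, each of which — expanding $da\wedge db = \FrP(da,db) - \FrP(db,da) - d\FrR(a,db)$ and using Lemma~\ref{Lemma::FuncRevis::ParaProdResults} with $\FrR(a,db)\in\Co^{\alpha+\beta}$ — lies in $\Co^{\beta-1}$ with norm $\lesssim \|a\|_{\Co^\beta}\|b\|_{\Co^{\alpha+1}}$; hence $\|d\widehat E^i\|_{\Co^{\beta-1}} \lesssim \eps^2$ too, so the new parameter is $\widehat\eps \lesssim \eps^2$.

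Iterating from $\mu^i$ produces $G_k$, $H_k := G_k\circ\cdots\circ G_1$, good parts $P^i_k$ and errors $E^i_k := (H_k)_*\lambda^i - dy^i - P^i_k$. With $c$ small the parameters obey $\eps_{k+1} \lesssim \eps_k^2$ (so $\eps_k \lesssim (C\eps_0)^k$), $\|G_k - \id\|_{\Co^{\alpha+1}} \lesssim \eps_{k-1}$, and — choosing the cutoffs so all supports stay $\Subset \tfrac34\Ball^n$ (permissible since the errors decay fast enough to absorb slowly shrinking transition regions) and writing each good part as a composition of the original $\rho$-bits with a \emph{single} near-identity $\Co^{\alpha+1}$-diffeomorphism (Proposition~\ref{Prop::FuncRevis::QuantdOfLowRegularityForm}\ref{Item::FuncRevis::QuantdOfLowRegularityForm::1}) — $\|P^i_k\|_{\Co^\beta} \lesssim \eps_0$ uniformly in $k$. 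Then $H_k \to F$ in $\Co^{\alpha+1}$ (a $\Co^{\alpha+1}$-diffeomorphism of $\Ball^n$, $\id$ near $\partial\Ball^n$, $\|F - \id\|_{\Co^{\alpha+1}} \lesssim \eps_0$), $E^i_k \to 0$ in $\Co^\alpha$, and $P^i_k = (H_k)_*\lambda^i - dy^i - E^i_k$ is bounded in $\Co^\beta$ and converges to $F_*\lambda^i - dy^i$ in $\Dist(\Ball^n)$ (continuity of pullback under $H_k\to F$). Since $\Co^\beta = \Bs^\beta_{\infty,\infty}$ is a dual space, bounded sequences have weak-$*$ limits in $\Co^\beta$ with norm $\le \liminf$; hence $F_*\lambda^i = dy^i + (\text{weak-}*\text{ limit of }P^i_k) \in \Co^\beta$ with $\|F_*\lambda^i - dy^i\|_{\Co^\beta} \lesssim \eps_0$. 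Summing over $i$ yields \eqref{Eqn::Keythm::Conclusion}; and since $\|F - \id\|_{C^0} \lesssim \eps_0 \le c$, $F(0)$ is near $0$, $F(\tfrac13\Ball^n) \supseteq B^n(F(0),\tfrac16)$ and $F(\tfrac13\Ball^n) \subseteq \tfrac34\Ball^n$, giving the claimed inclusion.

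The main obstacle is the quadratic step: making sense of, and bounding in $\Co^{\beta-1}$, the products $da\wedge db$ with $a\in\Co^\beta$, $b\in\Co^{\alpha+1}$ when $\alpha+\beta \le 1$ (so that Lemma~\ref{Lemma::FuncSpace::Product} does not apply) — this forces one to exploit the exact structure through the paraproducts $\FrP, \FrR$, exactly as in the proof of Proposition~\ref{Prop::FuncRevis::QuantdOfLowRegularityForm}\ref{Item::FuncRevis::QuantdOfLowRegularityForm::3}. Secondary difficulties are the bookkeeping that keeps all supports inside $\tfrac34\Ball^n$ with constants uniform in $k$ (the accumulated good part must never be re-degraded beyond $O(\eps_0)$ in $\Co^\beta$, which is why it is tracked as a single composition), and the low-regularity continuity of pullback used to identify the weak-$*$ limit.
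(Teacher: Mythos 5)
Your argument is essentially correct, but it takes a genuinely different route from the paper. The paper uses Malgrange's method: it constructs $F=\id+R$ in \emph{one} shot by solving the divergence-form elliptic system \eqref{Eqn::Key::PDEforR} with zero Dirichlet data (Proposition \ref{Prop::Key::ExistPDE}), shows via Lemma \ref{Lemma::Key::TransitionBetweenPDEs} that the pushforward coefficient matrix $B$ then satisfies the nonlinear elliptic system \eqref{Eqn::Key::PDEforB}, and upgrades $B$ from $\Co^\alpha$ to $\Co^\beta$ by a contraction-mapping/elliptic-bootstrap argument run simultaneously in $\Xs_{\alpha,\xi}$ and $\Xs_{\beta,\xi}$ (Proposition \ref{Prop::Key::RegularityPDE}). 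You instead avoid the nonlinear elliptic system entirely and pay with an infinite Newton-type iteration, using at each step the Hodge-type splitting $E^i=\rho^i+d\xi^i$ via the Newtonian potential (the same device as Lemma \ref{Lemma::FuncRevis::DecomposeForms}) and the exact cancellation $\Psi^*dG^i=dy^i$; the hard analytic content — bounding $da\wedge db$ in $\Co^{\beta-1}$ for $a\in\Co^\beta$, $b\in\Co^{\alpha+1}$ when $\alpha+\beta-1\le 0$ — is the same paraproduct computation that appears in the paper's proof of Proposition \ref{Prop::FuncRevis::QuantdOfLowRegularityForm} \ref{Item::FuncRevis::QuantdOfLowRegularityForm::3}, and you correctly identify it as the crux. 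What the paper's route buys is a single, canonical $F$ (closely tied to harmonic coordinates, which matters for Sections \ref{Section::DeTurck} and \ref{Section::Quant}); what yours buys is that no solvability or Schauder theory for variable-coefficient elliptic boundary-value problems is needed, only the constant-coefficient Newtonian potential.

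Two points you should tighten, neither fatal. First, your one-step lemma is stated for $dx^i+E^i$, but from step two onward the input is $dy^i+P^i_k+E^i_k$, and $\Psi_{k+1}^*P^i_k$ contributes $\sum_m(p^i_{k,m}\circ\Psi_{k+1})\,d\eta^m_{k+1}$ to the new error; this term is only $O(\eps_0\eps_k)$, not $O(\eps_k^2)$, so the true recursion is $\eps_{k+1}\lesssim\eps_0\eps_k$ (consistent with your stated geometric rate $(C\eps_0)^k$, but not with ``$\eps_{k+1}\lesssim\eps_k^2$''). Second, the uniform bound $\|P^i_k\|_{\Co^\beta}\lesssim\eps_0$ genuinely requires the single-composition bookkeeping you describe — writing $p^i_{k,m}=\sum_j\rho^i_{j,m}\circ\Theta_{j,k}$ with each $\Theta_{j,k}$ a \emph{single} near-identity $\Co^{\alpha+1}$-diffeomorphism that is the identity near $\partial\Ball^n$, so that Proposition \ref{Prop::FuncRevis::QuantdOfLowRegularityForm} \ref{Item::FuncRevis::QuantdOfLowRegularityForm::1} applies with one uniform constant $C_0$; iterating the scalar composition estimate naively would produce a factor $C_0^k$ and destroy the bound. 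With a single fixed cutoff $\chi$ (the total support drift is $O(\sum_k\eps_k)=O(\eps_0)$, so no shrinking transition regions are actually needed) and the Fatou/weak-$*$ property of $\BesovSymb^\beta_{\infty,\infty}$ to pass to the limit, the argument closes.
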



\subsection{Outline of the proof: the dual Malgrange method}
The proof of Theorem \ref{Thm::Keythm}  is inspired by Malgrange's proof
of the Newlander-Nirenberg Theorem \cite{MalgrangeSurLIntegrabiliteDesStructures}.

Let $x=(x^1,\dots,x^n)$ and $y=(y^1,\dots,y^n)$ be two coordinate systems on the unit ball $\Ball^n\subset\R^n$. In this section, we write 1-forms $\lambda^1,\dots,\lambda^n$, $\eta^1,\dots,\eta^n$ as
\begin{equation}\label{Eqn::Key::LambdaEta}
    \lambda^i=dx^i+\sum_{j=1}^na_j^idx^j,\quad\eta^i=dy^i+\sum_{j=1}^nb_j^idy^j,\quad i=1,\dots,n,
\end{equation}
and define coefficient matrices
\begin{equation}\label{Eqn::Key::LambdaEtaMatrix}
    A:=(a_j^i)_{n\times n}=\begin{pmatrix}a_1^1&\cdots&a_n^1\\\vdots&\ddots&\vdots\\a_1^n&\cdots&a_n^n\end{pmatrix},\quad B:=(b_j^i)_{n\times n}=\begin{pmatrix}b_1^1&\cdots&b_n^1\\\vdots&\ddots&\vdots\\b_1^n&\cdots&b_n^n\end{pmatrix}.
\end{equation}

In this section, $\lambda^1,\ldots, \lambda^n$ are given $\ZygSymb^{\alpha}$
$1$-forms on $\Ball^n\subset \R^n$ which span the cotangent space at every point.
And $\eta^i:=F_{*}\lambda^i$ are the push-forward $1$-forms by the unknown
$\ZygSymb^{\alpha+1}$-diffeomorphism $F:\Ball^n\xrightarrow{\sim}\Ball^n$,
which we are solving for.  Thus, $\eta^1,\ldots, \eta^n$ are also $\ZygSymb^{\alpha}$ $1$-forms defined on $\Ball^n$ which span the cotangent
space at every point.


As in Malgrange's work \cite{MalgrangeSurLIntegrabiliteDesStructures},
the main idea is to choose $F$ so that the matrix $B$ satisfies a nonlinear
elliptic PDE.  That $\eta=F_{*}\lambda\in \ZygSymb^{\beta}$ will follow
from the classical interior regularity of elliptic PDEs.  We will show
such an $F$ exists by showing that it suffices for $F$ to satisfy
a different elliptic PDE, whose solution is guaranteed by
classical elliptic theory.

    

Given collections $(\lambda^1,\dots,\lambda^n)$ and $(\eta^1,\dots,\eta^n)$ of 1-forms on $\Ball^1$, as above, that both span their co-tangent spaces at every point, we define Riemannian metrics $g$ and $h$ by
\begin{equation}\label{Eqn::Key::Riemannianmetric1}
\begin{split}
    &g=\sum_{i,j=1}^ng_{ij}dx^idx^j:=\sum_{i,j,k=1}^n(\delta_i^k+a_i^k)(\delta_j^k+a_j^k)dx^idx^j,\\ &h=\sum_{i,j=1}^nh_{ij}dy^idy^j:=\sum_{i,j,k=1}^n(\delta_i^k+b_i^k)(\delta_j^k+b_j^k)dy^idy^j.
\end{split}
\end{equation}
Here, $\delta_i^j$, $\delta_{ij}$, $\delta^{ij}$ are the Kronecker delta functions:
\begin{equation}\label{Eqn::Key::Krnoecker}
    \delta_i^j=\delta_{ij}=\delta^{ij}=\begin{cases}1,&i=j,\\0,&i\neq j.\end{cases}
\end{equation}

We use the following notations from classical Riemannian differential geometry:
\begin{equation}\label{Eqn::Key::Riemannianmetric2}
\begin{split}
    &g^{ij}:=g(dx^i,dx^j),\quad \sqrt{\det g}:=\left|\frac{\lambda^1\wedge\dots\wedge\lambda^n}{dx^1\wedge\dots\wedge dx^n}\right|,
    \\&h^{ij}:=h(dy^i,dy^j),\quad \sqrt{\det h}:=\left|\frac{\eta^1\wedge\dots\wedge\eta^n}{dy^1\wedge\dots\wedge dy^n}\right|.
\end{split}
\end{equation}

\begin{rmk}\label{Rmk::Key::RmkRiemMetric}
\begin{enumerate}[label=(\alph*)]
    \item We can write $h=\sum_{i=1}^n\eta^i\cdot\eta^i$. It is non-degenerate since $\eta^1,\dots,\eta^n$ span the cotangent space at every point. Moreover $\eta^1,\dots,\eta^n$ form an orthogonal basis with respect to this metric $h$. Similar remarks hold for $g=\sum_{i=1}^n\lambda^i\cdot\lambda^i$.
    
    \item\label{Item::Key::RmkRiemMetric::RationalFunction} Using matrix notations in \eqref{Eqn::Key::LambdaEta} we have $(h_{ij})_{n\times n}=(I+B)^\top(I+B)$ and so we know $(h^{ij})_{n\times n}=(h_{ij})^{-1}=(I+B)^{-1}\left((I+B)^{-1}\right)^\top$ and $\sqrt{\det h}=\det(I+B)$. Similarly, we  have $(g_{ij})=(I+A)^\top(I+A)=(g^{ij})^{-1}$ and $\sqrt{\det g}=\det(I+A)$. More importantly, we have the following lemma.
\end{enumerate}
\end{rmk}
\begin{lemma}\label{Lemma::Key::TaylorExpansionofRiemMetric}
Let $B$, $h^{ij}$ and $\sqrt{\det h}$ be as above. Then $h^{ij}$ and $\sqrt{\det h}$ are rational functions of the components of $B$. Moreover for every $\gamma>0$, there is a $c_{n,\gamma}>0$ such that
\begin{equation*}
    h^{ij},\sqrt{\det h}:\mleft\{B\in\Co^\gamma(\Ball^n;\Mbb^{n\times n}):\|B\|_{\Co^\gamma}<c_{n,\gamma}\mright\}\to \Co^\gamma(\Ball^n),\quad 1\le i,j\le n,
\end{equation*}
are norm continuous maps, with 
$$\sum_{i,j=1}^n\|h^{ij}-\delta^{ij}\|_{\Co^\gamma(\Ball^n)}\leq c_{n,\gamma}^{-1}\|B\|_{\Co^\gamma(\Ball^n;\Mbb^{n\times n})},\quad\|\sqrt{\det h}-1\|_{\Co^\gamma(\Ball^n)}\leq c_{n,\gamma}^{-1}\|B\|_{\Co^\gamma(\Ball^n;\Mbb^{n\times n})}.$$
\end{lemma}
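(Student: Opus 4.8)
The plan is to reduce the whole statement to the explicit formulas recorded in Remark \ref{Rmk::Key::RmkRiemMetric}\ref{Item::Key::RmkRiemMetric::RationalFunction}, namely $(h^{ij})_{n\times n}=(I+B)^{-1}\bigl((I+B)^{-1}\bigr)^\top$ and $\sqrt{\det h}=\det(I+B)$, and then to run elementary matrix algebra with $\Co^\gamma$-valued entries.

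For rationality, I would note that $\det(I+B)$ is, via the Leibniz formula, a polynomial in the entries of $B$ with constant term $1$, so $\sqrt{\det h}$ is a polynomial (in particular rational) function of the components of $B$; and by Cramer's rule $(I+B)^{-1}=\det(I+B)^{-1}\operatorname{adj}(I+B)$, whose entries are $(n-1)\times(n-1)$ minors of $I+B$ divided by $\det(I+B)$, hence rational in the components of $B$. Consequently each $h^{ij}=\sum_{k=1}^n[(I+B)^{-1}]_{ik}[(I+B)^{-1}]_{jk}$ is rational in the components of $B$ as well.

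For the quantitative part, fix $\gamma>0$. Lemma \ref{Lemma::FuncRevis::InvertingMatrix} supplies a smallness threshold on $\|B\|_{\Co^\gamma(\Ball^n;\Mbb^{n\times n})}$ below which $I+B(x)$ is everywhere invertible, $B\mapsto(I+B)^{-1}$ is $\Co^\gamma$-continuous, and $C:=(I+B)^{-1}-I$ satisfies $\|C\|_{\Co^\gamma}\le 2\|B\|_{\Co^\gamma}$. Writing $(h^{ij})_{n\times n}-I=(I+C)(I+C)^\top-I=C+C^\top+CC^\top$ and using the product bound \eqref{Eqn::FuncRevis::ProdConst}, one gets $\|(h^{ij})-I\|_{\Co^\gamma}\le 2\|C\|_{\Co^\gamma}+\tilde C_{\Ball^n,\gamma}\|C\|_{\Co^\gamma}^2$, which is $\le c_{n,\gamma}^{-1}\|B\|_{\Co^\gamma}$ once the threshold is shrunk enough to absorb the quadratic term into the linear one. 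For the determinant, expanding $\det(I+B)-1$ as a finite sum of monomials of degree $\ge 1$ in the entries of $B$ and applying Lemma \ref{Lemma::FuncSpace::Product} repeatedly bounds $\|\sqrt{\det h}-1\|_{\Co^\gamma}$ by a polynomial in $\|B\|_{\Co^\gamma}$ that vanishes at $0$, hence by $c_{n,\gamma}^{-1}\|B\|_{\Co^\gamma}$ on a small enough ball. Continuity of both maps into $\Co^\gamma(\Ball^n)$ then follows by composing continuous maps: $B\mapsto(I+B)^{-1}$ is continuous by Lemma \ref{Lemma::FuncRevis::InvertingMatrix}, and matrix multiplication, transposition, and $\det$ (a polynomial in the entries) are continuous on $\Co^\gamma$ by Lemma \ref{Lemma::FuncSpace::Product}.

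I do not expect any genuine obstacle here; the only bookkeeping point is to choose $c_{n,\gamma}$ as the minimum of the threshold from Lemma \ref{Lemma::FuncRevis::InvertingMatrix} and the (possibly smaller) threshold needed so that the quadratic-and-higher-order contributions to both $h^{ij}-\delta^{ij}$ and $\sqrt{\det h}-1$ are dominated by a linear multiple of $\|B\|_{\Co^\gamma(\Ball^n;\Mbb^{n\times n})}$.
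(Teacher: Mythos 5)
Your argument is correct and follows essentially the same route as the paper: both rely on the explicit formulas $(h^{ij})=(I+B)^{-1}\bigl((I+B)^{-1}\bigr)^\top$ and $\sqrt{\det h}=\det(I+B)$ from Remark \ref{Rmk::Key::RmkRiemMetric}, invoke Lemma \ref{Lemma::FuncRevis::InvertingMatrix} for the inverse and Lemma \ref{Lemma::FuncSpace::Product} for products, and shrink the threshold to absorb higher-order terms. The only cosmetic difference is that you expand $(I+C)(I+C)^\top-I=C+C^\top+CC^\top$ directly, while the paper bounds $\|(I+B)^{-1}((I+B)^{-1})^\top-I\|_{\Co^\gamma}$ by $\|(I+B)^\top(I+B)-I\|_{\Co^\gamma}$; both yield the same linear estimate.
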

\begin{rmk}\label{Rmk::Key::TaylorExpansionofRiemMetric}
The same results hold for $g^{ij}$ and $\sqrt{\det g}$. Namely,  $\|g^{ij}-\delta^{ij}\|_{\Co^\alpha}+\|\sqrt{\det g}-1\|_{\Co^\alpha}\leq c_{n,\alpha}^{-1} \|A\|_{\Co^\alpha}$ holds with the same constant $c_{n,\alpha}>0$.
\end{rmk}
\begin{proof}
By Lemma \ref{Lemma::FuncSpace::Product}, the space $\Co^\gamma(\Ball^n;\Mbb^{n\times n})$ is closed under matrix multiplication. By Remark \ref{Rmk::Key::RmkRiemMetric} \ref{Item::Key::RmkRiemMetric::RationalFunction} $\sqrt{\det h}=\det(I+B)$ is a polynomial in the components of $B$, so in particular is a norm continuous function on $\Co^\gamma(\Ball^n;\Mbb^{n\times n})$. Note that $\det(I+0)=1$ so we have 
$\|\sqrt{\det h}-1\|_{\Co^\gamma(\Ball^n)}\lesssim_\gamma\|B\|_{\Co^\gamma(\Ball^n;\Mbb^{n\times n})}$ when the right hand side is small.

By Lemma \ref{Lemma::FuncRevis::InvertingMatrix}, by choosing $c_{n,\gamma}<\tilde c_{\Ball^n,\gamma}$ where $\tilde c_{\Ball^n,\gamma}$ is the constant in Lemma \ref{Lemma::FuncRevis::InvertingMatrix}, we see that the map $B\mapsto(I+B)^{-1}$ is $\Co^\gamma$-norm continuous with $\|(I+B)^{-1}-I\|_{\Co^\gamma(\Ball^n;\Mbb^{n\times n})}\le2\|B\|_{\Co^\gamma(\Ball^n;\Mbb^{n\times n})}$. 

Thus, in the domain $\|B\|_{\Co^\gamma}<c_{n,\gamma}$, the map $B\mapsto(I+B)^{-1}\left((I+B)^{-1}\right)^\top$ is also continuous and satisfies $\|(I+B)^{-1}\left((I+B)^{-1}\right)^\top-I\|_{\Co^\gamma}\lesssim\|(I+B)^\top(I+B)-I\|_{\Co^\gamma}\lesssim\|B\|_{\Co^\gamma}$. It follows with $(h^{ij})_{n\times n}=(h_{ij})^{-1}_{n\times n}=(I+B)^{-1}\left((I+B)^{-1}\right)^\top$, we have that $h^{ij}$ are norm continuous and $\|h^{ij}-\delta^{ij}\|_{\Co^\gamma}\lesssim\|B\|_{\Co^\gamma}$.

By possibly shrinking $c_{n,\gamma}$ we get $\|\sqrt{\det h}-1\|_{\Co^\gamma}\le c_{n,\gamma}^{-1}\|B\|_{\Co^\gamma}$ and $\sum_{i,j=1}^n\|h^{ij}-\delta^{ij}\|_{\Co^\gamma}\le c_{n,\gamma}^{-1}\|B\|_{\Co^\gamma}$.
\end{proof}




\begin{conv}
Given a Riemannian metric $h$, we use the co-differential $\codiff_h$ as the adjoint of differential with respect to $h$. That is, for any $k$-form $\phi$ and any compactly supported $(k-1)$-form $\psi$, 
\begin{equation*}
    (\codiff_h\phi,\psi)_h:=(\phi,d\psi)_h=\int h(\phi,d\psi)\ d\operatorname{vol_h},
\end{equation*}
where $d\operatorname{vol_h}$ is the Riemannian volume density induced by $h$.  In local coordinates, $d\operatorname{vol_h}=\sqrt{\det h}\ d\operatorname{vol_{\R^n}}$, where $d\operatorname{vol_{\R^n}}$ is the usual Lebesgue density on $\R^n$.
We write $\codiff_{\R^n}$ for the usual co-differential with respect to the flat metric $\sum_{i,j=1}^n\delta_{ij}dy^idy^j$ on $\R^n$.
\end{conv}


\begin{lemma}\label{Lemma::Key::TransitionBetweenPDEs}
Let $\alpha>0$, and let $\lambda^1,\dots,\lambda^n$ be $\Co^\alpha$ 1-forms defined on $\Ball^n$ that span the cotangent space at every point, with $(a_i^j)$, $g$, $(g^{ij})$, and $\sqrt{\det g}$ given in \eqref{Eqn::Key::LambdaEtaMatrix}, \eqref{Eqn::Key::Riemannianmetric1}, and \eqref{Eqn::Key::Riemannianmetric2}. 

Suppose $F=\id+R:\Ball^n_x\to\Ball^n_y$ is a $\Co^{\alpha+1}$-diffeomorphism that satisfies
\begin{equation}\label{Eqn::Key::PDEforR}
        \sum_{i,j=1}^n\Coorvec{x^j}\Big(\sqrt{\det g}\cdot g^{ij}\frac{\partial R^k}{\partial x^i}\Big)=\sum_{i,j=1}^n\Coorvec{x^j}\big(\sqrt{\det g}\cdot g^{ij}a_i^k\big),\quad\text{in }\Ball^n_x,\quad k=1,\dots,n.
\end{equation}
Then for the pushforward 1-forms $\eta^k=F_*\lambda^k$, $k=1,\dots,n$, the coefficients $(b_i^j)$ defined in \eqref{Eqn::Key::LambdaEtaMatrix} satisfy
\begin{equation}\label{Eqn::Key::PDEforB}
    \sum_{i,j=1}^n\Coorvec{y^i}\left(\sqrt{\det h}h^{ij}b_j^k\right)=0,\quad\text{in }\Ball^n_y,\quad k=1,\dots,n.
\end{equation}
\end{lemma}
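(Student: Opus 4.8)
The plan is to reduce the statement to an invariance property of the divergence-form operator under the change of variables $F$. The key geometric fact is that the Laplace–Beltrami operator $\Delta_g$ (equivalently $\codiff_g d$ on functions) transforms naturally under diffeomorphisms: if $F:(\Ball^n_x,g)\to(\Ball^n_y,h)$ is an isometry in the sense that $h = F_* g$ (which holds here by the very definition \eqref{Eqn::Key::Riemannianmetric1} of $h$ via $\eta^k = F_*\lambda^k$, using Remark \ref{Rmk::Key::RmkRiemMetric}(a) that $g=\sum\lambda^k\cdot\lambda^k$ and $h=\sum\eta^k\cdot\eta^k$), then for any function $u$ on $\Ball^n_y$,
\begin{equation*}
  (\Delta_g (u\circ F)) = (\Delta_h u)\circ F .
\end{equation*}
First I would record this identity in the explicit divergence-form expression: in $x$-coordinates $\Delta_g v = -\tfrac1{\sqrt{\det g}}\sum_{i,j}\partial_{x^j}(\sqrt{\det g}\,g^{ij}\partial_{x^i} v)$, and similarly for $h$ in $y$-coordinates, and then the isometry relation $h=F_*g$ is exactly what makes these two operators conjugate by $v\mapsto v\circ F$.

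Next I would apply this with the coordinate functions $u = y^k$. Since $F = \id + R$, we have $y^k\circ F = x^k + R^k$, so $\Delta_g(y^k\circ F) = \Delta_g x^k + \Delta_g R^k$. The hypothesis \eqref{Eqn::Key::PDEforR} says precisely that
\begin{equation*}
  \sum_{i,j}\partial_{x^j}\!\big(\sqrt{\det g}\,g^{ij}\partial_{x^i}R^k\big) = \sum_{i,j}\partial_{x^j}\!\big(\sqrt{\det g}\,g^{ij}a_i^k\big),
\end{equation*}
i.e. $-\sqrt{\det g}\,\Delta_g R^k = \sum_{i,j}\partial_{x^j}(\sqrt{\det g}\,g^{ij}a_i^k)$. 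On the other hand $\partial_{x^i}x^k=\delta_i^k$, so $-\sqrt{\det g}\,\Delta_g x^k = \sum_{i,j}\partial_{x^j}(\sqrt{\det g}\,g^{ij}\delta_i^k) = \sum_{j}\partial_{x^j}(\sqrt{\det g}\,g^{kj})$. Adding, $-\sqrt{\det g}\,\Delta_g(y^k\circ F) = \sum_{i,j}\partial_{x^j}(\sqrt{\det g}\,g^{ij}(\delta_i^k + a_i^k))$. By the isometry identity this equals $-(\sqrt{\det h}\,\Delta_h y^k)\circ F$ transported back, which in $y$-coordinates reads $\sum_{i,j}\partial_{y^j}(\sqrt{\det h}\,h^{ij}\partial_{y^i}y^k)=\sum_{i,j}\partial_{y^j}(\sqrt{\det h}\,h^{ij}\delta_i^k)=\sum_j\partial_{y^j}(\sqrt{\det h}\,h^{kj})$.

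Finally I would identify the two sides with the divergence of $\eta^k$. From \eqref{Eqn::Key::LambdaEta}, $\eta^k = dy^k + \sum_j b_j^k\, dy^j$, so writing $\eta^k = \sum_j (\delta_j^k + b_j^k)\,dy^j$ and contracting against $h$, the quantity $\sum_{i,j}\partial_{y^i}(\sqrt{\det h}\,h^{ij}(\delta_j^k+b_j^k))$ is, up to the factor $\sqrt{\det h}$, the Riemannian divergence $\codiff_h \eta^k$ of the $1$-form $\eta^k$; but $\eta^k = F_*\lambda^k = F_* dx^k$ is an exact form ($\eta^k = d(x^k\circ F^{-1}) + \cdots$; more precisely each $\lambda^k$ need not be closed, so instead I use that the computation above already showed $\sum_{i,j}\partial_{y^j}(\sqrt{\det h}\,h^{ij}(\delta_i^k+b_i^k))$ equals $\sum_j\partial_{y^j}(\sqrt{\det h}\,h^{kj})$, i.e. the $b$-part satisfies $\sum_{i,j}\partial_{y^i}(\sqrt{\det h}\,h^{ij}b_j^k)=0$). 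This is exactly \eqref{Eqn::Key::PDEforB}.

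The step needing the most care is not the formal computation — which is classical Riemannian geometry — but justifying that every product, composition, and derivative appearing is well-defined at the given low regularity ($g^{ij},\sqrt{\det g}\in\Co^\alpha$ with $\alpha>0$ possibly small, $R\in\Co^{\alpha+1}$, so $\sqrt{\det g}\,g^{ij}a_i^k\in\Co^\alpha$ and its distributional $x^j$-derivative lies in $\Co^{\alpha-1}$). For this I would lean on Lemma \ref{Lemma::FuncSpace::Product} for the pointwise products, Lemma \ref{Lemma::FuncRevis::PushForwardFuncSpaces} and Lemma \ref{Lemma::FuncRn::PullbackComm} to make sense of the pullback/pushforward of the relevant forms and of the identity $F_*(d x^k\text{-type objects})$, and note that the equation \eqref{Eqn::Key::PDEforR} is an identity of distributions in $\Co^{\alpha-1}_\loc$, which is the natural space in which to read off \eqref{Eqn::Key::PDEforB} after changing variables by the $\Co^{\alpha+1}$-diffeomorphism $F$. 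Thus the whole argument is: transport the distributional divergence-form identity \eqref{Eqn::Key::PDEforR} through $F$ using the isometry $h=F_*g$, and recognize the result as \eqref{Eqn::Key::PDEforB}.
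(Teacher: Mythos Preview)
Your plan is essentially the same as the paper's: both recognize that \eqref{Eqn::Key::PDEforR} says $\codiff_g(dF^k-\lambda^k)=0$, push forward via $h=F_*g$ and the naturality of $\codiff$ under isometries, and read off \eqref{Eqn::Key::PDEforB} as $\codiff_h(dy^k-\eta^k)=0$. The paper carries out your ``transported back'' step in weak form---pairing \eqref{Eqn::Key::PDEforR} against $u\in C_c^1(\Ball^n_x)$, then substituting $u=v\circ F$ and using $F_*g=h$ to convert the $g$-pairing to an $h$-pairing---which is precisely how one makes the identity $\Delta_g(v\circ F)=(\Delta_h v)\circ F$ rigorous at $\Co^\alpha$-regularity; your detour through ``$\eta^k$ is exact'' is unnecessary and (as you noted) false, but your final computation already gives the result without it.
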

\begin{proof}
    Note that the composition of a $C^1$-function and a $\Co^{\alpha+1}$-diffeomorphism is still $C^1$, and being compactly supported is preserved under homeomorphism, so we have
    \begin{equation}\label{Eqn::Key::TransitionBetweenPDEs::Proof}
        C_c^1(\Ball_x^n)=\{v\circ F:v\in C_c^1(\Ball_y^n)\}.
    \end{equation}
    
    By assumption $R\in\Co^{\alpha+1}(\Ball^n;\R^n)$, and $a_i^k,g^{ij},\sqrt{\det g}\in\Co^\alpha(\Ball^n)$, so \eqref{Eqn::Key::PDEforR} makes sense in $\Co^{\alpha-1}_\loc(\Ball^n_x)\subsetneq C_c^1(\Ball^n_x)'$ and the equality
    can be viewed as elements of the dual of $C_c^1(\Ball^n)$. 
    
    For any $u\in C_c^1(\Ball^n)$, integrating by parts, we obtain
    \begin{align*}
        0&=\left\langle\sum_{i,j=1}^n\Coorvec{x^j}\left(\sqrt{\det g}\cdot g^{ij}\Big(\frac{\partial R^k}{\partial x^i}-a_i^k\Big)\right),u\right\rangle_{\Ball^n_x}=-\int_{\Ball^n}\sum_{i,j=1}^ng^{ij}\left(\frac{\partial R^k}{\partial x^i}-a_i^k\right)\frac{\partial u}{\partial x^j}\sqrt{\det g}dx
        \\
        &=-\langle dR^k-(\lambda^k-dx^k),du\rangle_{\Ball^n_x;g}=-\langle dF^k-\lambda^k,du\rangle_{\Ball_x^n;g}
    \end{align*}
    Here $\langle\cdot,\cdot\rangle_{\Ball_x^n;g}$ are the dual pairs for linear functionals and test functions induced by $g$. Namely, for $u,v\in C^0(\Ball^n)$ and $\phi,\psi\in C^0(\Ball^n;T^*\Ball^n)$, $\langle u,v\rangle_{\Ball^n;g}=\int_{\Ball^n}uv\sqrt{\det g}dx$ and $\langle \phi,\psi\rangle_{\Ball^n;g}=\int_{\Ball^n}g(\phi,\psi)\sqrt{\det g}dx$.

    Using \eqref{Eqn::Key::PDEforR}, we get $\langle dF^k-\lambda^k,du\rangle_{\Ball^n_x;g}=0$ for all $u\in C_c^1(\Ball^n)$. By \eqref{Eqn::Key::TransitionBetweenPDEs::Proof} we have $\langle dF^k-\lambda^k,d(v\circ F)\rangle_{\Ball^n_x;g}=0$ for all $v\in C_c^1(\Ball^n_y)$.
    
    Note that $F_*(g(\phi,\psi))=(F_*g)(F_*\phi,F_*\psi)=h(F_*\phi,F_*\psi)$ for all $\phi,\psi\in C^0_\loc(\Ball^n;T^*\Ball^n)$, and $F_*\sqrt{\det g}=\sqrt{\det F_*g}=\sqrt{\det h}$, so 
    we have, for every $v\in C_c^1(\Ball^n_y)$,
    \begin{equation*}
        0=\langle dF^k-\lambda^k,d(v\circ F)\rangle_{\Ball^n_x;g}=\langle F_*(dF^k-\lambda^k),F_*F^*dv\rangle_{\Ball^n_y;h}=\langle dy^k-\eta^k,dv\rangle_{\Ball^n_y;h}=\int_{\Ball^n}\sum_{i,j=1}^nh^{ij}b_j^k\frac{\partial v}{\partial y^i}\sqrt{\det h}dy.
    \end{equation*}
    Integrating by parts, we obtain \eqref{Eqn::Key::PDEforB}.
\end{proof}

We will choose a coordinate chart $F$ so that \eqref{Eqn::Key::PDEforR}
is satisfied, and therefore \eqref{Eqn::Key::PDEforB}
will be satisfied as well.

To  prove Theorem \ref{Thm::Keythm}  we will
prove the following:
\begin{itemize}
    \item There exists a $R\in\Co^{\alpha+1}(\Ball^n;\R^n)$ satisfying \eqref{Eqn::Key::PDEforR} with boundary condition $R\big|_{\partial\Ball^n}=0$. Moreover,
    we can choose $R$ with $\|R\|_{\Co^{\alpha+1}}\lesssim_{\alpha,\beta}\|A\|_{\Co^\alpha}$.
    Thus, by taking $c>0$ small, we may take $\|R\|_{\Co^{\alpha+1}}$ small.
    
    \item When $\|R\|_{\Co^{\alpha+1}}$ is small, $F=\id+R$ is a $\Co^{\alpha+1}$-diffeomorphism of $\Ball^n$.
    And under the assumption $\supp A\subsetneq\frac12\Ball^n$, we have $\|B\|_{\Co^\beta(\partial\Ball^n)}\lesssim_{\alpha,\beta}\|A\|_{\Co^\alpha}$ and $\|d\eta\|_{\Co^{\beta-1}}\lesssim_{\alpha,\beta}\|d\lambda\|_{\Co^{\beta-1}}$.
    In particular, by taking $c>0$ small, we may take 
    $\|B\|_{\Co^\beta(\partial\Ball^n)}+\|d\eta\|_{\Co^{\beta-1}} $ small.
    
    \item Using that $B\in\Co^\alpha(\partial\Ball^n;\Mbb^{n\times n})$ satisfies \eqref{Eqn::Key::PDEforB}, if $\|B\|_{\Co^\beta(\partial\Ball^n)}+\|d\eta\|_{\Co^{\beta-1}}$ is small, we will show $B\in\Co^\beta(\Ball^n;\Mbb^{n\times n})$ and  $\|B\|_{\Co^\beta}\lesssim_{\alpha,\beta}\|A\|_{\Co^\alpha}$.
\end{itemize}

The last step above  requires the Zygmund-H\"older well-posedness for the Dirichlet problem.

\begin{lemma}[The Dirichlet Problem]\label{Lemma::Key::DiriSol}
    Let $\gamma>0$, and let $U$ be a bounded domain with smooth boundary. Then for $f\in\Co^{\gamma-2}(U)$ and $g\in\Co^\gamma(\partial U)$ there is a unique $u\in\Co^{\gamma+2}(U)$ such that $\Lap u=f$ and $u\big|_{\partial U}=g$.
    
    Moreover, the solution map $(f,g)\mapsto u$ is continuous bilinear map $\Co^{\gamma-2}(U)\times\Co^\gamma(\partial U)\rightarrow \Co^{\gamma}(U)$.
\end{lemma}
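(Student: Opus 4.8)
The statement is the classical $L^\infty$-based Besov ($\Co^s=\Bs^s_{\infty,\infty}$) well-posedness of the Dirichlet problem for $\Lap$ on a smooth bounded domain, and I would prove it by the standard two-step reduction: first kill the interior source $f$ using the Newtonian potential machinery already set up in this section, and then reduce to the homogeneous Dirichlet problem $\Lap w=0$, $w|_{\partial U}=\tilde g$, which I would dispatch by quoting the Besov-space theory of elliptic boundary value problems (the Dirichlet Laplacian satisfies the Lopatinskii--Shapiro condition on a smooth domain). Write $u=v+w$ accordingly. Throughout, ``continuous bilinear'' should be read as: the solution operator $(f,g)\mapsto u$ is linear in $(f,g)$ and bounded $\Co^{\gamma-2}(U)\times\Co^\gamma(\partial U)\to\Co^\gamma(U)$.

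\textbf{Step 1: removing the source term.} Fix a bounded open set $B$ with smooth boundary and $\overline U\Subset B$, and fix a linear extension operator $E:\Co^{\gamma-2}(U)\to\Co^{\gamma-2}(\R^n)$ for the full Besov scale whose image consists of distributions supported in $B$ (such $E$ exists for smooth bounded domains; see \cite[Chapter 5]{TriebelTheoryOfFunctionSpacesII} or \cite{TriebelTheoryOfFunctionSpacesIII}). Set $v:=\Green\ast(Ef)$. By Lemma~\ref{Lemma::FuncRevis::GreensOp} (applied with smoothness parameter $\gamma-2$) one has $\Lap v=Ef=f$ on $U$ and $v\big|_B\in\Co^\gamma_{\loc}(B)$, hence $v\big|_U\in\Co^\gamma(U)$; and by Lemma~\ref{Lemma::FuncRevis::NewtonianBoundedness} on the domain $B$, $\|v\|_{\Co^\gamma(U)}\le\|v\|_{\Co^\gamma(B)}\lesssim_{B,\gamma}\|Ef\|_{\Co^{\gamma-2}(B)}\lesssim_{U,\gamma}\|f\|_{\Co^{\gamma-2}(U)}$. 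Since $\gamma>0$, the trace operator $\Co^\gamma(U)\to\Co^\gamma(\partial U)$ is bounded, so $\tilde g:=g-v\big|_{\partial U}\in\Co^\gamma(\partial U)$ with $\|\tilde g\|_{\Co^\gamma(\partial U)}\lesssim\|f\|_{\Co^{\gamma-2}(U)}+\|g\|_{\Co^\gamma(\partial U)}$. It therefore suffices to produce, linearly and boundedly in $\tilde g$, a unique $w\in\Co^\gamma(U)$ with $\Lap w=0$ in $U$ and $w\big|_{\partial U}=\tilde g$, and then put $u:=v+w$.

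\textbf{Step 2: the homogeneous Dirichlet problem, and the main obstacle.} The map $u\mapsto(\Lap u,\operatorname{tr}u)$ is an elliptic boundary value problem on the smooth bounded domain $U$ satisfying the complementing (Lopatinskii--Shapiro) condition, so by the Besov-space theory of such problems --- see \cite[Chapter 1.11]{TriebelTheoryOfFunctionSpacesIII} or \cite[Chapter 5]{TaylorPDE3}, whose interior version we have already used via Lemma~\ref{Lemma::FuncRevis::GreensOp} --- the operator $(\Lap,\operatorname{tr}):\Co^\gamma(U)\to\Co^{\gamma-2}(U)\times\Co^\gamma(\partial U)$ is bounded with closed range and finite-dimensional kernel and cokernel. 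Its kernel is trivial: if $\Lap u=0$ in $\Dist(U)$ then $u$ is smooth (indeed real-analytic) inside $U$ by Weyl's lemma, and $\operatorname{tr}u=0$ forces $u\equiv0$ by the maximum principle; the cokernel is likewise trivial, which amounts to solvability of the classical Dirichlet problem for continuous boundary data (Perron's method, or the double-layer potential together with Fredholm theory and the maximum principle). Hence $(\Lap,\operatorname{tr})$ is a Banach-space isomorphism, and applying its inverse to $(0,\tilde g)$ gives the required $w$ with $\|w\|_{\Co^\gamma(U)}\lesssim_{U,\gamma}\|\tilde g\|_{\Co^\gamma(\partial U)}$.

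\textbf{Conclusion.} Then $u:=v+w\in\Co^\gamma(U)$ solves $\Lap u=f$ in $U$, $u\big|_{\partial U}=g$; uniqueness follows from the triviality of $\ker(\Lap,\operatorname{tr})$ established above; and chaining the two estimates yields $\|u\|_{\Co^\gamma(U)}\lesssim_{U,\gamma}\|f\|_{\Co^{\gamma-2}(U)}+\|g\|_{\Co^\gamma(\partial U)}$, with $(f,g)\mapsto u$ manifestly linear, proving the continuity claim. The only genuinely nonelementary ingredient is the boundary estimate of Step 2: unlike the interior bound of Lemma~\ref{Lemma::FuncRevis::GreensOp} it is not a convolution estimate, and --- since we need it for \emph{all} $\gamma>0$, including integer $\gamma$, where the Hölder-space Schauder estimates fail --- it must be taken from the $\Bs^s_{\infty,\infty}$ version of elliptic boundary regularity rather than from classical Schauder theory. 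Everything else is routine bookkeeping with the extension operator, the trace theorem, and the mapping properties already recorded in this section.
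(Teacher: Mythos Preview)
The paper does not give an argument here at all: its entire ``proof'' is the sentence ``See \cite[Theorem 15]{DirichletBoundedness} for a proof of Lemma~\ref{Lemma::Key::DiriSol}.''  So there is nothing to compare your approach against --- you have supplied a proof where the paper only supplied a citation.

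Your two-step reduction (kill the source with the Newtonian potential via Lemmas~\ref{Lemma::FuncRevis::GreensOp} and~\ref{Lemma::FuncRevis::NewtonianBoundedness}, then invoke the Besov-space Lopatinskii--Shapiro theory for the homogeneous Dirichlet problem) is the standard route and is correct.  You are also right to flag that ``bilinear'' in the statement should read ``linear,'' and that the $\Co^{\gamma+2}$ in the first sentence of the statement is inconsistent with the continuity claim into $\Co^\gamma$ (the latter is what is actually used downstream, e.g.\ in Definition~\ref{Defn::Key::DiriSol} and Proposition~\ref{Prop::Key::ExistPDE}).  One small wrinkle in Step~1: to apply Lemma~\ref{Lemma::FuncRevis::NewtonianBoundedness} on $B$ you need $Ef$ to be supported in a fixed compact subset of $B$, not merely in $B$; arrange this by cutting off the extension with a fixed $\chi\in C_c^\infty(B)$ equal to $1$ on a neighborhood of $\overline U$, which preserves linearity, boundedness, and the identity $Ef|_U=f$.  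Your remark that integer $\gamma$ forces one to use the $\Bs^s_{\infty,\infty}$ boundary theory rather than classical Schauder is exactly the point, and is presumably why the paper defers to a reference that covers the full Zygmund scale.
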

See \cite[Theorem 15]{DirichletBoundedness} for a proof of Lemma \ref{Lemma::Key::DiriSol}. 

\begin{defn}[Dirichlet solution on ball]\label{Defn::Key::DiriSol}
Let $\gamma>0$ and $f\in\Co^{\gamma-2}(\Ball^n)$, we write $\Df(f)$ for the unique solution $u\in\Co^{\gamma}(\Ball^n)$ such that $\Lap u=f$ and $u\big|_{\partial\Ball^n}=0$.
For a $\Co^{\gamma-2}$-vector valued function $g=(g_1,\dots,g_m)$ on $\Ball^n$, we let $\Df(g):=(\Df(g_1),\dots,\Df(g_m))$.
\end{defn}

\begin{rmk}\label{Rmk::Key::ZygmundonSphere}
In Lemma \ref{Lemma::Key::DiriSol}, for $\gamma>0$, $\Co^\gamma(\partial\Ball^n)$ is the Zymgund-H\"older space on the sphere $\Sp^{n-1}=\partial\Ball^n$. One can use Definition \ref{Defn::FuncRevis::ObjonManifolds} to define $\Co^\gamma$-functions on it.
Note that the sphere is a compact manifold, and therefore $\Co^\gamma_\loc(\Sp^{n-1})=\Co^\gamma(\Sp^{n-1})$.
The norm can be defined using any finite atlas,
and the equivalence class of the norm does not depend on 
the choice of atlas. Moreover we have
\begin{equation}\label{Eqn::Key::HZNormforSphere}
    \|f\|_{\Co^\gamma(\partial\Ball^n)}\approx_\gamma\inf\{\|\tilde f\|_{\Co^\gamma(\Ball^n)}:\tilde f\big|_{\partial\Ball^n}=f\},
\end{equation}
since the trace operator $(\cdot)|_{\partial\Ball^n}:\Co^\gamma(\Ball^n)\to\Co^\gamma(\partial\Ball^n)$ is continuous and surjective; see \cite[Theorem 2.7.2]{TriebelTheoryOfFunctionSpacesI}. 
In fact given a function $f\in\Co^\gamma(\Sp^{n-1})$, we can take $\tilde f(x)=f(\frac{x}{|x|})\chi(|x|)$ where $\chi\in C_c^\infty(-\frac12,2)$ such that $\chi(1)=1$, then we have $\tilde f\big|_{\Sp^{n-1}}=f$ and $\|\tilde f\|_{\Co^\gamma(\Ball^n)}\approx_\gamma\|f\|_{\Co^\gamma(\Sp^{n-1})}$.
\end{rmk}

\label{Section::KeyThm::Outline}

    \subsection{The existence proposition}
    In this section, we show that there exists a $\Co^{\alpha+1}$-diffeomorphism $F=\id+R$ solving \eqref{Eqn::Key::PDEforR} and which satisfies
good estimates.

\begin{prop}\label{Prop::Key::ExistPDE}
Let $\alpha>0$ and let $\beta\in[\alpha,\alpha+1]$. There is a $c_1=c_1(n,\alpha,\beta)\in(0,1)$ such that, if $A=(a_j^k)_{n\times n}:\Ball^n\to\Mbb^{n\times n}$ satisfies
\begin{itemize}[parsep=-0.3ex]
    \item[-] $A\in\Co^\alpha_c(\frac12\Ball^n;\Mbb^{n\times n})$ and  $\|A\|_{\Co^\alpha}<c_1$,
\end{itemize}
then the matrix $(I+A)(x)$ is invertible for every $x\in\Ball^n$, and there is a  $\Co^{\alpha+1}$-map $ F=\id+R$ on $\Ball^n$ such that
\begin{enumerate}[parsep=-0.3ex,label=(\roman*)]
    \item\label{Item::Key::ExistPDE::1} $R$ solves the equation
    \begin{equation}\tag{\ref{Eqn::Key::PDEforR}}
        \sum_{i,j=1}^n\Coorvec{x^j}\Big(\sqrt{\det g}\cdot g^{ij}\frac{\partial R^k}{\partial x^i}\Big)=\sum_{i,j=1}^n\Coorvec{x^j}\big(\sqrt{\det g}\cdot g^{ij}a_i^k\big),\quad\text{in }\Ball^n_x,\quad k=1,\dots,n.
    \end{equation}
     with boundary condition $R\big|_{\partial\Ball^n}=0$, and we have
    \begin{equation}\label{Eqn::Key::ExistPDEQuantControl1}
        \textstyle\|R\|_{\Co^{\alpha+1}(\Ball^n; \R^n)}+\|\nabla R\|_{\Co^{\beta}(\Ball^n\backslash\frac34\Ball^n;\R^n)}\le c_1^{-1}\|A\|_{\Co^\alpha}.
    \end{equation}
    \item\label{Item::Key::ExistPDE::1.5} $F:\Ball^n_x\to\Ball^n_y$ is a $\Co^{\alpha+1}$-diffeomorphism such that $B^n(F(0),\frac16)\subseteq F(\frac13\Ball^n)\cap\frac34\Ball^n $.  
    \item\label{Item::Key::ExistPDE::2}Let $\Phi=F^{-1}:\Ball^n_y\to\Ball^n_x$ be its
    inverse map,  then 
    \begin{equation}\label{Eqn::Key::ExistPDEQuantControl2}
        \|\nabla \Phi-I\|_{\Co^{\alpha}(\Ball^n;\Mbb^{n\times n})}+\|\nabla \Phi-I\|_{\Co^{\beta}(\partial\Ball^n;\Mbb^{n\times n})}\le c_1^{-1}\|A\|_{\Co^\alpha}.
    \end{equation}
    
    In particular $\|\Phi\|_{\Co^{\alpha+1}(\Ball^n;\R^n)}\le c_1^{-1}$.
\end{enumerate}
\end{prop}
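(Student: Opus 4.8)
The plan is to solve \eqref{Eqn::Key::PDEforR} by separating off the flat Laplacian and realizing the equation as a fixed-point equation for an affine contraction built from the Dirichlet solver $\Df$ of Definition \ref{Defn::Key::DiriSol}, then to extract the boundary estimates in \ref{Item::Key::ExistPDE::1}, \ref{Item::Key::ExistPDE::1.5}, \ref{Item::Key::ExistPDE::2} from the hypothesis that $A$ is supported in $\tfrac12\Ball^n$. For $c_1<\tilde c_{\Ball^n,\alpha}$, Lemma \ref{Lemma::FuncRevis::InvertingMatrix} gives that $I+A(x)$ is invertible for all $x\in\Ball^n$, and Remark \ref{Rmk::Key::TaylorExpansionofRiemMetric} with Lemma \ref{Lemma::FuncSpace::Product} gives $\sqrt{\det g}\,g^{ij}-\delta^{ij}\in\Co^\alpha(\Ball^n)$ with $\sum_{i,j}\|\sqrt{\det g}\,g^{ij}-\delta^{ij}\|_{\Co^\alpha}\lesssim\|A\|_{\Co^\alpha}$ and $\|\sqrt{\det g}\,g^{ij}\|_{\Co^\alpha}\lesssim1$. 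Using $\Lap=-\sum_i\partial_{x^i}^2$ to isolate the flat part of the operator in \eqref{Eqn::Key::PDEforR}, the equation \eqref{Eqn::Key::PDEforR} together with $R\big|_{\partial\Ball^n}=0$ is, for $R\in\Co^{\alpha+1}(\Ball^n;\R^n)$, equivalent (by the uniqueness clause of Definition \ref{Defn::Key::DiriSol}) to $R=T(R)$, where
\[
T(R)^k:=\Df\Bigl(\sum_{i,j=1}^n\partial_{x^j}\bigl((\sqrt{\det g}\,g^{ij}-\delta^{ij})\,\partial_{x^i}R^k\bigr)-\sum_{i,j=1}^n\partial_{x^j}\bigl(\sqrt{\det g}\,g^{ij}a_i^k\bigr)\Bigr),\qquad k=1,\dots,n.
\]
Since $\partial_{x^j}\colon\Co^\alpha\to\Co^{\alpha-1}$ and $\Df\colon\Co^{\alpha-1}(\Ball^n)\to\Co^{\alpha+1}(\Ball^n)$ (Lemma \ref{Lemma::Key::DiriSol} with $\gamma=\alpha+1$) are bounded and multiplication $\Co^\alpha\times\Co^\alpha\to\Co^\alpha$ is bounded (Lemma \ref{Lemma::FuncSpace::Product}), $T$ is a well-defined affine self-map of the Banach space $E:=\{R\in\Co^{\alpha+1}(\Ball^n;\R^n):R\big|_{\partial\Ball^n}=0\}$ whose linear part $S$ has $\|S\|_{E\to E}\lesssim\|A\|_{\Co^\alpha}$ and whose value $T(0)$ has $E$-norm $\lesssim\|A\|_{\Co^\alpha}$. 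Choosing $c_1$ small so that $\|S\|\le\tfrac12$, the fixed point $R=\sum_{m\ge0}S^mT(0)\in E$ exists, solves \eqref{Eqn::Key::PDEforR} in $\Co^{\alpha-1}_\loc(\Ball^n)$, vanishes on $\partial\Ball^n$, and satisfies $\|R\|_{\Co^{\alpha+1}(\Ball^n;\R^n)}\lesssim\|A\|_{\Co^\alpha}$.

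For the boundary term in \eqref{Eqn::Key::ExistPDEQuantControl1}, the key observation is that $g^{ij}=\delta^{ij}$, $\sqrt{\det g}=1$, and $a_i^k=0$ off $\supp A$, so the rearranged equation reads $\Lap R^k=(\text{a distribution supported in }\supp A)$, and $\supp A\subseteq\overline{r_1\Ball^n}$ for some $r_1<\tfrac12$. Hence each $R^k$ is harmonic on $\{r_1<|x|<1\}$ and vanishes on $\partial\Ball^n$; by Schwarz reflection across the unit sphere (equivalently, standard boundary regularity for harmonic functions) it extends harmonically past $\partial\Ball^n$, and standard interior estimates for harmonic functions give $\|R^k\|_{C^m(\{\frac58\le|x|\le1\})}\lesssim_m\|R^k\|_{C^0(\Ball^n)}\le\|R^k\|_{\Co^{\alpha+1}}\lesssim\|A\|_{\Co^\alpha}$ for every $m$. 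Taking $m>\beta$ and using $C^m\hookrightarrow\Co^\beta$ on the closed annulus yields $\|\nabla R\|_{\Co^\beta(\Ball^n\setminus\frac34\Ball^n)}\lesssim\|A\|_{\Co^\alpha}$, which together with the previous paragraph gives \eqref{Eqn::Key::ExistPDEQuantControl1} after shrinking $c_1$.

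Then I would invoke Proposition \ref{Prop::FuncRevis::QuantdOfLowRegularityForm} for this $R$ (legitimate once $c_1$ is small enough that $\|R\|_{\Co^{\alpha+1}}\le C_0^{-1}$): it gives that $F=\id+R$ is a $\Co^{\alpha+1}$-diffeomorphism of $\Ball^n$, that $\|\nabla\Phi-I\|_{\Co^\alpha(\Ball^n)}\le C_0\|R\|_{\Co^{\alpha+1}}\lesssim\|A\|_{\Co^\alpha}$, and that $\|\Phi\|_{\Co^{\alpha+1}}\le C_0$, where $\Phi=F^{-1}$. Shrinking $c_1$ further so that $\|\nabla\Phi-I\|_{C^0}\le\tfrac13$ and $|F(0)|=|R(0)|<\tfrac1{12}$, the $\tfrac43$-Lipschitz map $\Phi$ satisfies $\Phi(F(0))=0$, whence $\Phi(B^n(F(0),\tfrac16))\subseteq B^n(0,\tfrac29)\subseteq\tfrac13\Ball^n$, so $B^n(F(0),\tfrac16)\subseteq F(\tfrac13\Ball^n)\cap\tfrac34\Ball^n$, proving \ref{Item::Key::ExistPDE::1.5}. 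For the remaining bound in \eqref{Eqn::Key::ExistPDEQuantControl2}: pick $\chi\in C_c^\infty(\{|x|>\tfrac58\})$ with $\chi\equiv1$ on $\{|x|\ge\tfrac34\}$; by the boundary-regularity paragraph $\chi\nabla R\in\Co^\beta(\Ball^n;\Mbb^{n\times n})$ with norm $\lesssim\|A\|_{\Co^\alpha}$, so Lemma \ref{Lemma::FuncRevis::InvertingMatrix} gives $G:=(I+\chi\nabla R)^{-1}-I\in\Co^\beta(\Ball^n;\Mbb^{n\times n})$ with norm $\lesssim\|A\|_{\Co^\alpha}$, and Proposition \ref{Prop::FuncRevis::QuantdOfLowRegularityForm}\ref{Item::FuncRevis::QuantdOfLowRegularityForm::1} (applied entrywise) gives $\|G\circ\Phi\|_{\Co^\beta(\Ball^n)}\le C_0\|G\|_{\Co^\beta(\Ball^n)}\lesssim\|A\|_{\Co^\alpha}$. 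Since $\Phi(\partial\Ball^n)=\partial\Ball^n\subseteq\{|x|>\tfrac34\}$, where $\chi\equiv1$, and $\nabla\Phi-I=\bigl((I+\nabla R)^{-1}-I\bigr)\circ\Phi$ by the chain rule, $\nabla\Phi-I$ and $G\circ\Phi$ coincide on a neighborhood of $\partial\Ball^n$; taking traces (the trace $\Co^\beta(\Ball^n)\to\Co^\beta(\partial\Ball^n)$ is bounded, Remark \ref{Rmk::Key::ZygmundonSphere}) gives $\|\nabla\Phi-I\|_{\Co^\beta(\partial\Ball^n)}\lesssim\|A\|_{\Co^\alpha}$, completing \eqref{Eqn::Key::ExistPDEQuantControl2}.

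I expect the main obstacle to be the second and third steps together: upgrading the mere $\Co^{\alpha+1}$ interior regularity of $R$ to $\Co^\beta$-control of $\nabla R$ up to $\partial\Ball^n$ by exploiting that $R$ is harmonic near the boundary, and then transporting that boundary control through the nonlinear composition $\bigl((I+\nabla R)^{-1}-I\bigr)\circ\Phi$ via Proposition \ref{Prop::FuncRevis::QuantdOfLowRegularityForm}\ref{Item::FuncRevis::QuantdOfLowRegularityForm::1}; by contrast the interior solvability of the equation is a routine contraction-mapping argument.
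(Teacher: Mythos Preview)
Your proposal is correct and follows essentially the same route as the paper: solve the Dirichlet problem for $R$ by a small-perturbation/absorption argument, exploit harmonicity of $R$ on the outer annulus (coming from $\supp A\subset\tfrac12\Ball^n$) to get $\Co^\beta$ control of $\nabla R$ up to $\partial\Ball^n$, and then invoke Proposition~\ref{Prop::FuncRevis::QuantdOfLowRegularityForm} for the diffeomorphism properties of $F$ and the $\Co^\alpha$ bound on $\nabla\Phi-I$. One simplification the paper makes in the final step: since $R|_{\partial\Ball^n}=0$ forces $\Phi|_{\partial\Ball^n}=\id$, one has $(\nabla\Phi-I)|_{\partial\Ball^n}=\bigl((I+\nabla R)^{-1}-I\bigr)|_{\partial\Ball^n}$ directly, so your detour through $G\circ\Phi$ and Proposition~\ref{Prop::FuncRevis::QuantdOfLowRegularityForm}\ref{Item::FuncRevis::QuantdOfLowRegularityForm::1} is unnecessary---it suffices to bound $(I+\chi\nabla R)^{-1}-I$ in $\Co^\beta(\Ball^n)$ and take its trace.
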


\begin{rmk}
The map $F$ in Proposition \ref{Prop::Key::ExistPDE} is uniquely determined by $A$. This is due to the well-posedness of the Dirichlet problem for the second order elliptic equations, since $R$ satisfies \eqref{Eqn::Key::PDEforR} with $R\big|_{\partial\Ball^n}=0$.
\end{rmk}

\begin{rmk}\label{Rmk::Key::ExistPDE:FisNeeded}
As we will see in the proof of Theorem \ref{Thm::Keythm}, the map $F$
from Proposition \ref{Prop::Key::ExistPDE}
is the map of the same name in Theorem \ref{Thm::Keythm}.
\end{rmk}

\begin{proof}
We let $c_1$ be a small constant which may change from line to line.
     Note that if \eqref{Eqn::Key::ExistPDEQuantControl1} and \eqref{Eqn::Key::ExistPDEQuantControl2} are valid for some $\tilde c_1$, then they are also valid for any $0<c_1\le\tilde c_1$. 
    
    First pick $c_1<\tilde c_{\Ball^n,\alpha}$ where $\tilde c_{\Ball^n,\alpha}$ is the constant in Lemma \ref{Lemma::FuncRevis::InvertingMatrix}. By Lemma \ref{Lemma::FuncRevis::InvertingMatrix}, the assumption $\|A\|_{\Co^\alpha}<c_1(<\tilde c_{\Ball^n,\alpha})$ implies that $I+A$ is invertible at every point and
    $(I+A)^{-1}\in \Co^{\alpha}(\Ball^n; \Mbb^{n\times n})$. Therefore,  $g$ given in \eqref{Eqn::Key::Riemannianmetric1} is indeed a $\Co^\alpha$-Riemannian metric.

    By the assumption $\supp A\subsetneq\frac12\Ball^n$, we have $\sqrt{\det g}\cdot g^{ij}\big|_{\Ball^n\backslash\frac12\Ball^n}=\delta^{ij}$. {Since $(\sqrt{\det g}g^{ij}(x))_{n\times n}$ is  an invertible matrix for $x\in\frac12\Ball^n$}, the second order operator $\sum_{i,j=1}^n\partial_{x^j}(\sqrt{\det g}\cdot g^{ij}\partial_{x^i})$ is uniformly elliptic on $\Ball^n$. 
    Classical existence theorems (for example, \cite[Theorem 8.3]{GilbargTrudinger}) show that for each $k=1,\dots,n$ there exists\footnote{Here $H^1(\Ball^n)$ stands for the classical $L^2$-Sobolev space of order $1$, and $H^{-1}(\Ball^n)=H_0^1(\Ball^n)^*$ is the $L^2$-Sobolev space of order $-1$.} a $R^k\in H^1(\Ball^n)$ that satisfies \eqref{Eqn::Key::PDEforR} with Dirichlet boundary condition $R^k\big|_{\partial\Ball^n}=0$, since $\sum_{i,j=1}^n\Coorvec{x^j}\left(\sqrt{\det g}g^{ij}a_i^k\right)\in\Co^{\alpha-1}(\Ball^n)\subset H^{-1}(\Ball^n)$. 
    By a classical regularity estimate (see \cite[Theorem 8.34]{GilbargTrudinger} or  \cite[Theorem 15]{DirichletBoundedness}), we know $R^k\in\Co^{\alpha+1}(\Ball^n; \R^n)$.
    
    To show $\|R\|_{\Co^{\alpha+1}}\lesssim\|A\|_{\Co^\alpha}$, we  write \eqref{Eqn::Key::PDEforR} as
    \begin{equation}\label{Eqn::Key::ExistPDE::Proof1}
        \Lap R^k=\sum_{i,j=1}^n\Coorvec{x^j}\left(\left(\delta^{ij}-\sqrt{\det g}g^{ij}\right)\frac{\partial R^k}{\partial x^i}\right)+\sum_{i,j=1}^n\Coorvec{x^j}\left(\sqrt{\det g}g^{ij}a_i^k\right),\quad\text{in }\Ball^n_x,\quad k=1,\dots,n.
    \end{equation}
    
    By  Remark \ref{Rmk::Key::TaylorExpansionofRiemMetric} (see also Lemma \ref{Lemma::Key::TaylorExpansionofRiemMetric}), we see that $\|\delta^{ij}-\sqrt{\det g}g^{ij}\|_{\Co^\alpha(\Ball^n)}\lesssim_\alpha\|A\|_{\Co^\alpha}$.
    Therefore we have
    \begin{equation}\label{Eqn::Key::EstimateLapR}
        \|\Lap R\|_{\Co^{\alpha-1}}\lesssim_\alpha\sum_{i,j,k=1}^n\left(\|\delta^{ij}-\sqrt{\det g}g^{ij}\|_{\Co^\alpha}\|\partial_{x^i}R^k\|_{\Co^\alpha}+\|\sqrt{\det g}g^{ij}a_i^k\|_{\Co^\alpha}\right)\lesssim
        \|A\|_{\Co^\alpha}\|R\|_{\Co^{\alpha+1}}+\|A\|_{\Co^\alpha},
    \end{equation}
    where the implicit constants depend only on $n$ and $\alpha$ but not $A$ or $R$.
    
    The assumption $R\big|_{\partial\Ball^n}=0$ implies that $R=\Df(\Lap R)$, where $\Df$ is the zero Dirichlet boundary solution operator given in Definition \ref{Defn::Key::DiriSol}. Since, by Lemma \ref{Lemma::Key::DiriSol}, $\Df:\Co^{\alpha-1}(\Ball^n)\to\Co^{\alpha+1}(\Ball^n)$ is bounded, \eqref{Eqn::Key::EstimateLapR} implies
    \begin{equation}\label{Eqn::Key::ExistPDE::Tmp1}
        \| R\|_{\Co^{\alpha+1}(\Ball^n;\R^n)}\le\tilde C_1\|A\|_{\Co^\alpha(\Ball^n;\Mbb^{n\times n})}\|R\|_{\Co^{\alpha+1}(\Ball^n;\R^n)}+\tilde C_1\|A\|_{\Co^\alpha(\Ball^n;\Mbb^{n\times n})},
    \end{equation}
    where $\tilde C_1=\tilde C_1(n,\alpha)>1$ is a constant  depending only on $n$ and $\alpha$ but not $A$ or $R$.
    
    Choosing $c_1$ small enough so that $c_1\tilde C_1\le\frac13$, then we get $\|R\|_{\Co^{\alpha+1}}\le\frac13\|R\|_{\Co^{\alpha+1}}+\tilde C_1\|A\|_{\Co^\alpha}$ when $A$ satisfies the assumption $\|A\|_{\Co^\alpha}<c_1$. Therefore
    \begin{equation}\label{Eqn::Key::ExistPDE::Tmp2}
        \textstyle\|R\|_{\Co^{\alpha+1}(\Ball^n;\R^n)}\le \frac32\tilde C_1\|A\|_{\Co^\alpha}\le\frac12 c_1^{-1}\|A\|_{\Co^\alpha},\quad\text{when }\|A\|_{\Co^\alpha}<c_1\le(3\tilde C_1)^{-1}.
    \end{equation}
     This is part of the estimate in \eqref{Eqn::Key::ExistPDEQuantControl1}.

    \medskip
    Next we show $\|R\|_{\Co^{\beta+1}(\Ball^n\backslash\frac34\Ball^n)}\lesssim\|A\|_{\Co^\alpha}$. Note that by the support assumption {$\supp A\subsetneq\frac12\Ball^n$} we have $\sqrt{\det g}g^{ij}\big|_{\Ball^n\backslash\frac12\Ball^n}=\delta^{ij}$ and $a_i^k\big|_{\Ball^n\backslash\frac12\Ball^n}=0$, so the right hand side of \eqref{Eqn::Key::ExistPDE::Proof1} is zero in $\Ball^n\backslash\frac12\Ball^n$. Therefore each $R^k$ are harmonic functions in the domain $\Ball^n\backslash\frac12\Ball^n$.
    
    The estimate $\|R\|_{\Co^{\alpha+1}(\Ball^n)}\lesssim_\alpha\|A\|_{\Co^\alpha}$ implies $\|R\|_{\Co^{\alpha+1}(\partial(\frac12\Ball^n))}\lesssim_\alpha\|A\|_{\Co^\alpha}$ since the trace map $\Co^{\alpha+1}(\Ball^n)\to\Co^{\alpha+1}(\partial(\frac12\Ball^n))$ is bounded (see Remark \ref{Rmk::Key::ZygmundonSphere}). By classical interior estimates of harmonic functions (for example, \cite[Theorem 2.10]{GilbargTrudinger}) since $\Lap R\big|_{\Ball^n\backslash\frac12\Ball^n}=0$, we have \begin{equation}\label{Eqn::Key::ExistPDE::TmpBoundEstR}
        \|R\|_{\Co^{\beta+1}(\partial\frac34\Ball^n)}\lesssim\|R\|_{C^{\lceil\beta\rceil+1}(\partial\frac34\Ball^n)}\lesssim\|R\|_{C^0(\Ball^n\backslash\frac23\Ball^n)}\lesssim\|A\|_{\Co^\alpha}.
    \end{equation}
    
    Therefore, along with the fact that $R\big|_{\partial \Ball^n}=0$, 
    $R\big|_{\partial(\Ball^n\backslash\frac34\Ball^n)}=R\big|_{\partial\frac34\Ball^n}\cup R\big|_{\partial\Ball^n}$ has $\Co^{\beta+1}$ norm bounded by a constant times $\|A\|_{\Co^\alpha}$. By classical regularity estimates of harmonic functions (also see Lemma \ref{Lemma::Key::DiriSol}) on $\Ball^n\backslash\frac34\Ball^n$ we know
    $$\|R\|_{\Co^{\beta+1}(\Ball^n\backslash\frac34\Ball^n)}\lesssim_\beta\|R\|_{\Co^{\beta+1}\big(\partial(\Ball^n\backslash\frac34\Ball^n)\big)}\lesssim_{\alpha,\beta}\|A\|_{\Co^\alpha}.$$
    In particular, there is a $\tilde C_2=\tilde C_2(n,\alpha,\beta)>0$ that depends on neither $A$ nor $R$ such that
    \begin{equation}\label{Eqn::Key::ExistPDE::Tmp3}
        \|\nabla R\|_{\Co^{\beta}(\Ball^n\backslash\frac34\Ball^n)}\le\tilde C_2\|A\|_{\Co^\alpha}.
    \end{equation}
    
    Taking $c_1<\frac 12\tilde C_2^{-1}$ we have $\|\nabla R\|_{\Co^{\beta}(\Ball^n\backslash\frac34\Ball^n)}\le \frac12c_1^{-1}\|A\|_{\Co^\alpha}$. Combining this with \eqref{Eqn::Key::ExistPDE::Tmp2}, completes the proof of \eqref{Eqn::Key::ExistPDEQuantControl1}.

    \medskip
     We can take $c_1>0$ possibly smaller so that $c_1<\frac13(C_0\tilde C_1)^{-1}$, where $C_0$ is the constant in Proposition \ref{Prop::FuncRevis::QuantdOfLowRegularityForm} and $\tilde C_1$ is the constant in \eqref{Eqn::Key::ExistPDE::Tmp1}. By \eqref{Eqn::Key::ExistPDE::Tmp2} we know $\|R\|_{\Co^{\alpha+1}}\le C_0^{-1}$. So by Proposition \ref{Prop::FuncRevis::QuantdOfLowRegularityForm}, the map $F=\id+R$ has $\Co^{\alpha+1}$-inverse.  We conclude $\Phi=F^{-1}\in\Co^{\alpha+1}(\Ball^n;\R^n)$.
    
    Since $\|R\|_{C^1(\Ball^n;\Mbb^{n\times n})}\lesssim_\alpha\|R\|_{\Co^{\alpha+1}(\Ball^n;\R^n)}$, by possibly shrinking $c_1$ we can ensure $\|R\|_{C^0}+\|\nabla R\|_{C^0}\le\frac12$. So $F(0)=R(0)\in B^n(0,\frac12)$, which implies $B^n(F(0),\frac16)\subseteq B^n(0,\frac12+\frac16)\subset\frac34\Ball^n$, and $|F(x_1)-F(x_2)|\ge|x_1-x_2|-|R(x_1)-R(x_2)|\ge\frac12|x_1-x_2|$ for $x_1,x_2\in\Ball^n$. 
    Thus, if  $|F(x)-F(0)|<\frac16$ then $|x-0|<\frac13$; i.e., $B^n(F(0),\frac16)\subseteq F(\frac13\Ball^n)$. So $B^n(F(0),\frac16)\subseteq F(\frac13\Ball^n)\cap\frac34\Ball^n$ finishing the proof of \ref{Item::Key::ExistPDE::1.5}.
    
    
    \medskip
    Finally we prove \ref{Item::Key::ExistPDE::2}. Note that by Proposition \ref{Prop::FuncRevis::QuantdOfLowRegularityForm} \ref{Item::FuncRevis::QuantdOfLowRegularityForm::1}, \eqref{Eqn::Key::ExistPDEQuantControl2} gives $\|\Phi\|_{\Co^{\alpha+1}(\Ball^n;\R^n)}\lesssim1$, which is $\|\Phi\|_{\Co^{\alpha+1}}\le c_1^{-1}$ by choosing $c_1$ small.
    
    By Proposition \ref{Prop::FuncRevis::QuantdOfLowRegularityForm} \ref{Item::FuncRevis::QuantdOfLowRegularityForm::2} and using that $c_1<\frac13(C_0\tilde C_1)^{-1}$, we get $\|\nabla\Phi-I\|_{\Co^\alpha}\le C_0\|R\|_{\Co^{\alpha+1}}\le C_0\tilde C_1\|A\|_{\Co^\alpha}\le\frac12c_1^{-1}\|A\|_{\Co^\alpha}$, which proves half of \eqref{Eqn::Key::ExistPDEQuantControl2}.


    To show the second half  of \eqref{Eqn::Key::ExistPDEQuantControl2}, we need to show $\|\nabla\Phi-I\|_{\Co^\beta(\partial\Ball^n)}\lesssim\|A\|_{\Co^\alpha}$. 
    
    The assumption $R\big|_{\partial\Ball^n}=0$ implies $F\big|_{\partial\Ball^n}=\id\big|_{\partial\Ball^n}=\Phi\big|_{\partial\Ball^n}$ and therefore 
    \begin{equation*}
        (\nabla\Phi-I)\big|_{\partial\Ball^n}=((\nabla\Phi)\circ\Phi^{-1})\big|_{\partial\Ball^n}-I=(\nabla F)^{-1}\big|_{\partial\Ball^n}-I.
    \end{equation*}

    Fix $\chi\in C_c^\infty(2\Ball^n\backslash\frac34\Ball^n)$ such that $\chi\equiv1$ in a neighborhood of $\partial\Ball^n$, so $\nabla R(x)=\chi(x)\nabla R(x)$ for $x\in\Ball^n$ near $\partial \Ball^n$.
    
    We shrink $c_1>0$ so that $c_1<\tilde c_{\Ball^n,\beta}\cdot(\tilde C_{\Ball^n,\beta}\cdot\tilde C_2\|\chi\|_{\Co^\beta})^{-1}$, where $\tilde C_{\Ball^n,\beta}$ is in \eqref{Eqn::FuncRevis::ProdConst}, $\tilde  C_2$ is in \eqref{Eqn::Key::ExistPDE::Tmp3}, 
    and $\tilde c_{\Ball^n,\beta}$ is in Lemma \ref{Lemma::FuncRevis::InvertingMatrix}. Then the assumption $\|A\|_{\Co^\alpha}<c_1$ implies
    \begin{equation*}
        \|\chi\nabla R\|_{\Co^\beta(\Ball^n)}\le \tilde C_{\Ball^n,\beta}\|\chi\|_{\Co^\beta_c(\Ball^n\backslash\frac34\Ball^n)}\|\nabla R\|_{\Co^\beta(\Ball^n\backslash\frac34\Ball^n)}\le\tilde C_{\Ball^n,\beta}\cdot\tilde C_2\|A\|_{\Co^\alpha_c(\frac12\Ball^n)}<\tilde c_{\Ball^n,\beta}.
    \end{equation*}
    Therefore we can apply Lemma \ref{Lemma::FuncRevis::InvertingMatrix} to $\chi\nabla R\in\Co^\beta(\Ball^n;\Mbb^{n\times n})$ to obtain $\|(I+\chi\nabla R)^{-1}-I\|_{\Co^\beta}\le 2\|\chi\nabla R\|_{\Co^\beta}$. Hence, along with \eqref{Eqn::Key::HZNormforSphere},
    $$\|(I+\nabla R)^{-1}-I\|_{\Co^\beta(\partial\Ball^n)}\lesssim\|(I+\chi\nabla R)^{-1}-I\|_{\Co^\beta(\Ball^n)}\lesssim\|\chi\nabla R\|_{\Co^\beta(\Ball^n)}\lesssim_\chi\|R\|_{\Co^{\beta+1}(\Ball^n\backslash\frac34\Ball^n)}\lesssim\|A\|_{\Co^\alpha}.$$
    So by possibly shrinking $c_1>0$, we get $\|\nabla \Phi-I\|_{\Co^\beta(\partial\Ball^n)}\le \frac12 c_1^{-1}\|A\|_{\Co^\alpha}$, which completes the second half of \eqref{Eqn::Key::ExistPDEQuantControl2}.
    %
\end{proof}

We now have pushforward 1-forms $\eta^1=F_*\lambda^1,\dots,\eta^n=F_*\lambda^n$. Their norms admit some control, as the next lemma shows. 

\begin{lemma}\label{Lemma::Key::AtoB}
Let $\alpha>0$ and let $\beta\in[\alpha,\alpha+1]$. There is a $c_2=c_2(n,\alpha,\beta)>0$ such that the following holds.
Let  $A\in\Co^\alpha_c(\frac12\Ball^n,\Mbb^{n\times n})$ be the coefficient matrix for $\lambda^1,\dots,\lambda^n$ (see \eqref{Eqn::Key::LambdaEta}) satisfying the assumptions of Proposition \ref{Prop::Key::ExistPDE} and also satisfying
\begin{enumerate}[parsep=-0.3ex,label=(\alph*)]
    \item $\|A\|_{\Co^\alpha(\Ball^n;\Mbb^{n\times n})}<c_2$.
    \item For $k=1,\dots,n$, $d\lambda^k\in\Co^{\beta-1}\mleft(\Ball^n;\mywedge^2 T^*\Ball^n\mright)$  with $\sum_{k=1}^n\|d\lambda^k\|_{\Co^{\beta-1}(\Ball^n;\wedge^2T^*\Ball^n)}<c_2$.
\end{enumerate}

Suppose $\Phi =F^{-1}:\Ball^n_y\to\Ball^n_x$ satisfies the conclusions of Proposition \ref{Prop::Key::ExistPDE}.  Then, for the 1-forms $\eta^k=\Phi^*\lambda^k$ ($k=1,\dots,n$) with coefficient matrix $B=(b^i_j)_{n\times n}$ (see \eqref{Eqn::Key::LambdaEta}), we have:
\begin{enumerate}[(i)]
    \item\label{Item::Key::AtoB::1} $B$ satisfies the PDE system \eqref{Eqn::Key::PDEforB}.
    \item\label{Item::Key::AtoB::2} $\|B\|_{\Co^\alpha(\Ball^n;\Mbb^{n\times n})}+\|B\|_{\Co^\beta(\partial\Ball^n;\Mbb^{n\times n})}<c_2^{-1}\|A\|_{\Co^\alpha}$.
    \item\label{Item::Key::AtoB::3} $d\eta^k\in\Co^{\beta-1}\mleft(\Ball^n;\mywedge^2T^*\Ball^n\mright)$ for $k=1,\dots,n$, with $\|d\eta^k\|_{\Co^{\beta-1}(\Ball^n;\wedge^2T^*\Ball^n)}<c_2^{-1}\|d\lambda^k\|_{\Co^{\beta-1}}$.
\end{enumerate}
\end{lemma}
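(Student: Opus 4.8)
The plan is to run $A$ through Proposition~\ref{Prop::Key::ExistPDE} to produce $F$ and $\Phi=F^{-1}$, deduce part~\ref{Item::Key::AtoB::1} directly from Lemma~\ref{Lemma::Key::TransitionBetweenPDEs}, obtain an explicit matrix formula for $B$ from which part~\ref{Item::Key::AtoB::2} is a bookkeeping exercise, and read off part~\ref{Item::Key::AtoB::3} from Proposition~\ref{Prop::FuncRevis::QuantdOfLowRegularityForm}~\ref{Item::FuncRevis::QuantdOfLowRegularityForm::3}. First I would fix $c_2\le c_1$ small enough (in terms of $n,\alpha,\beta$) that $\|A\|_{\Co^\alpha}<c_2$ forces $\|R\|_{\Co^{\alpha+1}}\le C_0^{-1}$, where $C_0$ is the constant of Proposition~\ref{Prop::FuncRevis::QuantdOfLowRegularityForm}; this is possible by the bound $\|R\|_{\Co^{\alpha+1}}\lesssim\|A\|_{\Co^\alpha}$ in Proposition~\ref{Prop::Key::ExistPDE}~\ref{Item::Key::ExistPDE::1}. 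Then $F=\id+R$ and $\Phi=F^{-1}$ are $\Co^{\alpha+1}$-diffeomorphisms of $\Ball^n$ satisfying all conclusions of Proposition~\ref{Prop::Key::ExistPDE}, in particular $\|\nabla\Phi-I\|_{\Co^\alpha(\Ball^n)}+\|\nabla\Phi-I\|_{\Co^\beta(\partial\Ball^n)}\lesssim\|A\|_{\Co^\alpha}$ and $\|\Phi\|_{\Co^{\alpha+1}(\Ball^n)}\lesssim1$, and $F$ is a valid input for Proposition~\ref{Prop::FuncRevis::QuantdOfLowRegularityForm}.

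Part~\ref{Item::Key::AtoB::1} is then immediate: since $R$ solves \eqref{Eqn::Key::PDEforR} and $F$ is a $\Co^{\alpha+1}$-diffeomorphism, Lemma~\ref{Lemma::Key::TransitionBetweenPDEs} applies and shows the coefficients of $\eta^k=\Phi^*\lambda^k=F_*\lambda^k$ satisfy \eqref{Eqn::Key::PDEforB}. For part~\ref{Item::Key::AtoB::2} I would compute $B$ explicitly: writing $\Phi=(\phi^1,\dots,\phi^n)$ and expanding $\eta^k=\Phi^*\lambda^k=d\phi^k+\sum_j(a_j^k\circ\Phi)\,d\phi^j$ in the coframe $dy^1,\dots,dy^n$ gives the matrix identity $I+B=(I+A\circ\Phi)\nabla\Phi$, i.e.\ $B=(\nabla\Phi-I)+(A\circ\Phi)\nabla\Phi$. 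The interior bound $\|B\|_{\Co^\alpha(\Ball^n)}\lesssim\|A\|_{\Co^\alpha}$ then follows from Proposition~\ref{Prop::Key::ExistPDE}~\ref{Item::Key::ExistPDE::2} (for $\nabla\Phi-I$), the algebra property Lemma~\ref{Lemma::FuncSpace::Product}, the composition estimate $\|a_j^k\circ\Phi\|_{\Co^\alpha}\lesssim\|a_j^k\|_{\Co^\alpha}$ of Proposition~\ref{Prop::FuncRevis::QuantdOfLowRegularityForm}~\ref{Item::FuncRevis::QuantdOfLowRegularityForm::1} (applied with $\beta$ replaced by $\alpha$), and $\|\nabla\Phi\|_{\Co^\alpha}\le1+\|\nabla\Phi-I\|_{\Co^\alpha}\lesssim1$. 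For the boundary bound, since $\supp A\Subset\frac12\Ball^n$ and $\Phi$ fixes $\partial\Ball^n$ (because $R|_{\partial\Ball^n}=0$) and is $C^0$-close to $\id$, we have $A\circ\Phi\equiv0$ on a neighborhood of $\partial\Ball^n$ in $\Ball^n$; hence $B=\nabla\Phi-I$ there, so $\|B\|_{\Co^\beta(\partial\Ball^n)}=\|\nabla\Phi-I\|_{\Co^\beta(\partial\Ball^n)}\lesssim\|A\|_{\Co^\alpha}$ again by Proposition~\ref{Prop::Key::ExistPDE}~\ref{Item::Key::ExistPDE::2}.

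For part~\ref{Item::Key::AtoB::3} I would apply Proposition~\ref{Prop::FuncRevis::QuantdOfLowRegularityForm}~\ref{Item::FuncRevis::QuantdOfLowRegularityForm::3} to $\theta:=\lambda^k-dx^k$, which has $\supp\theta\Subset\frac12\Ball^n$, small $\Co^\alpha$-norm, and $d\theta=d\lambda^k\in\Co^{\beta-1}$; since $F_*dx^k=\Phi^*dx^k=d\phi^k$ is exact we get $d(F_*\theta)=d\eta^k$, so the proposition yields $d\eta^k\in\Co^{\beta-1}(\Ball^n;\mywedge^2T^*\Ball^n)$ with $\|d\eta^k\|_{\Co^{\beta-1}}\le C_0\|d\lambda^k\|_{\Co^{\beta-1}}$. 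Shrinking $c_2$ once more so that the implied constants in \ref{Item::Key::AtoB::2} and \ref{Item::Key::AtoB::3} are at most $c_2^{-1}$ completes the proof. I do not expect a serious obstacle: the hard elliptic existence and regularity work is already isolated in Proposition~\ref{Prop::Key::ExistPDE}, so the only care needed is the support bookkeeping forcing $A\circ\Phi$ to vanish near $\partial\Ball^n$ and the verification — via Lemma~\ref{Lemma::FuncSpace::Product} and Proposition~\ref{Prop::FuncRevis::QuantdOfLowRegularityForm} — that every product and composition in the formula for $B$ is well defined in this low-regularity regime.
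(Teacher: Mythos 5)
Your proposal is correct and follows essentially the same route as the paper: part (i) from Lemma \ref{Lemma::Key::TransitionBetweenPDEs}, part (ii) from the identity $I+B=(I+A\circ\Phi)\nabla\Phi$ together with Proposition \ref{Prop::Key::ExistPDE} \ref{Item::Key::ExistPDE::2} and the vanishing of $A$ near $\partial\Ball^n$, and part (iii) from Proposition \ref{Prop::FuncRevis::QuantdOfLowRegularityForm} \ref{Item::FuncRevis::QuantdOfLowRegularityForm::3}. Your choice of $\theta=\lambda^k-dx^k$ in part (iii) is in fact slightly more careful than the paper's wording (which takes $\theta=\lambda^k$, not literally compactly supported in $\frac12\Ball^n$), since $d(F_*dx^k)=0$ makes the two applications equivalent while yours satisfies the support hypothesis verbatim.
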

\begin{proof}Part \ref{Item::Key::AtoB::1} is obtained in Lemma \ref{Lemma::Key::TransitionBetweenPDEs}.

For part \ref{Item::Key::AtoB::2}, write $\Phi=(\phi^1,\dots,\phi^n)$, where $\phi^k\in\Co^{\alpha+1}(\Ball^n)$, $k=1,\dots,n$. Therefore 
\begin{equation}\label{Eqn::Key::AtoB::Proof1}
    \eta^k=\Phi^*\Big(dx^k+\sum_{i=1}^na_i^kdx^i\Big)=d\phi^k+\sum_{i=1}^n(a_i^k\circ\Phi)d\phi^i,\quad b_j^k=\frac{\partial(\phi^k-y^k)}{\partial y^j}+\sum_{i=1}^n(a_i^k\circ\Phi)\frac{\partial\phi^i}{\partial y^j},\quad 1\le j,k\le n.
\end{equation}

From \eqref{Eqn::Key::AtoB::Proof1} we know that $\|B\|_{\Co^\alpha}\lesssim\|\nabla\Phi-I\|_{\Co^\alpha}+\|A\circ\Phi\|_{\Co^\alpha}\|\nabla\Phi\|_{\Co^\alpha}$. By Proposition \ref{Prop::Key::ExistPDE} \ref{Item::Key::ExistPDE::2} we know $\|\Phi\|_{\Co^{\alpha+1}}\lesssim1$ and $\|\nabla\Phi-I\|_{\Co^\alpha}\lesssim\|A\|_{\Co^\alpha}$. By Proposition \ref{Prop::FuncRevis::QuantdOfLowRegularityForm} \ref{Item::FuncRevis::QuantdOfLowRegularityForm::1} we get $\|A\circ\Phi\|_{\Co^\alpha}\lesssim\|A\|_{\Co^\alpha}$. Combining these we get 
\begin{equation}\label{Eqn::Key::AtoB::Tmp1}
    \|B\|_{\Co^\alpha}\lesssim\|\nabla\Phi-I\|_{\Co^\alpha}+\|A\circ\Phi\|_{\Co^\alpha}\|\nabla\Phi\|_{\Co^\alpha}\lesssim\|A\|_{\Co^\alpha}.
\end{equation}

Since we have $A\equiv 0$ outside $\frac{1}{2}\Ball^n$ {in particular $A\big|_{\partial\Ball^n}=0$}, it follows that $\eta^k=d\phi^k$ on $\partial\Ball^n$. 
Therefore $\|B\|_{\Co^\beta(\partial\Ball^n)}=\|\nabla\Phi-I\|_{\Co^\beta(\partial\Ball^n)}$.
So by Proposition \ref{Prop::Key::ExistPDE} \ref{Item::Key::ExistPDE::2}
\begin{equation}\label{Eqn::Key::AtoB::Tmp2}
    \|B\|_{\Co^\beta(\partial\Ball^n)}=\|\nabla\Phi-I\|_{\Co^\beta(\partial\Ball^n)}\lesssim\|A\|_{\Co^\alpha}
\end{equation}

By choosing $c_2>0$ small, \eqref{Eqn::Key::AtoB::Tmp1} and \eqref{Eqn::Key::AtoB::Tmp2} complete the proof of \ref{Item::Key::AtoB::2}.

\medskip
Finally, for \ref{Item::Key::AtoB::3}, we apply 
Proposition \ref{Prop::FuncRevis::QuantdOfLowRegularityForm} \ref{Item::FuncRevis::QuantdOfLowRegularityForm::3}
with $\theta=\lambda^k$,
for each $k=1,\dots,n$. Since $d(F_*\theta)=d(F_*\lambda^k)=d\eta^k$, by \eqref{Eqn::FuncRevis::QuantdOfLowRegularityForm::2} we get $\|d\eta^k\|_{\Co^{\beta-1}(\Ball^n;\wedge^2T^*\Ball^n)}\lesssim\|d\lambda^k\|_{\Co^{\beta-1}}$. Taking $c_2$ smaller, we complete the proof.
\end{proof}

    \subsection{The regularity proposition}
    
In this part, we show that the  1-forms $\eta^1,\dots,\eta^n$ are indeed $\Co^\beta$, by using the interior regularity theory for elliptic PDEs.

\begin{prop}\label{Prop::Key::RegularityPDE}
{Let $\alpha>0$ and $\beta\in[\alpha,\alpha+1]$.
There is a $c_3=c_3(n,\alpha,\beta)>0$}, such that if $\eta^1,\dots,\eta^n\in\Co^\alpha(\Ball^n;T^*\Ball^n)$ with coefficient matrix $B\in\Co^\alpha(\Ball^n;\Mbb^{n\times n})$ (see \eqref{Eqn::Key::LambdaEta}) such that $B$ solves the PDE \eqref{Eqn::Key::PDEforB}, $B\big|_{\partial \Ball^n}\in \Co^{\beta}(\partial\Ball^n;\Mbb^{n\times n})$ with
\begin{equation}\label{Eqn::Key::RegularityPDE::Assumption}
    \|B\|_{\Co^\alpha(\Ball^n;\Mbb^{n\times n})}+\|B\|_{\Co^\beta(\partial\Ball^n;\Mbb^{n\times n})}+\sum_{l=1}^n\|d\eta^l\|_{\Co^{\beta-1}\mleft(\Ball^n;\wedge^2T^*\Ball^n\mright)}<c_3,
\end{equation} then $B\in\Co^\beta(\Ball^n;\Mbb^{n\times n})$. Moreover
\begin{equation}\label{Eqn::Key::RegularityPDE::Conclusion}
    \|B\|_{\Co^\beta(\Ball^n;\Mbb^{n\times n})}\le c_3^{-1}\Big(\|B\|_{\Co^\alpha(\Ball^n;\Mbb^{n\times n})}+\|B\|_{\Co^\beta(\partial\Ball^n;\Mbb^{n\times n})}+\sum_{l=1}^n\|d\eta^l\|_{\Co^{\beta-1}(\Ball^n;\wedge^2T^*\Ball^n)}\Big).
\end{equation}
\end{prop}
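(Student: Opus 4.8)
The plan is to prove \eqref{Eqn::Key::RegularityPDE::Conclusion} by an interior bootstrap on the divergence‑form system \eqref{Eqn::Key::PDEforB}, using the extra regularity of $d\eta^k$ to power the gain, in the spirit of the paraproduct argument in the proof of Proposition \ref{Prop::FuncRn::dOfLowRegularityForm}; a collar of $\partial\Ball^n$ is handled separately using the Dirichlet problem and the boundary data $B\big|_{\partial\Ball^n}\in\Co^\beta$. Throughout, $c_3$ is a small constant fixed at the end, chosen so that \eqref{Eqn::Key::RegularityPDE::Assumption} forces $\|B\|_{\Co^\alpha}$ below the thresholds of Lemma \ref{Lemma::Key::TaylorExpansionofRiemMetric} and Lemma \ref{Lemma::FuncRevis::InvertingMatrix}. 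Write $\beta^k:=\eta^k-dy^k=\sum_i b^k_i\,dy^i$ and $G^{ij}:=\sqrt{\det h}\,h^{ij}-\delta^{ij}$. By Lemma \ref{Lemma::Key::TaylorExpansionofRiemMetric} and Remark \ref{Rmk::Key::TaylorExpansionofRiemMetric}, $G=(G^{ij})$ is a rational, derivative‑free function of the entries of $B$ with $G(0)=0$ and $\|G\|_{\Co^s}\lesssim_{n,s}\|B\|_{\Co^s}$ for every $s>0$ once $B\in\Co^s$ with $\|B\|_{\Co^s}$ small. In these terms, \eqref{Eqn::Key::PDEforB} says $\codiff_{\R^n}\beta^k=-\codiff_{\R^n}\bigl(\sum_j G^{\cdot j}b^k_j\bigr)$ on $\Ball^n$, i.e. the field $W^k:=(I+G)\beta^k$ is divergence‑free, while the hypothesis gives $d\beta^k=d\eta^k\in\Co^{\beta-1}\mleft(\Ball^n;\mywedge^2T^*\Ball^n\mright)$, with norm bounded by the right side of \eqref{Eqn::Key::RegularityPDE::Conclusion}. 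These are the two pieces of information we will feed into Hodge theory.

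Near $\partial\Ball^n$ we rewrite \eqref{Eqn::Key::PDEforB} as $\Lap b^k_i=\bigl(\text{terms linear in }d\eta^k,\text{ of regularity }\Co^{\beta-2}\bigr)+\partial_\bullet\bigl(\text{the small }\Co^\alpha\text{ quantity }G^{\cdot j}b^k_j\bigr)$, and apply the well‑posedness and boundedness of the Dirichlet problem (Lemma \ref{Lemma::Key::DiriSol}) on a thin collar: using the $\Co^\beta$ boundary datum and the perturbative smallness of $G$, one obtains $B\in\Co^\beta$ near $\partial\Ball^n$ with the desired bound. It therefore remains to prove the interior estimate at an arbitrary $p\in\Ball^n$.

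Fix a ball around $p$ and a cutoff $\chi$ equal to $1$ near $p$. Using the Newtonian potential $\Green$ exactly as in Lemma \ref{Lemma::FuncRevis::DecomposeForms} and Lemma \ref{Lemma::FuncRevis::GreensOp}, together with $\Lap\Green* =\mathrm{id}$, and using on $\{\chi\equiv1\}$ the two identities $d(\chi\beta^k)=d\eta^k$ and $\codiff_{\R^n}(\chi\beta^k)=-\codiff_{\R^n}\bigl(\sum_j G^{\cdot j}b^k_j\bigr)$ (the latter being \eqref{Eqn::Key::PDEforB}, up to a commutator with $\chi$ supported away from $p$), one arrives at a fixed‑point identity of the form
\begin{equation*}
    W^k=\rho^k+\Green*\codiff_{\R^n}d\Bigl(\textstyle\sum_j G^{\cdot j}b^k_j\Bigr)+r^k\qquad\text{near }p,\quad k=1,\dots,n,
\end{equation*}
where $\rho^k:=\Green*\codiff_{\R^n}d\eta^k\in\Co^\beta$ is bounded by $\|d\eta^k\|_{\Co^{\beta-1}}$ (Lemma \ref{Lemma::FuncRevis::NewtonianBoundedness}), $r^k$ is a smooth remainder from the cutoff, and $G=G(B)$. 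Read naively, this only reproduces $B\in\Co^\alpha$, because $\sum_j G^{\cdot j}b^k_j$ is a product of two merely‑$\Co^\alpha$ objects and $\Green*\codiff_{\R^n}d$ has order $0$. The gain is extracted exactly as in the proof of Proposition \ref{Prop::FuncRn::dOfLowRegularityForm}: split each product $G^{ij}b^k_j$ by Lemma \ref{Lemma::FuncRevis::ParaProdResults}\ref{Item::FuncRevis::ParaProdDecomp}, commute $\Green*\codiff_{\R^n}d$ past the paraproducts and move $d$ across the $\FrP$‑terms with Lemma \ref{Lemma::FuncRevis::ParaProdResults}\ref{Item::FuncRevis::ProductRule}, and estimate the resulting pieces with Lemma \ref{Lemma::FuncRevis::ParaProdResults}\ref{Item::FuncRevis::FrPCont1}--\ref{Item::FuncRevis::FrRCont} (splitting $\beta>1$ from $\beta\le1$ so that every product of negative‑order distributions is defined, as in that proof); feeding in an auxiliary Hodge splitting $\beta^k=\tilde\rho^k+d\xi^k$ with $\xi^k\in\Co^{\alpha+1}_\loc$ to supply a factor of regularity $\alpha+1$, this rewrites the nonlinear term as a term at the fixed index $\beta$ plus a term that is strictly more regular than $B$. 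Because the first part is pinned at index $\beta$ while the second improves by a definite increment and $G=G(B)$ (hence also $(I+G)^{-1}$, invertible by Lemma \ref{Lemma::FuncRevis::InvertingMatrix}) inherits each successive improvement of $B$ — the analogue of a metric tensor gaining regularity in harmonic coordinates — a finite iteration yields $B\in\Co^\beta$ near $p$; the smallness in \eqref{Eqn::Key::RegularityPDE::Assumption} makes $I+G$ invertible and makes each step of the iteration contract with a fixed geometric rate, which turns the iteration into the quantitative bound \eqref{Eqn::Key::RegularityPDE::Conclusion}. Patching with the collar estimate completes the proof.

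I expect the last step to be the main obstacle: organizing the paraproduct bookkeeping for all $n^2$ products $G^{ij}b^k_j$ simultaneously, verifying in the regime of small $\alpha,\beta$ that every product and commutator that appears is well‑defined distributionally, and tracking the constants through the iteration so that no implicit constant depends on $B$. (It is essential here that the hypothesis gives $d\eta^k\in\Co^{\beta-1}$, not merely $\Co^{\alpha-1}$, since that is exactly what forces $\rho^k\in\Co^\beta$ and hence fixes the target index $\beta$ of the iteration.)
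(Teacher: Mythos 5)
Your overall framing (divergence-free structure from \eqref{Eqn::Key::PDEforB}, $d\eta^k\in\Co^{\beta-1}$ as the source of the gain, Dirichlet problem for the boundary datum) matches the paper's setup, but the engine you propose for the interior step does not work, and it is not the one the paper uses. Your plan is a bootstrap: at each stage, decompose the nonlinear term $G^{ij}b^k_j$ by paraproducts and claim it splits into a piece pinned at index $\beta$ plus a piece ``strictly more regular than $B$,'' so that finitely many iterations reach $\Co^\beta$. The problem is that both factors $G^{ij}=G^{ij}(B)$ and $b^k_j$ sit at the \emph{same} regularity $\gamma$ as the current iterate, so while the resonant piece $\FrR(G^{ij},b^k_j)$ lands in $\Co^{2\gamma}$, the two paraproduct pieces $\FrP(G^{ij},b^k_j)$ and $\FrP(b^k_j,G^{ij})$ land only in $\Co^{\gamma}$ (Lemma \ref{Lemma::FuncRevis::ParaProdResults} \ref{Item::FuncRevis::FrPCont1}), with no gain. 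The trick from the proof of Proposition \ref{Prop::FuncRn::dOfLowRegularityForm} that you invoke relies on one factor ($\tmu\in\Co^{\alpha+1}$) carrying a full extra derivative, so that $d\FrP(\tmu,\rhot)$ trades the bad paraproduct for $\FrP(\tmu,d\rhot)$; here neither $G$ nor $b$ has that extra derivative, and the auxiliary Hodge splitting $\beta^k=\tilde\rho^k+d\xi^k$ does not supply it where it is needed (it controls $d\eta^k$, which is already assumed regular, not the product $G^{ij}b^k_j$). Running your scheme one therefore only recovers $\Lap B\in\Co^{\gamma-2}$ and hence $B\in\Co^\gamma$ — the iteration never advances past $\alpha$.

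The paper's proof avoids bootstrapping entirely. It exploits that $\Rc_i^k(B):=\sum_j(\sqrt{\det h}\,h^{ij}-\delta^{ij})b^k_j$ vanishes \emph{quadratically} at $B=0$, so the map
\[
\Tc_B[f]^k_j=b^k_j-\Df(\Lap b^k_j)+\Big\langle\Df(\codiff_{\R^n}d\eta^k),\Coorvec{y^j}\Big\rangle+\sum_i\Df\Big(\tfrac{\partial^2\Rc_i^k(f)}{\partial y^j\partial y^i}\Big)
\]
has Lipschitz constant $O(\xi)$ on the ball $\{\|f\|_{\Co^\gamma}\le\xi\}$ for \emph{both} $\gamma=\alpha$ and $\gamma=\beta$; the gain thus comes from \emph{smallness}, not from regularity improvement of the nonlinearity. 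Since $B$ is the unique fixed point of $\Tc_B$ in $\Xs_{\alpha,\xi}$ and $\Tc_B$ also has a (unique) fixed point in $\Xs_{\beta,\xi}\subset\Xs_{\alpha,\xi}$, uniqueness forces $B\in\Co^\beta$, and $\|B\|_{\Co^\beta}\le\xi_B$ gives \eqref{Eqn::Key::RegularityPDE::Conclusion}. To repair your argument you would need to replace the finite bootstrap with such a contraction-plus-uniqueness (or approximation) argument; as written, the claimed ``definite increment'' per step is the missing — and unavailable — ingredient.
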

\begin{proof}
We can write $B=\Df(\Lap B)+(B-\Df(\Lap B))$, where $\Df$ is defined in Definition \ref{Defn::Key::DiriSol} and is the zero Dirichlet boundary solution operator to the Laplacian equation on the unit ball.

Note that $B-\Df(\Lap B)$ is the harmonic function whose boundary value equals to $B\big|_{\partial\Ball^n}$ (which might not be zero). By Lemma \ref{Lemma::Key::DiriSol} using the assumption $B\big|_{\partial\Ball^n}\in\Co^\beta(\partial\Ball^n;\Mbb^{n\times n})$, we get $B-\Df(\Lap B)\in\Co^\beta(\Ball^n;\Mbb^{n\times n})$ and
\begin{equation}\label{Eqn::Key::RegularityPDE::Proof0}
    \|B-\Df(\Lap B)\|_{\Co^\beta(\Ball^n)}\lesssim\|B\|_{\Co^\beta(\partial\Ball^n)}.
\end{equation}

We can rewrite \eqref{Eqn::Key::PDEforB} as
\begin{equation}\label{Eqn::Key::RegularityPDE::Proof1}
    -\sum_{i=1}^n\Coorvec{y^i}b_i^k=\sum_{i,j=1}^n\Coorvec{y^i}\left(\big(\sqrt{\det h}h^{ij}-\delta^{ij}\big)b_j^k\right),\quad\text{in }\Ball^n_y,\quad k=1,\dots,n.
\end{equation}
The left hand side of \eqref{Eqn::Key::RegularityPDE::Proof1} is $\codiff_{\R^n}\eta^k$. By Lemma \ref{Lemma::Key::TaylorExpansionofRiemMetric}, the right hand side of \eqref{Eqn::Key::RegularityPDE::Proof1} is the derivatives of rational functions of the components of $B$, which vanish to second order at $B=0$. More precisely, using Lemma \ref{Lemma::Key::TaylorExpansionofRiemMetric}, we can rewrite \eqref{Eqn::Key::RegularityPDE::Proof1} as
\begin{equation}\label{Eqn::Key::RegularityPDE::EqnasCalR}
    \codiff_{\R^n}\eta^k=-\sum_{i=1}^n\Coorvec{y^i}b_i^k=\sum_{i=1}^n\Coorvec{y^i}\Rc_i^k(B),\quad\text{in }\Ball^n_y,\quad k=1,\dots,n.
\end{equation}
Here $\Rc_i^k$ are rational functions (see Lemma \ref{Lemma::Key::TaylorExpansionofRiemMetric}) defined in a neighborhood of origin in $\Mbb^{n\times n}$ with $|\Rc_i^k(u)|\lesssim|u|_{\Mbb^{n\times n}}^2$ for suitably small matrices $u\in\Mbb^{n\times n}$, and we have 
\begin{equation}\label{Eqn::Key::RegularityPDE::ProofTmp1}
    |\Rc_i^k(u_1)-\Rc_i^k(u_2)|\lesssim(|u_1|_{\Mbb^{n\times n}}+|u_2|_{\Mbb^{n\times n}})|u_1-u_2|_{\Mbb^{n\times n}},\quad\text{when }u_1,u_2\in\Mbb^{n\times n}\text{ small}.
\end{equation}

We can pass this fact from matrices to matrix-valued functions. Indeed, $\Rc_i^k$ has convergent power expansion in a neighborhood of $0$ as
\begin{equation}\label{Eqn::Key::RegularityPDE::PowerSeriesforR}
    \Rc_i^k(u)=\sum_{r=2}^\infty\sum_{j_1,\dots,j_r,l_1,\dots,l_r=1}^n a_{i,r;l_1\dots l_r}^{k;j_1\dots j_r}u_{j_1}^{l_1}\dots u_{j_r}^{l_r},\quad\text{converging when }|u|_{\Mbb^{n\times n}}\text{ is small.}
\end{equation}
Here $a_{i,r;l_1\dots l_r}^{k;j_1\dots j_r}\in\R$. The power expansion starts at $r=2$ since the zero and the first order terms  all vanish.

By Lemma  \ref{Lemma::FuncSpace::Product}, we can replace $u\in\Mbb^{n\times n}$ in \eqref{Eqn::Key::RegularityPDE::PowerSeriesforR} by $f\in\Co^\gamma(\Ball^n;\Mbb^{n\times n})$,  
and as in \eqref{Eqn::Key::RegularityPDE::ProofTmp1}, for $\gamma>0$ there is a $\tilde C_{\mathcal R,\gamma}>0$, such that when $\|f_1\|_{\Co^\gamma(\Ball^n;\Mbb^{n\times n})}+\|f_2\|_{\Co^\gamma(\Ball^n;\Mbb^{n\times n})}\le\tilde C_{\mathcal R,\gamma}^{-1}$,
\begin{equation}\label{Eqn::Key::RegularityPDE::Proof1.5}
    \|\Rc_i^k(f_1)-\Rc_i^k(f_2)\|_{\Co^\gamma(\Ball^n)}\le \tilde C_{\mathcal R,\gamma}(\|f_1\|_{\Co^\gamma(\Ball^n;\Mbb^{n\times n})}+\|f_2\|_{\Co^\gamma(\Ball^n;\Mbb^{n\times n})})\|f_1-f_2\|_{\Co^\gamma(\Ball^n;\Mbb^{n\times n})},\quad1\le i,k\le n.
\end{equation}

Using the fact that $\Lap\eta^k=d\codiff_{\R^n}\eta^k+\codiff_{\R^n}d\eta^k$, we further have
$$\Lap\eta^k=d\sum_{i=1}^n\Coorvec{y^i}\Rc_i^k(B)+\codiff_{\R^n} d\eta^k=\sum_{i,j=1}^n\frac{\partial^2}{\partial y^j\partial y^i}\Rc_i^k(B)dy^j+\sum_{j=1}^n\mleft\langle \codiff_{\R^n} d\eta^k,\Coorvec{y^j}\mright\rangle dy^j.$$
Here $\langle \cdot, \cdot \rangle$ denotes the pairing between
$1$ forms and vector fields.

On the other hand, $\Lap\eta^k=\sum_{j=1}^n\Lap(\delta_j^k+b_j^k)dy^j=\sum_{j=1}^n\Lap b_j^kdy^j$, therefore
\begin{equation}\label{Eqn::Key::RegularityPDE::Proof2}
    \Lap b_j^k=\mleft\langle\Lap\eta^k,\Coorvec{y^j}\mright\rangle=\sum_{i,j=1}^n\frac{\partial^2}{\partial y^j\partial y^i}\Rc_i^k(B)+\mleft\langle \codiff_{\R^n} d\eta^k,\Coorvec{y^j}\mright\rangle,\quad\text{in }\Ball^n_y,\quad k=1,\dots,n.
\end{equation}


Let $\tilde\xi_0:=\min(\tilde C_{\Rc,\alpha}^{-1},\tilde C_{\Rc,\beta}^{-1})$ where $\tilde C_{\Rc,\gamma}$ is the constant in \eqref{Eqn::Key::RegularityPDE::Proof1.5}. Let $\xi=\xi_B\in(0,\tilde\xi_0]$ to be determined. We define metric spaces $\Xs_{\gamma,\xi}$ and an operator $\Tc_B:\Xs_{\alpha,\tilde\xi_0}\to \Co^\alpha(\Ball^n;\Mbb^{n\times n})$ by
\begin{equation}\label{Eqn::Key::RegularityPDE::DefofContMap}
\begin{gathered}
    \Xs_{\gamma,\xi}:=\{f\in\Co^\gamma(\Ball^n;\Mbb^{n\times n}):\|f\|_{\Co^\gamma}\le \xi\}\subset\Co^\gamma(\Ball^n;\Mbb^{n\times n}),\quad\text{for }\gamma\in\{\alpha,\beta\}\text{ and }\xi\in(0,\tilde\xi_0].
    \\
    \Tc_B[f]^k_j:=b_j^k-\Df(\Lap b_j^k)+\Big\langle\Df(\codiff_{\R^n} d\eta^k),\Coorvec{y^j}\Big\rangle+\sum_{i=1}^n\Df\Big(\frac{\partial^2\Rc_i^k(f)}{\partial y^j\partial y^i}\Big),\quad 1\le j,k\le n.
\end{gathered}
\end{equation}
We endow $\Xs_{\gamma,\xi}$ with the metric induced by the norm $\|\cdot\|_{\Co^\gamma}$, which makes $\Xs_{\gamma,\xi}$ a complete metric space.

Note that from \eqref{Eqn::Key::RegularityPDE::DefofContMap} and \eqref{Eqn::Key::RegularityPDE::Proof2} we have $B=\Tc_B[B]$. Our goal is to show that when $c_3$ and $\xi$ are both suitably small, we have $B\in\Xs_{\alpha,\xi}$ and that  $\Tc_B$ is a contraction mapping on both $\Xs_{\alpha,\xi}$ and $\Xs_{\beta,\xi}$, thus by uniqueness of the fixed point we conclude that $B$ is a $\Co^\beta$-matrix and $\|B\|_{\Co^\beta}<\xi$.
\medskip


{By Lemma \ref{Lemma::Key::DiriSol}, $\Df:\Co^{\gamma-2}(\Ball^n;\Mbb^{n\times n})\to\Co^\gamma(\Ball^n;\Mbb^{n\times n})$ is bounded for $\gamma\in\{\alpha,\beta\}$}. By \eqref{Eqn::Key::RegularityPDE::Proof1.5}, we know for every $f_1,f_2\in\Xs_{\gamma,\xi}$,
\begin{equation}\label{Eqn::KeyRegPDE::Tmp1}
    \|\Tc_B[f_1]-\Tc_B[f_2]\|_{\Co^\gamma}\le \|\Df\|_{\Co^{\gamma-2}\to\Co^\gamma}\|\nabla^2\|_{\Co^\gamma\to\Co^{\gamma-2}}\sum_{k=1}^n\|\Rc_k^l(f_1)-\Rc_k^l(f_2)\|_{\Co^\gamma}\le \xi C'_{\Rc,\gamma}\|f_1-f_2\|_{\Co^\gamma}.
\end{equation}
where $C'_{\Rc,\gamma}>1$ is a constant that only depends on $n,(\Rc_j^k),\gamma$ but not on $B,\xi,f_1,f_2$.

On the other hand $\Tc_B[0]_j^k=b_j^k-\Df(\Lap b_j^k)+\mleft\langle\Df(\codiff_{\R^n} d\eta^k),\Coorvec{y^j}\mright\rangle$. By \eqref{Eqn::Key::RegularityPDE::Proof0}, for $\gamma\in \{\alpha,\beta\}$,
\begin{equation}\label{Eqn::KeyRegPDE::Tmp2}
\|\Tc_B[0]\|_{\Co^\gamma}\le\|\Tc_B[0]\|_{\Co^\beta}\le\|B-\Df(\Lap B)\|_{\Co^\beta(\Ball^n)}+\sum_{k=1}^n\|\Df(\codiff_{\R^n} d\eta^k)\|_{\Co^\beta}\lesssim\|B\|_{\Co^\beta(\partial\Ball^n)}+\sum_{k=1}^n\|d\eta^k\|_{\Co^{\beta-1}}. 
\end{equation}
So by possibly increasing $C'_{\Rc,\gamma}$, we have, for {$f_1\in\Xs_{\gamma,\xi}$},
using \eqref{Eqn::KeyRegPDE::Tmp1} and \eqref{Eqn::KeyRegPDE::Tmp2},
\begin{equation}\label{Eqn::KeyRegPDE::Tmp3}
\|\Tc_B[f_1]\|_{\Co^\gamma}
\leq \|\Tc_B[f_1]-\Tc_B[0]\|_{\Co^\gamma}+\|\Tc_B[0]\|_{\Co^\gamma}
\le C'_{\Rc,\gamma}\Big(\|B\|_{\Co^\beta(\partial\Ball^n)}+\sum_{l=1}^n\|d\eta^l\|_{\Co^{\beta-1}}+\xi\|f_1\|_{\Co^\gamma}\Big).
\end{equation}

Take $c_3>0$ satisfying $c_3<\frac14\max(1,C'_{\Rc,\alpha},C'_{\Rc,\beta})^{-2}$, and take 
\begin{equation}\label{Eqn::Key::RegularityPDE::Proof3}
    \xi=\xi_B:=2\max(C'_{\Rc,\alpha},C'_{\Rc,\beta})\left(\|B\|_{\Co^\alpha(\Ball^n;\Mbb^{n\times n})}+\|B\|_{\Co^\beta(\partial\Ball^n)}+\sum_{k=1}^n\|d\eta^k\|_{\Co^{\beta-1}}\right).
\end{equation}

By the assumption \eqref{Eqn::Key::RegularityPDE::Assumption}, $\xi_B\le\frac12\max(C'_{\Rc,\alpha},C'_{\Rc,\beta})^{-1}<\tilde\xi_0$, so $\Tc_B$ is defined on $\Xs_{\alpha,\tilde\xi_0}$ and by \eqref{Eqn::KeyRegPDE::Tmp3} $\Tc_B$ maps $\Xs_{\gamma,\xi_B}$ into $\Xs_{\gamma,\xi_B}$ for $\gamma\in\{\alpha,\beta\}$. 

Since $\xi_B C'_{\Rc,\gamma}<\frac12$ for $\gamma\in\{\alpha,\beta\}$, using \eqref{Eqn::KeyRegPDE::Tmp1},
$\Tc_B$ is a contraction mapping on the domain $\Xs_{\gamma,\xi_B}$, for $\gamma\in\{\alpha,\beta\}$.

Note that $\xi_B\ge\|B\|_{\Co^\alpha}$, and so $B\in \{f\in\Co^\alpha(\Ball^n;\Mbb^{n\times n}):\|f\|_{\Co^\alpha}\le \xi_B\}=\Xs_{\alpha,\xi_B}$. Therefore, $B$ is a fixed point for $\Tc_B$ in $\Xs_{\alpha,\xi_B}$, which is unique since $\Tc_B$ is a contraction mapping on $\Xs_{\alpha,\xi_B}$.

On the other hand $\Tc_B$ also has a unique fixed point in $\Xs_{\beta,\xi_B}\subsetneq\Xs_{\alpha,\xi_B}$. Therefore, by uniqueness, $B\in\Xs_{\beta,\xi_B}=\{f\in\Co^\beta(\Ball^n;\Mbb^{n\times n}):\|f\|_{\Co^\beta}\le \xi_B\}$.
In particular, $\|B\|_{\Co^\beta}\le\xi_B$. Thus by \eqref{Eqn::Key::RegularityPDE::Proof3}, 
$$\|B\|_{\Co^\beta(\Ball^n)}\le\xi_B\lesssim_{\alpha,\beta} \|B\|_{\Co^\alpha(\Ball^n)}+\|B\|_{\Co^\beta(\partial\Ball^n)}+\sum_{l=1}^n\|d\eta^l\|_{\Co^{\beta-1}}.$$

Thus, we have established \eqref{Eqn::Key::RegularityPDE::Conclusion}
which completes the proof.
\end{proof}

    \subsection{The proof of Theorem \ref{Thm::Keythm} and an improvement}\label{Section::ProofKey}
    Using Propositions \ref{Prop::Key::ExistPDE} and \ref{Prop::Key::RegularityPDE} we can prove Theorem \ref{Thm::Keythm}.

\begin{proof}[Proof of Theorem \ref{Thm::Keythm}]
Let $c_1,c_2,c_3>0$ be the small constants in Proposition \ref{Prop::Key::ExistPDE}, Lemma \ref{Lemma::Key::AtoB}, and Proposition \ref{Prop::Key::RegularityPDE}. We take $c=\frac1{2n^2}\min(c_1,c_2c_3)$ in the assumption of Theorem \ref{Thm::Keythm}.

Let $F$, $R=F-\id$, $\Phi=F^{-1}$, $A$, $B$ and $\eta^i=F_*\lambda^i$ be as in Proposition \ref{Prop::Key::ExistPDE}. {Recall $\eta^i$ and $B=(b_i^j)$ are given in \eqref{Eqn::Key::LambdaEta} and \eqref{Eqn::Key::LambdaEtaMatrix}.} 

When the assumption \eqref{Eqn::Keythm::Assumption} is satisfied, by Proposition \ref{Prop::Key::ExistPDE} \ref{Item::Key::ExistPDE::1.5} we have that $F$ is $\Co^{\alpha+1}$-diffeomorphism and satisfies $B^n(F(0),\frac16)\subseteq F(\frac13\Ball^n)\cap\frac34\Ball^n$. And by \eqref{Eqn::Key::ExistPDEQuantControl1},  we have $\|F-\id\|_{\Co^{\alpha+1}}=\|R\|_{\Co^{\alpha+1}}\le c_1^{-1}\|A\|_{\Co^\alpha}\le \frac1{2n} c^{-1}\sum_{i=1}^n\|\lambda^i-dx^i\|_{\Co^\alpha}$. This implies half of the estimate \eqref{Eqn::Keythm::Conclusion}.

By Lemma \ref{Lemma::Key::AtoB} \ref{Item::Key::AtoB::2} and \ref{Item::Key::AtoB::3}, we have $\|B\|_{\Co^\alpha(\Ball^n)}+\|B\|_{\Co^\beta(\partial\Ball^n)}<c_2^{-1}\|A\|_{\Co^\alpha}$ and $\|d\eta^k\|_{\Co^{\beta-1}}<c_2^{-1}\|d\lambda^k\|_{\Co^{\beta-1}}$, $k=1,\dots,n$. 

Thus, $\|B\|_{\Co^\alpha(\Ball^n)}+\|B\|_{\Co^\beta(\partial\Ball^n)}+\sum_{k=1}^n\|d\eta^k\|_{\Co^{\beta-1}}<c_2^{-1}\|A\|_{\Co^\alpha}+c_2^{-1}\sum_{k=1}^n\|d\lambda^k\|_{\Co^{\beta-1}}<2c_2^{-1}c<c_3$.
By \eqref{Eqn::Key::RegularityPDE::Conclusion} in Proposition \ref{Prop::Key::RegularityPDE}, we get 
\begin{align*}
    &\sum_{k=1}^n\|\eta^k-dy^k\|_{\Co^\beta}\le n\|B\|_{\Co^\beta(\Ball^n)}\le n c_3^{-1}\mleft(\|B\|_{\Co^\alpha}+\|B\|_{\Co^\beta(\partial\Ball^n)}+\sum_{k=1}^n\|d\eta^k\|_{\Co^{\beta-1}}\mright)\\
    &\le n c_3^{-1}\cdot c_2^{-1}\mleft(\|A\|_{\Co^\alpha}+\sum_{k=1}^n\|d\lambda^k\|_{\Co^{\beta-1}}\mright)\le n^2(c_2c_3)^{-1}\sum_{k=1}^n\mleft(\|\lambda^k-dx^k\|_{\Co^\alpha}+\|d\lambda^k\|_{\Co^{\beta-1}}\mright).
\end{align*}
This gives the second half of the estimate \eqref{Eqn::Keythm::Conclusion} since $n^2(c_2c_3)^{-1}\le\frac12 c^{-1}$.
\end{proof}


In Theorem \ref{Thm::Keythm}, we assumed \eqref{Eqn::Keythm::Assumption} which is a smallness assumption.  When \eqref{Eqn::Keythm::Assumption}
is not satisfied, we may use a scaling argument to transfer to a setting where it is satisfied, as the next result shows.

\begin{prop}[The scaling argument]\label{Prop::Key::Scaling}Let $\alpha>0$, $\beta\in[\alpha,\alpha+1]$
and let $\mu_0,\tilde c,M>0$. There exists a $\kappa_0=\kappa_0(\alpha,\beta,\mu_0,\tilde c,M)\in(0,\mu_0]$ that satisfies the following:

Suppose $\theta^1,\dots,\theta^n\in\Co^\alpha(\mu_0\Ball^n;T^*\R^n)$ such that $\theta^i\big|_0=dx^i\big|_0$ for $i=1,\dots,n$ and $d\theta^1,\dots,d\theta^n\in\Co^{\beta-1}\mleft(\mu_0\Ball^n;\mywedge^2T^*\R^n\mright)$ with estimate 
\begin{equation}\label{Eqn::Key::Scaling::AssumptionOnTheta}
    \sum_{i=1}^n\|\theta^i\|_{\Co^\alpha(\mu_0\Ball^n;T^*\R^n)}+\|d\theta^i\|_{\Co^{\beta-1}(\mu_0\Ball^n;\wedge^2T^*\R^n)}<M.
\end{equation}

Then there are 1-forms $\lambda^1,\dots,\lambda^n\in\Co^\alpha(\Ball^n;T^*\Ball^n)$ such that
\begin{enumerate}[parsep=-0.3ex,label=(\roman*)]
    \item\label{Item::Key::Scaling::FormEqual} $\lambda^i\big|_{\frac13\Ball^n}=\frac1{\kappa_0}\cdot(\phi_{\kappa_0}^*\theta^i)\big|_{\frac13\Ball^n}$, $i=1,\dots,n$, where $\phi_{\kappa_0}(x):=\kappa_0\cdot x$ is the scaling map.
    \item\label{Item::Key::Scaling::Est} $\lambda^1,\dots,\lambda^n$ satisfy the assumptions of Theorem \ref{Thm::Keythm} with the constant $c=\tilde c$. That is,
    \begin{itemize}
        \item $\lambda^1,\dots,\lambda^n$ span the cotangent space at every point in $\Ball^n$.
        \item $\supp(\lambda^i-dx^i)\subsetneq\frac12\Ball^n $.
        \item $\sum_{i=1}^n(\|\lambda^i-dx^i\|_{\Co^\alpha}+\|d\lambda^i\|_{\Co^{\beta-1}})\le \tilde c$.
    \end{itemize}
\end{enumerate}
\end{prop}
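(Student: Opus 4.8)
The plan is to build $\lambda^1,\dots,\lambda^n$ by pulling the $\theta^i$ back under a strong dilation and then cutting off near the origin. Write $\theta^i=dx^i+\sum_{j=1}^na_j^i\,dx^j$ with $a_j^i\in\Co^\alpha(\mu_0\Ball^n)$ (so the hypothesis $\theta^i|_0=dx^i|_0$ says $a_j^i(0)=0$), and $d\theta^i=\sum_{1\le j<k\le n}(d\theta^i)_{jk}\,dx^j\wedge dx^k$ with $(d\theta^i)_{jk}\in\Co^{\beta-1}(\mu_0\Ball^n)$. Fix a cutoff $\chi\in C^\infty_c(\tfrac12\Ball^n)$ with $\chi\equiv1$ on $\tfrac13\Ball^n$, and a cutoff $\hat\chi\in C^\infty_c(\mu_0\Ball^n)$ with $\hat\chi\equiv1$ on $\tfrac{\mu_0}2\Ball^n$, and let $\phi_\kappa(x):=\kappa x$. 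For $\kappa\in(0,\mu_0/2]$, using $\phi_\kappa^*dx^j=\kappa\,dx^j$, set
\begin{equation*}
\widetilde{\theta}^{i}_{\kappa}:=\tfrac1\kappa\,\phi_\kappa^*\theta^i=dx^i+\sum_{j=1}^n(a_j^i\circ\phi_\kappa)\,dx^j\quad\text{on }\Ball^n,\qquad \lambda^i:=dx^i+\chi\cdot\big(\widetilde{\theta}^{i}_{\kappa}-dx^i\big).
\end{equation*}
For every such $\kappa$, property \ref{Item::Key::Scaling::FormEqual} holds because $\chi\equiv1$ on $\tfrac13\Ball^n$, and $\supp(\lambda^i-dx^i)\subseteq\supp\chi\subsetneq\tfrac12\Ball^n$; it remains to choose $\kappa=\kappa_0$ small enough to force the smallness estimate and the spanning property in \ref{Item::Key::Scaling::Est}.

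For the estimates, note that
\begin{equation*}
\lambda^i-dx^i=\chi\sum_{j}(a_j^i\circ\phi_\kappa)\,dx^j,\qquad d\lambda^i=d\chi\wedge\big(\widetilde{\theta}^{i}_{\kappa}-dx^i\big)+\chi\,d\widetilde{\theta}^{i}_{\kappa},\qquad d\widetilde{\theta}^{i}_{\kappa}=\kappa\sum_{j<k}\big((d\theta^i)_{jk}\circ\phi_\kappa\big)\,dx^j\wedge dx^k,
\end{equation*}
the last equality because $\phi_\kappa^*(dx^j\wedge dx^k)=\kappa^2\,dx^j\wedge dx^k$. Since $\kappa\le\mu_0/2$ we have $a_j^i\circ\phi_\kappa=(\hat\chi a_j^i)\circ\phi_\kappa$ and $(d\theta^i)_{jk}\circ\phi_\kappa=(\hat\chi(d\theta^i)_{jk})\circ\phi_\kappa$ on $\Ball^n$; by Lemma \ref{Lemma::FuncSpace::Product} (multiplication by the smooth $\hat\chi$) and extension by zero, $\hat\chi a_j^i\in\Co^\alpha(\R^n)$ and $\hat\chi(d\theta^i)_{jk}\in\Co^{\beta-1}(\R^n)$ have norms $\lesssim_{\alpha,\beta,\mu_0} M$, and $(\hat\chi a_j^i)(0)=0$. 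I then invoke the standard dilation estimates for the spaces $\Co^s(\R^n)$: for $\kappa\in(0,1]$ one has $\|g\circ\phi_\kappa\|_{\Co^s(\R^n)}\lesssim_s\kappa^s\|g\|_{\Co^s(\R^n)}$ when $s\le0$, and $\|g\circ\phi_\kappa\|_{\Co^s(\R^n)}\lesssim_s\|g\|_{\Co^s(\R^n)}$ when $s>0$, with the extra gain $\|g\circ\phi_\kappa\|_{\Co^s(\R^n)}\lesssim_s\kappa^{\delta(s)}\|g\|_{\Co^s(\R^n)}$ for some $\delta(s)\in(0,1]$ when in addition $g(0)=0$ (for $0<s<1$ one may take $\delta(s)=s$; for $s\ge1$ the chain rule produces a factor $\kappa$ on every derivative term while the embedding $\Co^s\hookrightarrow C^{0,1/2}$ handles the sup norm). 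Feeding these in, and using Lemma \ref{Lemma::FuncSpace::Product} once more to estimate the products with $\chi$ and $d\chi$ (here $\beta-1\le\alpha$, so locally $\Co^\alpha\subseteq\Co^{\beta-1}$), we obtain
\begin{equation*}
\sum_{i=1}^n\Big(\|\lambda^i-dx^i\|_{\Co^\alpha(\Ball^n)}+\|d\lambda^i\|_{\Co^{\beta-1}(\Ball^n)}\Big)\le C(\alpha,\beta,\mu_0)\,M\big(\kappa^{\delta(\alpha)}+\kappa^{\min(\beta,1)}\big),
\end{equation*}
which tends to $0$ as $\kappa\to0^+$; the $d\lambda^i$ term uses the prefactor $\kappa$ in $d\widetilde{\theta}^{i}_{\kappa}$, which beats the possible growth $\kappa^{\beta-1}$ in the regime $\beta<1$ and leaves the net power $\kappa^{\min(\beta,1)}>0$.

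Finally, choose $\kappa_0=\kappa_0(\alpha,\beta,\mu_0,\tilde c,M)\in(0,\min(1,\mu_0)]$ small enough that the right-hand side above is $\le\min\{\tilde c,(2n)^{-1}\}$. With this choice the third bullet of \ref{Item::Key::Scaling::Est} holds with constant $\tilde c$; and since then every entry of the coefficient matrix $A_\lambda$ of $\lambda^1,\dots,\lambda^n$ has sup norm $\le(2n)^{-1}$, the matrix $I+A_\lambda(x)$ is invertible for all $x\in\Ball^n$, so the $\lambda^i$ span the cotangent space at every point, which is the first bullet. Together with property \ref{Item::Key::Scaling::FormEqual} and the support condition already verified, this proves the proposition. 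The only nonroutine ingredient is the family of dilation estimates for $\Co^s$ quoted above, the delicate cases being $s=\beta-1<0$ (rescued by the $\kappa^2$ produced when pulling back a $2$-form, against the $1/\kappa$ normalization) and the gain from $a_j^i(0)=0$ when $\alpha\ge1$ (where no derivative of $a_j^i$ need vanish at $0$, so the decay must be routed through the chain rule); every other step is bookkeeping with Lemma \ref{Lemma::FuncSpace::Product} and the cutoff $\hat\chi$, which transplants the problem from $\mu_0\Ball^n$ to $\R^n$ where these estimates are clean.
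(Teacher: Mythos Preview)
Your argument is correct and takes a genuinely different route from the paper's proof. Both define $\lambda^i$ the same way, by scaling and cutting off $\theta^i$; the divergence is in how one controls $\|d\lambda^i\|_{\Co^{\beta-1}}$ when $\beta-1$ may be negative.

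The paper avoids negative-index scaling entirely. It first manufactures auxiliary $1$-forms $\rho^i\in\Co^\beta$ with $\rho^i(0)=0$ and $d\rho^i=d\theta^i$ on $\tfrac{\mu_0}{2}\Ball^n$, via the Newtonian potential $\rho^i\approx\Green*\codiff d(\chi_0\theta^i)$. It then builds a companion $1$-form $\tau^i_\kappa$ (using $\rho^i$) with $d\tau^i_\kappa=d\lambda^i_\kappa$, and estimates $\|d\lambda^i\|_{\Co^{\beta-1}}\lesssim\|\tau^i_\kappa\|_{\Co^\beta}$. The latter involves only scaling of $\Co^\beta$ and $\Co^\alpha$ functions vanishing at the origin, handled by their Lemma~\ref{Lemma::Key::ScalingLemma} with the elementary gain $\kappa^{\min(\gamma,1/2)}$.

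You instead estimate $d\lambda^i$ directly, using that the $2$-form pullback supplies a spare factor of $\kappa$, and pairing this with the dilation bound $\|g\circ\phi_\kappa\|_{\Co^s(\R^n)}\lesssim_s\kappa^s\|g\|_{\Co^s(\R^n)}$ for $s=\beta-1<0$. The net power $\kappa\cdot\kappa^{\beta-1}=\kappa^\beta$ gives the required smallness. Your route is shorter and sidesteps the potential-theoretic construction; the price is that the negative-index dilation estimate, while true (it follows cleanly from Littlewood--Paley: frequencies shift down under $\phi_\kappa$ with $\kappa\le1$, and the geometric sum over the low-frequency block contributes $\kappa^s$ when $s<0$), is not proved in the paper and is somewhat less elementary than Lemma~\ref{Lemma::Key::ScalingLemma}. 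A minor point: your parenthetical ``chain rule produces a factor $\kappa$'' is not literally applicable at $\alpha=1$ since $\Co^1$ functions need not be differentiable, but your invocation of $\Co^\alpha\hookrightarrow C^{0,1/2}$ for the sup norm together with the second-difference characterization does the job and yields $\delta(\alpha)=\min(\alpha,1/2)$, as in the paper. One practical advantage of the paper's detour through $\rho^i,\tau^i_\kappa$ is that the same construction is recycled verbatim in Section~\ref{Section::Quant} (Lemma~\ref{Lemma::Quant::ScalingReg}) to track higher $\Co^s$-regularity; your direct argument would adapt, but the bookkeeping is redone from scratch.
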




{
The key to Proposition \ref{Prop::Key::Scaling} is the next lemma.
\begin{lemma}\label{Lemma::Key::ScalingLemma}
Let $\gamma>0$, then for any $\mu_0>0$ there is a $C_{\gamma,\mu_0}>0$ such that,
\begin{equation}\label{Eqn::Key::ScalingLemma}
    \|f(\kappa\cdot)\|_{\Co^\gamma(\Ball^n)}\le C_{\gamma,\mu_0} \kappa^{\min(\gamma,\frac12)}\|f\|_{\Co^\gamma(\mu_0\Ball^n)},\quad\forall\kappa\in(0,\mu_0], f\in\Co^\gamma(\mu_0\Ball^n)\text{ such that }f(0)=0.
\end{equation}
\end{lemma}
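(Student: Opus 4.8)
The plan is to set $g(x):=f(\kappa x)$ for $x\in\Ball^n$ — well defined since $\kappa\Ball^n\subseteq\mu_0\Ball^n$ — and to follow how the concrete descriptions of the $\Co^\gamma$-norm in Remark~\ref{Rmk::FuncSpace::CharofZyg} behave under this dilation. Two elementary facts are used throughout. First, writing $\delta:=\min(\gamma,\tfrac12)\in(0,1)$, we have $\Co^\gamma(\mu_0\Ball^n)\subseteq\Co^\delta(\mu_0\Ball^n)=C^{0,\delta}(\mu_0\Ball^n)$ — the inclusion being a standard property of these Besov spaces and the equality being Remark~\ref{Rmk::FuncSpace::CharofZyg}~\ref{Item::FuncSpace::CharofZyg::01} — so, since $f(0)=0$, $|f(z)|\le[f]_{C^{0,\delta}(\mu_0\Ball^n)}|z|^{\delta}\lesssim_{\gamma,\mu_0}|z|^{\delta}\|f\|_{\Co^\gamma(\mu_0\Ball^n)}$ on $\mu_0\Ball^n$; in particular $\|g\|_{C^0(\Ball^n)}\lesssim_{\gamma,\mu_0}\kappa^{\delta}\|f\|_{\Co^\gamma(\mu_0\Ball^n)}$. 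Second, because $\kappa\le\mu_0$, any power $\kappa^a$ with $a\ge\delta$ obeys $\kappa^a\lesssim_{a,\mu_0}\kappa^{\delta}$, so exponents may be freely ``rounded down'' to $\delta$.

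First I would dispose of the range $\gamma\in(0,2)$ directly. By Remark~\ref{Rmk::FuncSpace::CharofZyg}~\ref{Item::FuncSpace::CharofZyg::02} the norm $\|g\|_{\Co^\gamma(\Ball^n)}$ is comparable to $\|g\|_{C^0(\Ball^n)}$ plus the seminorm $\sup|h|^{-\gamma}|g(x+2h)-2g(x+h)+g(x)|$ over $h\ne0$ and $x,x+h,x+2h\in\Ball^n$. The substitution $x'=\kappa x$, $h'=\kappa h$ carries each such triple into $\kappa\Ball^n\subseteq\mu_0\Ball^n$ and multiplies the difference quotient by $\kappa^{\gamma}$, so this seminorm of $g$ is $\le\kappa^{\gamma}$ times the corresponding seminorm of $f$ on $\mu_0\Ball^n$, hence $\lesssim_{\gamma}\kappa^{\gamma}\|f\|_{\Co^\gamma(\mu_0\Ball^n)}$. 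Together with the $C^0$-estimate above and the rounding $\kappa^{\gamma}\lesssim_{\gamma,\mu_0}\kappa^{\delta}$ this gives \eqref{Eqn::Key::ScalingLemma} for $\gamma\in(0,2)$. This step is precisely what handles the borderline exponent $\gamma=1$ (where $\Co^1$ is not a H\"older space), and it is what forces us to use the second-difference characterization rather than a derivative recursion in this range.

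Then for $\gamma\ge2$ I would induct on $\lfloor\gamma\rfloor$, the base cases $\gamma\in(0,2)$ being settled. By Remark~\ref{Rmk::FuncSpace::CharofZyg}~\ref{Item::FuncSpace::CharofZyg::>1}, $\|g\|_{\Co^\gamma(\Ball^n)}\approx\|g\|_{\Co^{\gamma-1}(\Ball^n)}+\sum_{j=1}^n\|\partial_{x^j}g\|_{\Co^{\gamma-1}(\Ball^n)}$, and the index $\gamma-1\in[1,\gamma)$ has $\lfloor\gamma-1\rfloor<\lfloor\gamma\rfloor$, so the lemma is available at index $\gamma-1$; note $\min(\gamma-1,\tfrac12)=\tfrac12=\delta$ since $\gamma-1\ge1$. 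The first term is treated by the lemma at index $\gamma-1$ applied to $g=f(\kappa\,\cdot\,)$ itself (legitimate since $f(0)=0$), giving $\lesssim_{\gamma,\mu_0}\kappa^{1/2}\|f\|_{\Co^{\gamma-1}(\mu_0\Ball^n)}\le\kappa^{1/2}\|f\|_{\Co^\gamma(\mu_0\Ball^n)}$. For each derivative term, $\partial_{x^j}g(x)=\kappa\,(\partial_{x^j}f)(\kappa x)$, and since $\partial_{x^j}f$ need not vanish at the origin I split $(\partial_{x^j}f)(\kappa x)=(\partial_{x^j}f)(0)+h_j(\kappa x)$ with $h_j:=\partial_{x^j}f-(\partial_{x^j}f)(0)$: the constant part has $\Co^{\gamma-1}$-norm $|(\partial_{x^j}f)(0)|\le\|\partial_{x^j}f\|_{C^0(\mu_0\Ball^n)}\lesssim_{\gamma}\|f\|_{\Co^\gamma(\mu_0\Ball^n)}$ (valid as $\gamma-1>0$), while $h_j$ vanishes at $0$ with $\|h_j\|_{\Co^{\gamma-1}(\mu_0\Ball^n)}\lesssim_{\gamma}\|f\|_{\Co^\gamma(\mu_0\Ball^n)}$, so the lemma at index $\gamma-1$ bounds $\|h_j(\kappa\,\cdot\,)\|_{\Co^{\gamma-1}(\Ball^n)}$ by $\kappa^{1/2}\|f\|_{\Co^\gamma(\mu_0\Ball^n)}$. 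Hence $\|\partial_{x^j}g\|_{\Co^{\gamma-1}(\Ball^n)}\lesssim_{\gamma,\mu_0}\kappa\,\|f\|_{\Co^\gamma(\mu_0\Ball^n)}$, which rounds down to $\kappa^{1/2}$. Summing the finitely many pieces and using $\delta=\tfrac12$ here yields \eqref{Eqn::Key::ScalingLemma} for $\gamma\ge2$ and closes the induction.

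I expect the one genuine subtlety to be the saturation of the exponent at $\tfrac12$ for $\gamma\in(1,2)$: a naive argument that differentiates once and recurses would land at index $\gamma-1\in(0,1)$ and produce only the factor $\kappa^{\gamma-1}$, which is \emph{weaker} than the asserted $\kappa^{1/2}$. Treating the whole interval $(0,2)$ in one shot via second differences sidesteps this, and thereafter every recursion index is $\ge1$, so $\min(\,\cdot\,,\tfrac12)$ is identically $\tfrac12$ and the induction proceeds without loss. Everything else is routine bookkeeping of $(n,\gamma,\mu_0)$-dependent constants, which is harmless because $\kappa$ stays in the bounded interval $(0,\mu_0]$.
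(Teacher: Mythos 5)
Your proof is correct and follows essentially the same route as the paper's: handle all of $\gamma\in(0,2)$ at once via the second-difference characterization of Remark \ref{Rmk::FuncSpace::CharofZyg} \ref{Item::FuncSpace::CharofZyg::02} (which is exactly what saturates the exponent at $\tfrac12$), then induct for $\gamma\ge2$ using the characterization $\|g\|_{\Co^\gamma}\approx\|g\|_{\Co^{\gamma-1}}+\sum_j\|\partial_{x^j}g\|_{\Co^{\gamma-1}}$. The one place where you are actually more careful than the paper is the inductive step: the paper applies the scaling estimate directly to $(\partial_{x^j}f)_\kappa$, even though the lemma's hypothesis requires the function to vanish at the origin and $\partial_{x^j}f(0)$ need not be zero; your decomposition $(\partial_{x^j}f)(\kappa x)=(\partial_{x^j}f)(0)+h_j(\kappa x)$, with the constant absorbed via $\kappa\le\kappa^{1/2}\mu_0^{1/2}$, closes that gap cleanly. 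Everything else (the $C^0$ bound from $f(0)=0$ and the $C^{0,\min(\gamma,1/2)}$ embedding, the rounding of exponents using $\kappa\le\mu_0$) matches the paper.
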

\begin{proof}
By taking a scaling $x\mapsto\mu_0 x$, we can assume $\mu_0=1$ without loss of generality. Thus, $f$ is defined on the unit ball.
To prove the result, we use the characterizations of Zygmund-H\"older norms in Remark \ref{Rmk::FuncSpace::CharofZyg}.

For $\kappa\in(0,1]$ set $f_\kappa(x):=f(\kappa x)$. 
For $x\in\Ball^n$ and $\kappa\in(0,1]$, by Remark \ref{Rmk::FuncSpace::CharofZyg} \ref{Item::FuncSpace::CharofZyg::01}, for $x\in\Ball^n$,
\begin{equation}\label{Eqn::Key::ScalingLemma::ProofSup}
    |f_\kappa(x)|=|f(\kappa x)-f(0)|\lesssim\|f\|_{\Co^{\min(\gamma,\frac12)}(\Ball^n)}|\kappa x-0|^{\min(\gamma,\frac12)}\lesssim\|f\|_{\Co^\gamma(\Ball^n)}\kappa^{\min(\gamma,\frac12)}. 
\end{equation}
When $\gamma\in(0,2)$, using Remark \ref{Rmk::FuncSpace::CharofZyg} \ref{Item::FuncSpace::CharofZyg::02}, for $x_1,x_2\in\Ball^n$,
\begin{equation}\label{Eqn::Key::ScalingLemma::Proof02}
    \textstyle|\frac {f_\kappa(x_1)+f_\kappa(x_2)}2-f_\kappa(\frac{x_1+x_2}2)|=\left|\frac {f(\kappa x_1)+f(\kappa x_2)}2-f(\kappa\frac{x_1+x_2}2)\right|\lesssim_\gamma\|f\|_{\Co^{\min(\gamma,\frac12)}}|\kappa(x_1-x_2)|^\gamma\le\kappa^\gamma\|f\|_{\Co^\gamma}|x_1-x_2|^\gamma.
\end{equation}
    Combining \eqref{Eqn::Key::ScalingLemma::ProofSup} and \eqref{Eqn::Key::ScalingLemma::Proof02}, we get \eqref{Eqn::Key::ScalingLemma} for the case $0<\gamma<2$, since
    $$\textstyle\|f_\kappa\|_{\Co^\gamma(\Ball^n)}\approx\sup\limits_{x\in\Ball^n}|f_\kappa(x)|+\sup\limits_{x_1,x_2\in\Ball^n}|x_1-x_2|^{-\gamma}\left|\frac {f_\kappa(x_1)+f_\kappa(x_2)}2-f_\kappa(\frac{x_1+x_2}2)\right|\lesssim_\gamma\kappa^{\min(\gamma,\frac12)}\|f\|_{\Co^\gamma(\Ball^n)}.$$

    For $\gamma\ge2$, we proceed by induction. We prove the result for $\gamma\in [l,l+1)$, for $l\in \{1,2,\ldots\}$.  The base case, $l=1$ was shown above. We assume the result for $l-1$ and prove it for $l$.
    
    Assume $\gamma\in[l,l+1)$ where $l\ge2$. Note that $\nabla f_\kappa(x)=\kappa(\nabla f)(\kappa x)$, so $\|\partial_{x^j}(f_\kappa)\|_{\Co^{\gamma-1}(\Ball^n)}=\kappa\|(\partial_{x^j}f)_\kappa\|_{\Co^{\gamma-1}(\Ball^n)}\le \|(\partial_{x^j}f)_\kappa\|_{\Co^{\gamma-1}(\Ball^n)}$ for $j=1,\dots,n$. Here $(\partial_{x^j}f)_\kappa(x)=(\partial_{x^j}f)(\kappa x)$. 
    
    By the inductive hypothesis $\|f_\kappa\|_{\Co^{\gamma-1}(\Ball^n)}\le C_{\gamma-1}\kappa^{\frac12}\|f\|_{\Co^{\gamma-1}(\Ball^n)}$ and $\|(\partial_{x^j}f)_\kappa\|_{\Co^{\gamma-1}(\Ball^n)}\le C_{\gamma-1}\kappa^{\frac12}\|\partial_{x^j}f\|_{\Co^{\gamma-1}(\Ball^n)}$ for $j=1,\dots,n$. So by Remark \ref{Rmk::FuncSpace::CharofZyg} \ref{Item::FuncSpace::CharofZyg::>1} we get 
    \begin{align*}
        &\|f_\kappa\|_{\Co^\gamma(\Ball^n)}\approx\|f_\kappa\|_{\Co^{\gamma-1}(\Ball^n)}+\sum_{j=1}^n\|\partial_{x^j}(f_\kappa)\|_{\Co^{\gamma-1}(\Ball^n)}
        \\&\lesssim\kappa^\frac12\Big(\|f\|_{\Co^{\gamma-1}(\Ball^n)}+\sum_{j=1}^n\|\partial_{x^j}f\|_{\Co^{\gamma-1}(\Ball^n)}\Big)\approx \kappa^\frac12\|f\|_{\Co^{\gamma}(\Ball^n)}=\kappa^{\min(\gamma,\frac12)}\|f\|_{\Co^{\gamma}(\Ball^n)},
    \end{align*}
    completing the proof.
\end{proof}
}

\begin{proof}[Proof of Proposition \ref{Prop::Key::Scaling}]
First we construct 1-forms $\rho^1,\dots,\rho^n\in\Co^\beta(\mu_0\Ball^n;T^*\R^n)$ such that for $i=1,\dots,n$, 
\begin{enumerate}[parsep=-0.3ex,label=(\alph*)]
    \item\label{Item::Key::Scaling::RhoFormEqual} $\rho^i\big|_0=0$ and $d\rho^i\big|_{\frac{\mu_0}2\Ball^n}=d\theta^i\big|_{\frac{\mu_0}2\Ball^n}$.
    \item\label{Item::Key::Scaling::RhoFormEst} There is a $C_0=C_0(n,\alpha,\beta,\mu_0)>0$ that does not depend on $\theta^i$, such that 
    \begin{equation}\label{Eqn::Key::Scaling::RhoFormEst}
        \|\rho^i\|_{\Co^\beta(\mu_0\Ball^n;T^*\R^n)}\le C_0(\|\theta^i\|_{\Co^\alpha(\mu_0\Ball^n;T^*\R^n)}+\|d\theta^i\|_{\Co^{\beta-1}(\mu_0\Ball^n;\wedge^2T^*\R^n)}).
    \end{equation}
\end{enumerate}

Take a $\chi_0\in C_c^\infty(\mu_0\Ball^n)$ such that $\chi_0\big|_{\frac{\mu_0}2\Ball^n}\equiv1$. Define
\begin{equation}
    \tilde\rho^i:=\Green\ast\codiff d(\chi_0\theta^i)=\Green\ast\codiff (\chi_0\cdot d\theta^i+d\chi_0\wedge \theta^i),\quad\rho^i:=\tilde\rho^i-(\tilde\rho^i\big|_0),\quad i=1,\dots,n.
\end{equation}
Recall $\codiff$ is the codifferential from Notation \ref{Note::FuncRevis::Codiff}, and $\Green$ is the fundamental solution of Laplacian as in \eqref{Eqn::FuncRevis::GreensFunction}. 
The convolution is defined in $\R^n$ using Lemma \ref{Lemma::FuncRevis::GreensOp}, since the support $\supp \codiff d(\chi_0\theta^i)\subseteq\supp\chi_0\Subset\mu_0\Ball^n$ is compact.

Clearly $\rho^i\big|_0=0$. Similar to the proof of Lemma \ref{Lemma::FuncRevis::DecomposeForms}, since $\chi_0\big|_{\frac{\mu_0}2\Ball^n}\equiv1$, we have $$d\theta^i\big|_{\frac{\mu_0}2\Ball^n}=d(\chi_0\theta^i)\big|_{\frac{\mu_0}2\Ball^n}=(\codiff d+d\codiff)(\Green\ast d(\chi_0\theta^i))\big|_{\frac{\mu_0}2\Ball^n}=(\Green\ast d\codiff d(\chi_0\theta^i))\big|_{\frac{\mu_0}2\Ball^n}=d\tilde\rho^i\big|_{\frac{\mu_0}2\Ball^n}=d\rho^i\big|_{\frac{\mu_0}2\Ball^n}.$$
So condition \ref{Item::Key::Scaling::RhoFormEqual} is satisfied.

By Lemma \ref{Lemma::FuncRevis::NewtonianBoundedness} we have, for every $\mu>0$,
\begin{equation}\label{Eqn::Key::ScalingProof3}
    \|\Green\ast\codiff\omega\|_{\Co^{\beta}(\mu\Ball^n;\wedge^2T^*\R^n)}\lesssim_{\beta,\mu}\|\omega\|_{\Co^{\beta-1}(\mu\Ball^n;T^*\R^n)},\quad\forall \omega\in\Co^{\beta-1}_c(\mu\Ball^n;T^*\R^n).
\end{equation}

Take $\omega=\codiff (\chi_0\cdot d\theta^i+d\chi_0\wedge \theta^i)$, $\mu=\mu_0$ in \eqref{Eqn::Key::ScalingProof3} and by Lemma \ref{Lemma::FuncSpace::Product}, we have
\begin{equation}\label{Eqn::Key::Scaling::RhoFormEstCompute}
    \begin{aligned}
    &\|\rho^i\|_{\Co^\beta(\mu_0\Ball^n;T^*\R^n)}\le \|\tilde\rho^i\|_{\Co^\beta(\mu_0\Ball^n;T^*\R^n)}+|\tilde\rho^i(0)|\le 2\|\tilde\rho^i\|_{\Co^\beta(\mu_0\Ball^n;T^*\R^n)}
    \\
    &\lesssim_{\beta,\mu_0}\|\codiff(\chi_0\cdot d\theta^i+d\chi_0\wedge \theta^i)\|_{\Co^{\beta-2}(\mu_0\Ball^n;\wedge^2T^*\R^n)}\lesssim_{\beta,\mu_0}\|\chi_0\|_{\Co^{\alpha+1}}\|d\theta^i\|_{\Co^{\beta-1}}+\|d\chi_0\|_{\Co^{\alpha}}\|\theta^i\|_{\Co^{\beta-1}}
    \\
    &\lesssim_{\alpha,\beta,\mu_0,\chi_0}\|d\theta^i\|_{\Co^{\beta-1}(\mu_0\Ball^n;\wedge^2T^*\R^n)}+\|\theta^i\|_{\Co^\alpha(\mu_0\Ball^n;T^*\R^n)}.
\end{aligned}
\end{equation}

\eqref{Eqn::Key::Scaling::RhoFormEstCompute} gives us the $C_0$ for condition \ref{Item::Key::Scaling::RhoFormEst}. This complete the proof of \ref{Item::Key::Scaling::RhoFormEqual} and \ref{Item::Key::Scaling::RhoFormEst} and we get $\rho^1,\dots,\rho^n$ as desired.

\medskip
Fix $\chi_1\in C_c^\infty(\frac12\Ball^n)$ such that $\chi_1\big|_{\frac13\Ball^n}\equiv1$. For $\kappa>0$, let $\phi_\kappa(x):=\kappa\cdot x$, so $\phi_\kappa$ maps $\frac12\Ball^n$ into $\frac{\mu_0}2\Ball^n$ when $\kappa\in(0,\mu_0]$. For $\kappa\in(0,\mu_0]$, we define 1-forms $\lambda_\kappa^1,\dots,\lambda_\kappa^n$ and $\tau_\kappa^1,\dots,\tau_\kappa^n$ by
\begin{equation}\label{Eqn::Key::Scaling::DefLambdaTau}
    \lambda_\kappa^i:=dx^i+\tfrac1\kappa\chi_1\cdot\phi_\kappa^*(\theta^i-dx^i),\quad\tau_\kappa^i:=\tfrac1\kappa\chi_1\cdot(\phi_\kappa^*\rho^i)+\tfrac1\kappa\Green\ast\codiff\left(d\chi_1\wedge \phi_\kappa^*(\theta^i-dx^i)\right),\quad i=1,\dots,n.
\end{equation}

Since $\theta^i\in\Co^\alpha$ and $\rho^i\in\Co^\beta$, we have $\lambda_\kappa^i\in\Co^\alpha(\Ball^n;T^*\R^n)$, $\tau_\kappa^i\in\Co^\beta(\Ball^n;T^*\R^n)$ (by \eqref{Eqn::Key::ScalingProof3}) and $\supp(\lambda_\kappa^i-dx^i)\subseteq\supp\chi_1\Subset\frac12\Ball^n$. And since $\chi_1\big|_{\frac13\Ball^n}\equiv1$ and $\phi_\kappa^*dx=\kappa dx$,
\begin{equation}\label{Eqn::Key::Scaling::FormEqualProof}
    \lambda_\kappa^i\big|_{\frac13\Ball^n}=dx^i+\tfrac1\kappa\cdot\phi_\kappa^*(\theta^i-dx^i)\big|_{\frac13\Ball^n}=\tfrac1\kappa(\phi_\kappa^*\theta^i)\big|_{\frac13\Ball^n}.
\end{equation}

We write $\theta^i$ and $\rho^i$, $i=1,\dots,n$ as
\begin{equation}\label{Eqn::Key::Scaling::ThetaRhoAB}
    \theta^i=dx^i+\sum_{j=1}^na^i_j(x)dx^j,\quad\rho^i=\sum_{j=1}^nb^i_j(x)dx^j,\quad\text{where }a^i_j\in\Co^\alpha(\mu_0\Ball^n),\ b^i_j\in\Co^\beta(\mu_0\Ball^n).
\end{equation}
By assumption $\theta^i\big|_0=dx^i\big|_0$ and $\rho^i\big|_0=0$ for $i=1,\dots,n$, so $a^i_j(0)=b^i_j(0)=0$ for all $1\le i,j\le n$. And we have
\begin{equation}\label{Eqn::Key::ScalingProofLambdaCoordinate}
    \lambda_\kappa^i=dx^i+\sum_{j=1}^n\chi_1(x)a^i_j(\kappa x)dx^j,\quad\tau_\kappa^i=\sum_{j=1}^n\Big(\chi_1(x) b^i_j(\kappa x)dx^j+\Green\ast\codiff\big(a^i_j(\kappa x)d\chi_1\wedge dx^j\big)\Big),\quad i=1,\dots,n.
\end{equation}

Since $\phi_\kappa(\frac12\Ball^n)\subseteq\frac{\mu_0}2\Ball^n$ and $\supp\chi_1\Subset\frac12\Ball^n$, by condition \ref{Item::Key::Scaling::RhoFormEqual}  we have $\chi_1\cdot\phi_\kappa^*d\rho^i=\chi_1\cdot\phi_\kappa^*d\theta^i$ for $i=1,\dots,n$. It follows that  $d\lambda_\kappa^i=d\tau_\kappa^i$ for $i=1,\dots,n$; indeed,
\begin{equation}\label{Eqn::Key::ScalingTmp2}
    d\lambda_\kappa^i-d\tau_\kappa^i
    =\tfrac1\kappa d\chi_1\wedge \phi_\kappa^*(\theta^i-dx^i)
    +\tfrac1\kappa\chi_1\cdot\phi_\kappa^*d\theta^i
    -\tfrac1\kappa d\chi_1\wedge\phi_\kappa^*(\theta^i-dx^i)
    -\tfrac1\kappa\chi_1\cdot\phi_\kappa^*d\rho^i
    =\tfrac1\kappa \chi_1\cdot\phi_\kappa^*(d\theta^i-d\rho^i)=0.
\end{equation}

Applying Lemmas \ref{Lemma::FuncSpace::Product} and \ref{Lemma::Key::ScalingLemma} to $a^i_j$  we have
\begin{equation}\label{Eqn::Key::Scaling::BddA}
    \|\chi_1\cdot a^i_j(\kappa\cdot)\|_{\Co^\alpha(\Ball^n)}\lesssim_\alpha\|\chi_1\|_{\Co^\alpha}\|a^i_j(\kappa\cdot)\|_{\Co^\alpha(\Ball^n)}\lesssim_{\alpha,\mu_0}\kappa^{\min(\alpha,\frac12)}\|\chi_1\|_{\Co^\alpha}\|a^i_j\|_{\Co^\alpha(\mu_0\Ball^n)},\quad\forall\kappa\in(0,\mu_0].
\end{equation}

Using \eqref{Eqn::Key::ScalingProofLambdaCoordinate} and \eqref{Eqn::Key::Scaling::BddA}, we deduce that

\begin{equation}\label{Eqn::Key::ScalingProof2.1}
\sum_{i=1}^n\|\lambda_\kappa^i-dx^i\|_{\Co^\alpha}\le\sum_{i,j=1}^n\|\chi_1(x)a^i_j(\kappa x)\|_{\Co^\alpha}\lesssim_{\alpha,\mu_0}\kappa^{\min(\alpha,\frac12)}\|\chi_1\|_{\Co^\alpha}\sum_{i,j=1}^n\| a^i_j\|_{\Co^\alpha(\mu_0\Ball^n)}.
\end{equation}

By \eqref{Eqn::Key::ScalingTmp2} we have $d\lambda_\kappa^i=d\tau_\kappa^i$, and therefore 
\begin{equation}\label{Eqn::Key::ScalingTmpDlambdaDkappa}
    \|d\lambda_\kappa^i\|_{\Co^{\beta-1}(\Ball^n;\wedge^2T^*\Ball^n)}=\|d\tau_\kappa^i\|_{\Co^{\beta-1}(\Ball^n;\wedge^2T^*\Ball^n)}\lesssim_\beta\|\tau_\kappa^i\|_{\Co^{\beta}(\Ball^n;T^*\Ball^n)},\quad i=1,\dots,n.
\end{equation}

Applying Lemma \ref{Lemma::Key::ScalingLemma} to $ a^i_j$ and $ b^i_j$ we get that for $0<\kappa<\mu_0$,
\begin{gather}\label{Eqn::Key::ScalingTmp2.1}
    \|\chi_1\cdot b^i_j(\kappa x)\|_{\Co^\beta(\Ball^n)}\lesssim_\beta\|\chi_1\|_{\Co^\beta}\|b^i_j(\kappa x)\|_{\Co^\beta(\Ball^n)}\lesssim_\beta\kappa^{\min(\beta,\frac12)}\|\chi_1\|_{\Co^\beta}\|b^i_j\|_{\Co^\beta}\le \kappa^{\min(\alpha,\frac12)}\|\chi_1\|_{\Co^\beta}\|b^i_j\|_{\Co^\beta(\mu_0\Ball^n)}.
    \\\label{Eqn::Key::ScalingTmp2.2}
    \|a^i_j(\kappa x)d\chi_1\|_{\Co^\alpha(\Ball^n;T^*\R^n)}\lesssim_\alpha\|a^i_j(\kappa x)\|_{\Co^\alpha(\Ball^n)}\|\chi_1\|_{\Co^{\alpha+1}}\lesssim_\alpha\kappa^{\min(\alpha,\frac12)}\|\chi_1\|_{\Co^{\alpha+1}}\|a^i_j\|_{\Co^\alpha(\mu_0\Ball^n)}.
\end{gather}

Letting $\omega=a_j^i(\kappa x)d\chi_1\wedge dx^j\in\Co^{\alpha}(\Ball^n;T^*\Ball^n)\subseteq\Co^{\beta-1}(\Ball^n;T^*\Ball^n)$ and $\mu=1$ in \eqref{Eqn::Key::ScalingProof3}, we see that  
\begin{equation}
\label{Eqn::Key::ScalingProof2.2}
\begin{aligned}
    &\sum_{i=1}^n\|d\lambda_\kappa^i\|_{\Co^{\beta-1}(\Ball^n;\wedge^2T^*\Ball^n)}=\sum_{i=1}^n\|d\tau_\kappa^i\|_{\Co^{\beta-1}(\Ball^n;\wedge^2T^*\Ball^n)}\lesssim_\beta\sum_{i=1}^n\|\tau_\kappa^i\|_{\Co^\beta(\Ball^n;T^*\Ball^n)}&\text{by \eqref{Eqn::Key::ScalingTmpDlambdaDkappa}}
    \\
    &\le\sum_{i,j=1}^n\big(\|\chi_1(x)b_j^i(\kappa x)dx^j\|_{\Co^\beta}+\big\|\Green\ast\codiff\big(a^i_j(\kappa x)d\chi_1\wedge dx^j\big)\big\|_{\Co^\beta}\big)&\text{by \eqref{Eqn::Key::ScalingProofLambdaCoordinate}}
    \\
    &\lesssim_\beta\sum_{i,j=1}^n\big(\|\chi_1(x)b_j^i(\kappa x)\|_{\Co^\beta(\Ball^n)}+\|a^i_j(\kappa x)d\chi_1\wedge dx^j\big\|_{\Co^{\beta-1}(\Ball^n;\wedge^2T^*\Ball^n)}\big)&\text{by \eqref{Eqn::Key::ScalingProof3}}
    \\
    &\lesssim_{\alpha,\beta}\|\chi_1\|_{\Co^{\alpha+1}}\sum_{i,j=1}^n\big(\|b_j^i(\kappa x)\|_{\Co^\beta(\Ball^n)}+\|a^i_j(\kappa x)d\chi_1\big\|_{\Co^{\alpha}(\Ball^n;T^*\Ball^n)}\big)&\text{since }\alpha\ge\beta-1
    \\
    &\lesssim_{\alpha,\beta}\kappa^{\min(\alpha,\frac12)}\|\chi_1\|_{\Co^{\alpha+1}}\sum_{i,j=1}^n\Big(\| a^i_j\|_{\Co^\alpha(\mu_0\Ball^n)}+\|b^i_j\|_{\Co^\beta(\mu_0\Ball^n)}\Big)&\text{by \eqref{Eqn::Key::ScalingTmp2.1} and \eqref{Eqn::Key::ScalingTmp2.2}}.
\end{aligned}\end{equation}

Note that $\chi_1$ is a fixed cut-off function whose $\Co^\alpha$ and $\Co^{\alpha+1}$-norms depend only on $n,\alpha$. So combining \eqref{Eqn::Key::ScalingProof2.1} and \eqref{Eqn::Key::ScalingProof2.2} we have
\begin{equation}\label{Eqn::Key::Scaling::FinalBdd1}
    \sum_{i=1}^n(\|\lambda_\kappa^i-dx^i\|_{\Co^\alpha}+\|d\lambda_\kappa^i\|_{\Co^{\beta-1}})\lesssim_{\alpha,\beta,\mu_0}\kappa^{\min(\alpha,\frac12)}\sum_{i,j=1}^n\big(\|a^i_j\|_{\Co^\alpha(\mu_0\Ball^n;T^*\R^n)}+\|b^i_j\|_{\Co^{\beta}(\mu_0\Ball^n;\wedge^2T^*\R^n)}\big).
\end{equation}

By \eqref{Eqn::Key::Scaling::ThetaRhoAB} we have $\sum_{i,j=1}^n\|a^i_j\|_{\Co^\alpha}\lesssim 1+\sum_{i=1}^n\|\theta^i\|_{\Co^\alpha}$ and $\sum_{i,j=1}^n\|b^i_j\|_{\Co^\beta}\lesssim\sum_{i=1}^n\|\rho^i\|_{\Co^\beta}$. And combining \eqref{Eqn::Key::Scaling::FinalBdd1} with \eqref{Eqn::Key::Scaling::RhoFormEst}, we can find a $C_1=C_1(n,\alpha,\beta,\mu_0)>0$ that does not depend on the other quantities, such that
\begin{equation}\label{Eqn::Key::Scaling::FinalBdd2}
    \sum_{i=1}^n\|\lambda_\kappa^i-dx^i\|_{\Co^\alpha}+\|d\lambda_\kappa^i\|_{\Co^{\beta-1}}\le C_1\cdot\kappa^{\min(\alpha,\frac12)}\sum_{i=1}^n\Big(1+\|\theta^i\|_{\Co^\alpha(\mu_0\Ball^n;T^*\R^n)}+\|d\theta^i\|_{\Co^{\beta-1}(\mu_0\Ball^n;\wedge^2T^*\R^n)}\Big),\quad\forall \kappa\in(0,\mu_0].
\end{equation}

Now applying assumption \eqref{Eqn::Key::Scaling::AssumptionOnTheta} to \eqref{Eqn::Key::Scaling::FinalBdd2} we have $$\sum_{i=1}^n\|\lambda_\kappa^i-dx^i\|_{\Co^\alpha}+\|d\lambda_\kappa^i\|_{\Co^{\beta-1}}\le \kappa^{\min(\alpha,\frac12)}C_1\cdot (M+n).$$ 

Since $\|(\langle\lambda_\kappa^i-dx^i,\Coorvec{x^j}\rangle)_{n\times n}\|_{C^0(\Ball^n;\Mbb^{n\times n})}\lesssim_\alpha\sum_{i=1}^n\|\lambda_\kappa^i-dx^i\|_{\Co^\alpha(\Ball^n)}$, we can find a $\tilde c'=\tilde c'(n,\alpha)>0$ such that 
$$\sum_{i=1}^n\|\lambda_\kappa^i-dx^i\|_{\Co^\alpha}\le \tilde c'\quad\text{implies}\quad\textstyle\|(\langle\lambda_\kappa^i-dx^i,\Coorvec{x^j}\rangle)_{n\times n}\|_{C^0(\Ball^n;\Mbb^{n\times n})}\le\frac12.$$ In particular $\mleft(\langle\lambda_\kappa^i,\Coorvec{x^j}\rangle\mright)_{n\times n}=I+\mleft(\langle\lambda_\kappa^i-dx^i,\Coorvec{x^j}\rangle\mright)_{n\times n}$ is invertible at every point in $\Ball^n$, which means $(\lambda_\kappa^1,\dots,\lambda_\kappa^n)$  span the tangent space at every point in $\Ball^n$.

We take $\kappa_0=\kappa_0(n,\alpha,\beta,\mu_0,M,\tilde c)>0$ such that
$$0<\kappa_0<\mu_0\quad\text{and}\quad\kappa_0^{\min(\alpha,\frac12)}C_1\cdot (M+n)\le \min(\tilde c,\tilde c').$$ 

Take $\lambda^i=\lambda_{\kappa_0}^i$ for $i=1,\dots,n$. We have $\sum_{i=1}^n\|\lambda^i-dx^i\|_{\Co^\alpha}+\|d\lambda^i\|_{\Co^{\beta-1}}\le\min(\tilde c,\tilde c')$. {By our assumption on $\tilde c'$,} $\lambda^1,\dots,\lambda^n$ span the tangent space at every point in $\Ball^n$. Note that by \eqref{Eqn::Key::ScalingProofLambdaCoordinate} $\supp(\lambda_{\kappa_0}^i-dx^i)\subseteq\supp\chi_1\Subset\frac12\Ball^n$. This shows conclusion \ref{Item::Key::Scaling::Est} is satisfied.

By \eqref{Eqn::Key::Scaling::FormEqualProof} we get conclusion \ref{Item::Key::Scaling::FormEqual}, finishing the proof.
\end{proof}


We can now prove a special case of the Theorem \ref{Thm::TheMainResult}:
\begin{cor}\label{Cor::Key::BasicCaseforMainThm}
Let $\alpha>0$ and let $\beta\in[\alpha,\alpha+1]$. Let $\lambda^1,\dots,\lambda^n$ be $\Co^\alpha$ 1-forms on a $\Co^{\alpha+1}$-manifold $\Manifold$ of dimension $n$, which span the cotangent space at every point. Then the following are equivalent:

\begin{enumerate}[(1)]
    \item\label{Item::Key::BasicCaseforMainThm::a} For every $p\in\Manifold$  there exist a neighborhood $U\subseteq\Manifold$ of $p$ and a $\Co^{\alpha+1}_\loc$-diffeomorphism $\Phi:\Ball^n\xrightarrow{\sim}U\subseteq \Manifold$ 
    such that $\Phi(0)=p$ and $\Phi^*\lambda^1,\dots,\Phi^*\lambda^n\in\Co^\beta(\Ball^n;T^*\Ball^n)$.
    \item\label{Item::Key::BasicCaseforMainThm::b} $d\lambda^1,\dots,d\lambda^n$ have regularity  $\Co^{\beta-1}_\loc$ (see Definition \ref{Defn::FuncRn::dRegularityLow}).
\end{enumerate}
\end{cor}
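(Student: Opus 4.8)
Both implications will be obtained from the already-established Theorem~\ref{Thm::Keythm} and the scaling device Proposition~\ref{Prop::Key::Scaling}; the content is purely organizational, so I will only sketch the chain of reductions.

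For \ref{Item::Key::BasicCaseforMainThm::a}$\Rightarrow$\ref{Item::Key::BasicCaseforMainThm::b}, fix $p$ and take a chart $\Phi\colon\Ball^n\xrightarrow{\sim}U$ as in \ref{Item::Key::BasicCaseforMainThm::a}. Then $F:=\Phi^{-1}\colon U\xrightarrow{\sim}\Ball^n$ is a $\Co^{\alpha+1}_\loc$ coordinate chart around $p$ with $F_*\lambda^i=\Phi^*\lambda^i\in\Co^\beta(\Ball^n;T^*\Ball^n)$, so $dF_*\lambda^i$ has coefficients that are first derivatives of $\Co^\beta$-functions and hence $dF_*\lambda^i\in\Co^{\beta-1}_\loc\mleft(\Ball^n;\mywedge^2T^*\Ball^n\mright)$. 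By Definition~\ref{Defn::FuncRn::dRegularityLow} this says $d\lambda^i$ has regularity $\Co^{\beta-1}_\loc$ near $p$; as $p$ was arbitrary, \ref{Item::Key::BasicCaseforMainThm::b} holds.

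Now assume \ref{Item::Key::BasicCaseforMainThm::b} and fix $p\in\Manifold$. Pick a $\Co^{\alpha+1}_\loc$ chart $\psi_0$ around $p$ with $\psi_0(p)=0$; since $\lambda^1,\dots,\lambda^n$ span the cotangent space, the covectors $(\psi_0)_*\lambda^i\big|_0$ form a basis of $T^*_0\R^n$, so after post-composing $\psi_0$ with a suitable invertible linear map we obtain a chart $\psi$ such that $\theta^i:=\psi_*\lambda^i$ satisfies $\theta^i\big|_0=dx^i\big|_0$ for all $i$. By Proposition~\ref{Prop::FuncRn::dOfLowRegularityForm} the regularity property in \ref{Item::Key::BasicCaseforMainThm::b} is independent of the chart, so $d\theta^i\in\Co^{\beta-1}_\loc$ on the image of $\psi$; fixing $\mu_0>0$ with $\overline{\mu_0\Ball^n}$ contained in that image we get $\theta^i\in\Co^\alpha(\mu_0\Ball^n;T^*\R^n)$ and $d\theta^i\in\Co^{\beta-1}\mleft(\mu_0\Ball^n;\mywedge^2T^*\R^n\mright)$. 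Let $c=c(n,\alpha,\beta)>0$ be the constant of Theorem~\ref{Thm::Keythm} and set $M:=1+\sum_i\bigl(\|\theta^i\|_{\Co^\alpha(\mu_0\Ball^n)}+\|d\theta^i\|_{\Co^{\beta-1}(\mu_0\Ball^n)}\bigr)$. Applying Proposition~\ref{Prop::Key::Scaling} with $\tilde c:=c$ gives $\kappa_0\in(0,\mu_0]$ and $1$-forms $\widetilde\lambda^1,\dots,\widetilde\lambda^n\in\Co^\alpha(\Ball^n;T^*\Ball^n)$ that verify the hypotheses of Theorem~\ref{Thm::Keythm} with constant $c$ and satisfy $\widetilde\lambda^i\big|_{\frac13\Ball^n}=\kappa_0^{-1}\bigl(\phi_{\kappa_0}^*\theta^i\bigr)\big|_{\frac13\Ball^n}$, where $\phi_{\kappa_0}(x)=\kappa_0 x$. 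Theorem~\ref{Thm::Keythm} then produces a $\Co^{\alpha+1}$-diffeomorphism $F\colon\Ball^n\xrightarrow{\sim}\Ball^n$ with $B^n(F(0),\tfrac16)\subseteq F(\tfrac13\Ball^n)\cap\tfrac34\Ball^n$ and $F_*\widetilde\lambda^i\in\Co^\beta(\Ball^n;T^*\Ball^n)$.

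It remains to assemble the chart. Let $\sigma\colon\Ball^n\xrightarrow{\sim}B^n(F(0),\tfrac16)$, $\sigma(z):=F(0)+\tfrac16 z$, be the obvious affine isomorphism, and put
\[
\Phi:=\psi^{-1}\circ\phi_{\kappa_0}\circ F^{-1}\circ\sigma\colon\Ball^n\longrightarrow\Manifold .
\]
This is well defined since $F^{-1}\circ\sigma$ maps $\Ball^n$ into $F^{-1}\bigl(B^n(F(0),\tfrac16)\bigr)\subseteq\tfrac13\Ball^n$, then $\phi_{\kappa_0}$ into $\tfrac{\kappa_0}{3}\Ball^n\subseteq\mu_0\Ball^n\subseteq\operatorname{dom}\psi^{-1}$; it is a $\Co^{\alpha+1}_\loc$-diffeomorphism onto a neighborhood $U$ of $p$ (a composition of $\Co^{\alpha+1}_\loc$-diffeomorphisms, cf.\ Lemma~\ref{Lemma::FuncRevis::PushForwardFuncSpaces}), and $\Phi(0)=\psi^{-1}(\phi_{\kappa_0}(F^{-1}(F(0))))=\psi^{-1}(0)=p$. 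Pulling back, and using $(\psi^{-1})^*\lambda^i=\psi_*\lambda^i=\theta^i$, the identity $\phi_{\kappa_0}^*\theta^i=\kappa_0\widetilde\lambda^i$ on $\tfrac13\Ball^n$, and $F^{-1}(\sigma(\Ball^n))\subseteq\tfrac13\Ball^n$, we obtain
\[
\Phi^*\lambda^i=\sigma^*(F^{-1})^*\phi_{\kappa_0}^*\theta^i=\kappa_0\,\sigma^*(F^{-1})^*\widetilde\lambda^i=\kappa_0\,\sigma^*\bigl(F_*\widetilde\lambda^i\bigr),
\]
which lies in $\Co^\beta(\Ball^n;T^*\Ball^n)$ because $F_*\widetilde\lambda^i\in\Co^\beta(\Ball^n;T^*\Ball^n)$ and the affine map $\sigma$ acts on $\Co^\beta$-forms by dilation and translation. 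This proves \ref{Item::Key::BasicCaseforMainThm::a}, and with it the corollary. The only points requiring care are the use of Proposition~\ref{Prop::FuncRn::dOfLowRegularityForm} to transport the chart-independent hypothesis \ref{Item::Key::BasicCaseforMainThm::b} into the concrete chart $\psi$, and the bookkeeping of the subballs on which $\widetilde\lambda^i$ agrees with the rescaled $\theta^i$, so that $\Phi$ actually takes values where that identity holds; all the analytic substance is already contained in Theorem~\ref{Thm::Keythm} and Proposition~\ref{Prop::Key::Scaling}.
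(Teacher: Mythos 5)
Your proposal is correct and follows essentially the same route as the paper: the forward direction reads off the definition of $d\lambda^i$ having regularity $\Co^{\beta-1}_\loc$ from the chart $\Phi^{-1}$, and the converse normalizes $\lambda^i$ at $p$ via a linear change of chart, invokes Proposition \ref{Prop::Key::Scaling} to reach the smallness hypotheses of Theorem \ref{Thm::Keythm}, and composes the resulting maps (your $\psi^{-1}\circ\phi_{\kappa_0}\circ F^{-1}\circ\sigma$ is exactly the paper's $(F_3\circ F_2\circ F_1\circ F_0)^{-1}$). The bookkeeping of subballs and the use of Proposition \ref{Prop::FuncRn::dOfLowRegularityForm} for chart-independence are handled correctly.
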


\begin{rmk}\label{Rmk::Key::SpecialCase}
This is the special case of Theorem \ref{Thm::TheMainResult} when $\beta\in[\alpha,\alpha+1]$ and $q=n$: given $\Co^\alpha$-vector fields $X_1,\dots,X_n$ that span the $n$-dimensional tangent space at every point, we take 1-forms $\lambda^1,\dots,\lambda^n$ to be the corresponding dual basis that span the cotangent space at every point. 
\end{rmk}

\begin{proof}
By passing to a local coordinate system, we can assume $\Manifold$ to be an open subset of $\R^n$, since by Proposition \ref{Prop::FuncRevis::QuantdOfLowRegularityForm} both conditions \ref{Item::Key::BasicCaseforMainThm::a} and \ref{Item::Key::BasicCaseforMainThm::b} are invariant under $\Co^{\alpha+1}_\loc$-diffeomorphisms.

\ref{Item::Key::BasicCaseforMainThm::a}$\Rightarrow$\ref{Item::Key::BasicCaseforMainThm::b}: For such $\Phi$, we have $d(\Phi^*\lambda^i)\in\Co^{\beta-1}_\loc(\Ball^n;T^*\Ball^n)$ for $i=1,\dots,n$. So $\Phi^{-1}:U\subseteq\Manifold\to\R^n$ is the desired coordinate chart that shows $d\lambda^1,\dots,d\lambda^n$ fulfill the 
conditions for $d\lambda^1,\dots,d\lambda^n$ to have regularity in $\Co^{\beta-1}_\loc$
(see Definition \ref{Defn::FuncRn::dRegularityLow}).

\ref{Item::Key::BasicCaseforMainThm::b}$\Rightarrow$\ref{Item::Key::BasicCaseforMainThm::a}: Let $p\in\Manifold$. 

By passing to local coordinate system and applying an invertible linear transformation we can find a $\mu_0>0$ and a $\Co^{\alpha+1}$-coordinate chart $F_0:U_0\subseteq\Manifold\xrightarrow{\sim} \mu_0\Ball^n_x$ such that
\begin{itemize}[parsep=-0.3ex]
    \item $F_0(p)=0$.
    \item $((F_0)_*\lambda^i)\big|_0=dx^i\big|_0$ for $i=1,\dots,n$.
\end{itemize}

Take $c>0$ be the small constant in Theorem \ref{Thm::Keythm}. By Proposition \ref{Prop::Key::Scaling} with $\tilde c=c$, we can find a $\kappa_0\in(0,\mu_0]$ and 1-forms $\tilde \lambda^1,\dots,\tilde\lambda^n\in\Co^\alpha(\Ball^n;T^*\Ball^n)$ such that for scaling map $\phi_{\kappa_0}:\Ball^n\to\mu_0\Ball^n$, $\phi_{\kappa_0}(x)=\kappa_0x$, we have
\begin{enumerate}[parsep=-0.3ex,label=(\alph*)]
    \item $\tilde\lambda^1,\dots,\tilde\lambda^n$ span the cotangent space at every point.
    \item $\supp(\tilde\lambda^i-dx^i)\subsetneq\frac12\Ball^n$ for $i=1,\dots,n$.
    \item\label{Item::Key::BasicCaseforMainThm::Tmp} $(\tilde\lambda^1,\dots,\tilde \lambda^n)\big|_{\frac13\Ball^n}=\frac1{\kappa_0}\cdot((F_0^{-1}\circ\phi_{\kappa_0})^*\lambda^1,\dots,(F_0^{-1}\circ\phi_{\kappa_0})^*\lambda^n)\big|_{\frac13\Ball^n}$.
    \item $\sum_{i=1}^n\|\tilde\lambda^i-dx^i\|_{\Co^\alpha}+\|d\tilde\lambda^i\|_{\Co^{\beta-1}}<c$.
\end{enumerate}
We set $F_1:=\phi_{\kappa_0}^{-1}$. 

Applying Theorem \ref{Thm::Keythm} to $\tilde\lambda^1,\dots,\tilde\lambda^n$ we obtain a $\Co^{\alpha+1}$-chart $F_2:\Ball^n\xrightarrow{\sim}\Ball^n$ such that $F_2(\frac13\Ball^n)\supseteq B^n(F_2(0),\frac16)$, $\|F_2-\id\|_{\Co^{\alpha+1}}<c$ and $(F_2)_*\tilde\lambda^1,\dots,(F_2)_*\tilde\lambda^n\in\Co^\beta(\Ball^n;T^*\Ball^n)$.

By \ref{Item::Key::BasicCaseforMainThm::Tmp}, $\left((F_2\circ F_1\circ F_0)_*\lambda^i\right)\big|_{F_2(\frac13\Ball^n)}={\tfrac1{\kappa_0}}\cdot(F_2)_*\tilde\lambda^i\big|_{F_2(\frac13\Ball^n)}\in\Co^\beta$ for $i=1,\dots,n$. We can take an
affine linear transformation
$F_3:\R^n\to\R^n$ such that $F_3(F_2(0))=0$ and $F_3(B^n(F_2(0),\frac16))\supseteq\Ball^n$. 


{Now we have $F_0(U_0)\supseteq\mu_0\Ball^n$, $F_1(\mu_0\Ball^n)\supseteq\Ball^n$, $F_2(\frac13\Ball^n)\supseteq B^n(F_2(0),\frac16)$ and $F_3(B^n(F_2(0),\frac16)\supseteq\Ball^n$.} Take $\Phi:=(F_3\circ F_2\circ F_1\circ F_0)^{-1}:\Ball^n\to\Manifold$. Since $F_0,F_1,F_2,F_3$ are all $\Co^{\alpha+1}$-diffeomorphism onto their images, we know $\Phi:\Ball^n\xrightarrow{\sim}\Phi(\Ball^n)\subseteq\Manifold$ is a $\ZygSymb^{\alpha+1}$ diffeomorphism. Moreover, we have $\Phi(0)=p$ because $F_0(p)=0$, $F_1(0)=0$, $F_3(F_2(0))=0$. Thus, $\Phi$ is the diffeomorphism we desire with $U:=\Phi(\Ball^n)$, completing the proof.
\end{proof}


    
\section{Function Spaces Along Vector Fields, Revisited}\label{Section::FuncVF}



Let $\alpha>0$, $\beta>1-\alpha$, and let $\Phi:\Nanifold\xrightarrow{\sim}\Manifold$ be a $\Co^{\alpha+1}$-diffeomorphism between two $\Co^{\alpha+1}$-manifolds $\Nanifold$ and $\Manifold$. Let $X\in\Co^\alpha_\loc(\Manifold;T\Manifold)$, $f\in\Co^\beta_\loc(\Manifold)$, $Y\in\Co^\beta_\loc(\Manifold;T\Manifold)$ and $\theta\in\Co^\beta_\loc\mleft(\Manifold;\mywedge^kT^*\Manifold\mright)$. We have the following
\begin{itemize}
    \item $\Phi^*(Xf)=(\Phi^*X)(\Phi^*f)$ on $\Nanifold$.
    \item $\Phi^*[X,Y]=[\Phi^*X,\Phi^*Y]$ and $\Phi^*(\Lie{X}\theta)=\Lie{\Phi^*X}\Phi^*\theta$ on $\Nanifold$, provided that $\alpha>\frac12$.
\end{itemize}
In fact Lemma \ref{Lemma::FuncRn::PullbackComm} establishes the above facts on open subsets in $\R^n$. And since these results are local, they hold in manifolds as well.

\begin{rmk}\label{Rmk::FuncVF::ZygVFUnderDiffeo}

Let $\alpha,\gamma>0$, $\beta\in(-\min(\alpha,\gamma),\infty)$, and let $\Manifold$ and $\Nanifold$ be two $\Co^{\max(\alpha,\gamma)+1}$-manifolds. Assume $\Phi:\Nanifold\to\Manifold$ is a $\Co^{\gamma+1}$-diffeomorphism.
Let $X_1,\dots,X_q$ be $\Co^{\alpha}_\loc$-vector fields on $\Manifold$ that span the tangent space at every point. We write $\Phi^*X$ for the list of pullback vector fields $(\Phi^*X_1,\dots,\Phi^*X_q)$ defined on $\Nanifold$. The following properties follow easily from Definitions \ref{Defn::FuncVF::Funregularity}, \ref{Defn::FuncVF::VFregularity}, \ref{Defn::FuncVf::formregularity} and \ref{Defn::FuncVf::RegularityofForms}:
\begin{enumerate}[parsep=-0.3ex,label=(\roman*)]
    \item\label{Item::FuncVF::ZygVFUnderDiffeo::Fun} If $f\in\Co^\beta_{X,\loc}(\Manifold)$, then $\Phi^*f\in\Co^\beta_{\Phi^*X,\loc}(\Nanifold)$.
    \item\label{Item::FuncVF::ZygVFUnderDiffeo::VF} If $Y\in\Co^\beta_{X,\loc}(\Manifold;T\Manifold)$, then $\Phi^*Y\in\Co^\beta_{\Phi^*X,\loc}(\Nanifold;T\Nanifold)$, provided that $\alpha,\gamma>\frac12$.
    \item\label{Item::FuncVF::ZygVFUnderDiffeo::Forms} Let $1\le k\le n$. If $\theta\in\Co^\beta_{X,\loc}\mleft(\Manifold;\mywedge^kT^*\Manifold\mright)$, then $\Phi^*\theta\in\Co^\beta_{\Phi^*X,\loc}\mleft(\Nanifold;\mywedge^kT^*\Nanifold\mright)$, provided that $\alpha,\gamma>\frac12$.
    \item\label{Item::FuncVF::ZygVFUnderDiffeo::dForms} Let $1\le k\le n-1$ and let {$\theta\in\Co^{(-\alpha)^+}_\loc\mleft(\Manifold;\mywedge^kT^*\Manifold\mright)$ (see Convention \ref{Conv::C^+Space})}. If $d\theta$ has regularity in $\Co^{\beta-1}_{X,\loc}(\Manifold)$, then $d\Phi^*\theta$ has regularity in $\Co^{\beta-1}_{\Phi^*X,\loc}(\Nanifold)$.
\end{enumerate}
\end{rmk}

When the vector fields
$X_1,\ldots, X_q$ are sufficiently smooth, our $\Co^\beta_{X,\loc}$-spaces coincide with the standard  $\Co^\beta_{\loc}$-spaces, as  the next result shows.
\begin{prop}\label{Prop::FuncVF::ZygVF=Zyg}
Let $\alpha>0$ and let $X_1,\dots,X_q$ be $\Co^\alpha_\loc$-vector fields on a $\Co^{\alpha+1}$-manifold $\Manifold$ that span the tangent space at every point. Then
\begin{enumerate}[parsep=-0.3ex,label=(\roman*)]
    \item\label{Item::FuncVF::ZygVF=Zyg::Fun} $\Co^\beta_{X,\loc}(\Manifold)=\Co^\beta_\loc(\Manifold)$ for all $\beta\in(-\alpha,\alpha+1]$.
    \item\label{Item::FuncVF::ZygVF=Zyg::VF} When $\alpha>\frac12$, $\Co^\beta_{X,\loc}(\Manifold;T\Manifold)=\Co^\beta_\loc(\Manifold;T\Manifold)$ for all $\beta\in(-\alpha,\alpha]$.
    \item\label{Item::FuncVF::ZygVF=Zyg::Forms} When $\alpha>\frac12$, for $1\le k\le n$, $\Co^\beta_{X,\loc}\mleft(\Manifold;\mywedge^kT^*\Manifold\mright)=\Co^\beta_\loc\mleft(\Manifold;\mywedge^kT^*\Manifold\mright)$ for all $\beta\in(-\alpha,\alpha]$.
    \item\label{Item::FuncVF::ZygVF=Zyg::dFormHasReg} Assume $\alpha\ge1$. Let $\beta\in(1-\alpha,1+\alpha]$ and $1\le k\le n$. Let $\theta\in\Co^{(-\alpha)^+}_\loc\mleft(\Manifold;\mywedge^kT^*\Manifold\mright)$ be a $k$-form, then $d\theta$ has regularity $\Co^{\beta-1}_{X,\loc}(\Manifold)$ if and only if $d\theta\in\Co^{\beta-1}_{\loc}\mleft(\Manifold;\mywedge^{k+1}T^*\Manifold\mright)$.
\end{enumerate}
\end{prop}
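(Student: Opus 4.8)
The plan is to prove each of the four statements by reducing to the classical equivalences and using the function-space machinery already in place. For \ref{Item::FuncVF::ZygVF=Zyg::Fun}, I would induct on the integer part of $\beta$. The cases $\beta\in(-\alpha,1]$ are immediate from Definition \ref{Defn::FuncVF::Funregularity}. For $\beta>1$, suppose inductively that $\Co^{\beta-1}_{X,\loc}(\Manifold)=\Co^{\beta-1}_\loc(\Manifold)$. The inclusion $\Co^\beta_\loc\subseteq\Co^\beta_{X,\loc}$ follows because if $f\in\Co^\beta_\loc$ then $f\in\Co^{\beta-1}_\loc\cap C^1_\loc$ and, by Proposition \ref{Prop::FuncRn::LieOnManiofold} \ref{Item::FuncRn::VectActionCont}, $X_jf\in\Co^{\beta-1}_\loc=\Co^{\beta-1}_{X,\loc}$. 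For the reverse inclusion, the key point is that, since $X_1,\dots,X_q$ span the tangent space at every point, locally one can select $n$ of them forming a basis, and writing $\partial_{x^i}$ in terms of the $X_j$ with $\Co^\alpha_\loc$ (hence at least $C^0$) coefficients, the condition $X_jf\in\Co^{\beta-1}_\loc$ for all $j$, combined with $f\in C^1_\loc$, forces $\partial_{x^i}f\in\Co^{\min(\beta-1,\alpha)}_\loc$; iterating and using Remark \ref{Rmk::FuncSpace::CharofZyg} \ref{Item::FuncSpace::CharofZyg::>1} bootstraps $f$ up to $\Co^\beta_\loc$. I would need to be careful to do this bootstrap in finitely many steps, increasing regularity by roughly $\alpha$ each time until reaching $\min(\beta,\alpha+1)$, which is allowed since $\beta\le\alpha+1$.

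For \ref{Item::FuncVF::ZygVF=Zyg::VF} and \ref{Item::FuncVF::ZygVF=Zyg::Forms} with $\alpha>\tfrac12$, the argument is parallel, inducting now in half-integer steps starting from $\beta\in(-\alpha,\tfrac12]$. The inclusion $\Co^\beta_\loc\subseteq\Co^\beta_{X,\loc}$ uses Proposition \ref{Prop::FuncRn::LieOnManiofold} \ref{Item::FuncRn::CommutatorCont} (resp.\ \ref{Item::FuncRn::LieFormsCont}): if $Y\in\Co^\beta_\loc$ then $[X_j,Y]\in\Co^{\beta-1}_\loc=\Co^{\beta-1}_{X,\loc}$ by induction, and $Y\in\Co^{\beta-1}_\loc\cap\Co^{1/2}_\loc$. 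For the reverse, one again trivializes locally: pick $X_{j_1},\dots,X_{j_n}$ a basis, write $Y=\sum_i c^i X_{j_i}$ with $c^i\in\Co^\alpha_\loc$, and note $[X_j,Y]=\sum_i (X_jc^i)X_{j_i}+\sum_i c^i[X_j,X_{j_i}]$; since the second sum is $\Co^{\alpha-1}_\loc$ and the commutators $[X_j,X_{j_i}]$ are controlled, knowing $[X_j,Y]\in\Co^{\beta-1}_{X,\loc}$ lets one solve for $X_jc^i$ in $\Co^{\min(\beta-1,\alpha-1)+?}$ — here I would instead phrase the coefficient extraction using the dual $1$-forms $\lambda^{j_i}$, writing $c^i=\langle\lambda^{j_i},Y\rangle$ and differentiating. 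The cleanest route may be to reduce to part \ref{Item::FuncVF::ZygVF=Zyg::Fun} by observing that a vector field (resp.\ $k$-form) is $\Co^\beta_\loc$ iff all its coefficient functions (in a fixed $\Co^{\alpha+1}$ chart) are, and that the recursive $\Lie{X_j}$-condition on $Y$ translates, modulo lower-order terms involving $\Lie{X_j}$ of the coefficient vector fields/forms of the chart, into the recursive $X_j$-condition on its coefficients. One must check these "lower-order terms" genuinely have regularity $\ge\beta-1$ along $X$, which is where $\alpha>\tfrac12$ is used and where the products of Lemma \ref{Lemma::FuncSpace::Product} must be invoked.

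For \ref{Item::FuncVF::ZygVF=Zyg::dFormHasReg} with $\alpha\ge1$, I would compare Definition \ref{Defn::FuncVf::RegularityofForms} with the already-established meaning of "$d\theta\in\Co^{\beta-1}_\loc$" from Definition \ref{Defn::FuncRn::dthetaisZyg} and Lemma \ref{Lemma::FuncRn::dwelldefined} (valid since $\beta-1>-\alpha$). The "if" direction: if $d\theta\in\Co^{\beta-1}_\loc$ as an honest form, then by part \ref{Item::FuncVF::ZygVF=Zyg::Forms} (applicable as $\alpha\ge1>\tfrac12$, and $\beta-1\le\alpha$) it lies in $\Co^{\beta-1}_{X,\loc}$; unwinding Definition \ref{Defn::FuncVf::RegularityofForms} by induction — noting $\Lie{X_j}d\theta=d\IntProd{X_j}d\theta$ is $d$ of a $\Co^{\beta-1}_{X,\loc}$ form so it drops by one and we recurse — gives that $d\theta$ has regularity $\Co^{\beta-1}_{X,\loc}$. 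The "only if" direction runs the recursion in reverse: from $d\theta$ having regularity $\Co^{\beta-1}_{X,\loc}$ one extracts, via the base case $\beta\in(1-\alpha,1]$ or $\beta\in(1,2]$, that $d\theta\in\Co^{0^+}_\loc$ and then that its $\Lie{X_j}$-derivatives have the right regularity, and bootstraps through part \ref{Item::FuncVF::ZygVF=Zyg::Forms} to conclude $d\theta\in\Co^{\beta-1}_\loc$ as a form.

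The main obstacle I anticipate is the coefficient-extraction/bootstrap step common to \ref{Item::FuncVF::ZygVF=Zyg::Fun}--\ref{Item::FuncVF::ZygVF=Zyg::Forms}: converting "$X_jf\in\Co^{s}$ for all $j$" plus spanning plus a priori $C^1$ (or $\Co^{1/2}$) regularity into "$\partial_{x^i}f\in\Co^{\min(s,\alpha)}$", and iterating this finitely many times without losing track of which index ranges ($\beta\le\alpha+1$ for functions, $\beta\le\alpha$ for vector fields and forms) keep every product in Lemma \ref{Lemma::FuncSpace::Product} well-defined. The change-of-basis matrix between $\{X_{j_i}\}$ and $\{\partial_{x^i}\}$ is only $\Co^\alpha_\loc$, so each differentiation of $f$ picks up terms where a $\Co^\alpha$ factor multiplies a $\Co^{s-1}$ factor, and one needs $\alpha+(s-1)>0$ throughout; tracking this carefully — and handling the borderline Zygmund cases $s\in\{0,1\}$ via Remark \ref{Rmk::FuncSpace::CharofZyg} rather than naive Hölder estimates — is the delicate part. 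Everything else is a routine induction bookkeeping exercise built on Proposition \ref{Prop::FuncRn::LieOnManiofold} and Lemma \ref{Lemma::FuncSpace::Product}.
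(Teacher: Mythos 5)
Your proposal is correct and follows essentially the same route as the paper's proof: induction on $\lceil\beta\rceil$ (resp.\ $\lceil\beta+\tfrac12\rceil$), the forward inclusions via Proposition \ref{Prop::FuncRn::LieOnManiofold}, the reverse inclusions by expressing $\partial_{x^j}$ through the $X_i$ with $\Co^\alpha_\loc$ coefficients (the coordinate-coefficient route you settle on, rather than the frame coefficients $c^i$, which a priori are only as regular as $Y$) and then invoking a gradient characterization of $\Co^\beta$, and for part \ref{Item::FuncVF::ZygVF=Zyg::dFormHasReg} the same inductive comparison of the two recursive definitions combined with part \ref{Item::FuncVF::ZygVF=Zyg::Forms}. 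Two small remarks: since $\beta-1\le\alpha$ throughout, the product $b_j^i\cdot X_if$ already lands in $\Co^{\beta-1}_\loc$, so the multi-step bootstrap ``by $\alpha$ at a time'' you anticipate collapses to a single step; and the characterization you need when $\beta\le1$ in parts \ref{Item::FuncVF::ZygVF=Zyg::VF}--\ref{Item::FuncVF::ZygVF=Zyg::Forms} is not Remark \ref{Rmk::FuncSpace::CharofZyg} itself but the two-sided statement that $f\in\Co^\beta_\loc$ iff $f,\nabla f\in\Co^{\beta-1}_\loc$ for all real $\beta$ (the paper's Lemma \ref{Lemma::FuncRevisVF::GradCharofZyg}, proved via the isomorphism $I+\Lap:\Co^\beta\to\Co^{\beta-2}$).
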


To prove Proposition \ref{Prop::FuncVF::ZygVF=Zyg} we need a simple result concerning H\"older-Zygmund spaces:
\begin{lemma}\label{Lemma::FuncRevisVF::GradCharofZyg}
Let $U\subseteq\R^n$ be an open subset, let $\beta\in\R$ and $f\in\Dist(U)$. Then $f\in\Co^\beta_\loc(U)$ if and only if $f,\partial_{x^1}f,\dots,\partial_{x^n}f\in\Co^{\beta-1}_\loc(U)$.
\end{lemma}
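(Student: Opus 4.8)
\textbf{Proof plan for Lemma \ref{Lemma::FuncRevisVF::GradCharofZyg}.}

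The statement is a local one, so the plan is to reduce everything to the characterization of $\Co^s(\R^n)$ already recorded in Remark \ref{Rmk::FuncSpace::CharofZyg}, in particular item \ref{Item::FuncSpace::CharofZyg::>1} (for $s>1$) and item \ref{Item::FuncSpace::CharofZyg::<0} (for $s\le 0$), together with the product rule in Lemma \ref{Lemma::FuncSpace::Product}. First I would observe that it suffices to prove the following: for every $\beta\in\R$, an open ball $B\Subset U$, and $f\in\Dist(U)$, one has $f\big|_{B}\in\Co^\beta(B)$ whenever $f, \partial_{x^1}f,\dots,\partial_{x^n}f\in\Co^{\beta-1}_\loc(U)$, and conversely $f, \partial_{x^j}f\in\Co^{\beta-1}_\loc(U)$ whenever $f\in\Co^\beta_\loc(U)$. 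The forward ($\Rightarrow$) direction of the "only if" is trivial when $\beta-1$ lands in a range where differentiation is known to lose exactly one derivative; more carefully, for $\beta>1$ this is exactly Remark \ref{Rmk::FuncSpace::CharofZyg} \ref{Item::FuncSpace::CharofZyg::>1}, and for general $\beta$ it follows from the definition $\Co^\beta=\BesovSymb^\beta_{\infty,\infty}$ and the fact that $\partial_{x^j}:\BesovSymb^\beta_{\infty,\infty}\to\BesovSymb^{\beta-1}_{\infty,\infty}$ is bounded (a standard Littlewood--Paley estimate), which I would just cite from \cite{TriebelTheoryOfFunctionSpacesI}; in any case $f\in\Co^\beta_\loc\subseteq\Co^{\beta-1}_\loc$.

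For the "if" direction, the natural device is to invert the elliptic operator $I+\Lap$ locally, exactly as in the proof of Lemma \ref{Lemma::FuncRevis::DecomposeForms} and Lemma \ref{Lemma::FuncRevis::GreensOp}. Fix $x_0\in U$, pick balls $B'\Subset B\Subset U$ with $x_0\in B'$ and a cutoff $\chi\in C_c^\infty(B)$ with $\chi\equiv1$ near $\overline{B'}$. Write $g_0:=(I+\Lap)^{-1}(\chi f)$ and $g_j:=-\partial_{x^j}(I+\Lap)^{-1}(\chi f)$ for $1\le j\le n$, using the Newtonian-type potential for $I+\Lap$; then $\chi f = g_0+\sum_{j}\partial_{x^j}g_j$ globally on $\R^n$. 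The point is to show each $g_j\big|_{B'}\in\Co^{\beta}_{\loc}$, because then Remark \ref{Rmk::FuncSpace::CharofZyg}\ref{Item::FuncSpace::CharofZyg::<0} (applied with the shift $\beta$ rather than $\beta-1$, i.e. the statement $\Co^{\beta-1}=\{g_0+\sum\partial_{x^j}g_j : g_j\in\Co^{\beta}\}$... wait) — more precisely I would argue directly: from $\chi f$, the operator $(I+\Lap)^{-1}$ gains two derivatives in Zygmund scale (interior elliptic regularity, \cite[Proposition 4.1]{TaylorPDE3}, as used in Lemma \ref{Lemma::FuncRevis::GreensOp}), and since $\Lap(I+\Lap)^{-1}(\chi f) = \chi f - (I+\Lap)^{-1}(\chi f)$, knowing $\chi f\in\Co^{\beta-1}$ (which follows from $f\in\Co^{\beta-1}_\loc$) together with knowing each $\partial_{x^j}(\chi f) = \chi\,\partial_{x^j}f + (\partial_{x^j}\chi)f\in\Co^{\beta-1}_\loc$ (using Lemma \ref{Lemma::FuncSpace::Product}) gives that $\Lap(I+\Lap)^{-1}(\chi f)$ is better than expected. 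I'll organize it cleanly as: $u:=(I+\Lap)^{-1}(\chi f)$ satisfies $u\in\Co^{\beta+1}_\loc$ a priori from $\chi f\in\Co^{\beta-1}$; but then $\Lap u = \chi f - u$, and I want to upgrade using the hypothesis on $\partial_{x^j}f$. The cleanest route is instead to apply the already-proven "only if" direction in reverse spirit is circular, so I'll commit to the potential-theoretic computation below.

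\textbf{The core step.} Set $u:=\Green_1*(\chi f)$ where $\Green_1$ is a fundamental solution of $I+\Lap$ (or just use $(I+\Lap)^{-1}$ on $\R^n$ via Fourier multipliers, which is bounded $\Co^{s}\to\Co^{s+2}$ for every $s$). Since $\chi f\in\Co^{\beta-1}(\R^n)$ (here I use $f\in\Co^{\beta-1}_\loc$ and Lemma \ref{Lemma::FuncSpace::Product} to multiply by $\chi$), we get $u\in\Co^{\beta+1}(\R^n)$, hence $\partial_{x^j}u\in\Co^{\beta}$ and $u\in\Co^\beta$. Now $\chi f = u + \Lap u = u - \sum_j \partial_{x^j}\partial_{x^j}u = u + \sum_j \partial_{x^j}(g_j)$ with $g_j:=-\partial_{x^j}u\in\Co^{\beta}$. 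This only recovers $\chi f\in\Co^{\beta-1}$, which we already knew — so this alone is not enough, and this is exactly the anticipated obstacle: the gain from the hypothesis "$\partial_{x^j}f\in\Co^{\beta-1}$" must be inserted. I would do so by applying $(I+\Lap)^{-1}$ to $\partial_{x^j}(\chi f)=\chi\,\partial_{x^j}f+(\partial_{x^j}\chi)f\in\Co^{\beta-1}(\R^n)$: this yields $(I+\Lap)^{-1}\partial_{x^j}(\chi f)\in\Co^{\beta+1}(\R^n)$, i.e. $\partial_{x^j}u\in\Co^{\beta+1}(\R^n)\subseteq\Co^{\beta}(\R^n)$ — and combined with $u\in\Co^{\beta+1}$ this gives, by Remark \ref{Rmk::FuncSpace::CharofZyg}\ref{Item::FuncSpace::CharofZyg::>1} applied at level $\beta+1>1$... no: rather, $\chi f = (I+\Lap)u = u - \sum_j\partial_{x^j}(\partial_{x^j}u)$, and now $u\in\Co^{\beta+1}$ and $\partial_{x^j}u\in\Co^{\beta+1}$, so each term $u, \partial_{x^j}(\partial_{x^j}u)$ lies in $\Co^\beta$ (the latter because $\partial_{x^j}u\in\Co^{\beta+1}\Rightarrow\partial_{x^j}\partial_{x^j}u\in\Co^\beta$ by boundedness of $\partial_{x^j}:\Co^{\beta+1}\to\Co^\beta$). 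Hence $\chi f\in\Co^\beta(\R^n)$, and restricting to $B'$, $f\big|_{B'}\in\Co^\beta(B')$. Since $x_0\in U$ was arbitrary, $f\in\Co^\beta_\loc(U)$, completing the proof. I should double-check the one multiplier fact I am invoking — that $\partial_{x^j}:\Co^s(\R^n)\to\Co^{s-1}(\R^n)$ and $(I+\Lap)^{-1}:\Co^s\to\Co^{s+2}$ are bounded for all real $s$ — which is standard and follows from the Littlewood--Paley definition of $\Co^s=\BesovSymb^s_{\infty,\infty}$ in \cite[Section 2.3]{TriebelTheoryOfFunctionSpacesI}; I will cite it rather than reprove it.
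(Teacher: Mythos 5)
Your proposal is correct and uses essentially the same mechanism as the paper: localize with a cutoff $\chi$, observe that the hypotheses on $f$ and $\partial_{x^j}f$ force $(I+\Lap)(\chi f)\in\ZygSymb^{\beta-2}(\R^n)$, and exploit that $I+\Lap:\ZygSymb^{\beta}(\R^n)\to\ZygSymb^{\beta-2}(\R^n)$ is an isomorphism (\cite[Theorem 2.3.8]{TriebelTheoryOfFunctionSpacesI}). The paper applies the operator directly to $\chi f$ and then inverts, while you apply $(I+\Lap)^{-1}$ first and reconstruct $\chi f = u-\sum_j\partial_{x^j}^2 u$; this is the same argument rearranged, and both directions check out.
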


\begin{proof}
When $\beta>1$, the result follows from Remark \ref{Rmk::FuncSpace::CharofZyg} \ref{Item::FuncSpace::CharofZyg::>1}.  Thus, 
we consider only the case $\beta\leq 1$.

Let $V\Subset U$ be an arbitrary precompact open subset of $U$. We fix  $\chi\in C_c^\infty(U)$ such that $\chi\big|_V\equiv1$ so $\chi f$ is defined in $\R^n$. 
Since we have $\partial_{x^j}f\big|_V=\partial_{x^j}(\chi f)\big|_V$, $j=1,\dots,n$, it is enough to prove that $\tilde f:=\chi f\in\Co^\beta(\R^n)$ if and only if $\tilde f,\partial_{x^1}\tilde f,\dots,\partial_{x^n}\tilde f\in\Co^{\beta-1}(\R^n)$.

Clearly $\tilde f\in\Co^\beta(\R^n)$ implies $\partial_{x^1}\tilde f,\dots,\partial_{x^n}\tilde f\in\Co^{\beta-1}(\R^n)$. 

Conversely, suppose $\tilde f,\partial_{x^1}\tilde f,\dots,\partial_{x^n}\tilde f\in\Co^{\beta-1}(\R^n)$ hold, we have $(I+\Lap)\tilde f=\tilde f-\sum_{j=1}^n\partial_{x^j}(\partial_{x^j}\tilde f)\in\Co^{\beta-2}(\R^n)$. Since $I+\Lap:\Co^{\beta}(\R^n)\to\Co^{\beta-2}(\R^n)$ is a isomorphism of Banach spaces for every $\beta\in\R$ (see \cite[Theorem 2.3.8]{TriebelTheoryOfFunctionSpacesI}), we have $\tilde f=(I+\Lap)^{-1}(I+\Lap)\tilde f\in\Co^\beta(\R^n)$, completing the proof.
\end{proof}

\begin{proof}[Proof of Proposition \ref{Prop::FuncVF::ZygVF=Zyg} \ref{Item::FuncVF::ZygVF=Zyg::Fun}, \ref{Item::FuncVF::ZygVF=Zyg::VF}, and \ref{Item::FuncVF::ZygVF=Zyg::Forms}]

By passing to a local coordinate chart it suffices to prove the results on an open subset $\Manifold=U\subseteq\R^n$ endowed with the standard coordinate system $(x^1,\dots,x^n)$. In this coordinate system we write
\begin{equation}\label{Eqn::FuncRevisVF::ZygVF=ZygPF::CoeffforX}
    X_i=\sum_{j=1}^na_i^j\Coorvec{x^j},\quad\text{with }a_i^j\in\Co^\alpha_\loc(U).
\end{equation}
The assumption that $X_1,\dots,X_q$ span the tangent space at every point shows that the matrix function $(a_i^j)_{n\times q}\in\Co^\alpha_\loc(U;\Mbb^{n\times q})$ has full rank $n$ at every point. So we can find a matrix $(b_j^i)_{q\times n}\in\Co^\alpha_\loc(U;\Mbb^{q\times n})$ such that 
\begin{equation}\label{Eqn::FuncRevisVF::ZygVF=ZygPF::CoeffforInvMat}
    \sum_{i=1}^qb_j^ia_i^k=\delta_j^k=\begin{cases}1,&j=k,\\0,&j\neq k.\end{cases}
\end{equation}

\noindent\ref{Item::FuncVF::ZygVF=Zyg::Fun}: By Definition \ref{Defn::FuncVF::Funregularity}, $\Co^\beta_{X,\loc}(U)=\Co^\beta_\loc(U)$ holds for $\beta\in(-\alpha,1]$. {For $\beta\in(0,\alpha+1]$ we proceed by induction on $r=\lceil\beta\rceil$}. The base case $r=1$, {which is $\beta\in(0,1]$},  follows from the definition. {Let $r\geq 2$} and suppose we {have the case $\lceil\beta\rceil=r-1$,} i.e. $\Co^\beta_{X,\loc}(U)=\Co^\beta_\loc(U)$ for all $\beta\in(r-1,r]$, we wish to prove it for $\beta\in(r,\min(r+1,\alpha+1)]$. 

Let $f\in\Co^\beta_\loc(U)$, so $f\in C^1_\loc\cap\Co^{\beta-1}_\loc(U)$ and by Proposition \ref{Prop::FuncRn::LieOnManiofold} \ref{Item::FuncRn::VectActionCont}, $X_1f,\dots,X_qf\in\Co^{\beta-1}_\loc(U)$. By the inductive hypothesis $f\in C^1_\loc\cap\Co^{\beta-1}_{X,\loc}(U)$ and $X_1f,\dots,X_qf\in\Co^{\beta-1}_{X,\loc}(U)$, so $f\in\Co^\beta_{X,\loc}(U)$.

Conversely, suppose $f\in\Co^\beta_{X,\loc}(U)$, so $f\in C^1_\loc\cap\Co^{\beta-1}_{X,\loc}(U)$ and $X_1f,\dots,X_qf\in\Co^{\beta-1}_{X,\loc}(U)$. By the inductive hypothesis  $f,X_1f,\dots,X_qf\in\Co^{\beta-1}_{\loc}(U)$. Using \eqref{Eqn::FuncRevisVF::ZygVF=ZygPF::CoeffforInvMat} and Lemma \ref{Lemma::FuncSpace::Product} we have $\partial_jf=\sum_{i=1}^q\sum_{k=1}^nb_j^ia_i^k\partial_kf=\sum_{i=1}^qb_j^iX_if\in\Co^{\beta-1}_\loc(U)$ for $j=1,\dots,n$. By Lemma \ref{Lemma::FuncRevisVF::GradCharofZyg} we obtain $f\in\Co^\beta_\loc(U)$.

\medskip
\noindent\ref{Item::FuncVF::ZygVF=Zyg::VF}: Here $\alpha>\frac12$ and Definition \ref{Defn::FuncVF::VFregularity} imply that $\Co^\beta_{X,\loc}(U;TU)=\Co^\beta_\loc(U;TU)$ holds for $\beta\in(-\alpha,\frac12]$.

{For $\beta\in(-\frac12,\alpha]$ }we prove $\Co^\beta_{X,\loc}(U;TU)=\Co^\beta_\loc(U;TU)$ 
by induction on $r=\lceil\beta+\frac12\rceil$. The base case $r=1$. {which is  $\beta\in(-\frac12,\frac12]$,} was established above. {Let $r\geq 2$} and suppose we {have the case $\lceil\beta+\frac12\rceil=r-1$}, i.e. $\Co^\beta_{X,\loc}(U;TU)=\Co^\beta_\loc(U;TU)$ for all $\beta\in(r-\frac32,r-\frac12]$, we wish to prove it for $\beta\in(r-\frac12,\min(r+\frac12,\alpha)]$. 

Suppose $Y\in\Co^\beta_\loc(U;TU)$, so $Y\in\Co^\frac12_\loc\cap\Co^{\beta-1}_\loc(U;TU)$ and by Proposition \ref{Prop::FuncRn::LieOnManiofold} \ref{Item::FuncRn::CommutatorCont}, $[X_1,Y],\dots,[X_q,Y]\in\Co^{\beta-1}_\loc(U;TU)$.  By the inductive hypothesis $Y\in \Co^\frac12_\loc\cap\Co^{\beta-1}_{X,\loc}(U;TU)$ and $[X_1,Y],\dots,[X_q,Y]\in\Co^{\beta-1}_{X,\loc}(U;TU)$, which is the definition of $Y\in\Co^\beta_{X,\loc}(U;TU)$ (see Definition \ref{Defn::FuncVF::VFregularity}).

Conversely, suppose $Y\in\Co^\beta_{X,\loc}(U;TU)$, so $Y\in\Co^\frac12_\loc\cap\Co^{\beta-1}_{X,\loc}(U;TU)$ and $[X_1,Y],\dots,[X_q,Y]\in\Co^{\beta-1}_{X,\loc}(U;TU)$. By the inductive hypothesis $Y\in\Co^{\max(\frac12,\beta-1)}_\loc(U;TU)$ and $[X_1,Y],\dots,[X_q,Y]\in\Co^{\beta-1}_{\loc}(U;TU)$.

Write $Y=\sum_{j=1}^n\rho^j\Coorvec{x^j}$ where $\rho^j\in\Co^{\min(\frac12,\beta-1)}_\loc(U)$. By Lemma \ref{Lemma::FuncSpace::Product} we know the equation below is defined.
\begin{equation}\label{Eqn::FuncRevisVF::ZygVF=ZygPF::LieforVF}
    [X_i,Y]=\sum_{j,k=1}^n\mleft(a_i^j\frac{\partial\rho^k}{\partial x^j}-\rho^j\frac{\partial a_i^k}{\partial x^j}\mright)\Coorvec{x^k}=\sum_{k=1}^n(X_i\rho^k)\Coorvec{x^k}+L_i(Y),\quad 1\le i\le q.
\end{equation}
Here $L_1,\dots,L_q$ are multiplication operators (0-th order differential operators) with $\Co^{\alpha-1}_\loc$-coefficients.

By Lemma \ref{Lemma::FuncSpace::Product}, since $\max(\frac12,\beta-1)>1-\alpha$, we have $L_i(Y)\in\Co^{\beta-1}_\loc(U;TU)$, so $\rho^k\in\Co^{\beta-1}_\loc(U)$ and $X_i\rho^k\in\Co^{\beta-1}_\loc(U)$ for $1\le i\le q$, $1\le k\le n$. {Using \eqref{Eqn::FuncRevisVF::ZygVF=ZygPF::CoeffforInvMat} and Lemma \ref{Lemma::FuncSpace::Product} we have $\partial_j\rho^k=\sum_{i=1}^qb_j^i\cdot(X_i\rho^k)\in\Co^{\beta-1}_\loc(U)$, for $j=1,\dots,n$. By Lemma \ref{Lemma::FuncRevisVF::GradCharofZyg} we obtain $\rho^k\in\Co^\beta_\loc(U)$  for all $1\le k\le n$, which means $Y\in\Co^\beta_\loc(U;TU)$. This finishes the induction argument and hence the proof of \ref{Item::FuncVF::ZygVF=Zyg::VF}.}


\medskip The proof of \ref{Item::FuncVF::ZygVF=Zyg::Forms} is the same as \ref{Item::FuncVF::ZygVF=Zyg::VF}. Indeed, similar to \eqref{Eqn::FuncRevisVF::ZygVF=ZygPF::LieforVF}, we can write
$$\Lie{X_i}\theta=\sum_{1\le j_1<\dots<j_k\le n}^n(X_i\theta_{j_1\dots j_k})dx^{j_1}\wedge\dots\wedge dx^{j_k}+L_i'(\theta),\quad1\le i\le q.$$
where $L_1',\dots,L_n'$ are product operators on $k$-forms with $\Co^{\alpha-1}_\loc$-coefficients. Using the same method {as for \ref{Item::FuncVF::ZygVF=Zyg::VF}} we can prove that $\theta_{j_1\dots j_k}\in\Co^\beta_{X,\loc}(U)$ {if and only if $\theta_{j_1\dots j_k},X_1\theta_{j_1\dots j_k},\dots,X_n\theta_{j_1\dots j_k}\in\Co^{\beta-1}_\loc(U)$} if and only if $\theta_{j_1\dots j_k}\in\Co^\beta_\loc(U)$. We omit the details.
\end{proof}

\begin{cor}\label{Cor::FuncVF::ZygVF=ZygCor} Let $\alpha\ge1$ and let $X_1,\dots,X_q$ be $\Co^\alpha_\loc$-vector fields on a $\Co^{\alpha+1}$-manifold $\Manifold$ that span the tangent space at every point. Let $\beta>1-\alpha$ and $1\le k\le n$.
\begin{enumerate}[parsep=-0.3ex,label=(\roman*)]
    \item\label{Item::FuncVF::ZygVF=Zyg::dForms} Let $\theta\in\Co^{(-\alpha)^+}_\loc\mleft(\Manifold;\mywedge^kT^*\Manifold\mright)$ be a $k$-form. Then $d\theta$ has regularity $\Co^{\beta-1}_{X,\loc}(\Manifold)$ if and only if $d\theta\in\Co^{\beta-1}_{X,\loc}\mleft(\Manifold;\mywedge^{k+1}T^*\Manifold\mright)$. 
    \item\label{Item::FuncVF::ZygVF=Zyg::Diff} Let $\theta\in\Co^{\beta}_{X,\loc}\mleft(\Manifold;\mywedge^kT^*\Manifold\mright)$, then $d\theta\in\Co^{\beta-1}_{X,\loc}\mleft(\Manifold;\mywedge^{k+1}T^*\Manifold\mright)$.
\end{enumerate}
\end{cor}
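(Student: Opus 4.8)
The plan is to prove the first assertion, \ref{Item::FuncVF::ZygVF=Zyg::dForms}, by induction on $\beta$, and then to deduce the second, \ref{Item::FuncVF::ZygVF=Zyg::Diff}, from it by a further induction. Throughout I will use that $\alpha\ge 1>\tfrac12$, so that Proposition~\ref{Prop::FuncRn::LieOnManiofold} makes $d$, $\IntProd{X_j}$ and $\Lie{X_j}$ all available in the relevant ranges; I will also use the elementary monotonicity in $\beta$ of the scale $\Co^\beta_{X,\loc}$ for forms (immediate from Definition~\ref{Defn::FuncVf::formregularity}) and the analogous monotonicity of the predicate ``$d\theta$ has regularity $\Co^{\beta-1}_{X,\loc}$'' (immediate from Definition~\ref{Defn::FuncVf::RegularityofForms}).

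For \ref{Item::FuncVF::ZygVF=Zyg::dForms}, the base case is $\beta\in(1-\alpha,1+\alpha]$, i.e.\ $\beta-1\in(-\alpha,\alpha]$: then Proposition~\ref{Prop::FuncVF::ZygVF=Zyg}\ref{Item::FuncVF::ZygVF=Zyg::dFormHasReg} says $d\theta$ has regularity $\Co^{\beta-1}_{X,\loc}$ iff $d\theta\in\Co^{\beta-1}_\loc(\Manifold;\mywedge^{k+1}T^*\Manifold)$, and Proposition~\ref{Prop::FuncVF::ZygVF=Zyg}\ref{Item::FuncVF::ZygVF=Zyg::Forms} identifies the latter space with $\Co^{\beta-1}_{X,\loc}(\Manifold;\mywedge^{k+1}T^*\Manifold)$. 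For the inductive step $\beta>1+\alpha$ (so $\beta>2$ and $\beta-1>\alpha$), I would first use monotonicity together with the base case applied with $\alpha+1$ in place of $\beta$: in \emph{either} direction this already shows $d\theta\in\Co^\alpha_\loc(\Manifold;\mywedge^{k+1}T^*\Manifold)$, so by Lemma~\ref{Lemma::FuncRn::dwelldefined} $d\theta$ is a well-defined closed form; hence $\Lie{X_j}d\theta=d\IntProd{X_j}d\theta$ for each $j$, and $d\theta\in\Co^{1/2}_\loc$ for free. Now the recursions in Definition~\ref{Defn::FuncVf::RegularityofForms} (case $\beta>2$) and in Definition~\ref{Defn::FuncVf::formregularity} (case $\beta-1>\tfrac12$) have parallel shape: each says the relevant property at level $\beta$ holds iff the corresponding property at level $\beta-1$ holds for $d\theta$ and for every $d\IntProd{X_j}d\theta$. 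Since $\IntProd{X_j}d\theta$ is a $k$-form in $\Co^{(-\alpha)^+}_\loc$ by Proposition~\ref{Prop::FuncRn::LieOnManiofold}\ref{Item::FuncRn::IntProdCont}, the inductive hypothesis (namely \ref{Item::FuncVF::ZygVF=Zyg::dForms} at level $\beta-1$), applied to $\theta$ and to each $\IntProd{X_j}d\theta$, closes the equivalence in both directions.

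For \ref{Item::FuncVF::ZygVF=Zyg::Diff}, note that $\theta\in\Co^\beta_{X,\loc}(\Manifold;\mywedge^kT^*\Manifold)$ lies in $\Co^{(-\alpha)^+}_\loc$, so by \ref{Item::FuncVF::ZygVF=Zyg::dForms} it suffices to show that $d\theta$ has regularity $\Co^{\beta-1}_{X,\loc}$. When $\beta\le\alpha$ this is immediate, since $\theta\in\Co^\beta_\loc$ by Proposition~\ref{Prop::FuncVF::ZygVF=Zyg}\ref{Item::FuncVF::ZygVF=Zyg::Forms} and $d\theta\in\Co^{\beta-1}_\loc$ by Proposition~\ref{Prop::FuncRn::LieOnManiofold}\ref{Prop::FuncRn::dCont}. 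For $\beta>\alpha$ I would induct on $\lceil\beta\rceil$: by Definition~\ref{Defn::FuncVf::formregularity} one has $\theta\in\Co^{\beta-1}_{X,\loc}(\Manifold;\mywedge^kT^*\Manifold)$ and $\Lie{X_j}\theta\in\Co^{\beta-1}_{X,\loc}(\Manifold;\mywedge^kT^*\Manifold)$ for each $j$; applying the inductive hypothesis (that is, \ref{Item::FuncVF::ZygVF=Zyg::Diff} at level $\beta-1$) to $\theta$ gives $d\theta\in\Co^{\beta-2}_{X,\loc}(\Manifold;\mywedge^{k+1}T^*\Manifold)$ --- in particular $d\theta$ is a well-defined form and $\Lie{X_j}d\theta=d\IntProd{X_j}d\theta$ --- while applying it to each $\Lie{X_j}\theta$, together with the identity $\Lie{X_j}d\theta=d\Lie{X_j}\theta$ (formal from $\Lie{X_j}=d\IntProd{X_j}+\IntProd{X_j}d$ and $d^2=0$ once $d\theta$ is a well-defined form), gives $\Lie{X_j}d\theta\in\Co^{\beta-2}_{X,\loc}(\Manifold;\mywedge^{k+1}T^*\Manifold)$. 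By \ref{Item::FuncVF::ZygVF=Zyg::dForms} these memberships translate into the regularity statements required by Definition~\ref{Defn::FuncVf::RegularityofForms}, so $d\theta$ has regularity $\Co^{\beta-1}_{X,\loc}$.

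The hard part will be the honest low-regularity bookkeeping that Definitions~\ref{Defn::FuncVf::RegularityofForms} and~\ref{Defn::FuncVf::formregularity} impose on the recursion: verifying $d\theta\in\Co^{0^+}_\loc$ (when $\beta\in(1,2]$) or $d\theta\in\Co^{1/2}_\loc$ (when $\beta-1>\tfrac12$) at the base of the induction, in the regime where $\alpha$ and $\beta$ are both close to $1$. A crude H\"older count is too lossy here --- the coframe dual to $X_1,\dots,X_n$ is only $\Co^\alpha_\loc$ and the structure functions of $X_1,\dots,X_q$ only $\Co^{\alpha-1}_\loc$, so one does not see the cancellations that make $d\theta$ as regular as asserted. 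In \ref{Item::FuncVF::ZygVF=Zyg::dForms} this difficulty is sidestepped by the ``bootstrap to $\Co^\alpha_\loc$'' step above, because the hypothesis already supplies regularity/membership at level $\alpha$. In \ref{Item::FuncVF::ZygVF=Zyg::Diff} there is no such free lunch: making the argument go through amounts to showing directly that $\theta\in\Co^\beta_{X,\loc}$ forces $d\theta$ to be honestly of class $\Co^{\min(\beta-1,\alpha)}_\loc$, and I expect this to be the real content, requiring the product rules and the frame-contraction (reconstruction) properties of the $\Co^\bullet_{X,\loc}$ scale established earlier in the paper rather than naive interpolation.
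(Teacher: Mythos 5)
Your architecture is the paper's: part \ref{Item::FuncVF::ZygVF=Zyg::dForms} by induction exploiting that Definitions \ref{Defn::FuncVf::RegularityofForms} and \ref{Defn::FuncVf::formregularity} recurse in parallel (applying the inductive hypothesis to $\theta$ and to each $\IntProd{X_j}d\theta$, with $\Lie{X_j}d\theta=d\IntProd{X_j}d\theta$ since $d\theta$ is closed), and part \ref{Item::FuncVF::ZygVF=Zyg::Diff} by induction using $\Lie{X_j}d\theta=d\Lie{X_j}\theta$ and the hypothesis applied to $\theta$ and $\Lie{X_j}\theta$. Your base cases are larger than the paper's ($\beta\in(1-\alpha,1+\alpha]$ via Proposition \ref{Prop::FuncVF::ZygVF=Zyg} \ref{Item::FuncVF::ZygVF=Zyg::dFormHasReg}, rather than $\beta\in(1-\alpha,1]$ followed by integer steps), which is fine, and your bootstrap to $d\theta\in\Co^\alpha_\loc$ in part \ref{Item::FuncVF::ZygVF=Zyg::dForms} is a clean way to dispatch the $\Co^{0^+}_\loc$ and $\Co^{1/2}_\loc$ side conditions that the definitions impose; just note that the "monotonicity" you invoke (regularity/membership at level $\beta$ implies the same at every lower admissible level, not only at $\beta-1,\beta-2,\dots$) needs its own short induction, since the definitions only step down by integers.

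The one place you stop short is the one you flag yourself: in part \ref{Item::FuncVF::ZygVF=Zyg::Diff}, for $\beta$ in roughly $(\alpha,\alpha+1]$ with $\alpha$ near $1$, you must still verify $d\theta\in\Co^{0^+}_\loc$ (resp.\ $\Co^{1/2}_\loc$ when $\beta-1>\tfrac12$) before Definition \ref{Defn::FuncVf::formregularity} lets you conclude membership. This is not a missing idea so much as an unexecuted step, and your diagnosis of the tool is correct: from $\IntProd{X_i}d\theta=\Lie{X_i}\theta-d\IntProd{X_i}\theta$ one gets $\IntProd{X_i}d\theta\in\Co^{\min(\beta-1,\alpha)}_\loc$ (using Proposition \ref{Prop::FuncVF::ZygVF=Zyg} and Lemma \ref{Lemma::ProofThm::ALemma}-type product bounds), and contracting back against the $\Co^\alpha_\loc$ coframe recovers $d\theta\in\Co^{\min(\beta-1,\alpha)}_\loc$, which is $\Co^{0^+}_\loc$ and, when $\beta\ge\tfrac32$, $\Co^{1/2}_\loc$; when $\beta<\tfrac32$ the $\Co^{1/2}_\loc$ condition is vacuous because $\Co^{\beta-1}_{X,\loc}=\Co^{\beta-1}_\loc$. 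This is exactly the frame-contraction mechanism behind Proposition \ref{Prop::FuncVF::ZygVF=Zyg} \ref{Item::FuncVF::ZygVF=Zyg::Forms} and Lemma \ref{Lemma::ProofThm::BLemma}, and it is also how the paper closes this step (its phrase ``by Proposition \ref{Prop::FuncVF::ZygVF=Zyg} \ref{Item::FuncVF::ZygVF=Zyg::Forms} if $\beta\le\frac32$, or by Definition \ref{Defn::FuncVf::formregularity} if $\beta\ge\frac32$'' is shorthand for precisely this). So: same route, part \ref{Item::FuncVF::ZygVF=Zyg::dForms} complete, part \ref{Item::FuncVF::ZygVF=Zyg::Diff} correct in outline with one honest, fillable deferral that you have correctly located.
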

\begin{proof}{ By passing to a local coordinate chart it suffices to prove the results on an open subset $\Manifold=U\subseteq\R^n$.} 

\noindent\ref{Item::FuncVF::ZygVF=Zyg::dForms}: First we prove the case $\beta\in(1-\alpha,1]$. By Definition, \ref{Defn::FuncVf::RegularityofForms}, $d\theta$ having regularity $\Co^{\beta-1}_{X,\loc}(U)$ is equivalent as $d\theta$ having regularity $\Co^{\beta-1}_\loc(U)$ when $\beta\in(-\alpha,1]$. By Lemma \ref{Lemma::FuncRn::dwelldefined}, $d\theta$ having regularity $\Co^{\beta-1}_\loc(U)$ is equivalent as $d\theta\in\Co^{\beta-1}_\loc\mleft(U;\mywedge^{k+1}T^*U\mright)$ for $\beta\in(1-\alpha,\alpha+1]$. By Proposition \ref{Prop::FuncVF::ZygVF=Zyg} \ref{Item::FuncVF::ZygVF=Zyg::Forms}, $\Co^{\beta-1}_\loc\mleft(U;\mywedge^{k+1}T^*U\mright)=\Co^{\beta-1}_{X,\loc}\mleft(U;\mywedge^{k+1}T^*U\mright)$. So $d\theta$ has regularity $\Co^{\beta-1}_\loc(U)$ if and only if $d\theta\in\Co^{\beta-1}_\loc\mleft(U;\mywedge^{k+1}T^*U\mright)$. 

{For $\beta\in(0,\infty)$} we proceed by induction on $r=\lceil\beta\rceil$. The base case $r=1$, {which is the case $\beta\in(0,1]\subseteq(1-\alpha,1]$}, was established above. {Let $r\geq 2$ and} suppose we { have the case $\lceil\beta\rceil=r-1$, i.e.} \ref{Item::FuncVF::ZygVF=Zyg::dForms} holds for all $\beta\in(r-1,r]$ and we wish to prove it for $\beta\in(r,r+1]$. 

Suppose $d\theta\in\Co^{\beta-1}_{X,\loc}\mleft(U;\mywedge^{k+1}T^*U\mright)$. By Definition \ref{Defn::FuncVf::formregularity}, $d\theta\in\Co^{0^+}_\loc\cap\Co^{\beta-1}_{X,\loc}$ and $d\IntProd{X_i}d\theta=\Lie{X_i}d\theta\in\Co^{\beta-2}_{X,\loc}\mleft(U;\mywedge^{k+1}T^*U\mright)$ for all $i=1,\dots,q$. By the inductive hypothesis $d\theta,d\IntProd{X_1}d\theta,\dots,d\IntProd{X_q}d\theta$ have regularity $\Co^{\beta-2}_{X,\loc}(U)$, which is
the definition of 
$d\theta$ having regularity $\Co^{\beta-1}_{X,\loc}(U)$ by Definition \ref{Defn::FuncVf::formregularity}.

Conversely, suppose $d\theta$ has regularity $\Co^{\beta-1}_{X,\loc}(U)$, i.e. $d\theta\in\Co^{0^+}_\loc\mleft(U;\mywedge^{k+1}T^*U\mright)$ and $d\theta,d\IntProd{X_1}d\theta,\dots,d\IntProd{X_q}d\theta$ have regularity $\Co^{\beta-2}_{X,\loc}(U)$. Applying the inductive hypothesis to $\theta,\IntProd{X_1}d\theta,\dots,\IntProd{X_q}d\theta$ we have $d\theta,d\IntProd{X_1}d\theta,\dots,d\IntProd{X_q}d\theta\in\Co^{\beta-2}_{X,\loc}\mleft(U;\mywedge^{k+1}T^*U\mright)$. Note that $d\theta\in\Co^{0^+}_\loc$ implies $d\theta\in\Co^\eps_\loc$ for some $\eps>0$, and by assumption $X_i\in\Co^\alpha_\loc\subseteq\Co^1_\loc$, so by Proposition \ref{Prop::FuncRn::LieOnManiofold} \ref{Item::FuncRn::LieFormsCont}, $\Lie{X_i}d\theta$ is defined and $\Lie{X_i}d\theta=d\IntProd{X_i}d\theta\in\Co^{\beta-2}_{X,\loc}\mleft(U;\mywedge^{k+1}T^*U\mright)$. By Definition \ref{Defn::FuncVf::formregularity} we get $d\theta\in\Co^{\beta-1}_{X,\loc}\mleft(U;\mywedge^{k+1}T^*U\mright)$, finishing the induction argument.

\medskip\noindent\ref{Item::FuncVF::ZygVF=Zyg::Diff}: For the case $\beta\in(1-\alpha,1]$, by  Proposition \ref{Prop::FuncVF::ZygVF=Zyg} \ref{Item::FuncVF::ZygVF=Zyg::Forms}, since $\alpha>1$, we have $\Co^{\beta}_{X,\loc}\mleft(U;\mywedge^{k}T^*U\mright)=\Co^{\beta}_{\loc}(U;\mywedge^{k}T^*U)$ and $\Co^{\beta-1}_{X,\loc}\mleft(U;\mywedge^{k+1}T^*U\mright)=\Co^{\beta-1}_{\loc}\mleft(U;\mywedge^{k+1}T^*U\mright)$. Therefore $\theta\in\Co^\beta_{X,\loc}$ implies $d\theta\in\Co^{\beta-1}_{X,\loc}$.

{For $\beta\in(0,\infty)$} we proceed the induction on $r=\lceil\beta\rceil$. The base case $r=1$, {i.e. $\beta\in(0,1]$,} was established above. 
{Let $r\ge2$ and} suppose we { have the case $\lceil\beta\rceil=r-1$, i.e.} $\beta\in(r-1,r]$, we wish to prove the case $\beta\in(r,r+1]$. 

Let $\theta\in\Co^{\beta}_{X,\loc}\mleft(U;\mywedge^kT^*U\mright)$, so by assumption $\theta,\Lie{X_1}\theta,\dots,\Lie{X_q}\theta\in\Co^{\beta-1}_{X,\loc}\mleft(U;\mywedge^kT^*U\mright)$. By the inductive hypothesis $d\theta,d\Lie{X_1}\theta,\dots,d\Lie{X_q}\theta\in\Co^{\beta-2}_{X,\loc}\mleft(U;\mywedge^{k+1}T^*U\mright)$. Since $\beta>1$ and $d\theta\in\Co^{\beta-1}_{X,\loc}\subsetneq\Co^{0^+}_\loc$, by Proposition \ref{Prop::FuncRn::LieOnManiofold} \ref{Item::FuncRn::LieFormsCont}, $\Lie{X_i}d\theta$ is defined, and therefore $\Lie{X_i}d\theta=d\Lie{X_i}\theta\in\Co^{\beta-2}_{X,\loc}$. So by Proposition \ref{Prop::FuncVF::ZygVF=Zyg} \ref{Item::FuncVF::ZygVF=Zyg::Forms} if $\beta\le\frac32$, or by Definition \ref{Defn::FuncVf::formregularity} if $\beta\ge\frac32$, we get $d\theta\in\Co^{\beta-1}_{X,\loc}\mleft(U;\mywedge^{k+1}T^*U\mright)$.
\end{proof}

\begin{proof}[Proof of Proposition \ref{Prop::FuncVF::ZygVF=Zyg} \ref{Item::FuncVF::ZygVF=Zyg::dFormHasReg}]{ By passing to a local coordinate chart it suffices to prove the result on an open subset $\Manifold=U\subseteq\R^n$.} 

By Corollary \ref{Cor::FuncVF::ZygVF=ZygCor} \ref{Item::FuncVF::ZygVF=Zyg::dForms}, $d\theta$ has regularity $\Co^{\beta-1}_{X,\loc}(U)$ if and only if $d\theta\in\Co^\beta_{X,\loc}\mleft(U;\mywedge^{k+1}T^*U\mright)$. By Proposition \ref{Prop::FuncVF::ZygVF=Zyg}  \ref{Item::FuncVF::ZygVF=Zyg::Forms}, since $\beta-1\in(-\alpha,\alpha]$, $\Co^{\beta-1}_{X,\loc}\mleft(U;\mywedge^{k+1}T^*U\mright)=\Co^{\beta-1}_{\loc}\mleft(U;\mywedge^{k+1}T^*U\mright)$. So $d\theta$ has regularity $\Co^{\beta-1}_{X,\loc}(U)$ if and only if $d\theta\in\Co^{\beta-1}_{\loc}\mleft(U;\mywedge^{k+1}T^*U\mright)$.
\end{proof}
\begin{rmk}
Proposition \ref{Prop::FuncVF::ZygVF=Zyg} \ref{Item::FuncVF::ZygVF=Zyg::dFormHasReg} and Corollary \ref{Cor::FuncVF::ZygVF=ZygCor} \ref{Item::FuncVF::ZygVF=Zyg::dForms} and \ref{Item::FuncVF::ZygVF=Zyg::Diff} also hold for general $\alpha>0$. However, the proof is more complicated and is not used in the proof of our main results. Because of this,
we do not include these more general results.
\end{rmk}

\section{The Proof of the Main Theorem}

In this section, we prove Theorem \ref{Thm::TheMainResult}. 

\begin{lemma}\label{Lemma::ProofThm::ALemma}
Let $\alpha,\beta,\gamma>\frac12$. Let $X_1,\dots,X_q$ be $\Co^\alpha_\loc$-vector fields on a manifold $\Manifold$ that span the tangent space at every point, and let $\Phi:\Nanifold\to\Manifold$ be a $\Co^{\gamma+1}$-diffeomorphism.

Let $f\in\Co^\beta_{X,\loc}(\Manifold)$, $Y\in\Co^\beta_{X,\loc}(\Manifold;T\Manifold)$ and $\theta\in\Co^\beta_{X,\loc}(\Manifold;T^*\Manifold)$, then $f\theta\in\Co^\beta_{X,\loc}(\Manifold;T^*\Manifold)$, $\langle \theta,Y\rangle\in \Co^\beta_{X,\loc}(\Manifold)$, and $\langle\Phi^*\theta,\Phi^*Y\rangle\in\Co^\beta_{\Phi^*X,\loc}(\Nanifold)$.
\end{lemma}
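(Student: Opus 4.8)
The plan is to reduce everything to a local coordinate statement and then run an induction on $\beta$ that follows the recursive structure of Definitions \ref{Defn::FuncVF::Funregularity}, \ref{Defn::FuncVF::VFregularity}, and \ref{Defn::FuncVf::formregularity}. The last assertion, that $\langle\Phi^*\theta,\Phi^*Y\rangle\in\Co^\beta_{\Phi^*X,\loc}(\Nanifold)$, is an immediate consequence of the first two: by Remark \ref{Rmk::FuncVF::ZygVFUnderDiffeo} \ref{Item::FuncVF::ZygVFUnderDiffeo::VF} and \ref{Item::FuncVF::ZygVFUnderDiffeo::Forms} (valid since $\alpha,\gamma>\frac12$) we have $\Phi^*Y\in\Co^\beta_{\Phi^*X,\loc}(\Nanifold;T\Nanifold)$ and $\Phi^*\theta\in\Co^\beta_{\Phi^*X,\loc}(\Nanifold;T^*\Nanifold)$, and since $\langle\Phi^*\theta,\Phi^*Y\rangle=\Phi^*\langle\theta,Y\rangle$ (a pointwise/functorial identity, valid here because pullback of functions, vector fields and forms is defined by Lemma \ref{Lemma::FuncRevis::PushForwardFuncSpaces}), the statement $\langle\theta,Y\rangle\in\Co^\beta_{X,\loc}(\Manifold)$ transported to $\Nanifold$ gives the claim — formally this is just applying the already-proved pairing statement on $\Manifold$ together with Remark \ref{Rmk::FuncVF::ZygVFUnderDiffeo} \ref{Item::FuncVF::ZygVFUnderDiffeo::Fun}. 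So the real content is $f\theta\in\Co^\beta_{X,\loc}(\Manifold;T^*\Manifold)$ and $\langle\theta,Y\rangle\in\Co^\beta_{X,\loc}(\Manifold)$.

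For these two, I would first pass to a local coordinate chart, so $\Manifold$ becomes an open $U\subseteq\R^n$, and then induct on $r=\lceil\beta+\tfrac12\rceil$ (or, for the function-valued output, on $\lceil\beta\rceil$), exactly mirroring the proof strategy of Proposition \ref{Prop::FuncVF::ZygVF=Zyg}. The base case handles $\beta\le\tfrac12$ (resp. $\beta\le1$), where $\Co^\beta_{X,\loc}$ coincides with $\Co^\beta_\loc$ by definition, and the statements $f\theta\in\Co^\beta_\loc$, $\langle\theta,Y\rangle\in\Co^\beta_\loc$ follow from Lemma \ref{Lemma::FuncSpace::Product} applied componentwise (using $\beta+\beta>0$ since $\beta>\tfrac12>0$, and $\beta\le\beta$). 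For the inductive step, the key is the Leibniz rule for the Lie derivative: one needs identities like $\Lie{X_j}(f\theta)=(X_jf)\theta+f\,\Lie{X_j}\theta$ and $X_j\langle\theta,Y\rangle=\langle\Lie{X_j}\theta,Y\rangle+\langle\theta,[X_j,Y]\rangle$, each valid in the relevant low-regularity range by Proposition \ref{Prop::FuncRn::LieOnManiofold} (its parts \ref{Item::FuncRn::VectActionCont}, \ref{Item::FuncRn::CommutatorCont}, \ref{Item::FuncRn::LieFormsCont}, all available since $\alpha>\tfrac12$). Granting these, $f\in\Co^\beta_{X,\loc}$ and $\theta\in\Co^\beta_{X,\loc}$ give, by definition, $f,X_jf\in\Co^{\beta-1}_{X,\loc}$ and $\theta,\Lie{X_j}\theta\in\Co^{\beta-1}_{X,\loc}$; the inductive hypothesis applied at regularity $\beta-1$ then yields $f\theta\in\Co^{\beta-1}_{X,\loc}$ and $(X_jf)\theta, f\Lie{X_j}\theta\in\Co^{\beta-1}_{X,\loc}$, hence $\Lie{X_j}(f\theta)\in\Co^{\beta-1}_{X,\loc}$; combined with the requisite base regularity $f\theta\in\Co^{1/2}_\loc$ (from Lemma \ref{Lemma::FuncSpace::Product} and the observation $\Co^\beta_{X,\loc}\subseteq\Co^{1/2}_\loc$ at each stage, since $\beta>\tfrac12$), Definition \ref{Defn::FuncVF::VFregularity}/\ref{Defn::FuncVf::formregularity} gives $f\theta\in\Co^\beta_{X,\loc}$. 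The pairing $\langle\theta,Y\rangle$ is handled the same way with $X_j\langle\theta,Y\rangle=\langle\Lie{X_j}\theta,Y\rangle+\langle\theta,[X_j,Y]\rangle$ and Definition \ref{Defn::FuncVF::Funregularity}.

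The main obstacle I anticipate is bookkeeping the ranges of validity and the ``minimum base regularity'' requirements in the recursive definitions: one must check at each induction step that $\Co^\beta_{X,\loc}$ of the relevant object-type is contained in the space ($\Co^{1/2}_\loc$ for vector fields/forms, $C^1_\loc$ for functions) demanded by the definition, and that each application of Lemma \ref{Lemma::FuncSpace::Product} respects the constraint $r+s>0$, $r\ge s$ — this is where the hypothesis $\alpha,\beta,\gamma>\tfrac12$ is genuinely used. A secondary subtlety is justifying the Leibniz identities as distributional identities rather than merely formal ones; but each such identity holds because both sides are continuous bilinear (or trilinear) in the inputs on dense subspaces of smooth objects where it is classical, and the relevant continuity is exactly what Proposition \ref{Prop::FuncRn::LieOnManiofold} and Proposition \ref{Prop::FuncRn::LieOnManiofold} \ref{Item::FuncRn::IntProdCont}--\ref{Item::FuncRn::WedgeProdCont} provide. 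Once the identities are in place and the range-chasing is done carefully, the induction closes routinely.
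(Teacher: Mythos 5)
Your proposal is correct and follows essentially the same route as the paper: an induction on $\lceil\beta+\tfrac12\rceil$ driven by the Leibniz identities $X_i\langle\theta,Y\rangle=\langle\Lie{X_i}\theta,Y\rangle+\langle\theta,\Lie{X_i}Y\rangle$ and $\Lie{X_i}(f\theta)=(X_if)\theta+f\Lie{X_i}\theta$, with the base case $\beta\in(\tfrac12,\tfrac32]$ handled by Lemma \ref{Lemma::FuncSpace::Product} / Proposition \ref{Prop::FuncRn::LieOnManiofold} together with Proposition \ref{Prop::FuncVF::ZygVF=Zyg}\ref{Item::FuncVF::ZygVF=Zyg::Fun}, and the final claim reduced to the pairing statement via $\langle\Phi^*\theta,\Phi^*Y\rangle=\Phi^*\langle\theta,Y\rangle$ (Lemma \ref{Lemma::FuncRn::PullbackComm}\ref{Item::FuncRn::IntProdComm}) and Remark \ref{Rmk::FuncVF::ZygVFUnderDiffeo}\ref{Item::FuncVF::ZygVFUnderDiffeo::Fun}. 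This matches the paper's proof.
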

\begin{proof}

First we prove $\langle \theta,Y\rangle\in \Co^\beta_{X,\loc}(\Manifold)$. The argument to show $f\theta\in\Co^\beta_{X,\loc}(\Manifold)$ is similar and we omit the details.

Consider first $\beta\in(\frac12,\frac32]$. By assumption (see Definitions \ref{Defn::FuncVF::VFregularity} and \ref{Defn::FuncVf::formregularity}) we see that $Y,\theta\in\Co^\frac12_\loc$ and $\Lie{X_1}Y,\Lie{X_1}\theta,\dots,\Lie{X_q}Y,\Lie{X_q}\theta\in\Co^{\beta-1}_\loc$. So by Proposition \ref{Prop::FuncRn::LieOnManiofold} \ref{Item::FuncRn::IntProdCont}, $\langle \theta,Y\rangle=\IntProd{Y}\theta\in\Co^\frac12_\loc(\Manifold)$ satisfies
\begin{equation*}
    X_i\langle \theta,Y\rangle=\langle \theta,\Lie{X_i}Y\rangle+\langle \Lie{X_i}\theta,Y\rangle\in\Co^{\beta-1}_\loc(\Manifold),\quad\text{for all }i=1,\dots,q.
\end{equation*}
By Proposition \ref{Prop::FuncVF::ZygVF=Zyg} \ref{Item::FuncVF::ZygVF=Zyg::Fun}, since $\beta\le\alpha+1$, we get $\langle \theta,Y\rangle\in\Co^\beta_\loc(\Manifold)=\Co^\beta_{X,\loc}(\Manifold)$.

{For $\beta\in(\frac12,\infty)$, we proceed by induction on $r=\lceil \beta+\frac12\rceil$.} The base case, $r=2$, is established above. Let $r\ge3$ and suppose {the result is true for $\lceil\beta+\frac12\rceil=r-1$, i.e. for} $\beta\in(r-\frac32,r-\frac12]$, we wish to prove it for $\beta\in(r-\frac12,r+\frac12]$. 

By assumption $Y,\theta\in\Co^{\beta-1}_{X,\loc}$ and $Y,\theta,\Lie{X_1}Y,\Lie{X_1}\theta,\dots,\Lie{X_q}Y,\Lie{X_q}\theta\in\Co^{\beta-1}_{X,\loc}$. By the inductive hypothesis $\langle \theta,Y\rangle,\langle\theta,\Lie{X_i}Y\rangle,\langle\Lie{X_i}\theta,Y\rangle\in\Co^{\beta-1}_{X,\loc}(\Manifold)$ for all $i=1,\dots, q$. So $X_i\langle \theta,Y\rangle=\langle\theta,\Lie{X_i}Y\rangle+\langle\Lie{X_i}\theta,Y\rangle\in\Co^{\beta-1}_{X,\loc}(\Manifold)$ for all $i=1,\dots,q$ as well. By Definition \ref{Defn::FuncVF::Funregularity} we get $\langle \theta,Y\rangle\in\Co^\beta_{X,\loc}(\Manifold)$. This completes the induction argument.

Finally, we show $\langle\Phi^*\theta,\Phi^*Y\rangle\in\Co^\beta_{\Phi^*X,\loc}(\Nanifold)$. Note that $\langle \theta,Y\rangle=\IntProd{Y}\theta$. So by Lemma \ref{Lemma::FuncRn::PullbackComm} \ref{Item::FuncRn::IntProdComm} and Remark \ref{Rmk::FuncVF::ZygVFUnderDiffeo} \ref{Item::FuncVF::ZygVFUnderDiffeo::Fun} we get $\langle\Phi^*\theta,\Phi^*Y\rangle=\IntProd{\Phi^*Y}\Phi^*\theta=\Phi^*(\IntProd{Y}\theta)=\Phi^*\langle \theta,Y\rangle$ and their common value is in $\Co^\beta_{\Phi^*X,\loc}(\Nanifold)$.
\end{proof}

\begin{lemma}\label{Lemma::ProofThm::BLemma}
Let $\alpha>\frac12$ and $\beta\in[\alpha,\alpha+1]$. Assume that $\lambda^1,\dots,\lambda^n$ form a $\Co^\alpha$-local basis for the cotangent bundle of a manifold $\Manifold$. Let $X_1,\dots,X_n$ be its dual basis. 

Suppose $\lambda^1,\dots,\lambda^n\in\Co^\beta_{X,\loc}(\Manifold;T^*\Manifold)$ then $d\lambda^1,\dots,d\lambda^n\in\Co^{\beta-1}_{\loc}\mleft(\Manifold;\mywedge^2T^*\Manifold\mright)$.
\end{lemma}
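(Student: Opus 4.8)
The plan is to exploit the defining relation of the dual basis: for every point we have $\langle \lambda^i, X_j\rangle = \delta^i_j$, a constant function, and to combine this with Cartan's formula for the exterior derivative of a $1$-form. Recall that for a $1$-form $\omega$ and vector fields $Y,Z$ one has $d\omega(Y,Z) = Y\langle\omega,Z\rangle - Z\langle\omega,Y\rangle - \langle\omega,[Y,Z]\rangle$. Applying this with $\omega = \lambda^i$, $Y = X_j$, $Z = X_k$ and using $\langle\lambda^i,X_j\rangle\equiv\delta^i_j$, the first two terms vanish, so $d\lambda^i(X_j,X_k) = -\langle\lambda^i,[X_j,X_k]\rangle$. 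Since $X_1,\dots,X_n$ form a $\Co^\alpha$-local basis, $d\lambda^i$ is determined by the functions $d\lambda^i(X_j,X_k)$ together with the $\lambda^j$'s (writing $d\lambda^i = \sum_{j<k} d\lambda^i(X_j,X_k)\,\lambda^j\wedge\lambda^k$), so it suffices to show each $\langle\lambda^i,[X_j,X_k]\rangle$ lies in $\Co^{\beta-1}_{X,\loc}$ and then multiply by $\Co^\alpha$-forms.

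First I would note that by hypothesis $\lambda^i\in\Co^\beta_{X,\loc}(\Manifold;T^*\Manifold)$, and since $\beta-1\geq \alpha-1 > -\tfrac12$, Remark \ref{Rmk::FuncVF::Independofalpha} lets us work with these spaces freely. The key input is that $\Lie{X_j}\lambda^i\in\Co^{\beta-1}_{X,\loc}(\Manifold;T^*\Manifold)$ for each $j$ (this is the $\beta>1/2$ clause of Definition \ref{Defn::FuncVf::formregularity}, and when $\beta-1\le 1/2$ it is immediate). Next, because $X_k\in\Co^\alpha_{X,\loc}(\Manifold;T\Manifold)$ trivially (any $\Co^\alpha$ vector field is in $\Co^\alpha_{X,\loc}$ when $\alpha\le 1/2$, or by Proposition \ref{Prop::FuncVF::ZygVF=Zyg}\ref{Item::FuncVF::ZygVF=Zyg::VF} when $\alpha>1/2$), and actually $X_k\in\Co^{\beta-1}_{X,\loc}$ since $\beta-1\le\alpha$, Lemma \ref{Lemma::ProofThm::ALemma} (pairing a $\Co^{\beta-1}_{X,\loc}$ form with a $\Co^{\beta-1}_{X,\loc}$ vector field) would give $\langle \Lie{X_j}\lambda^i, X_k\rangle\in\Co^{\beta-1}_{X,\loc}(\Manifold)$. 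I also need $\langle\lambda^i,[X_j,X_k]\rangle$; here I would instead use the identity $\langle\lambda^i,[X_j,X_k]\rangle = X_j\langle\lambda^i,X_k\rangle - \langle \Lie{X_j}\lambda^i, X_k\rangle = -\langle \Lie{X_j}\lambda^i, X_k\rangle$, which follows from the defining property of the Lie derivative of a $1$-form paired against a vector field, $\Lie{X_j}\langle\lambda^i,X_k\rangle = \langle\Lie{X_j}\lambda^i,X_k\rangle + \langle\lambda^i,\Lie{X_j}X_k\rangle$, together with $\langle\lambda^i,X_k\rangle\equiv\delta^i_k$. Thus $\langle\lambda^i,[X_j,X_k]\rangle = -\langle\Lie{X_j}\lambda^i,X_k\rangle\in\Co^{\beta-1}_{X,\loc}(\Manifold)$.

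Having established $d\lambda^i(X_j,X_k)\in\Co^{\beta-1}_{X,\loc}(\Manifold)$ for all $j<k$, I would expand $d\lambda^i = \sum_{j<k} d\lambda^i(X_j,X_k)\,\lambda^j\wedge\lambda^k$ in the $\Co^\alpha$-local coframe and apply Lemma \ref{Lemma::ProofThm::ALemma} once more — or rather its form version — to conclude that a $\Co^{\beta-1}_{X,\loc}$ function times the $\Co^\alpha$ (hence $\Co^{\beta-1}_{X,\loc}$, since $\beta-1\le\alpha$) $2$-form $\lambda^j\wedge\lambda^k$ is again in $\Co^{\beta-1}_{X,\loc}$; here I should either extend Lemma \ref{Lemma::ProofThm::ALemma} to wedge products of forms (the argument is identical, using Proposition \ref{Prop::FuncRn::LieOnManiofold}\ref{Item::FuncRn::WedgeProdCont} in place of \ref{Item::FuncRn::IntProdCont}) or simply observe that multiplying by the $\Co^\beta_{X,\loc}$ forms $\lambda^j$ preserves $\Co^{\beta-1}_{X,\loc}$. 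Finally, Corollary \ref{Cor::FuncVF::ZygVF=ZygCor} or the definitions identify $\Co^{\beta-1}_{X,\loc}\mleft(\Manifold;\mywedge^2T^*\Manifold\mright)$ with $\Co^{\beta-1}_{\loc}\mleft(\Manifold;\mywedge^2T^*\Manifold\mright)$ when $\beta-1\le\alpha$ — more precisely Proposition \ref{Prop::FuncVF::ZygVF=Zyg}\ref{Item::FuncVF::ZygVF=Zyg::Forms} applies since $\alpha>1/2$ — giving $d\lambda^i\in\Co^{\beta-1}_{\loc}\mleft(\Manifold;\mywedge^2T^*\Manifold\mright)$ as claimed. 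The main obstacle I anticipate is bookkeeping at the low end $\beta$ close to $\alpha$ close to $1/2$: one must check that every pairing, Lie derivative, and product invoked is actually well-defined in the sense of Proposition \ref{Prop::FuncRn::LieOnManiofold} and Section \ref{Section::Func}, i.e. that the regularity indices stay in the admissible ranges $(-\alpha+1,\alpha]$ etc.; the rest is the standard Cartan-formula computation made rigorous.
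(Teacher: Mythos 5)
Your proposal follows essentially the same route as the paper: the Cartan-formula identity $d\lambda^i(X_j,X_k)=\langle\Lie{X_j}\lambda^i,X_k\rangle$, expansion of $d\lambda^i$ in the coframe $\lambda^j\wedge\lambda^k$, and the observation that the coefficients inherit $\Co^{\beta-1}$ regularity from $\Lie{X_j}\lambda^i$. The one technical slip is routing the pairing step through Lemma \ref{Lemma::ProofThm::ALemma}, whose hypotheses require the regularity index to exceed $\tfrac12$, while here the index is $\beta-1$, which can be as small as $\alpha-1>-\tfrac12$; the correct (and simpler) fix, which is what the paper does, is to first identify $\Co^{\beta-1}_{X,\loc}=\Co^{\beta-1}_{\loc}$ via Proposition \ref{Prop::FuncVF::ZygVF=Zyg} \ref{Item::FuncVF::ZygVF=Zyg::Forms} (legitimate since $\beta-1\le\alpha$) and then apply the classical continuity statements of Proposition \ref{Prop::FuncRn::LieOnManiofold} \ref{Item::FuncRn::IntProdCont} and \ref{Item::FuncRn::WedgeProdCont}, whose admissible range $(-\alpha,\alpha]$ does contain $\beta-1$ because $\alpha>\tfrac12$.
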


\begin{proof}The inverse of a $\Co^\alpha_\loc$-matrix is a $\Co^\alpha_\loc$-matrix, so as the dual basis of $(\lambda^1,\dots,\lambda^n)$, we know $X_1,\dots,X_n$ are $\Co^\alpha_\loc$-vector fields on $\Manifold$.

Write $c_{ij}^k:=\langle \Lie{X_i}\lambda^k,X_j\rangle$ for $1\le i,j,k\le n$. Since $X_1,\dots,X_n$ and $\lambda^1,\dots,\lambda^n$ are dual bases, we have
\begin{equation}\label{Eqn::ProofThm::BLemma::Tmp}
    d\lambda^k(X_i,X_j)=X_i\langle\lambda^k,X_j\rangle-\langle\lambda^k,[X_i,X_j]\rangle-X_j\langle\lambda^k,X_i\rangle=\langle\Lie{X_i}\lambda^k,X_j\rangle-0=c_{ij}^k\ \Rightarrow\ d\lambda^k=\sum_{1\le i<j\le n}c_{ij}^k\lambda^i\wedge\lambda^j.
\end{equation}
Note that the products in \eqref{Eqn::ProofThm::BLemma::Tmp} are all defined due to Lemma \ref{Lemma::FuncSpace::Product}.


By Definition \ref{Defn::FuncVf::formregularity}, $\Lie{X_i}\lambda^k\in\Co^{\beta-1}_{X,\loc}(\Manifold;T^*\Manifold)$. By Proposition \ref{Prop::FuncVF::ZygVF=Zyg} \ref{Item::FuncVF::ZygVF=Zyg::Forms}, {since $\beta-1\le\alpha$, we have} $\Co^{\beta-1}_{X,\loc}(\Manifold;T^*\Manifold)=\Co^{\beta-1}_\loc(\Manifold;T^*\Manifold)$, so $\Lie{X_i}\lambda^k\in\Co^{\beta-1}_{\loc}(\Manifold;T^*\Manifold)$.
Since $X_1,\dots,X_n\in\Co^\alpha_\loc$, by Proposition \ref{Prop::FuncRn::LieOnManiofold} \ref{Item::FuncRn::IntProdCont}, $\langle \Lie{X_i}\lambda^k,X_j\rangle\in\Co^{\beta-1}_{\loc}(\Manifold) $; i.e. $c_{ij}^k\in\Co^{\beta-1}_{\loc}(\Manifold)$. By Proposition \ref{Prop::FuncRn::LieOnManiofold} \ref{Item::FuncRn::WedgeProdCont}, {since $\beta-1+\alpha>0$, we know} $\sum_{1\le i<j\le n}c_{ij}^k\lambda^i\wedge\lambda^j\in\Co^{\beta-1}_\loc\mleft(\Manifold;\mywedge^2T^*\Manifold\mright)$, so by \eqref{Eqn::ProofThm::BLemma::Tmp}, we conclude that $d\lambda^k\in\Co^{\beta-1}_\loc\mleft(\Manifold;\mywedge^2T^*\Manifold\mright)$ for $1\le k\le n$.
\end{proof}
\begin{rmk}
Lemma \ref{Lemma::ProofThm::BLemma} is similar to Corollary \ref{Cor::FuncVF::ZygVF=ZygCor} \ref{Item::FuncVF::ZygVF=Zyg::Diff}. However, it does not follow from Corollary \ref{Cor::FuncVF::ZygVF=ZygCor} \ref{Item::FuncVF::ZygVF=Zyg::Diff} as we require $\alpha\ge1$ in Corollary \ref{Cor::FuncVF::ZygVF=ZygCor} \ref{Item::FuncVF::ZygVF=Zyg::Diff}.
\end{rmk}

\begin{proof}[Proof of Theorem \ref{Thm::TheMainResult}]
{The case $\beta\le\alpha$, for each condition is trivial.
Indeed, fix $p\in\Manifold$, take any $\Co^{\alpha+1}$-coordinate chart $\Phi^{-1}:U\subseteq\Manifold\xrightarrow{\sim}\Ball^n$ near $p$.  We have $\Phi^*X_1,\dots,\Phi^*X_q\in\Co^\alpha_\loc(\Ball^n;T\Ball^n)\subseteq\Co^\beta_\loc(\Ball^n;T\Ball^n)$ and $\Phi^*\lambda^1,\dots,\Phi^*\lambda^n\in\Co^\alpha_\loc(\Ball^n;T^*\Ball^n)$. By shrinking the domain and  scaling, we can make replace $\Co^\beta_\loc$ by $\Co^\beta$.  In other words, we have $\Phi^*X_1,\dots,\Phi^*X_q\in\Co^\beta(\Ball^n;T\Ball^n)$ and $\Phi^*\lambda^1,\dots,\Phi^*\lambda^n\in\Co^\alpha(\Ball^n;T^*\Ball^n)$. Therefore, \ref{Item::TheMainResult::ExistChart} and  \ref{Item::TheMainResult::dForm} are automatically satisfied for $\alpha>0$, and the same for \ref{Item::TheMainResult::1Form} and \ref{Item::TheMainResult::VF} when $\alpha>\frac12$.
For the remainder of the proof, we assume $\beta>\alpha$.


\medskip}

We will show \ref{Item::TheMainResult::ExistChart} $\Leftrightarrow$ \ref{Item::TheMainResult::dForm} for arbitrary $\alpha>0$, and  \ref{Item::TheMainResult::ExistChart} $\Rightarrow$ \ref{Item::TheMainResult::VF} $\Rightarrow$ \ref{Item::TheMainResult::1Form} $\Rightarrow$ \ref{Item::TheMainResult::dForm} when $\alpha>\frac12$.

We first prove that \textbf{\ref{Item::TheMainResult::ExistChart}$\Rightarrow$\ref{Item::TheMainResult::dForm}} for arbitrary $\alpha>0$.

Suppose \ref{Item::TheMainResult::ExistChart} holds, and thus there exists a neighborhood $U\subseteq\Manifold$ of $p$ and a diffeomorphism $\Phi:\Ball^n\xrightarrow{\sim}U\subseteq\Manifold$ as in \ref{Item::TheMainResult::ExistChart}. Since by assumption $\Phi^*X_1,\dots,\Phi^*X_n\in\Co^\beta(\Ball^n;T\Ball^n)$ and the inverse of $\Co^\beta_\loc$-matrix is still $\Co^\beta_\loc$-matrix, we see that the dual basis $\Phi^*\lambda^1,\dots,\Phi^*\lambda^n\in\Co^\beta_\loc(\Ball^n;T^*\Ball^n)$. So $d\Phi^*\lambda^i\in\Co^{\beta-1}_\loc$ and $\langle\Phi^*\lambda^i,\Phi^*X_j\rangle\in\Co^\beta_\loc$ for $1\le i\le n$, $1\le j\le n$.

By Proposition \ref{Prop::FuncVF::ZygVF=Zyg} \ref{Item::FuncVF::ZygVF=Zyg::Fun}, since $\beta<\beta+1$, we have $\Co^\beta_\loc(\Ball^n)=\Co^\beta_{\Phi^*X,\loc}(\Ball^n)$. Thus, $\langle\Phi^*\lambda^i,\Phi^*X_j\rangle\in\Co^\beta_{\Phi^*X,\loc}(\Ball^n)$ for $1\le i\le n$, $1\le j\le q$. 

By Definition \ref{Defn::FuncRn::dthetaisZyg}, $d\Phi^*\lambda^i\in\Co^{\beta-1}_\loc(\Ball^n;T^*\Ball^n)$ is the same as $d\Phi^*\lambda^i$ having regularity $\Co^{\beta-1}_\loc(\Ball^n)$. By Proposition \ref{Prop::FuncVF::ZygVF=Zyg} \ref{Item::FuncVF::ZygVF=Zyg::dFormHasReg}, if $\beta\ge1$ (where we use $\alpha=\beta$ and $\Phi^*X\in\Co^\beta_\loc$ in the proposition) or by Definition \ref{Defn::FuncVf::RegularityofForms} if $\beta\le1$ (where $\beta-1\le0$), $d\Phi^*\lambda^i$ having regularity $\Co^{\beta-1}_\loc(\Ball^n)$ is the same as  $d\Phi^*\lambda^i$ having regularity $\Co^{\beta-1}_{\Phi^*X,\loc}(\Ball^n)$.

Now $d\Phi^*\lambda^i$ has regularity $\Co^{\beta-1}_{\Phi^*X,\loc}(\Ball^n)$ and $\langle\Phi^*\lambda^i,\Phi^*X_j\rangle\in\Co^\beta_{\Phi^*X,\loc}(\Ball^n)$ for $1\le i\le n$, $1\le j\le q$. Note that $\Phi_*=(\Phi^{-1})^*$. By Remark \ref{Rmk::FuncVF::ZygVFUnderDiffeo} \ref{Item::FuncVF::ZygVFUnderDiffeo::dForms}, $d\lambda^i=d\Phi_*\Phi^*\lambda^i$ has regularity $\Co^{\beta-1}_{\Phi_*\Phi^*X,\loc}(U)=\Co^{\beta-1}_{X,\loc}(U)$ for $1\le i\le n$. And by Lemma \ref{Lemma::ProofThm::ALemma}, $\langle\lambda^i,X_j\rangle=\langle\Phi_*\Phi^*\lambda^i,\Phi_*\Phi^*X_j\rangle\in\Co^\beta_{\Phi_*\Phi^*X,\loc}(U)=\Co^\beta_{X,\loc}(U)$, finishing the direction \ref{Item::TheMainResult::ExistChart} $\Rightarrow$ \ref{Item::TheMainResult::dForm}.

Next we prove \textbf{\ref{Item::TheMainResult::ExistChart}$\Rightarrow$\ref{Item::TheMainResult::VF}} by assuming $\alpha>\frac12$. 

By assumption of \ref{Item::TheMainResult::ExistChart} $\Phi^*X_1,\dots,\Phi^*X_q\in\Co^\beta(\Ball^n;T\Ball^n)$. By Proposition \ref{Prop::FuncVF::ZygVF=Zyg} \ref{Item::FuncVF::ZygVF=Zyg::VF}, $\Co^\beta_\loc(\Ball^n;T\Ball^n)=\Co^\beta_{\Phi^*X,\loc}(\Ball^n;T\Ball^n)$, so $\Phi^*X_1,\dots,\Phi^*X_q\in\Co^\beta_{\Phi^*X,\loc}$. By Remark \ref{Rmk::FuncVF::ZygVFUnderDiffeo} \ref{Item::FuncVF::ZygVFUnderDiffeo::VF}, where we use $\Phi_*=(\Phi^{-1})^*$, we have $X_j=\Phi_*\Phi^*X_j\in\Co^\beta_{\Phi_*\Phi^*X}(U;TU)=\Co^\beta_{X}(U;TU)$ for $1\le j\le q$. Therefore, we get \ref{Item::TheMainResult::ExistChart} $\Rightarrow$ \ref{Item::TheMainResult::VF}.


\bigskip
We now  prove \textbf{\ref{Item::TheMainResult::dForm}$\Rightarrow$\ref{Item::TheMainResult::ExistChart}}
for all $\alpha>0$.  Fix $p\in\Manifold$. {We proceed by induction on $r=\lceil\beta\rceil$.}


{We start with the base case $r=1$, i.e. $\beta\in(0,1]$. And as mentioned in the beginning of the proof} we  assume $\beta>\alpha$
and therefore $\alpha<1$. 
By Definition \ref{Defn::FuncVf::RegularityofForms},  \ref{Item::TheMainResult::dForm} is equivalent to $d\lambda^1,\dots,d\lambda^n$ having regularity $\Co^{\beta-1}_\loc(\Manifold)$. By Corollary \ref{Cor::Key::BasicCaseforMainThm} there is a $\Co^{\alpha+1}$-diffeomorphism $\Phi:\Ball^n\xrightarrow{\sim}U\subseteq\Manifold$ such that $\Phi(0)=p$ and $\Phi^*X_1,\dots,\Phi^*X_n\in\Co^\beta(\Ball^n;T\Ball^n)$. 

We let $X^0$ denote the sub-collection of $X$ given by $$X^0=(X_1,\dots,X_n).$$
So $\Co^\beta_{X,\loc}(U)\subseteq \Co^\beta_{X^0,\loc}(U)$.
By assumption $\langle \lambda^i,X_j\rangle\in\Co^\beta_{X,\loc}(U)\subseteq\Co^\beta_{X^0,\loc}(U)$ for all $1\le i\le n$ and $1\le j\le q$. Applying Lemma \ref{Lemma::ProofThm::ALemma} to $\langle \lambda^i,X_j\rangle$ with $X=(X_1,\dots,X_q)$ in that lemma replaced by $X^0=(X_1,\dots,X_n)$, we get $\langle\Phi^* \lambda^i,\Phi^*X_j\rangle\in\Co^\beta_{\Phi^*X^0,\loc}(\Ball^n)$. By Definition \ref{Defn::FuncVF::Funregularity}, we have $\Co^\beta_{\Phi^*X^0,\loc}(\Ball^n)=\Co^\beta_\loc(\Ball^n)$ since $\beta\le 1$. Therefore $\langle\Phi^* \lambda^i,\Phi^*X_j\rangle\in\Co^\beta_{\loc}(\Ball^n)$ and thus
\begin{equation}\label{Eqn::ProofThm::PhiX}
    \Phi^*X_j=\sum_{i=1}^n\langle\Phi^*\lambda^i,\Phi^*X_j\rangle \Phi^*X_i\in\Co^\beta_\loc(\Ball^n;T\Ball^n),\quad n+1\le j\le q.
\end{equation}

{Now let $r\ge1$ and suppose \ref{Item::TheMainResult::dForm} $\Rightarrow$ \ref{Item::TheMainResult::ExistChart} is true for the case $\lceil\beta\rceil\le r$, i.e. for} $\beta\in(0,r]$, we wish to prove the case $\beta\in(r,r+1]$. 

By the inductive hypothesis, there is a neighborhood $U_0\subseteq\Manifold$ of $p$ and a $\Co^{r+1}$-diffeomorphism $\Phi_0:\Ball^n\xrightarrow{\sim}U_0\subseteq\Manifold$ such that $\Phi_0(0)=p$ and $\Phi_0^*X_1,\dots,\Phi_0^*X_q\in\Co^r(\Ball^n;T\Ball^n)$. Since the inverse of $\Co^r_\loc$-matrix is still a $\Co^r_\loc$-matrix and $\Phi_0^*X_1,\dots,\Phi_0^*X_n\in\Co^r_\loc$, for the dual basis we have $\Phi_0^*\lambda^1,\dots,\Phi_0^*\lambda^n\in\Co^r_\loc(\Ball^n;T^*\Ball^n)$. 

By assumption, $d\lambda^i$ has regularity $\Co^{\beta-1}_{X,\loc}(U_0)$, so by Remark \ref{Rmk::FuncVF::ZygVFUnderDiffeo} \ref{Item::FuncVF::ZygVFUnderDiffeo::dForms}, $d\Phi_0^*\lambda^i$ has regularity $\Co^{\beta-1}_{\Phi_0^*X,\loc}(\Ball^n)$ for $i=1,\dots,n$. 
By Proposition \ref{Prop::FuncVF::ZygVF=Zyg} \ref{Item::FuncVF::ZygVF=Zyg::dFormHasReg}, since $\Phi_0^*X\in\Co^r_\loc$ and  $\beta\le r+1$, we know that $d\Phi_0^*\lambda^1,\dots,d\Phi_0^*\lambda^n$ have regularity $\Co^{\beta-1}_\loc(\Ball^n)$.
By Definition \ref{Defn::FuncRn::dthetaisZyg}, this is to say $d\Phi_0^*\lambda^1,\dots,d\Phi_0^*\lambda^n\in\Co^{\beta-1}_\loc\mleft(\Ball^n;\mywedge^2T^*\Ball^n\mright)$.

{Applying }Corollary \ref{Cor::Key::BasicCaseforMainThm} \ref{Item::Key::BasicCaseforMainThm::b} $\Rightarrow$ \ref{Item::Key::BasicCaseforMainThm::a}
to $d\Phi_0^*\lambda^1,\dots,d\Phi_0^*\lambda^n\in\Co^{\beta-1}_\loc\mleft(\Ball^n;\mywedge^2T^*\Ball^n\mright)$, since by assumption $\beta-1\in(r-1,r]$, we can find a map $\Phi_1:\Ball^n\to\Ball^n$ that is $\Co^{r+1}$-diffeomorphism onto its image, such that $\Phi_1(0)=0$ and $\Phi_1^*\Phi_0^*\lambda^1,\dots,\Phi_1^*\Phi_0^*\lambda^n\in\Co^\beta(\Ball^n;T^*\Ball^n)$. Since the inverse of $\Co^\beta_\loc$-matrix is still a $\Co^\beta_\loc$-matrix, see that  the dual basis $\Phi_1^*\Phi_0^*X_1,\dots,\Phi_1^*\Phi_0^*X_n\in\Co^\beta_\loc(\Ball^n;T\Ball^n)$. 

Take $\Phi=\Phi_0\circ\Phi_1$ and $U=\Phi(\Ball^n)\subseteq\Manifold$. So $\Phi(0)=\Phi_0(\Phi_1(0))=\Phi_0(0)=p$ and $\Phi^*X_1,\dots,\Phi^*X_n\in\Co^\beta_\loc(\Ball^n;T\Ball^n)$. We are going to prove $\Phi^*X_{n+1},\dots,\Phi^*X_q\in\Co^\beta_\loc(\Ball^n;T\Ball^n)$.

We still use $X^0=(X_1,\dots,X_n)$ as the sub-collection of $X=(X_1,\dots,X_q)$. By assumption \ref{Item::TheMainResult::dForm}, $\langle \lambda^i,X_j\rangle\in\Co^\beta_{X,\loc}(U)\subseteq \Co^\beta_{X^0,\loc}(U)$, so applying Lemma \ref{Lemma::ProofThm::ALemma} to $\Phi$ and $X^0$, we have $\langle\Phi^*\lambda^i,\Phi^*X_j\rangle\in\Co^\beta_{\Phi^*X^0,\loc}(\Ball^n)$ for all $1\le i\le n$, $1\le j\le q$. By Proposition \ref{Prop::FuncVF::ZygVF=Zyg} \ref{Item::FuncVF::ZygVF=Zyg::Fun}, since $\Phi^*X^0\in\Co^\beta_\loc$, we have $\langle\Phi^*\lambda^i,\Phi^*X_j\rangle\in\Co^\beta_{\loc}(\Ball^n)$ for all $1\le i\le n$, $1\le j\le q$. Therefore using \eqref{Eqn::ProofThm::PhiX} we get $\Phi^*X_{n+1},\dots,\Phi^*X_q\in\Co^\beta_\loc(\Ball^n;T\Ball^n)$.

We conclude $\Phi^*X_1,\dots,\Phi^*X_q\in\Co^\beta_\loc(\Ball^n;T\Ball^n)$. Replacing $\Phi(t)$ by $\Phi(\frac12t)$ for $t\in\Ball^n$, we can replace $\Co^\beta_\loc$ by $\Co^\beta$, finishing the induction argument and hence the proof of \textbf{\ref{Item::TheMainResult::dForm} $\Rightarrow$ \ref{Item::TheMainResult::ExistChart}}.

\bigskip
Now assume $\alpha>\frac12$, we will show \ref{Item::TheMainResult::VF} $\Rightarrow$ \ref{Item::TheMainResult::1Form} $\Rightarrow$ \ref{Item::TheMainResult::dForm}.  {As mentioned in the beginning of the proof} we assume $\beta>\alpha$. In particular, we assume $\beta>\frac12$.

In the following proof, we fix a neighborhood $U$ of $p\in\Manifold$ where $X_1,\dots,X_n$ form a $\Co^\alpha$-local basis on $U$.

\medskip\noindent
\textbf{\ref{Item::TheMainResult::VF}$\Rightarrow$\ref{Item::TheMainResult::1Form}}: By assumption $X_1,\dots,X_n\in\Co^\beta_{X,\loc}(U;TU)$, by Definition \ref{Defn::FuncVF::VFregularity} since $\beta>\frac12$, we have $X_1,\dots,X_n\in\Co^\frac12_{\loc}(U;TU)$. The inverse of $\Co^\frac12_\loc$-matrix is still a $\Co^\frac12_\loc$-matrix, so for the dual basis of $(X_1,\dots,X_n)$ we have $\lambda^1,\dots,\lambda^n\in\Co^\frac12_\loc(U;T^*U)$. 
To prove \ref{Item::TheMainResult::1Form}, by Definition \ref{Defn::FuncVf::formregularity}, it remains to show $\lambda^i,\Lie{X_k}\lambda^i\in\Co^{\beta-1}_{X,\loc}(U;T^*U)$ for $1\le i\le n$ and $1\le j\le q$.

For $1\le i,j\le n$, $\langle \lambda^i,X_j\rangle=\delta_j^i$ is a constant function, so $0=\Lie{X_k}\langle\lambda^i,X_j\rangle=\langle\Lie{X_k}\lambda^i,X_j\rangle+\langle\lambda^i,\Lie{X_k}X_j\rangle$. Therefore 
\begin{equation}\label{Eqn::ProofThm::Lie}
    \Lie{X_k}\lambda^i=\sum_{k=1}^n\langle\Lie{X_k}\lambda^i,X_j\rangle\lambda^j=-\sum_{k=1}^n\langle\lambda^i,\Lie{X_k}X_j\rangle\lambda^j,\quad\text{for }1\le i\le n,1\le k\le q.
\end{equation}
We prove \ref{Item::TheMainResult::VF}$\Rightarrow$\ref{Item::TheMainResult::1Form} by induction on $r=\lceil\beta+\frac12\rceil$, we work on the range $\beta\in(\frac12,\infty)$.

We start with the base case $r=2$ which is the case $\beta\in(\frac12,\frac32]$. By assumption \ref{Item::TheMainResult::VF} and Definition \ref{Defn::FuncVF::VFregularity}, since $\beta-1\le\frac12$, we have $\Lie{X_k}X_j\in\Co^{\beta-1}_\loc(U;TU)$ for $1\le j,k\le q$. Since $\lambda^1,\dots,\lambda^n\in\Co^\frac12_\loc(U;T^*U)$, by Proposition \ref{Prop::FuncRn::LieOnManiofold} \ref{Item::FuncRn::IntProdCont} and \ref{Item::FuncRn::WedgeProdCont} we know $\langle\lambda^i,\Lie{X_k}X_j\rangle\in\Co^{\beta-1}_\loc$ and $\langle\lambda^i,\Lie{X_k}X_j\rangle\lambda^j\in\Co^{\beta-1}_\loc$ for $1\le i,j\le n$, $1\le k\le q$. Using \eqref{Eqn::ProofThm::Lie} we get $\Lie{X_k}\lambda^i\in\Co^{\beta-1}_\loc(U;T^*U)$ for $1\le i\le n$, $1\le k\le q$. 

We have $\lambda^i\in\Co^\frac12_\loc(U;T^*U)$ and $\Lie{X_k}\lambda^i\in\Co^{\beta-1}_\loc(U;T^*U)$  for $1\le i\le n$, $1\le k\le q$, so by Definition \ref{Defn::FuncVf::formregularity}, $\lambda^1,\dots,\lambda^n\in\Co^\beta_{X,\loc}(U;T^*U)$. Note that by assumption \ref{Item::TheMainResult::VF}, $X_1,\dots,X_q\in\Co^\beta_{X,\loc}(U;TU)$, so by Lemma \ref{Lemma::ProofThm::ALemma}, $\langle\lambda^i,X_j\rangle\in\Co^\beta_{X,\loc}(U)$ for $1\le i\le n,1\le j\le q$.

Now let $r\ge3$ and suppose \ref{Item::TheMainResult::VF} $\Rightarrow$ \ref{Item::TheMainResult::1Form}  holds {for the case $\lceil\beta+\frac12\rceil=r-1$ i.e. for} $\beta\in(r-\frac32,r-\frac12]$, we wish to prove it for $\beta\in(r-\frac12,r+\frac12]$. 

By Definition \ref{Defn::FuncVF::VFregularity} $X_1,\dots,X_n\in\Co^\beta_{X,\loc}(U;TU)$ implies $\Lie{X_k}X_j\in\Co^{\beta-1}_{X,\loc}(U;TU)$ for $1\le j,k\le q$. By the inductive hypothesis we have $\lambda^1,\dots,\lambda^n\in\Co^{\beta-1}_{X,\loc}(U;T^*U)$. So by Lemma \ref{Lemma::ProofThm::ALemma} we get $\langle\lambda^i,\Lie{X_k}X_j\rangle\in\Co^{\beta-1}_{X,\loc}(U)$ and $\langle\lambda^i,\Lie{X_k}X_j\rangle\lambda^j\in\Co^{\beta-1}_{X,\loc}(U;T^*U)$. So by \eqref{Eqn::ProofThm::Lie} we get $\Lie{X_k}\lambda^i\in\Co^{\beta-1}_{X,\loc}(U;T^*U)$ for $1\le i\le n$, $1\le k\le q$. 

We have $\lambda^i,\Lie{X_k}\lambda^i\in\Co^{\beta-1}_{X,\loc}(U;T^*U)$ for $1\le i\le n$, $1\le k\le q$. By Definition \ref{Defn::FuncVf::formregularity} we get $\lambda^1,\dots,\lambda^n\in\Co^\beta_{X,\loc}(U;T^*U)$. Since $X_1,\dots,X_q\in\Co^\beta_\loc(U;TU)$, by Lemma \ref{Lemma::ProofThm::ALemma} again we get $\langle \lambda^i,X_j\rangle\in\Co^\beta_{X,\loc}(U)$ for $1\le i\le n$, $1\le j\le q$, finishing the induction argument and hence the proof of  \ref{Item::TheMainResult::VF} $\Rightarrow$ \ref{Item::TheMainResult::1Form}.

\bigskip
\noindent \textbf{\ref{Item::TheMainResult::1Form}$\Rightarrow$\ref{Item::TheMainResult::dForm}}: The result $\langle\lambda^i,X_j\rangle\in\Co^\beta_{X,\loc}(U)$ follows from the assumption. We need is to show $d\lambda^i$ has regularity $\Co^{\beta-1}_{X,\loc}(U)$, for $1\le i\le n$.

First we assume $\alpha\ge1$. By assumption \ref{Item::TheMainResult::1Form}, $\lambda^1,\dots,\lambda^n\in\Co^\beta_{X,\loc}(U;T^*U)$, so by Corollary \ref{Cor::FuncVF::ZygVF=ZygCor} \ref{Item::FuncVF::ZygVF=Zyg::Diff} we get $d\lambda^1,\dots,d\lambda^n\in\Co_{X,\loc}^{\beta-1}\mleft(U;\mywedge^2T^*U\mright)$. 

We next consider $\alpha\in(\frac12,1]$. First we assume $\beta\le 1$. Note that we only need to consider $\beta\in(\alpha,1]$ otherwise it is trivial.

We use $X^0=(X_1,\dots,X_n)$ as the sub-collection of $X=(X_1,\dots,X_q)$.
By assumption of the theorem, $X_1,\dots,X_n\in\Co^\alpha_\loc(U;TU)$. Since the inverse of $\Co^\alpha_\loc$-matrix is a $\Co^\alpha_\loc$-matrix, for the dual basis of $(X_1,\dots,X_n)$ we have $\lambda^1,\dots,\lambda^n\in\Co^\alpha_\loc(U;T^*U)$.

Note that $\Co^\beta_{X,\loc}(U;T^*U)\subseteq\Co^\beta_{X^0,\loc}(U;T^*U)$. Since $(X_1,\dots,X_n)$ and $(\lambda^1,\dots,\lambda^n)$ are dual bases, applying Lemma \ref{Lemma::ProofThm::BLemma} and using
that $\lambda^1,\dots,\lambda^n\in\Co^\beta_{X^0,\loc}(U;T^*U)$, we obtain  $d\lambda^1,\dots,d\lambda^n\in\Co^{\beta-1}_\loc\mleft(U;\mywedge^2T^*U\mright)$. By Definition \ref{Defn::FuncRn::dthetaisZyg} this is the same as $d\lambda^1,\dots,d\lambda^n$ having regularity $\Co^{\beta-1}_\loc(U)$. By Definition \ref{Defn::FuncVf::RegularityofForms}, since $\beta-1\le0$, this is the equivalent to $d\lambda^1,\dots,d\lambda^n$ having regularity $\Co^{\beta-1}_{X,\loc}(U)$.

When $\beta>1$, by the established case $\beta=1$ from above we know that $d\lambda^1,\dots,d\lambda^n$ have regularity $\Co^0_{X,\loc}(U)$. By assumption \ref{Item::TheMainResult::1Form}, 
$\langle\lambda^i,X_j\rangle\in\Co^\beta_{X,\loc}(U)\subseteq \Co^1_{X,\loc}(U)$, 
for $1\le i\le n$, $1\le j\le q$. Therefore, by the already proved implication \ref{Item::TheMainResult::dForm} $\Rightarrow$ \ref{Item::TheMainResult::ExistChart} we can find a $\Co^2$-atlas on $U$ such that $X_1\big|_U,\dots,X_q\big|_U$ are $\Co^1$ on this atlas. That is to say we can assume $\alpha=1$ in this case. 
Since we have already established the case $\alpha\ge 1$, we see that $d\lambda^1,\dots,d\lambda^n$ have regularity $\Co^\beta_{X,\loc}(U)$, completing the proof.
\end{proof}




\section{Harmonic Coordinates and Canonical Coordinates}\label{Section::DeTurck}
Given a non-smooth Riemannian metric $g$ on a manifold, $\Manifold$,
DeTurck and Kazdan showed that $g$ has optimal regularity
in harmonic coordinates \cite[Lemma 1.2]{DeTurckKazdan} (in
the Zygmund-H\"older sense),
but may not have optimal regularity in geodesic normal
coordinates \cite[Example 2.3]{DeTurckKazdan} (in fact, the regularity
of $g$ in geodesic normal coordinates may be two derivatives worse than
the regularity in harmonic coordinates).  


In this section, we present analogous results for vector fields.
 Let $X_1,\dots,X_n$ be $C^1_\loc$-vector fields on a $C^2$-manifold $\Manifold$ that form a local basis for the tangent space at every point. In Section \ref{Section::KeyThm} we defined a Riemannian metric $g=\sum_{i=1}^n\lambda^i\cdot\lambda^i$ where $(\lambda^1,\dots,\lambda^n)$ is the dual basis of $(X_1,\dots,X_n)$ (see Remark \ref{Rmk::Key::RmkRiemMetric}). 
 With respect to this metric, $X_1,\dots,X_n$ form an orthonormal basis at every point. Since $X_1,\dots,X_n\in C^1$, 
 we can talk about the metric Laplacian $\Lap_g$ with respect to $g$.

\begin{prop}\label{Prop::RmkCoor::HarmOptimal}
In harmonic coordinates with respect to $g$, $X_1,\dots,X_n$ have optimal regularity. 

More precisely, let $X_1,\dots,X_n$ and $g$ be as above, and let $\beta>1$. Suppose there is a $\Co^{\beta+1}$-atlas $\As$ which is compatible with the $C^2$-atlas of $\Manifold$, such that $X_1,\dots,X_n$ are $\Co^\beta$ on $\As$, let $\psi:U\subseteq\Manifold\to V\subseteq\R^n$ be a harmonic coordinate chart\footnote{Such a harmonic coordinate chart $\psi$ always exists locally when $\beta>1$, see also \cite[Lemma 1.2]{DeTurckKazdan}.}, then $\psi_*X_1,\dots,\psi_*X_n\in\Co^\beta_\loc(V;\R^n)$.
\end{prop}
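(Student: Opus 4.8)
The plan is to reduce the statement to the classical interior elliptic regularity for the Laplacian, combined with the already-established equivalences from Section 2 and the transformation rules of Section 4. The key observation is that in harmonic coordinates, the metric components satisfy a semilinear elliptic system, and that the vector fields $X_1,\dots,X_n$ and the one-forms $\lambda^1,\dots,\lambda^n$ are determined algebraically by the metric.

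First I would set up the bootstrapping framework. Work in the given $\Co^{\beta+1}$-atlas $\As$, where $X_1,\dots,X_n$ are $\Co^\beta$; then $\lambda^1,\dots,\lambda^n$ are $\Co^\beta$ (inverse of a $\Co^\beta_\loc$-matrix), and hence the metric $g=\sum_i \lambda^i\cdot\lambda^i$ and its inverse $g^{-1}$ are $\Co^\beta_\loc$ in the $\As$-coordinates. Let $\psi:U\to V$ be a harmonic coordinate chart, i.e.\ $\Lap_g \psi^k=0$ for each $k$. The point is to show the transition map between the $\As$-coordinates and the harmonic coordinates $\psi$ is $\Co^{\beta+1}_\loc$; once this is known, pushing $X_1,\dots,X_n$ forward by a $\Co^{\beta+1}_\loc$-diffeomorphism keeps them $\Co^\beta_\loc$ by Lemma \ref{Lemma::FuncRevis::PushForwardFuncSpaces} \ref{Item::FuncRevis::PullBackVect}. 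So the real content is the regularity of $\psi$ itself.

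For that, I would use the standard DeTurck--Kazdan device: in harmonic coordinates, the equation $\Delta_g y^k = 0$ combined with the fact that the coordinate functions $y^k$ are harmonic means the metric components $h_{ij}:=(\psi_* g)_{ij}$ satisfy $\Lap_h h_{ij} = Q_{ij}(h,\partial h)$, where $Q_{ij}$ is a universal expression quadratic in $\partial h$ with coefficients rational in $h$ (this is \cite[Lemma 1.2]{DeTurckKazdan}; here $\Lap_h$ is the scalar Laplacian in the $h$-metric). Starting from $h\in\Co^{\beta_0}_\loc$ for some $\beta_0>0$ — which we get because $\psi$ is at least a $C^2$-chart compatible with the original structure, so $g$, hence $h$, is at least $\Co^{\min(\beta,1)^+}_\loc$ — the elliptic estimate Lemma \ref{Lemma::FuncRevis::GreensOp} (interior regularity for a second-order elliptic operator with $\Co^{\beta_0}_\loc$ coefficients) upgrades $h$ by one derivative at a time: $\Lap_h h \in \Co^{\beta_0-1}_\loc$ forces $h\in\Co^{\beta_0+1}_\loc$, and we iterate until we reach $h\in\Co^\beta_\loc$ (the iteration stops there because $Q_{ij}$ only sees $\partial h\in\Co^{\beta-1}_\loc$ and multiplication by $\Co^{\beta-1}_\loc$ preserves $\Co^{\beta-1}_\loc$ when $\beta-1>0$, via Lemma \ref{Lemma::FuncSpace::Product}). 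Then the change of coordinates $\psi\circ(\text{$\As$-chart})^{-1}$ satisfies a linear elliptic equation whose coefficients are the now-$\Co^\beta_\loc$ metric components, with the identity as a $\Co^\beta_\loc$ comparison, so this transition map is $\Co^{\beta+1}_\loc$ by one more application of interior elliptic regularity (again Lemma \ref{Lemma::FuncRevis::GreensOp}).

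The main obstacle I anticipate is the careful handling of the low-regularity bootstrap near the bottom of the range, particularly making sure the semilinear system for $h$ and the linear system for the transition map genuinely make sense at each stage — i.e.\ that every product and composition appearing is covered by Lemma \ref{Lemma::FuncSpace::Product} and Lemma \ref{Lemma::FuncRevis::PushForwardFuncSpaces}, and that the elliptic operator $\Lap_h$ stays uniformly elliptic with $\Co^{\beta_0}_\loc$ coefficients throughout. One also must be slightly careful that $\beta>1$ is exactly what guarantees the iteration closes: the quadratic nonlinearity $Q_{ij}(h,\partial h)$ lands in $\Co^{\beta-1}_\loc$ precisely when $\beta-1>0$, so that $h\in\Co^\beta_\loc$ is a fixed point of the bootstrap rather than an obstruction. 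Finally, since harmonic coordinates are only local, the conclusion is stated with $\Co^\beta_\loc$, which is consistent with all the tools used.
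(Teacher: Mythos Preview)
Your overall architecture is sound (show the transition $\psi\circ x^{-1}$ is $\Co^{\beta+1}_\loc$, then push $X_1,\dots,X_n$ forward via Lemma \ref{Lemma::FuncRevis::PushForwardFuncSpaces}), but you are taking a substantial and unnecessary detour. The paper's proof is a one-step application of interior elliptic regularity, done \emph{in the $\As$-coordinates}: in an $\As$-chart $x$, the metric $g=\sum_i\lambda^i\cdot\lambda^i$ is already $\Co^\beta_\loc$ by hypothesis, so $\Lap_g=-\frac{1}{\sqrt{\det g}}\sum_{i,j}\partial_{x^i}(\sqrt{\det g}\,g^{ij}\partial_{x^j})$ is a divergence-form elliptic operator with $\Co^\beta_\loc$ coefficients. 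Since $\Lap_g\psi^k=0$, classical interior regularity gives $\psi^k\in\Co^{\beta+1}_\loc$ immediately. That is the entire argument.

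Your route instead passes to the harmonic coordinates first, bootstraps the pushed-forward metric $h=\psi_*g$ up to $\Co^\beta_\loc$ via the DeTurck--Kazdan semilinear system $\Lap_h h_{ij}=Q_{ij}(h,\partial h)$, and only then applies elliptic regularity to the transition map. This is backwards: you already have a coordinate system (namely $\As$) in which the coefficients are $\Co^\beta$, so there is no need to manufacture that regularity in a second coordinate system. Your bootstrap also has a soft spot at the bottom: the nonlinearity $Q_{ij}(h,\partial h)$ is quadratic in $\partial h$, and the product of two $\Co^{\beta_0-1}_\loc$ functions lands back in $\Co^{\beta_0-1}_\loc$ only when $\beta_0>1$ (Lemma \ref{Lemma::FuncSpace::Product} needs the sum of indices to be positive); starting merely from ``$\psi$ is a $C^2$-chart'' gives $h$ roughly $C^1$ regularity, which is not quite enough to launch the iteration cleanly. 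None of this is needed if you simply write $\Lap_g\psi^k=0$ in the $\As$-chart and invoke elliptic regularity once.
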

\begin{proof}
It suffices to show that $\psi$ is a $\Co_{\loc}^{\beta+1}$-map with respect to $(\Manifold,\As)$. Once this is done, applying Lemma \ref{Lemma::FuncRevis::PushForwardFuncSpaces} \ref{Item::FuncRevis::PullBackVect} on $\Phi=\psi^{-1}$ and using that $X_1,\dots,X_n$ are $\Co^\beta_\loc$ with respect to $(\Manifold,\As)$, we get that $\psi_*X_1,\dots,\psi_*X_n\in\Co^\beta_\loc$.

Since the statement $\psi\in\Co_{\loc}^{\beta+1}(U;V)$ is local, 
we may without loss of generality, shrink $U$.
By doing so, the hypotheses of the proposition
imply that there is a $\Co^{\beta+1}$-coordinate chart $x=(x^1,\dots,x^n):U\subseteq\Manifold\to\R^n$ on $U$ (respect to $\As$). In this coordinate chart we can write $\Lap_g=-\frac1{\sqrt{\det g}}\sum_{i,j=1}^n\Coorvec{x^i}(\sqrt{\det g}g^{ij}\Coorvec{x^j})$ where $g^{ij}$ and $\sqrt{\det g}$ are as in \eqref{Eqn::Key::Riemannianmetric2}.

By assumption $\Lap_g \psi^k=0$ for $k=1,\dots,n$. Note that on $(x^1,\dots,x^n)$, $\Lap_g$ is a second order divergent form elliptic operator with $\Co^\beta$-coefficients. By a classical elliptic estimates (for example, \cite[Proposition 4.1]{TaylorPDE3}) we have that $\psi^k$ are all $\Co^{\beta+1}_\loc$ with respect to $\As$, completing the proof.
\end{proof}
\begin{rmk}


In fact the coordinate chart we construct in Proposition \ref{Prop::Key::ExistPDE} (also see \eqref{Eqn::Key::PDEforR}) is closely related to harmonic coordinates.
\end{rmk}
\begin{rmk}
While Proposition \ref{Prop::RmkCoor::HarmOptimal} shows that $X_1,\dots,X_n$ have optimal regularity with respect to harmonic coordinates, this fact along does not give a practical test for what the optimal regularity is. Theorem \ref{Thm::TheMainResult}, on the other hand, provides such a test.
\end{rmk}

\begin{rmk}
Proposition \ref{Prop::RmkCoor::HarmOptimal} shows that harmonic coordinates induces a $\Co^{\beta+1}$-atlas with respect to which $X_1,\dots,X_n$ are $\Co^\beta_\loc$. It is possible that the harmonic coordinates induces some $\Co^{\gamma+1}$-atlas for some $\gamma>\beta$ while $X_1,\dots,X_n$ are only $\Co^\beta$ with respect to this atlas; see Example \ref{Ex::CanonicalMighNotBeOptimal}.
\end{rmk}

\begin{example}\label{Ex::CanonicalMighNotBeOptimal}
Endow $\R^2$ with standard coordinates $(x,y)$, and let $\theta\in C^1(\R^2)$ be a function which is not smooth. Set $X:=\cos(\theta(x,y))\Coorvec x+\sin(\theta(x,y))\Coorvec y$ and $Y:=-\sin(\theta(x,y))\Coorvec x+\cos(\theta(x,y))\Coorvec x$. The corresponding metric is $g=dx^2+dy^2$ since $X,Y$ form an orthonormal basis with respect to the standard Euclidean metric, thus $\Lap_g=\Lap=-\partial_x^2-\partial_y^2$.

Therefore the singleton $\{(x,y):\R^2\to\R^2\}$ is an atlas of harmonic coordinates for $\R^2$, and since harmonic functions are real-analytic, we know the collection of harmonic coordinates with respect to $\Lap_g$ defines an real-analytic structure for $\R^2$ (which coincides with the standard real-analytic structure). Even though the differential structure induced by the harmonic coordinates is real analytic, $X$ and $Y$ cannot be smooth under any coordinate system (since they are not smooth with respect to these
harmonic coordinates).
\end{example}





As mentioned before, 
DeTurck and Kazdan showed that a Riemannian metric may not have optimal
regularity with respect to
geodesic normal
coordinates \cite[Example 2.3]{DeTurckKazdan}.
A natural analog of geodesic normal coordiantes for vector fields
are canonical coordinates (of the first kind).
Next, we show that vector fields may not have optimal regularity
with respect to these canonical coordinates.


Given $C^1$-vector fields $X_1,\dots,X_n$ on $\Manifold$ that form a basis on the tangent space at every point, the canonical coordinates at $p\in\Manifold$ is the map $\Phi_p(t^1,\dots,t^n):=e^{t^1X_1+\dots+t^nX_n}p$ defined via solving the ordinary differential equation, provided that it is solvable:
\begin{equation}\label{Eqn::RmkCoor::DefnCan}
    e^{t\cdot X}(p)=E(1),\quad{\text{where }}E:[0,1]\to\Manifold,\quad\frac d{dr} E(r)=r\mleft(t^1X_1(E(r))+\dots+t^nX_n(E(r))\mright),\quad E(0)=p.
\end{equation}
When $X_1,\dots,X_n\in\Co^\alpha$ for some $\alpha>1$, classical regularity theorems for ODEs show that $\Phi_p$ is at least $\Co^\alpha$. 
Therefore, $\Phi_p^{*}X_1,\ldots, \Phi_p^{*}X_n$ are at least $\Co^{\alpha-1}$; which is one derivative less than the original
regularity of $X_1,\ldots, X_n$.  The next result
shows that this loss of one derivative is sometimes inevitable.


\begin{lemma}\label{Lemma::CanonicalCoords}
Endow $\R^2$ with standard coordinate system $(x,y)$. Let $\alpha>1$ and let $X:=\partial_x$ and $Y:=xf(y)\partial_x+\partial_y$ where $f(y):=\alpha \max(0,y)^{\alpha-1}$.

Then we can find a  new $\Co^{\alpha+1}$ atlas $\As$ on $\R^2$ which is compatible with the standard $\Co^\alpha$-structure on $\R^2$, such that $X,Y$ are $\Co^\alpha_\loc$ with respect to this the new atlas, but for the canonical coordinates $\Phi(t,s):=e^{tX+sY}(0)$ we have $\Phi^*Y\notin\Co^{\alpha-1+\eps}$ near $(0,0)$, in particular the collection $(\Phi^*X,\Phi^*Y)$ is not $\Co^{\alpha-1+\eps}$.
\end{lemma}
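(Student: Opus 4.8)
## Proof Plan for Lemma \ref{Lemma::CanonicalCoords}

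The plan is to compute the canonical coordinate map $\Phi(t,s) = e^{tX+sY}(0)$ explicitly, observe that its regularity is genuinely only $\Co^\alpha$ (not better), and then show that the pullback $\Phi^*Y$ loses a derivative because of the singularity of $f(y) = \alpha\max(0,y)^{\alpha-1}$ along $\{y=0\}$. First I would address the auxiliary claim that there is a $\Co^{\alpha+1}$ atlas $\As$, compatible with the standard structure, with respect to which $X, Y$ are $\Co^\alpha_{\loc}$. The key point is that $X=\partial_x$ and $Y = xf(y)\partial_x + \partial_y$ have the special triangular form making the flow of $Y$ integrable in closed form: along the integral curve of $Y$ through $(x_0,y_0)$ we have $\dot y = 1$, so $y = y_0 + r$, and $\dot x = x f(y)$, giving $x(r) = x_0 \exp\bigl(\int_0^r f(y_0+\rho)\,d\rho\bigr) = x_0 \exp\bigl(\max(0,y_0+r)^\alpha - \max(0,y_0)^\alpha\bigr)$. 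Thus the map $\Psi(x_0,y_0)$ straightening $Y$ (i.e. sending $Y \mapsto \partial_{y_0}$) is, up to reparametrization, $\Psi(x,y) = \bigl(x\exp(-\max(0,y)^\alpha), y\bigr)$, which is a homeomorphism; one checks that in the coordinate chart given by $\Psi$ (together with the standard chart, which is still needed where $Y$'s straightening degrades) the vector fields are $\Co^\alpha_{\loc}$ while the transition map $\Psi$ itself is $\Co^{\alpha+1}_\loc$ — because $\exp(\max(0,y)^\alpha)$ is $\Co^{\alpha+1}$ (it vanishes to order $\alpha$ at $y=0$, and $t\mapsto e^t$ is analytic). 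Actually the cleaner route is: $X,Y$ are already $\Co^{\alpha-1+1}=\Co^\alpha$ coefficient-wise since $f\in\Co^{\alpha-1}$, so they are $\Co^\alpha_{\loc}$ on the standard atlas, which is $\Co^{\alpha+1}$; I would double-check whether $f\in\Co^{\alpha-1}$ suffices as stated, and otherwise invoke the straightening chart above.

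Next I would compute $\Phi(t,s) = e^{tX+sY}(0)$. By \eqref{Eqn::RmkCoor::DefnCan} this means solving $E'(r) = r\bigl(tX(E(r)) + sY(E(r))\bigr)$ with $E(0)=0$, i.e. writing $E(r) = (\xi(r),\eta(r))$: $\eta'(r) = rs$, so $\eta(r) = \tfrac12 r^2 s$, and $\xi'(r) = r t + r s\,\xi(r) f(\eta(r)) = rt + rs\,\xi(r) f(\tfrac12 r^2 s)$. Setting $r=1$ gives $\Phi(t,s) = (\xi(1), \tfrac12 s)$ where $\xi$ solves a linear ODE in $r$ with forcing $rt$; explicitly $\xi(1) = t\int_0^1 r\,\mu(r)^{-1}\,dr \cdot \mu(1)$ where $\mu(r) = \exp\bigl(-\int_0^r \rho s f(\tfrac12\rho^2 s)\,d\rho\bigr)$, and $\int_0^r \rho s f(\tfrac12 \rho^2 s)\,d\rho = \max(0,\tfrac12 r^2 s)^\alpha$ by the same antiderivative computation as above. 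So $\Phi(t,s) = \bigl(t\cdot G(s),\ \tfrac12 s\bigr)$ for an explicit function $G(s) = \exp(\max(0,\tfrac12 s)^\alpha)\int_0^1 r\exp(-\max(0,\tfrac12 r^2 s)^\alpha)\,dr$, which is $C^\infty$ for $s\le 0$ and only $\Co^{\alpha+1}$ (no better) at $s=0$ because of the $\max(0,s)^\alpha$ terms. In particular $\Phi$ is a $\Co^{\alpha+1}$ (indeed $\Co^{\alpha+1}_\loc$) diffeomorphism near $(0,0)$ since $G(0) = \int_0^1 r\,dr = \tfrac12 \ne 0$, but $\Phi$ is not $\Co^{\alpha+1+\eps}$.

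Then I would compute $\Phi^*Y$. Since $\Phi$ sends $\partial_t \mapsto G(s)\,\partial_x$ and $\partial_s \mapsto (tG'(s))\partial_x + \tfrac12\partial_y$, inverting: $\partial_y = 2\Phi_*\partial_s - (\text{correction})\Phi_*\partial_t$... more directly, $Y = xf(y)\partial_x + \partial_y$ pulls back to $\Phi^*Y = (\text{coeff})\partial_t + (\text{coeff})\partial_s$ determined by $d\Phi\cdot\Phi^*Y = Y\circ\Phi$. Writing $\Phi(t,s)=(tG(s),\tfrac12 s)$ we get $d\Phi = \begin{pmatrix} G(s) & tG'(s) \\ 0 & \tfrac12\end{pmatrix}$, and $Y(\Phi(t,s)) = \bigl(tG(s) f(\tfrac12 s)\bigr)\partial_x + \partial_y$. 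Solving the $2\times 2$ system: the $\partial_s$-component of $\Phi^*Y$ is $2$, and the $\partial_t$-component is $G(s)^{-1}\bigl(tG(s)f(\tfrac12 s) - 2tG'(s)\bigr) = t\bigl(f(\tfrac12 s) - 2G'(s)/G(s)\bigr)$. Now $f(\tfrac12 s) = \alpha\max(0,\tfrac12 s)^{\alpha-1}$ is $\Co^{\alpha-1}$ and not $\Co^{\alpha-1+\eps}$ at $s=0$, whereas $G'(s)/G(s)$ is $\Co^\alpha$ (since $G$ is $\Co^{\alpha+1}$, nonvanishing, so $G'$ is $\Co^\alpha$); hence the $\partial_t$-coefficient of $\Phi^*Y$, namely $t\bigl(f(\tfrac12 s) - 2G'(s)/G(s)\bigr)$, is exactly $\Co^{\alpha-1}$ in $(t,s)$ near the origin and not $\Co^{\alpha-1+\eps}$ for any $\eps>0$ (the $t$-factor does not improve matters away from $t=0$; near any point $(t_0,0)$ with $t_0\ne 0$ the function $t\cdot\max(0,s)^{\alpha-1}$ has Zygmund regularity exactly $\alpha-1$ in $s$). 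Therefore $\Phi^*Y\notin\Co^{\alpha-1+\eps}$ near $(0,0)$, so $(\Phi^*X,\Phi^*Y)$ is not $\Co^{\alpha-1+\eps}$ there, completing the proof.

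The main obstacle I anticipate is bookkeeping in the sharp regularity claims: showing both that $G$ is $\Co^{\alpha+1}$ \emph{and no better} at $s=0$, and that the offending coefficient $t(f(\tfrac12 s) - 2G'(s)/G(s))$ genuinely fails $\Co^{\alpha-1+\eps}$ — this requires confirming that the cancellation between $f(\tfrac12 s)$ and $2G'(s)/G(s)$ is incomplete, i.e. that $G'(s)/G(s)$ really is better than $\Co^{\alpha-1}$. This follows from $G'(s) = \tfrac{d}{ds}\bigl[\exp(\max(0,\tfrac12 s)^\alpha)\bigr]\cdot(\text{integral}) + \exp(\cdots)\cdot\tfrac{d}{ds}(\text{integral})$, where each derivative lands on a $\max(0,\cdot)^\alpha$-type expression producing a $\max(0,\cdot)^{\alpha-1}$ term times $\exp$, but crucially these are multiplied by factors vanishing at $s=0$ or are themselves $\Co^\alpha$ after the $s$-integration smooths by one order; I would verify this by writing the integral as $\int_0^1 r\exp(-\max(0,\tfrac12 r^2 s)^\alpha)\,dr$ and noting its $s$-derivative is $\int_0^1 r\cdot(-\alpha)\max(0,\tfrac12 r^2 s)^{\alpha-1}\cdot\tfrac12 r^2\cdot\exp(\cdots)\,dr$, whose integrand's singular factor $\max(0,r^2 s)^{\alpha-1}$ integrates in $r$ to something $\Co^\alpha$ in $s$ (the extra $r^2$ helps). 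So the net coefficient carries exactly the unimproved $\Co^{\alpha-1}$ singularity of $f$, which is the desired conclusion.
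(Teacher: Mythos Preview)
Your overall strategy---compute $\Phi$ explicitly, write out $\Phi^*Y$, and isolate the singular coefficient---matches the paper's. But there is a genuine error in your regularity analysis that breaks the argument as written.

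You claim $G\in\Co^{\alpha+1}$ and hence $G'/G\in\Co^\alpha$. This is false: in fact $G\in\Co^\alpha$ and no better. Expanding for $s>0$,
\[
G(s)=e^{(s/2)^\alpha}\int_0^1 r\,e^{-(r^2 s/2)^\alpha}\,dr
=\tfrac12+\tfrac{\alpha}{2(\alpha+1)}\,(s/2)^\alpha+O(s^{2\alpha}),
\]
and the term $\max(0,s/2)^\alpha$ is exactly $\Co^\alpha$. Consequently $G'\in\Co^{\alpha-1}$, the \emph{same} order as $f(s/2)$, so your intended mechanism---``$G'/G$ is strictly smoother, hence $f(s/2)$ dominates''---does not apply. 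Your closing justification is also wrong: the $s$-derivative of $\int_0^1 r\,e^{-\max(0,r^2 s/2)^\alpha}dr$ is, to leading order for $s>0$, proportional to $s^{\alpha-1}\int_0^1 r^{2\alpha+1}dr$, which is $\sim s^{\alpha-1}$; the extra powers of $r$ do nothing to the $s$-regularity.

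What actually needs to be checked is \emph{non-cancellation of leading coefficients}: one computes (e.g.\ from the expansion above) that
\[
f(s/2)-2G'(s)/G(s)=\tfrac{\alpha}{\alpha+1}\,(s/2)^{\alpha-1}+O(s^{2\alpha-1})\quad(s>0),
\]
with nonzero leading coefficient, so the $\partial_t$-component of $\Phi^*Y$ is genuinely only $\Co^{\alpha-1}$. This explicit Taylor expansion is exactly how the paper proceeds (it expands $g(s)=1-\frac{\alpha}{\alpha+1}\max(s,0)^\alpha+O(s^{2\alpha})$ and tracks the nonzero coefficient).

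A secondary issue: your ``cleaner route'' for the atlas $\As$ is incorrect---the coefficient $xf(y)$ of $Y$ is only $\Co^{\alpha-1}$ in standard coordinates, not $\Co^\alpha$, so the standard atlas does \emph{not} give $X,Y\in\Co^\alpha_\loc$. Your straightening chart $\Psi(x,y)=(xe^{-\max(0,y)^\alpha},y)$ does work (a single global chart gives a trivially $C^\infty$ atlas, compatible with the standard $\Co^\alpha$ structure since $\Psi$ is a $\Co^\alpha$-diffeomorphism, and $\Psi_*X=e^{-\max(0,v)^\alpha}\partial_u$, $\Psi_*Y=\partial_v$ are $\Co^\alpha$), but note $\Psi$ itself is only $\Co^\alpha$, not $\Co^{\alpha+1}$ as you claim. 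The paper instead invokes its main Theorem~\ref{Thm::TheMainResult} for this step.
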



Note that $X$ and $Y$ form a local basis of the tangent space at every point, and $f\in\Co^{\alpha-1}_\loc(\R)$.
\begin{proof}

First we show the existence of the new atlas $\As$ with respect to which $X$ and $Y$ are $\Co^\alpha$. In particular $X,Y$ are $C^1$ in $\As$, so \eqref{Eqn::RmkCoor::DefnCan} is uniquely solvable and hence $\Phi$ is well-defined.

Note that $[X,Y]=f(y)\partial_x\in\Co^{\alpha-1}(\R^2;\R^2)$. Specifically, the dual basis of $X,Y$ are 1-forms $\lambda=dx-xf(y)dy,\eta=dy$ which satisfy that $d\lambda=f(y)dx\wedge dy$ and $d\eta=0$ are both $\Co^{\alpha-1}$ 2-forms, so the condition \ref{Item::TheMainResult::dForm} in Theorem \ref{Thm::TheMainResult} is satisfied. By Theorem \ref{Thm::TheMainResult} we can find a $\Co^{\alpha+1}$-atlas $\As$ on $\R^2$ such that $X,Y$ are both $\Co^\alpha_\loc$ on $\As$.

Since $X_1,\ldots, X_n$
are $\Co^{\alpha}\subsetneq C^1$ with respect to $\As$, we see that $\Phi(t,s)$ is well-defined near $(t,s)=(0,0)$. We can compute $\Phi$ in terms of $f$. 

Clearly $\Phi(t,s)=(*,s)$ since $e^{s\partial_y}(x,y)=(x,y+s)$. We can write $\Phi(t,s)=(\phi(t,s),s)$. Define $\Phi(t,s;r)$ for $r\in\R$ as the solution to the ODE $\Coorvec r\Phi(t,s;r)=tX(\Phi(t,s;r))+sY(\Phi(t,s;r))$, $\Phi(t,s;0)=0$. So $\Phi(t,s)=\Phi(t,s;1)$ and we have $\Phi(t,s;r)=(\phi(t,s;r),rs)$ where
\begin{equation*}
    \Coorvec r\phi(t,s;r)=t+sf(rs)\phi(t,s;r),\quad\phi(t,s;0)=0,\quad t,s,r\in\R.
\end{equation*}
Solving this ODE we have
\begin{equation*}
    \phi(t,s;r)=e^{\int_0^r sf(\rho s)d\rho}\int_0^re^{-\int_0^\rho sf(\mu s)d\mu}td\rho,\quad\phi(t,s)=\phi(t,s;1)=t\frac{e^{-\int_0^sf(\rho)d\rho}}s\int_0^se^{\int_0^{\rho }f(\mu)d\mu}d\rho.
\end{equation*}

Now plug in \[f(y)=\begin{cases}\alpha y^{\alpha-1},&y\ge0,\\0&y\le0.\end{cases}\] We have
\begin{equation*}
    \phi(t,s)=t \frac {e^{-s^\alpha}}s\int_0^se^{\rho^\alpha} d\rho\quad\text{when } s>0;\quad \phi(t,s)=t\quad\text{when } s\le0.
\end{equation*}
Thus, $\phi(t,s)=tg(s)$ where 
\begin{equation}\label{Eqn::RmkCoor::g}
    g(s)=\begin{cases}\frac {e^{-s^\alpha}}s\int_0^se^{\rho^\alpha} d\rho&s>0\\1&s\le0.\end{cases}
\end{equation}

We are going to show $\Phi\in\Co^\alpha_\loc(\R^2;\R^2)$ and $\Phi\notin\Co^{\alpha+\eps}$ near $(0,0)$. To see this, it suffices to show $g\in\Co^\alpha_\loc(\R)$ and $g\notin\Co_{\loc}^{\alpha+\eps}$ near $0$, for every $\eps>0$.

By Taylor's expansion on the exponential function we have, when $s>0$,
\begin{align*}
    g(s)&=e^{-s^\alpha}\frac1s\int_0^se^{\rho^\alpha}d\rho=\sum_{j=0}^\infty\frac{(-1)^js^{j\alpha}}{j!}\frac1s\int_0^s\sum_{k=0}^\infty\frac{\rho^{k\alpha}}{k!}d\rho=\sum_{j,k=0}^\infty\frac{(-1)^j}{j!k!}\frac{s^{j\alpha}s^{k\alpha}}{k\alpha+1}
    \\&=\sum_{l=0}^\infty\Big(\sum_{k=0}^l\frac{(-1)^{l-k}}{k\alpha+1}\Big)s^{l\alpha}=1-\big(1-\tfrac1{\alpha+1}\big)s^\alpha+O(s^{2\alpha}).
\end{align*}
In other words
\begin{equation*}
    g(s)=\sum_{l=0}^\infty\Big(\sum_{k=0}^l\frac{(-1)^{l-k}}{k\alpha+1}\Big)\max(s,0)^{l\alpha}=1-\big(1-\tfrac1{\alpha+1}\big)\max(s,0)^\alpha+\sum_{l=2}^\infty\Big(\sum_{k=0}^l\frac{(-1)^{l-k}}{k\alpha+1}\Big)\max(s,0)^{l\alpha},\quad\forall s\in\R.
\end{equation*}

Note that for $\beta>0$ the function $\max(s,0)^\beta=\begin{cases}s^\beta&s\ge0\\0&s\le0\end{cases}$ is $\Co^\beta_\loc$ but not $\Co^{\beta+\eps}$ near $0$.
Indeed when $0<\beta<1$ we know that $\max(s,0)^\beta\in C^{0,\beta}_\loc=\Co^\beta_\loc$ and is not $C^{0,\beta+\eps}=\Co^{\beta+\eps}$ near $0$ for any $0<\eps<1-\beta$, when $\beta=1$ we know $\max(s,0)\in C^{0,1}_\loc\subsetneq\Co^1_\loc$ and is not $C^1$ near 0 so is not $\Co^{1+\eps}$ for any $\eps>0$. For $\beta>1$ by passing to its derivatives we see that $\max(s,0)^\beta\in\Co^\beta_\loc$ and is not $\Co^{\beta+\eps}$ near $0$.

So the remainder $\sum_{l=2}^\infty\big(\sum_{k=0}^l\frac{(-1)^{l-k}}{k\alpha+1}\big)\max(s,0)^{l\alpha}$ is $\Co^{2\alpha}\subset \Co^{\alpha+\eps}_\loc$ for all $0<\eps\le\alpha$, while the main term $1-\big(1-\tfrac1{\alpha+1}\big)\max(s,0)^\alpha$ is $\Co^\alpha_\loc$ but not $\Co^{\alpha+\eps}$ near 0. Therefore we conclude that $g\in\Co^\alpha(\R)$, but $g\notin\Co^{\alpha+\eps}$ near $s=0$.


Now we know $\Phi\in\Co^{\alpha}_\loc(\R^2;\R^2)$ but $\Phi\notin\Co^{\alpha+\eps}_\loc$ near $(t,s)=(0,0)$. Consider the inverse function of $\Phi$, 
and set $(u(x,y),v(x,y)):=\Phi^{-1}(x,y)$; so that $\Phi^*Y=(Yu,Yv)\circ\Phi$. We have $v(x,y)=y$ and $u(x,y)=\frac1{g(y)}x$. Note that $g(s)>0$ for every $s$, so $y\mapsto\frac1{g(y)}$ is not $\Co^{\alpha+\eps}$ near $y=0$, for any $\eps>0$. Therefore $Yv=x\cdot\partial_y\frac1{g(y)}$ is not $\Co^{\alpha+\eps-1}$ near $(x,y)=(0,0)$. By composing with $\Phi$ which is a $\Co^\alpha_\loc\subset\Co^{\alpha+\eps-1}_\loc$-diffeomorphism (for $0<\eps<1$), we see that $\Phi^*Y$ is not $\Co^{\alpha+\eps-1}$ near $(t,s)=(0,0)$, for every $\eps>0$. 
\end{proof}
\begin{rmk}
As a differentiable map $\Phi:(\R^2_{t,s},\mathrm{std})\to(\R^2,\As)$ between two $\Co^{\alpha+1}$-manifolds, we see that $\Phi$ is not $\Co^{\alpha+\eps}$ near $(0,0)$ for any $\eps>0$. Otherwise since $X$ and $Y$ are $\Co^\alpha_\loc$ on $\As$, by Lemma \ref{Lemma::FuncRevis::PushForwardFuncSpaces} \ref{Item::FuncRevis::PullBackVect} we have $\Phi^*Y\in\Co^{\min(\alpha,\alpha-1+\eps)}=\Co^{\alpha-1+\min(\eps,1)}$ near $(0,0)$, contradicting to Lemma \ref{Lemma::CanonicalCoords}.
\end{rmk}


\section{The  Quantitative Result}\label{Section::Quant}
Let $\Manifold$ be an $n$-dimensional manifold and let $\alpha>\frac12$. If we are given $\Co^\alpha_\loc$-vector fields $X_1,\dots,X_q$ that span the tangent space at every point, we can write $[X_i,X_j]=\sum_{k=1}^qc_{ij}^kX_k$ for some $\Co^{\alpha-1}_\loc$-functions $c_{ij}^k$. In Theorem \ref{Thm::TheMainResult} we show that, for $s_0>\alpha-1$ and near each point $p$ the following are equivalent
\begin{itemize}
    \item There exists a $\Co^{\alpha+1}$-parameterization $\Phi$ near $p$ such that $\Phi^*X_1,\dots,\Phi^*X_q$ are $\Co^{s_0+1}_\loc$.
    \item We may choose $c_{ij}^k$ with  $c_{ij}^k\in\Co^{s_0}_{X,\loc}$ near $p$.
\end{itemize}
By contrast the range of $s_0$ in \cite{StovallStreetII} is $s_0>1$.

If one traces through the proof of Theorem \ref{Thm::TheMainResult}, the size of the neighborhood of $p$ and the $\Co^{s_0+1}$-norms of $\Phi^*X_1,\dots,\Phi^*X_q$ depend on the $\Co^\alpha$-norms of $X_1,\dots,X_q$ under some fixed initial coordinate system near $p$, and on a lower
bound for \eqref{Eqn::Intro::QualVsQuant::LowerBoundDet} at $x=p$ (in some fixed initial coordinate
system).
However, in \cite{StovallStreetII}, when $X_1,\dots,X_q\in C^1$ and $s_0>1$, a similar coordinate system $\Phi$ was constructed, but where all of the  estimates depend only on the diffeomorphic invariant quantities  like the norms $\|c_{ij}^k\|_{\Co^{s_0}_X}$ (see \cite[Section 5.1]{StovallStreetI}).

Using the methods of this paper, we can extend the main results of
\cite{StovallStreetII} and \cite{StreetSubHermitian} (namely,
\cite[Theorem 2.14]{StovallStreetII} and \cite[Theorem 4.5]{StreetSubHermitian}) from $s_0>1$ to $s_0>0$.

\begin{thm}\label{Thm::Quant}
 \cite[Theorem 2.14]{StovallStreetII} and \cite[Theorem 4.5]{StreetSubHermitian} are still true with $s_0>1$ replaced by $s_0>0$, and leaving rest of the assumptions and statements unchanged.  
\end{thm}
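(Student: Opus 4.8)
The plan is to revisit the proofs of \cite[Theorem 2.14]{StovallStreetII} and \cite[Theorem 4.5]{StreetSubHermitian} and identify exactly where the hypothesis $s_0>1$ is used, then replace the offending ingredient by the techniques developed in this paper. In \cite{StovallStreetII} and \cite{StreetSubHermitian} the restriction $s_0>1$ enters in two places: first, in the construction of a good coordinate chart, where one needs to make sense of commutators $[X_i,X_j]$, of the structure functions $c_{ij}^k$, and of quantities like $\Co^{s_0}_X$-norms of these functions; and second, in a regularity bootstrap that upgrades an initial $C^1$ (or $\Co^{1+\eps}$) chart to a chart with the optimal regularity. For the first issue, the point is that everything we need is already available once $s_0>0$, provided we phrase the hypotheses in the language of Section \ref{Section::Func}: by Proposition \ref{Prop::FuncRn::LieOnManiofold} \ref{Item::FuncRn::CommutatorCont} and \ref{Item::FuncRn::VectActionCont}, commutators and the pairings $\langle\lambda^i,X_j\rangle$ make sense for $\alpha>1/2$, and the spaces $\Co^\beta_{X,\loc}$ are well-defined for all $\beta>-\alpha$ by Definitions \ref{Defn::FuncVF::Funregularity}--\ref{Defn::FuncVf::RegularityofForms}. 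For the second issue, the scaling-free version of the Malgrange-type construction in Theorem \ref{Thm::Keythm} and Proposition \ref{Prop::Key::ExistPDE} — together with the quantitative $d$-regularity transfer of Proposition \ref{Prop::FuncRevis::QuantdOfLowRegularityForm} — is precisely the replacement that works down to $s_0>0$, since all the constants there depend only on $n,\alpha,\beta$.

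Concretely, I would proceed as follows. First, I would state the two cited theorems in the form they are used, keeping the notation of \cite{StovallStreetII,StreetSubHermitian}, and isolate the single lemma in each paper (the construction of the scaling map / adapted chart, together with its regularity estimates) whose proof assumes $s_0>1$. Second, I would show that this lemma remains valid for $s_0>0$ by substituting the dual Malgrange method of Section \ref{Section::KeyThm}: given the $\Co^\alpha$ vector fields and the hypothesis that the structure functions lie in $\Co^{s_0}_{X,\loc}$, one forms the dual coframe $\lambda^1,\dots,\lambda^n$, checks via Lemma \ref{Lemma::ProofThm::BLemma} (or Corollary \ref{Cor::FuncVF::ZygVF=ZygCor}) that $d\lambda^j$ has regularity $\Co^{s_0}_{X,\loc}$, and then invokes Theorem \ref{Thm::Keythm} (after the scaling normalization of Proposition \ref{Prop::Key::Scaling}) to produce the chart $\Phi$ with $\Phi^*X_j\in\Co^{s_0+1}$ and with the quantitative bound \eqref{Eqn::Keythm::Conclusion}. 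Third, I would feed this chart back into the remaining (non-regularity) parts of the proofs in \cite{StovallStreetII,StreetSubHermitian} verbatim; those parts only use $s_0>1$ through the lemma we have just replaced, so nothing else changes. Finally, I would note that the diffeomorphism-invariance of all quantities involved is unaffected, since Proposition \ref{Prop::FuncRevis::QuantdOfLowRegularityForm} gives the needed transformation estimates uniformly in the $\Co^{\alpha+1}$-norm of the transition map.

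The main obstacle I anticipate is \emph{bookkeeping rather than new mathematics}: one must verify that every auxiliary estimate in \cite{StovallStreetII} and \cite{StreetSubHermitian} that was stated for $s_0>1$ either is a pure consequence of the replaced lemma, or itself extends to $s_0>0$ by the same Section~\ref{Section::Func} machinery (product estimates from Lemma \ref{Lemma::FuncSpace::Product}, the elliptic regularity of Lemma \ref{Lemma::FuncRevis::GreensOp}, and the paraproduct bounds of Lemma \ref{Lemma::FuncRevis::ParaProdResults}). In particular, one should double-check the passages in those papers where one differentiates the structure equations or the Jacobian of $\Phi$: for $s_0\le1$ such derivatives live in negative-order Zygmund spaces, so the relevant products must be read in the sense of Lemma \ref{Lemma::FuncSpace::Product} and the paraproduct decomposition, exactly as in the proof of Proposition \ref{Prop::FuncRevis::QuantdOfLowRegularityForm}. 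Once this verification is complete — and it is routine given the tools already assembled — the statement follows. Because the argument is almost entirely a matter of tracing through and re-citing, I would present it as a guided reduction rather than reproducing the full proofs of the two source theorems, indicating precisely which lemma in each paper is to be replaced and by what.
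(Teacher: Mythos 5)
Your overall strategy — trace the uses of $s_0>1$ in \cite{StovallStreetII,StreetSubHermitian} and substitute the dual Malgrange construction of Section \ref{Section::KeyThm} for the elliptic-PDE step — is the right direction, and it matches the paper's plan in broad outline. But the claim that what remains is ``bookkeeping rather than new mathematics'' is where the proposal breaks down, because it skips the step that makes the result \emph{quantitative}. Theorem \ref{Thm::Keythm} and the bound \eqref{Eqn::Keythm::Conclusion} are stated in terms of Euclidean $\Co^\alpha$-norms of the coframe in some ambient coordinate system; the theorems you are extending require all estimates in terms of $\{s\}$-admissible (diffeomorphism-invariant) constants such as $\|c_{ij}^k\|_{\Co^s_X}$ (see Section \ref{Section::Intro::QualVsQuant}). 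The bridge between the two is the passage to canonical coordinates $\Phi_0(x)=e^{x\cdot X}(p)$, and this is precisely where $s_0\le 1$ causes genuine trouble: the pullbacks $Y_j=\Phi_0^*X_j$ are only $\Co^{s_0}$, hence possibly not $C^1$, so (i) the injectivity argument of \cite[Proposition 9.15]{StovallStreetI} does not apply to $\Phi_0$ and one only gets a \emph{local} $C^1$-diffeomorphism, and (ii) the identification $\|\tilde c_{ij}^k\|_{\Co^s}\lesssim_{\{s\}}1$ cannot be quoted from \cite[Lemma 9.24]{StovallStreetI}. The paper resolves (ii) by introducing an auxiliary atlas with respect to which $\Phi_0$ is $C^2$ and exploiting that the $\Co^s_Y$-norms for $0<s\le1$ depend only on the $C^{1,s/2}$-structure (Lemma \ref{Lemma::Quant::Phi0}), and resolves (i) by proving injectivity of the composite $\Phi_0\circ\Phi_1$ on a smaller, admissibly-sized ball using the $C^1$ regularity of $\Phi_1^*Y$ rather than of $Y$ (Lemma \ref{Lemma::Quant::InjLemma}). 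Neither issue appears in your proposal.

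A second omission: the quantitative theorems assert not only $\Phi^*X_j\in\Co^{s_0+1}$ but also that the \emph{same} chart automatically satisfies $\|\Phi^*X_j\|_{\Co^{s+1}}\lesssim_{\{s\}}1$ whenever the data is $\Co^s$ for $s\ge s_0$. Theorem \ref{Thm::Keythm} only covers $\beta\in[\alpha,\alpha+1]$ and gives no information for larger $s$; the upgrade requires a separate interior Schauder bootstrap for the first-order div--curl system satisfied by the coefficient matrix $B$ (Proposition \ref{Prop::Quant::Schauder}, together with the regularity version of the scaling step, Lemma \ref{Lemma::Quant::ScalingReg}). This is a new elliptic argument, not a re-citation of anything in Section \ref{Section::KeyThm} or of Proposition \ref{Prop::FuncRevis::QuantdOfLowRegularityForm}. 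So while your reduction correctly locates the role of the Malgrange method, the proof as proposed has real gaps at the canonical-coordinates stage, at the injectivity of the final chart, and in the higher-regularity propagation.
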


In these papers,
the assumption $s_0>1$ is used in the following  places:
\begin{itemize}
    \item In \cite[Theorem 4.7]{StovallStreetI}, which is used to prove
    \cite[Proposition 4.1]{StovallStreetII}, 
    ``1-admissible constants'' are used in order to obtain the results (d), (e), and (f) of \cite[Theorem 4.7]{StovallStreetI}. The proof of \cite[Theorem 4.7 (d), (e), (f)]{StovallStreetI} is done in \cite[Proposition 9.22]{StovallStreetI}. 
    The 1-admissible constants are allowed to depend on quantities like $\| c_{ij}^k\|_{C^1_X}$.
    
    We are going to show that in \cite[Proposition 9.22]{StovallStreetI}, if we only need the conclusion that $\Phi$ is a $\Co^{s_0+1}$-diffeomorphism, then the assumption ``1-admissible constants'' can be replaced by ``$\{s_0\}$-admissible constants,''
    for a fixed $s_0>0$ (which is possibly $\le1$), where $\{s_0\}$-admissible constants are defined 
    in \cite[Definition 2.13]{StovallStreetII}. See Lemma \ref{Lemma::Quant::InjLemma}.
    
    \item In \cite[Proposition 6.8]{StovallStreetII}, the assumption $s_0>1$ is used in order to set up some well-defined elliptic PDEs. 
    
    In this paper we use different elliptic PDEs that are defined when $0<s_0\le1$, as illustrated in Section \ref{Section::KeyThm::Outline}. See Proposition \ref{Prop::Quant} for the precise statement to the modification of \cite[Proposition 6.8]{StovallStreetII}.
    
    \item In \cite[Theorem 2.14]{StovallStreetII}, a map $\Phi$
    is constructed, which depends on $s_0$, such that $\Phi^*X_1,\dots,\Phi^*X_q\in\Co^{s_0+1}$.
    Moreover, this map satisfies for $s\geq s_0$, that if
    $c_{ij}^k\in\Co^s_{X,\loc}$, then $\Phi^*X_1,\dots,\Phi^*X_n\in \Co^{s+1}$, with appropriate bounds on their $\Co^{s+1}$ norms.
    In \cite[Theorem 2.14]{StovallStreetII}, these estimates required
    $s\geq s_0>1$; but by using the regularity theory of elliptic
    PDEs we will be able to extend this to $s\geq s_0>0$.
    
    
    
    \item 
    Once we have established \cite[Theorem 2.14 (a)-(j)]{StovallStreetII}
    for $s_0>0$, the proof from \cite{StovallStreetII}
    of \cite[Theorem 2.14 (k) and (l)]{StovallStreetII} also establishes
    these results for $s_0>0$.
    
    \item The only place that $s_0>1$ is used in \cite[Theorem 4.5]{StreetSubHermitian} is when it refers to \cite[Theorem 2.14]{StovallStreetII}. Once \cite[Theorem 2.14]{StovallStreetII} is established for $s_0>0$, the same is true of \cite[Theorem 4.5]{StreetSubHermitian}.
    
\end{itemize}





\cite[Theorem 2.14]{StovallStreetII}
begins with 
 $X_1,\dots,X_q$ which are $C^1$-vector fields on $\Manifold$ such that $[X_i,X_j]=\sum_{k=1}^qc_{ij}^kX_k$ for some $c_{ij}^k\in C^0_\loc(\Manifold)$. By passing to an immersed submanifold using \cite[Proposition 3.1]{StovallStreetI} we may assume that $X_1,\dots,X_q$ span the tangent space at every point. 

Fix a point $p\in\Manifold$.  We choose $J_0=(j^0_1,\dots,j^0_n)\in\{1,\dots,q\}^n$ such that $X_{j^0_1}(p)\wedge\dots X_{j^0_n}(p)\neq0$ and  
\begin{equation}\label{Eqn::Quant::DensityBound}
    \max_{1\le j_1<\dots<j_n\le q}\mleft|\frac{X_{j_1}(p)\wedge\dots\wedge X_{j_n}(p)}{X_{j^0_1}(p)\wedge\dots\wedge X_{j^0_n}(p)}\mright|\le\zeta^{-1}.
\end{equation}
Here $\zeta>0$ is a constant which all of our estimates may depend on; one
can always pick $j_1^0,\ldots, j_n^0$ so that the left hand side of \eqref{Eqn::Quant::DensityBound}
equals $1$, though it is convenient in some applications to allow for $\zeta<1$.



In \cite[Definition 4.1]{StovallStreetI}  ``0-admissible constants''  are defined to be constants that depend only on diffeomorphic invariant quantities like  $\sum_{i,j,k=1}^q\|c_{ij}^k\|_{C^0(B_{X_{J_0}}(p,\xi))}$ where $\xi>0$ is a small, given constant
on which our estimates may depend,
and $X_{J_0}=(X_{j_1^0},\ldots, X_{j_n^0})$.  See \cite[Definition 4.1]{StovallStreetI} for the
precise definition.


Fix $s_0>0$.  For $s\geq s_0$ we define $\{s\}$-admissible constants as in 
\cite[Definition 2.13]{StovallStreetII} except we only require $s_0>0$ rather than $s_0>1$. 
These are constants which depend only on diffeomorphic invariant quantities like
$\sum_{i,j,k=1}^q\|c_{ij}^k\|_{\Co^s_{X_{J_0}}(C^0(B_{X_{J_0}}(p,\xi)))}$.
See \cite[Definition 2.13]{StovallStreetII} for the precise definition.


Recall that $X_1,\ldots, X_q$ span the tangent space to $\Manifold$ at every point.
Moreover, by reordering $X_1,\ldots, X_q$ so that $j_1^0=1,\ldots, j_n^0=n$,
we may assume that $X_1(p),\ldots, X_n(p)$ form a basis for $T_p\Manifold$
and \eqref{Eqn::Quant::DensityBound} holds with $X_1(p)\wedge \cdots \wedge X_n(p)$
in the denominator.


We begin by considering the canonical coordinates
$\Phi_0(x_1,\dots,x_n):=e^{x_1X_1+\dots+x_nX_n}(p)$. In the following lemma we prove an analog of \cite[Proposition 4.1]{StovallStreetII} when $s_0>0$ (as opposed to $s_0>1$). 
In this case, we only show that $\Phi_0$ is locally a $C^1$-diffeomorphism
rather than globally a $C^2$-diffeomorphism.  In particular, we only
show that $\Phi_0$ is locally injective.

\begin{lemma}\label{Lemma::Quant::Phi0}
There is an {$\{s_0\}$-admissible} constant\footnote{In \cite{StovallStreetI} and \cite{StovallStreetII}
a constant similar to $\mu_0$ was called $\eta_0$.} $\mu_0>0$, such that $\Phi_0(x):=e^{x\cdot X}(p)$ is defined for $ x\in B^n(\mu_0)$ and $\Phi_0:B^n(\mu_0)\to \Manifold$ 
is a locally $C^1$-diffeomorphism, so that we can pullback $X_1,\dots,X_q$ to $B^n(\mu_0)$. Moreover, by writing $Y_j=\Phi_0^*X_j$ for $j=1,\dots,q$ and $[Y_1,\dots,Y_n]^\top=(I+A)\nabla$, we have
\begin{enumerate}[parsep=-0.3ex,label=(\roman*)]
    \item\label{Item::Quant::Phi0::A} $A(0)=0$, $\sup\limits_{x\in B^n(\mu_0)}|A(x)|_{\Mbb^{n\times n}}\le\frac12$ and $\|A\|_{\Co^s(B^n(\mu_0);\Mbb^{n\times n})}\lesssim_{\{s\}}1$ for $s\ge s_0$.
    \item\label{Item::Quant::Phi0::LinDep} There exist $b_k^l\in \Co^{s_0+1}(B^n(\mu_0))$, $1\le l\le n<k\le q$, such that $Y_k=\sum_{l=1}^nb_k^lY_l$ for $n+1\le k\le q$. Moreover $\displaystyle\sum_{k=n+1}^q\sum_{l=1}^n\|b_k^l\|_{\Co^{s+1}(B^n(\mu_0))}\lesssim_{\{s\}}1$.
    \item\label{Item::Quant::Phi0::TrueCijk} There exist $\tilde c_{ij}^k\in\Co^{s_0}(B^n(\mu_0))$ for $1\le i,j\le q$ and $1\le k\le n$, such that $\Phi_0^*[X_i,X_j]=\sum_{k=1}^n\tilde c_{ij}^k\cdot Y_k$ on $B^n(\mu_0)$ for $1\le i,j\le q$. Moreover $\displaystyle\sum_{i,j=1}^q\sum_{k=1}^n\|\tilde c_{ij}^k\|_{\Co^s(B^n(\mu_0))}\lesssim_{\{s\}}1$.
\end{enumerate}
\end{lemma}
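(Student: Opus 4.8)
The plan is to adapt the proof of \cite[Proposition 4.1]{StovallStreetII}, replacing the use of $1$-admissible constants by $\{s_0\}$-admissible constants throughout, and downgrading the conclusion ``$\Phi_0$ is a global $C^2$-diffeomorphism'' to ``$\Phi_0$ is a locally $C^1$-diffeomorphism.'' First I would recall the classical ODE construction: since $X_1,\dots,X_q$ are $C^1$ and span the tangent space, the vector field $x\mapsto x^1 X_1+\dots+x^n X_n$ defines a flow and hence $\Phi_0(x)=e^{x\cdot X}(p)$ is defined and $C^1$ for $x$ in a ball $B^n(\mu_0)$, where $\mu_0>0$ can be taken $\{s_0\}$-admissible because the relevant ODE existence time is controlled by the $C^0$-norms of the structure functions $c_{ij}^k$ along the $X_{J_0}$-flow together with the density bound \eqref{Eqn::Quant::DensityBound} — exactly the quantities $\{s_0\}$-admissible constants may depend on. The standard computation (see \cite[Section 4]{StovallStreetI} and \cite[Proposition 4.1]{StovallStreetII}) shows $d\Phi_0|_0 = (X_1(p)|\cdots|X_n(p))$ is invertible, so after shrinking $\mu_0$ (again $\{s_0\}$-admissibly) $\Phi_0$ is a local $C^1$-diffeomorphism onto its image; this lets us pull back $X_1,\dots,X_q$.

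For part \ref{Item::Quant::Phi0::A}, write $Y_j=\Phi_0^*X_j$ and $[Y_1,\dots,Y_n]^\top=(I+A)\nabla$. The identity $A(0)=0$ is the classical fact that in canonical coordinates the pulled-back frame is the coordinate frame at the origin. The bound $\sup|A|\le\frac12$ after shrinking $\mu_0$ follows by continuity. The key new ingredient is the higher-regularity estimate $\|A\|_{\Co^s}\lesssim_{\{s\}}1$ for all $s\ge s_0$: here I would run the same elliptic-type / commutator bootstrap used in \cite{StovallStreetI,StovallStreetII} but now feeding in that $c_{ij}^k\in\Co^s_{X,\loc}$ with $\{s\}$-admissible norms. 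The mechanism is that the matrix $A$ satisfies an ODE along radial directions whose inhomogeneity is built from the $\Phi_0^*c_{ij}^k$; by the Zygmund-H\"older composition and product estimates (Lemmas \ref{Lemma::FuncSpace::Product} and \ref{Lemma::FuncRevis::PushForwardFuncSpaces}) together with the fact that $\Phi_0$ is itself controlled in $\Co^{s_0+1}$, one obtains $A\in\Co^s$ with the claimed admissible bound. Note one must be careful: for $s_0\le 1$ the commutator $[X_i,X_j]$ only makes sense because $\alpha>\frac12$, so I would use Proposition \ref{Prop::FuncRn::LieOnManiofold}\ref{Item::FuncRn::CommutatorCont} to justify all manipulations.

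Parts \ref{Item::Quant::Phi0::LinDep} and \ref{Item::Quant::Phi0::TrueCijk} are then consequences of \ref{Item::Quant::Phi0::A}. For \ref{Item::Quant::Phi0::LinDep}: since $Y_1,\dots,Y_n = (I+A)\nabla$ with $I+A$ invertible and $\Co^s$-bounded, its inverse $(I+A)^{-1}$ is $\Co^s$-bounded by Lemma \ref{Lemma::FuncRevis::InvertingMatrix}; writing $Y_k$ in the coordinate frame (its coefficients are $\Co^{s_0+1}$, pulled back from the $\Co^\alpha$, hence better, coefficients of $X_k$ — actually one gains a derivative from the canonical-coordinate structure, cf.\ \cite[Proposition 4.1]{StovallStreetII}) and multiplying by $(I+A)^{-1}$ expresses $Y_k=\sum_l b_k^l Y_l$ with $b_k^l$ the matrix product, which lands in $\Co^{s+1}$ with $\{s\}$-admissible bound by the product estimate. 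For \ref{Item::Quant::Phi0::TrueCijk}: apply $\Phi_0^*$ to $[X_i,X_j]=\sum_k c_{ij}^k X_k$, use that pullback commutes with Lie bracket (Lemma \ref{Lemma::FuncRn::PullbackComm}), reduce the index $k$ from $\{1,\dots,q\}$ to $\{1,\dots,n\}$ via part \ref{Item::Quant::Phi0::LinDep}, and read off $\tilde c_{ij}^k$ as sums of products of $\Phi_0^*c_{ij}^k$ and $b_k^l$; the $\Co^s$ bound follows once more from Lemma \ref{Lemma::FuncSpace::Product}. The main obstacle is the bootstrap in \ref{Item::Quant::Phi0::A}: one must verify that the argument of \cite[Proposition 4.1]{StovallStreetII} genuinely only uses $\{s_0\}$-admissibility (not $1$-admissibility) once the global-diffeomorphism conclusion is dropped, and that all the low-regularity products and compositions are legitimate in the range $s_0>0$, $\alpha>\frac12$ — this is where the function-space machinery of Sections \ref{Section::Func} and \ref{Section::FuncRevisited} does the real work.
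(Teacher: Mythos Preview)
Your overall architecture is right and matches the paper's for parts \ref{Item::Quant::Phi0::A} and \ref{Item::Quant::Phi0::LinDep}: the ODE for $A$ along radial directions, the $0$-admissible bound $|A(x)|\le D|x|$ giving $\mu_0$, and the reduction of $q>n$ to $q=n$ all go through as you say (the paper cites \cite[Propositions 9.4, 9.18 and Lemmas 9.33, 9.34]{StovallStreetI}).

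There is, however, a genuine gap in your treatment of \ref{Item::Quant::Phi0::TrueCijk}, specifically in obtaining $\|\tilde c_{ij}^k\|_{\Co^s}\lesssim_{\{s\}}1$ when $0<s\le 1$. You propose to use the classical composition estimate (Lemma \ref{Lemma::FuncRevis::PushForwardFuncSpaces}) on $\tilde c_{ij}^k=c_{ij}^k\circ\Phi_0$, but that lemma requires $c_{ij}^k$ to lie in a \emph{classical} Zygmund space, whereas the $\{s\}$-admissible hypothesis only gives control of $\|c_{ij}^k\|_{\Co^s_{X}}$, the intrinsic norm along the vector fields. These are not the same object, and you have no a priori coordinate system on $\Manifold$ in which $c_{ij}^k\in\Co^s$ with admissible bound. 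The natural route is: first transfer the intrinsic bound via $\|c_{ij}^k\circ\Phi_0\|_{\Co^s_Y}\le \|c_{ij}^k\|_{\Co^s_X}$ (this is \cite[Proposition 8.6]{StovallStreetI}), then compare $\Co^s_Y$ with the Euclidean $\Co^s$ on $B^n(\mu_0)$ via \cite[Proposition 8.12]{StovallStreetI}. The obstruction is that \cite[Proposition 8.6]{StovallStreetI} requires $\Phi_0$ to be (qualitatively) $C^2$, and when $s_0\le 1$ one only has $\Phi_0\in\Co^{1+s_0}\not\subset C^2$.

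The paper resolves this with a trick you are missing: introduce an auxiliary atlas $\As$ on $B^n(\mu_0)$ declaring $f$ to be $C^2_{\loc}$ precisely when $f\circ\Phi_0^{-1}$ is $C^2_{\loc}$ on $\Manifold$ (locally where $\Phi_0$ is injective). With respect to $\As$, the map $\Phi_0$ is tautologically $C^2$, so \cite[Proposition 8.6]{StovallStreetI} applies and yields $\|\tilde c_{ij}^k\|_{\Co^s_Y(B^n(\mu_0),\As)}\le\|c_{ij}^k\|_{\Co^s_X}$. The crucial observation is then that for $0<s\le 1$ the definition of $\|\cdot\|_{\Co^s_Y}$ depends only on the $C^{1,s/2}$-structure of the domain (it involves only $C^{0,s/2}$-paths tangent to $Y$ and the $\dist_Y$-metric), and $\As$ is compatible with the standard $C^{1,s/2}$-structure because $Y_1,\dots,Y_n\in\Co^{s_0}$ span. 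Hence $\|\tilde c_{ij}^k\|_{\Co^s_Y(B^n(\mu_0),\As)}=\|\tilde c_{ij}^k\|_{\Co^s_Y(B^n(\mu_0))}$, and now \cite[Proposition 8.12]{StovallStreetI} converts this to the Euclidean $\Co^s$ bound. Without this atlas-switching argument, your proof does not close for $s_0\in(0,1]$.
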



\begin{rmk}
\begin{enumerate}[parsep=-0.3ex,label=(\alph*)]
    \item {The constant $\mu_0$ does not depend on $\|c_{ij}^k\|_{\Co^{s_0}}$. It is almost a 0-admissible constant (see \cite[Definition 4.1]{StovallStreetI}) except it also depends on the quantity $\eta>0$ in \cite[Section 3.2]{StovallStreetI}.}
    \item When $\Phi_0$ is locally $C^1$-diffeomorphism, for any (continuous) vector field $L$ on $\Manifold$ there is a unique vector field $\tilde L$ on $B^n(\mu_0)$ such that $d\Phi_0(x)\tilde L\big|_x=L\big|_x$. So we define the pullback vector field as $\tilde L:=\Phi_0^*L$.
    \item We cannot say $[Y_i,Y_j]=\sum_{k=1}^n\tilde c_{ij}^kY_k$ yet, since $Y_1,\dots,Y_q$ are $\Co^{s_0}$ and we may not be able to talk about commutators of $\Co^{s_0}$-vector fields when $0<s_0\le\frac12$. Nevertheless $[Y_i,Y_j]$ can be thought of as $\Phi_0^*[X_i,X_j]$.
    \item When $0<s<1$, by standard results from  ODEs we only know that $\Phi_0$ is a $\Co^{1+s}$-map and we do not expect $\Phi_0$ to be $C^2$. 
    Unfortunately, the proof of injectivity for $\Phi_0$ in \cite[Proposition 9.15]{StovallStreetI}
    requires $\Phi_0^*X_1,\dots,\Phi_0^*X_n$ to be $C^1$
    Nevertheless we will show that $\Phi_0$ is injective when restricted to a ball centered at 0 with a smaller radius, and this smaller radius is a $\{s_0\}$-admissible constant. See Lemma \ref{Lemma::Quant::InjLemma} and Remark \ref{Rmk::Quant::Phi0isInj}.
    
    \item
    Lemma \ref{Lemma::Quant::Phi0} ``loses one derivative''
    in the sense that it implies $\| Y_j\|_{\Co^{s}}\lesssim_{\{s\}} 1$, but our main result
    gives $\|Y_j\|_{\Co^{s+1}}\lesssim_{\{s\}} 1$.
    Similar to the proof in \cite{StovallStreetII}, we will recover this lost derivative
    by composing with another map $\Phi_1$ in Proposition \ref{Prop::Quant} (see also \cite[Proposition 6.3]{StovallStreetII}).

\end{enumerate}
\end{rmk}

\begin{proof}[Proof of Lemma \ref{Lemma::Quant::Phi0}]
    Let $\tilde\eta>0$ be a number such that $\Phi_0$ is defined on $B^n(\tilde\eta)$ and $\Phi_0$ cannot be defined on $B^n(\tilde\eta')$ for any $\tilde\eta'>\tilde\eta$. 
    Note that $\tilde\eta$ is bounded below by a positive {$\{s_0\}$-admissible constant (also see \cite[Definitions 3.7 and 3.10]{StovallStreetI})}. 

    We first prove the result in the special case $q=n$. In this case,
    $X_1,\ldots, X_n$ form a basis of the tangent space to $\Manifold$ at every point.  Thus, $[X_i,X_j]=\sum_{k=1}^nc_{ij}^kX_k$ where $(c_{ij}^k)_{i,j,k=1}^n$ are uniquely determined by $X_1,\dots,X_n$.
    
    By \cite[Lemma 9.6]{StovallStreetI} we know there is a unique\footnote{In \cite[Section 9.3.1]{StovallStreetI}, what we call $\tilde A$ is called $A$, and what we call
    $A$ is called $\widehat A$.} 
    $\tilde A\in C^0(B^n(0,\tilde\eta);\Mbb^{n\times n})$ such that 
\begin{equation}\label{Eqn::Quant::InjLemma::ODEforA}
\begin{gathered}
    \begin{cases}
    \Coorvec r(r\tilde A(r\theta))=-\tilde A(r\theta)^2-C(r\theta)\tilde A(r\theta)-C(r\theta),&\text{for }|r|<\tilde\eta\text{ and }\theta\in\mathbb S^{n-1},\\\tilde A(0)=0,
    \end{cases}
    \\
    \text{ where }C(x)_i^j:=\sum_{k=1}^nx_k\cdot c_{ik}^j(\Phi_0(x)),\quad x\in B^n(0,\tilde\eta),\quad 1\le i,j\le n.
\end{gathered}
\end{equation}
    
By \cite[Proposition 9.4]{StovallStreetI} there is a $0$-admissible constant $D>0$ (in fact, depending only on $n$ and upper bounds for $\sum_{i,j,k=1}^n\|c_{ij}^k\|_{C^0(\Manifold)}$), such that (see \eqref{Eqn::FuncRevis::MatrixNorms} in Convention \ref{Conv::FuncRevis::MatrixNorms})
\begin{equation}\label{Eqn::Quant::InjLemma::BddofA}
    |\tilde A(x)|_{\Mbb^{n\times n}}\le D|x|,\quad x\in B^n(0,\tilde\eta).
\end{equation}

Take $\mu_0:=\frac1{2D}$, so $\|\tilde A\|_{C^0(B^n(\mu_0);\Mbb^{n\times n})}\le\frac12$. Therefore $I+A(x)$ is invertible matrix at every point $x\in B^n(\mu_0)$, which means $\Phi_0$ has non-degenerate tangent map at every point. By the Inverse Function Theorem, $\Phi_0$ is a locally $C^1$-diffeomorphism.

Define the matrix $A$ by $Y=:(I+A)\nabla$; where we are treating
$Y$ as the column vector of vector fields $Y=[Y_1,\dots,Y_n]^\top$,
and $\nabla$ is thought of as a column vector.
It follows from \cite[Proposition 9.18]{StovallStreetI}, that $A=\tilde A$ in $B^n(\mu_0)$ and $\|\tilde A\|_{\Co^s(B^n(\mu_0);\Mbb^{n\times n})}\lesssim_{\{s\}}1$. Since $Y=(I+A)\nabla$, we have $\sum_{i=1}^n\|Y_i\|_{\Co^s(B^n(\mu_0);\R^n)}\lesssim_{\{s\}}1$, {finishing the proof of  \ref{Item::Quant::Phi0::A} when $n=q$}.

Since $\|A\|_{C^0(B^n(\mu_0);\Mbb^{n\times n})}\le\frac12$, we have $\|(I+A)^{-1}\|_{C^0(B^n(\mu_0);\Mbb^{n\times n})}\le\sum_{k=0}^\infty(\frac12)^k=\frac32<\infty$, so by \cite[Lemma 5.7]{StovallStreetII} {(applied to the cofactor representation of $(I+A)^{-1}$)} with $\|A\|_{\Co^s(B^n(\mu_0);\Mbb^{n\times n})}\lesssim_{\{s\}}1$, we get
\begin{equation}\label{Eqn::Quant::I+AisAdmiss}
    \|(I+A)\|_{\Co^s(B^n(\mu_0))}+\|(I+A)^{-1}\|_{\Co^s(B^n(\mu_0))}\lesssim_{\{s\}}1.
\end{equation}

Take $\tilde c_{ij}^k:=c_{ij}^k\circ\Phi_0$ for $1\le i,j,k\le n$. We separate the proof of $\sum_{i,j,k=1}^n\|\tilde c_{ij}^k\|_{\Co^s(B^n(\mu_0))}\lesssim_{\{s\}}1$ into the case $s>1$ and the case $0<s\le1$.

{
When $s>1$, we have $Y_1,\dots,Y_n\in \Co^{s}\subset C^1$. By \cite[Lemma 9.24]{StovallStreetI} we have $\sum_{i,j,k=1}^n\|\tilde c_{ij}^k\|_{\Co^s(B^n(\mu_0))}\lesssim_{\{s\}}1$.

When $0<s\le1$, we may not have $Y_1,\dots,Y_n\in C^1$. In order to use previous results such as \cite[Proposition 8.6]{StovallStreetI}, we need $\Phi_0$ to be (qualitatively) $C^2$; though we do not require any estimates on any $C^2$ norm of $\Phi_0$. To get around the fact that $\Phi_0$ is not $C^2$, we introduce another atlas on $B^n(\mu_0)$ with respect to which $\Phi_0$ is $C^2$}. 
Indeed, we say $f:B^n(\mu_0)\to\R$ is $C^2_\loc$ with respect to the atlas $\As$, if for every open subset $U\subseteq B^n(\mu_0)$ such that $\Phi_0:U\to \Phi_0(U)$ is bijective, we have $f\circ\Phi_0^{-1}:\Phi_0(U)\subseteq\Manifold\to\R$ is $C^2_\loc$.

Since $Y_1,\dots,Y_n$ are $\Co^s$ and span the tangent space at every point in $B^n(\mu_0)$, we know $\As$ is compatible with the standard $\Co^{s+1}$-structure on $B^n(\mu_0)$. In particular both $(B^n(\mu_0),\As)$ 
agrees with the standard $C^{1,s/2}$-structure on $B^n(\mu_0)$.

{Now $\Phi_0:(B^n(\mu_0),\As)\to \Manifold$ is $C^2$. By \cite[Proposition 8.6]{StovallStreetI}} we have \begin{equation}\label{Eqn::Quant::Phi0:TildeC1}
    \|\tilde c_{ij}^k\|_{\Co^s_Y(B^n(\mu_0),\As)}=\|c_{ij}^k\circ\Phi_0\|_{\Co^s_Y(B^n(\mu_0),\As)}\le\|c_{ij}^k\|_{\Co^s_X(\Manifold)}\quad 1\le i,j,k\le n,\quad 0<s\le1.
\end{equation}

On the other hand by \cite[Section 2.2]{StovallStreetI}, the definition of $\|\cdot\|_{\Co^s_Y}$ involves only the $C^{1,s/2}$-structure of the manifold. {Indeed for $0<s\le1$, on the domain $U=B^n(\mu_0)$,
{\begin{gather*}
    \|f\|_{\Co^s_Y}=\|f\|_{C^0}+\sup\limits_{x,y\in U;x\neq y}\frac{|f(x)-f(y)|}{\dist_Y(x,y)^\frac s2}+\sup\limits_{\gamma\in\mathcal P_{Y,\frac s2}(h)}h^{-s}|f(\gamma(2h))-2f(\gamma(h))+f(\gamma(0))|,\quad\text{where}
    \\
    \mathcal P_{Y,\frac s2}(h)=\Big\{\gamma\in C^{1,\frac s2}([0,2h];U):\dot\gamma(t)=\sum_{j=1}^nd_j(t)Y_j(\gamma(t)),\ d_1,\dots, d_n\in C^{0,\frac s2}[0,2h],\ \sum_{j=1}^n\|d_j\|_{C^{0,\frac s2}[0,2h]}^2\le1\Big\},
    \\\dist_Y(x,y)=\inf\Big\{T>0:\exists \gamma\in C^{0,1}([0,T];U),\gamma(0)=x,\gamma(T)=y,\dot\gamma(t)=\sum_{j=1}^nd_j(t)Y_j(\gamma(t)),\ \sum_{j=1}^n\|d_j\|_{L^\infty[0,T]}^2\le1\Big\}.
\end{gather*}
Here $\dist_Y$ only depends on the Lipschitz structure of $B^n(\mu_0)$ and $\mathcal P_{Y,\frac s2}(h)$ only depends on the $C^{1,\frac s2}$-structure of $B^n(\mu_0)$.}
}

Since $\As$ is compatible with the standard $C^{1,\frac s2}$-structure for $B^n(\mu_0)$, we have 
\begin{equation}\label{Eqn::Quant::Phi0:TildeC2}
    \|\tilde c_{ij}^k\|_{\Co^s_Y(B^n(\mu_0),\As)}=\|\tilde c_{ij}^k\|_{\Co^s_Y(B^n(\mu_0))},\quad 1\le i,j,k\le n,\quad0<s\le1.
\end{equation}

To show $\|\tilde c_{ij}^k\|_{\Co^s(B^n(\mu_0))}\lesssim_{\{s\}}1$, it remains to show that $\|\tilde c_{ij}^k\|_{\Co^s(B^n(\mu_0))}\lesssim_{\{s\}}\|\tilde c_{ij}^k\|_{\Co^s_Y(B^n(\mu_0))}$.

Note that $\Co^s(B^n(\mu_0))\subset C^{0,s/2}(B^n(\mu_0))$ with $\|f\|_{C^{0,s/2}(B^n(\mu_0))}\lesssim_{n,s,\mu_0}\|f\|_{\Co^s(B^n(\mu_0))}$ for all $f$. 
Using \eqref{Eqn::Quant::I+AisAdmiss} and \cite[Proposition 8.12]{StovallStreetI} we get $\|\tilde c_{ij}^k\|_{\Co^s(B^n(\mu_0))}\lesssim_{\{s-1\}}\|\tilde c_{ij}^k\|_{\Co^s_Y(B^n(\mu_0))}$, in particular $\|\tilde c_{ij}^k\|_{\Co^s(B^n(\mu_0))}\lesssim_{\{s\}}\|\tilde c_{ij}^k\|_{\Co^s_Y(B^n(\mu_0))}$.
{Here the implicit $\{s-1\}$-admissible constant (that appears when we say ``$\lesssim_{\{s-1\}}$'') is given in \cite[Definition 8.10]{StovallStreetI}, which depends on the upper bound of $C^{0,\frac s2}$-norms of $(\tilde c_{ij}^k)_{i,j,k=1}^n$.}

Combining this with \eqref{Eqn::Quant::Phi0:TildeC1} and \eqref{Eqn::Quant::Phi0:TildeC2} we get $\sum_{i,j,k=1}^n\|\tilde c_{ij}^k\|_{\Co^s(B^n(\mu_0))}\lesssim_{\{s\}}1$ for $0<s\le1$, finishing the proof of \ref{Item::Quant::Phi0::TrueCijk} when $q=n$.

The general case $q>n$ can be reduced to the case $q=n$ as in \cite[Section 9.3.2]{StovallStreetI}: we get \ref{Item::Quant::Phi0::LinDep} from \cite[Lemma 9.33]{StovallStreetI} and \ref{Item::Quant::Phi0::TrueCijk} from \cite[Lemma 9.34]{StovallStreetI}.
\end{proof}

Lemma \ref{Lemma::Quant::Phi0} yields a local $C^1$-diffeomorphism $\Phi_0:B^n(\mu_0)\to\Manifold$ and $\tilde c_{ij}^k\in C^0(B^n(\mu_0))$ for $1\le i,j,k\le n$ such that $\Phi_0^*[X_i,X_j]=\sum_{k=1}^n\tilde c_{ij}^kY_k$ for $1\le i,j\le n$ and $\sum_{i=1}^q\|\Phi_0^*X_i\|_{\Co^s(B^n(\mu_0);\R^n)}+\sum_{i,j,k=1}^n\|\tilde c_{ij}^k\|_{\Co^s(B^n(\mu_0))}\lesssim_{\{s\}}1$ for $s\ge s_0$. 
Thus, we have reduced the problem to studying vector fields on $B^n(\mu_0)$,
which have estimates in terms of classical function spaces, instead of the
abstract function spaces $\Co^s_X$.

By Lemma \ref{Lemma::Quant::Phi0} \ref{Item::Quant::Phi0::LinDep}, $\Phi_0^*X_{n+1},\dots,\Phi_0^*X_q$ are now linear combinations of $\Phi_0^*X_1,\dots,\Phi_0^*X_n$ whose {coefficients have $\Co^{s_0+1}$-norms} bounded by a $\{s_0\}$-admissible constant. {So we can assume $q=n$}, just as  in the beginning of \cite[Section 6]{StovallStreetII}. 

When $s_0>1$, the proof of \cite[Theorem 2.14]{StovallStreetII} is as follow.
In \cite[Proposition 6.3]{StovallStreetII} it is shown
that there is a map $\Phi_2:\Ball^n\to B^n(\mu_0)$ which is $\Co^{s_0+1}$-diffeomorphism onto its image such that $\Phi_2^*Y_1,\dots,\Phi_2^*Y_n$ are $\Co^{s_0+1}$ and have $\Co^{s_0+1}$-norms bounded by a $\{s_0\}$-admissible constant. Meanwhile if $Y_1,\dots,Y_n$ and $(\tilde c_{ij}^k)_{i,j,k=1}^n$ are all $\Co^s$ for some $s>s_0$, then $\Phi_2$ is automatically $\Co^{s+1}$ and the $\Co^{s+1}$-norms of the coefficients of $\Phi_2^*Y_1,\dots,\Phi_2^*Y_n$ are automatically bounded by  a $\{s\}$-admissible constant.  This completes the proof when $s_0>1$.  Our goal
is to generalize this argument to $s_0>0$.

Based on the techniques from Section \ref{Section::KeyThm}, we can prove an analog of \cite[Proposition 6.3]{StovallStreetII} in the setting of $s_0>0$. We formulate the statement in
the proposition below.

\bigskip
Let $X_1,\dots,X_n$ be $C^1$-vector fields on a $C^2$-manifold $\Manifold$ that form a  basis of the tangent space at every point. Near a fixed point $p\in\Manifold$ we define $\Phi_0(x):=e^{x\cdot X}(p)$. Write $[X_i,X_j]=\sum_{k=1}^nc_{ij}^kX_k$ where $c_{ij}^k\in C^0_\loc(\Manifold)$ are uniquely determined by $X_1,\dots,X_n$. 

On the subset of the domain of $\Phi_0$ where $\nabla\Phi_0$ is non-degenerate (so $\Phi_0$ is locally $C^1$-diffeomorphism on this set), we denote $Y_i:=\Phi_0^*X_i$ and $\tilde c_{ij}^k:=\Phi_0^*c_{ij}^k$ for $1\le i,j,k\le n$. And we write $Y=[Y_1,\dots,Y_n]^\top$ as $Y=(I+A(x))\Coorvec x$ where $A$ is a $\Mbb^{n\times n}$-valued function defined on the domain of $\Phi_0$.
\begin{prop}\label{Prop::Quant} 

Let $s_0,\mu_0>0$, $s\ge s_0$ and $M_0,M_1>0$. There are constants $\widehat K=\widehat K(n,s_0,\mu_0,M_0)>0$, $K_0=K_0(n,s_0,\mu_0,M_0)>0$, and $K_1=K_1(n,s_0,s,\mu_0,M_0,M_1)>0$ that satisfy the following:

Let $X_1,\dots,X_n$ and $\Phi_0(x)=e^{x\cdot X}(p)$ be as above. Suppose we have  the following:
\begin{itemize}[parsep=-0.3ex]
    \item $\Phi_0:B^n(0,\mu_0)\to\Manifold$ is defined and is a locally $C^1$-diffeomorphism onto its image (so that $Y,A,\tilde c_{ij}^k$ are defined on $B^n(0,\mu_0)$).
    \item $\sup\limits_{|x|<\mu_0}|A(x)|\le\frac12$ and 
    \begin{equation}\label{Eqn::Quant::AssumptionM0}
        \|A\|_{\Co^{s_0}(B^n(0,\mu_0);\Mbb^{n\times n})}+\sum_{i,j,k=1}^n\|\tilde c_{ij}^k\|_{\Co^{s_0}(B^n(0,\mu_0))}<M_0.
    \end{equation}
\end{itemize}
Then
\begin{enumerate}[parsep=-0.3ex,label=(\roman*)]
    \item\label{Item::Quant::Phi1Exist} There is a map $\Phi_1:\Ball^n\to B^n(0,\mu_0)$ such that
\begin{itemize}[parsep=-0.3ex]
    \item $\Phi_1(0)=0$ and $\Phi_1$ is $\Co^{s_0+1}$-diffeomorphism onto its image.
    \item $\Phi_1^*Y=[\Phi_1^*Y_1,\dots,\Phi_1^*Y_n]^\top$ is a collection of $\Co^{s_0+1}$-vector fields on $\Ball^n$ that can be written as
    \begin{equation}\label{Eqn::Quant::Khat}
        \Phi_1^*Y=\widehat K (I+\widehat A)\nabla,\quad\text{where }\widehat A(0)=0, \ \|\widehat A\|_{C^0(\Ball^n;\Mbb^{n\times n})}\le\tfrac{1}{2}.
    \end{equation}
    {Moreover, we have the estimate
    \begin{equation}\label{Eqn::Quant::K0}
        \|\Phi_1\|_{\Co^{s_0+1}(\Ball^n;\R^n)}+\|\widehat A\|_{\Co^{s_0+1}(\Ball^n;\Mbb^{n\times n})}\le K_0.
    \end{equation}}
\end{itemize}
\item\label{Item::Quant::Phi1Reg} Suppose additionally $A$ and $\tilde c_{ij}^k$ are all $\Co^s$ with
\begin{equation}\label{Eqn::Quant::AssumptionReg}
        \|A\|_{\Co^{s}(B^n(0,\mu_0);\Mbb^{n\times n})}+\sum_{i,j,k=1}^n\|\tilde c_{ij}^k\|_{\Co^{s}(B^n(0,\mu_0))}<M_1.
    \end{equation}
    
    Then $\Phi_1:\Ball^n\to B^n(0,\mu_0)$ is a $\Co^{s+1}$-map and $\widehat A\in\Co^{s+1}(\Ball^n;\Mbb^{n\times n})$. Moreover 
    \begin{equation}\label{Eqn::Quant::K1}
        \|\Phi_1\|_{\Co^{s+1}(\Ball^n;\R^n)}\le K_1,\qquad \|\widehat A\|_{\Co^{s+1}(\Ball^n;\Mbb^{n\times n})}\le K_1.
    \end{equation}
\end{enumerate}

\end{prop}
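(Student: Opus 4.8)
The plan is to deduce Proposition~\ref{Prop::Quant} from the machinery already assembled in Section~\ref{Section::KeyThm}, applied to the dual $1$-forms of the pulled-back vector fields $Y_1,\dots,Y_n$ on $B^n(0,\mu_0)$. Concretely, since $Y=(I+A)\nabla$ with $\|A\|_{C^0}\le\frac12$, the matrix $I+A$ is invertible everywhere, and the dual coframe $\theta^1,\dots,\theta^n$ of $Y_1,\dots,Y_n$ satisfies $[\theta^1,\dots,\theta^n]^\top=(I+A)^{-1}[dx^1,\dots,dx^n]^\top$, so by Lemma~\ref{Lemma::FuncRevis::InvertingMatrix} and \eqref{Eqn::Quant::I+AisAdmiss}-type estimates, $\theta^i\in\Co^{s_0}$ with $\sum_i\|\theta^i\|_{\Co^{s_0}}$ controlled by $M_0$, and likewise $\sum_i\|\theta^i\|_{\Co^s}$ controlled by $M_1$ under \eqref{Eqn::Quant::AssumptionReg}. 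The crucial input is the commutator relation $\Phi_0^*[X_i,X_j]=\sum_k\tilde c_{ij}^kY_k$, which via the identity \eqref{Eqn::ProofThm::BLemma::Tmp}-style computation (valid here because the relevant products are defined by Lemma~\ref{Lemma::FuncSpace::Product}, and $[X_i,X_j]$ makes sense on $\Manifold$ by hypothesis since $X_i\in C^1$) gives $d\theta^k=\sum_{i<j}\tilde c_{ij}^k\,\theta^i\wedge\theta^j$; hence $d\theta^k\in\Co^{s_0-1}$ with norm $\lesssim_{\{s_0\}}1$ by Proposition~\ref{Prop::FuncRn::LieOnManiofold}\ref{Item::FuncRn::WedgeProdCont} (and $\Co^{s-1}$ with norm $\lesssim_{\{s\}}1$). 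Note $s_0-1>-\tfrac12\cdot$ is not needed; what matters is just that $s_0-1+s_0>0$ for the wedge product, or more simply that $\tilde c_{ij}^k\in\Co^{s_0}\subset L^\infty$ so $\tilde c_{ij}^k\theta^i\wedge\theta^j\in\Co^{s_0}$ by Lemma~\ref{Lemma::FuncSpace::Product}, which in fact gives $d\theta^k\in\Co^{s_0}\subset\Co^{s_0-1}$.

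Next I would apply the scaling Proposition~\ref{Prop::Key::Scaling} with $\alpha=s_0$, $\beta=\min(s_0+1,s_0+1)$—more precisely, we run the argument with $\beta\in[s_0,s_0+1]$ chosen so that the conclusion gives $\Co^{\beta}$ regularity; since we want $\Co^{s_0+1}$ we take $\beta=s_0+1$ (legitimate since $d\theta^k\in\Co^{s_0}=\Co^{\beta-1}$). First translate and apply a linear map so the coframe is normalized at $0$ (as in the proof of Corollary~\ref{Cor::Key::BasicCaseforMainThm}); then Proposition~\ref{Prop::Key::Scaling}, with $\tilde c$ the small constant $c$ of Theorem~\ref{Thm::Keythm} and with $M$ the bound coming from \eqref{Eqn::Quant::AssumptionM0}, produces a scaling radius $\kappa_0$ depending only on $n,s_0,\mu_0,M_0$, together with $1$-forms $\tilde\lambda^i$ on $\Ball^n$ satisfying the hypotheses of Theorem~\ref{Thm::Keythm}. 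Applying Theorem~\ref{Thm::Keythm} yields a $\Co^{s_0+1}$-diffeomorphism $F$ of $\Ball^n$ with $\|F-\id\|_{\Co^{s_0+1}}\lesssim c$ and $F_*\tilde\lambda^i\in\Co^{s_0+1}$ with norms $\lesssim_{\{s_0\}}1$. Composing the translation, the linear map, the scaling $\phi_{\kappa_0}$, the map $F$, and a final linear renormalization onto $\Ball^n$ (as in the last paragraph of the proof of Corollary~\ref{Cor::Key::BasicCaseforMainThm}) produces $\Phi_1:\Ball^n\to B^n(0,\mu_0)$; one then reads off $\Phi_1^*Y$ from the dual $1$-forms $\Phi_1^*\theta^i$, writing $\Phi_1^*Y=\widehat K(I+\widehat A)\nabla$ where $\widehat K$ absorbs the scaling factor $\kappa_0^{-1}$ times the determinant of the linear normalizations (hence $\widehat K$ depends only on $n,s_0,\mu_0,M_0$) and $\widehat A$ is, up to a harmless invertible constant matrix, $(\,(I+(\text{coefficient matrix of }\Phi_1^*\theta))^{-1}\,)-I$; smallness $\|\widehat A\|_{C^0}\le\frac12$ and $\widehat A(0)=0$ follow from the smallness estimates in Theorem~\ref{Thm::Keythm} (shrinking $c$ if necessary), and \eqref{Eqn::Quant::K0} follows from \eqref{Eqn::Keythm::Conclusion}, Lemma~\ref{Lemma::FuncRevis::InvertingMatrix}, and Lemma~\ref{Lemma::FuncSpace::Product}. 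This proves \ref{Item::Quant::Phi1Exist}.

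For \ref{Item::Quant::Phi1Reg} I would re-run exactly the same construction but carry the $\Co^s$ bounds through. Under \eqref{Eqn::Quant::AssumptionReg} we have $\sum_i(\|\theta^i\|_{\Co^{s_0}}+\|d\theta^i\|_{\Co^{s-1}})$—and more—bounded in terms of $M_1$; the scaling Proposition~\ref{Prop::Key::Scaling} is applied with $\beta=\min(s_0+1,s+1)$ restricted to $[s_0,s_0+1]$, but the point is that the \emph{same} map $\Phi_1$ works, and the regularity Proposition~\ref{Prop::Key::RegularityPDE} (which was proved for general $\beta\in[\alpha,\alpha+1]$) upgrades the coefficient matrix $\widehat A$ from $\Co^{s_0}$ to $\Co^{\min(s,s_0+1)}$. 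If $s\le s_0+1$ we are done in one step; if $s>s_0+1$ we bootstrap: now $\widehat A\in\Co^{s_0+1}$, so $Y$ is $\Co^{s_0+1}$ in the new coordinates, i.e. we may increase $s_0$ to $s_0+1$ (replacing $C^2$ by $\Co^{s_0+2}$ via the atlas just constructed, exactly as in the passage ``we can assume $\alpha=1$'' near the end of the proof of Theorem~\ref{Thm::TheMainResult}), and iterate finitely many times until we reach $s$. At each stage the admissible constants depend only on the data listed, so the final $\|\Phi_1\|_{\Co^{s+1}}+\|\widehat A\|_{\Co^{s+1}}\le K_1(n,s_0,s,\mu_0,M_0,M_1)$.

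The main obstacle is the bootstrapping/consistency issue in \ref{Item::Quant::Phi1Reg}: one must verify that the \emph{single} map $\Phi_1$ produced in part~\ref{Item::Quant::Phi1Exist} is the one whose higher regularity is being asserted, rather than producing a new, a priori different map for each value of $s$. This is exactly the uniqueness built into Proposition~\ref{Prop::Key::ExistPDE} (the Dirichlet problem for the divergence-form elliptic equation \eqref{Eqn::Key::PDEforR} has a unique solution with $R|_{\partial\Ball^n}=0$) together with the uniqueness of the fixed point in Proposition~\ref{Prop::Key::RegularityPDE}; so the correct statement of Theorem~\ref{Thm::Keythm}—or rather its proof—already yields a canonically determined $F$, and $\Phi_1$ is its composition with the (data-determined) scaling and linear maps. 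The remaining technical care is purely bookkeeping: tracking how the scaling factor $\kappa_0$ and the normalizing linear maps depend only on $n,s_0,\mu_0,M_0$ (and not on $s$ or $M_1$), which is where the constant $\widehat K$ comes from, and checking that all products of low-regularity objects appearing in the identity $d\theta^k=\sum\tilde c_{ij}^k\theta^i\wedge\theta^j$ are legitimate via Lemma~\ref{Lemma::FuncSpace::Product} since $s_0>0$.
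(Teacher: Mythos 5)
Your treatment of part \ref{Item::Quant::Phi1Exist} follows essentially the same route as the paper: pass to the dual coframe $\theta^i$ of $Y_i$, use the structure identity $d\theta^k=\sum_{i<j}\tilde c_{ij}^k\,\theta^i\wedge\theta^j$ to bound $\|d\theta^k\|_{\Co^{s_0}}$ by an admissible constant, normalize and rescale via Proposition \ref{Prop::Key::Scaling}, apply Theorem \ref{Thm::Keythm}, and compose with data-determined affine maps. (One small simplification you could note: no initial translation or linear normalization is needed, since $A(0)=0$ forces $\theta^i|_0=dx^i|_0$ automatically.)

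In part \ref{Item::Quant::Phi1Reg}, however, there is a genuine gap in the bootstrap. Proposition \ref{Prop::Key::RegularityPDE} is a contraction-mapping argument whose hypothesis \eqref{Eqn::Key::RegularityPDE::Assumption} demands \emph{smallness} of $\|B\|_{\Co^\alpha}+\|B\|_{\Co^\beta(\partial\Ball^n)}+\sum_l\|d\eta^l\|_{\Co^{\beta-1}}$ at the regularity level at which it is applied. Your first application (with $\alpha=s_0$, $\beta=s_0+1$) is fine because the scaling step makes these quantities small. But to iterate --- i.e.\ to re-apply the proposition with $\alpha'=s_0+1$, $\beta'=s_0+2$ --- you would need the \emph{higher-order} norms $\|B\|_{\Co^{s_0+1}}$, $\|B\|_{\Co^{s_0+2}(\partial\Ball^n)}$, $\|d\eta\|_{\Co^{s_0+1}}$ to be below the threshold $c_3(n,s_0+1,s_0+2)$; part \ref{Item::Quant::Phi1Exist} only shows they are \emph{bounded} by admissible constants ($K_0$-type quantities), not small, and no further rescaling is permitted because the map $\Phi_1$ is already fixed. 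Your fallback --- ``increase $s_0$ to $s_0+1$'' and re-run the whole construction --- produces a genuinely \emph{new} elliptic problem (new data $A$, new metric $g$) and hence a new diffeomorphism; the uniqueness of the Dirichlet problem for \eqref{Eqn::Key::PDEforR} identifies the solution only for fixed data, so it does not show that the original $\Phi_1$ itself gains regularity. This is exactly why the paper proves the separate Schauder-type result, Proposition \ref{Prop::Quant::Schauder}: there, $B$ is exhibited as a $C^1$ solution of the first-order \emph{linear} elliptic system \eqref{Eqn::Quant::NewEqnforB2} ($\Ec u+\Lc_B u=g_B$ with $\Ec^*\Ec=\Lap$), and the classical interior estimate of \cite[Proposition A.1]{StovallStreetII} requires only $L^\infty$-smallness of the coefficients $\widetilde\Rc_{il}^{jk}(B)$ --- which does follow from the $s_0$-level smallness --- while delivering the full jump to $\Co^{s+1}$ in a single step for every $s\ge s_0$, with interior (not global) control on $\tfrac34\Ball^n$; the affine map $\psi$ into $B^n(F(0),\tfrac16)\subset\tfrac34\Ball^n$ is then what makes this interior estimate suffice. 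To repair your argument you would need to supply an analogue of this one-step linear Schauder estimate (or otherwise circumvent the smallness requirement at higher regularity).
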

\begin{rmk}
\begin{enumerate}[parsep=-0.3ex,label=(\alph*)]
    \item For the proof of Theorem \ref{Thm::Quant} we will apply Proposition \ref{Prop::Quant} with $\Phi_0$
    and $\mu_0$
    as in Lemma \ref{Lemma::Quant::Phi0}.  In this application,
    $\widehat K$ and $K_0$ are $\{s_0\}$-admissible constants and $K_1$ is an $\{s\}$-admissible constant.
    In particular, in this application, we have $\|\Phi_1\|_{\Co^{s+1}}\lesssim_{\{s\}}1$ and $\|\widehat A\|_{\Co^{s+1}}\lesssim_{\{s\}}1$.
    \item A map similar to $\Phi_1$ in Proposition \ref{Prop::Quant} is called $\Phi_2$ in \cite[Proposition 6.3]{StovallStreetII}. In the proof of \cite[Proposition 6.3]{StovallStreetII}, $\Phi_2$ is decomposed as a $\Co^{s_0+1}$-diffeomorphism and a scaling map. In our setting the scaling is already done
    in Proposition \ref{Prop::Key::Scaling}. Also see Lemma \ref{Lemma::Quant::ScalingReg}.
\end{enumerate}
\end{rmk}

We need some preliminary results to prove Proposition \ref{Prop::Quant}.

Suppose we have Proposition \ref{Prop::Quant} \ref{Item::Quant::Phi1Exist}, that is, we construct a $\Phi_1$ such that $\|\Phi_1\|_{\Co^{s_0+1}}+\sum_{i=1}^n\|\Phi_1^*Y_i\|_{\Co^{s_0+1}}\lesssim_{\{s_0\}}1$. Note that the $\Phi_1$ does not depend on the index $s$. In order to prove Proposition \ref{Prop::Quant} \ref{Item::Quant::Phi1Reg}, i.e. $\|\Phi_1\|_{\Co^{s+1}}+\sum_{i=1}^n\|\Phi_1^*Y_i\|_{\Co^{s+1}}\lesssim_{\{s\}}1$ for every $s>s_0$ (see also for \cite[Theorem 2.14 (j)]{StovallStreetII}), we need to give regularity estimate for Theorem \ref{Thm::Keythm}. 

Instead of vector fields, we proceed by using 1-forms.

Recall in Section \ref{Section::KeyThm::Outline} we start with 1-forms $\lambda^i=dx^i+\sum_{j=1}^na_j^idx^j$, $i=1,\dots,n$, defined on $\Ball^n\subset\R^n$ such that $A=(a_j^i)_{n\times n}$ is supported in $\frac12\Ball^n$. We let $F=\id+R:\Ball^n_x\to\Ball^n_y$
be the map in Proposition \ref{Prop::Key::ExistPDE}, which is a $C^1$-diffeomorphism and solves \eqref{Eqn::Key::PDEforR} with $R\big|_{\partial\Ball^n}=0$. We write the pushforward 1-forms $\eta^i=F_*\lambda^i$, $i=1,\dots,n$ on $\Ball^n_y$ as $\eta^i=dy^i+\sum_{j=1}^nb_j^idy^j$. By Lemma \ref{Lemma::Key::AtoB} we know $B=(b_j^i)_{n\times n}:\Ball^n\to\Mbb^{n\times n}$ solves \eqref{Eqn::Key::PDEforB}, which can be rewritten as \eqref{Eqn::Key::RegularityPDE::EqnasCalR}.

\begin{prop}\label{Prop::Quant::Schauder}
Fix $s_0>0$.  There is a $c'=c'(n,s_0)>0$, such that in additional to the results in Theorem \ref{Thm::Keythm} with $\alpha=s_0$, $\beta=s_0+1$ and $c(n,s_0,s_0+1)=c'(n,s_0)$, we have the following:

\begin{enumerate}[parsep=-0.3ex,label=(\roman*)]
    \item\label{Item::Quant::SchauderSupofB} 
    For the collection of 1-forms $[F_*\lambda^1,\dots,F_*\lambda^n]^\top=(I+B)dy$ we have $\|B\|_{C^0(\Ball^n;\Mbb^{n\times n})}\le\frac14$.
\end{enumerate}

And for any $s\ge s_0$, $M'>0$, there is a $K'=K'(n,s,s_0,M')>0$ such that if in addition to the assumptions of Theorem \ref{Thm::Keythm}, we have $\lambda^1,\dots,\lambda^n,d\lambda^1,\dots,d\lambda^n\in\Co^s$ with 
$$\sum_{i=1}^n\|\lambda^i\|_{\Co^s(\Ball^n;T^*\Ball^n)}+\|d\lambda^i\|_{\Co^s(\Ball^n;\wedge^2T^*\Ball^n)}<M',$$ then:
\begin{enumerate}[parsep=-0.3ex,label=(\roman*)]\setcounter{enumi}{1}
    \item\label{Item::Quant::QuantExt} The map $F:\Ball^n_x\to\Ball^n_y$ in Theorem \ref{Thm::Keythm} is $\Co^{s+1}$ and its inverse $\Phi$ satisfies $\|\Phi\|_{\Co^{s+1}(\Ball^n;\R^n)}<K'$.
    \item\label{Item::Quant::QuantReg} The 1-forms $\eta^1,\dots,\eta^n$ are all $\Co^{s+1}$ and the coefficient matrix $B:\Ball^n\to\Mbb^{n\times n}$ satisfies $\|B\|_{\Co^{s+1}(\frac34\Ball^n;\Mbb^{n\times n})}<K'$.
\end{enumerate}
\end{prop}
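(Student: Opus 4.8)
\emph{Proof proposal.}
The plan is to bootstrap the a priori control coming from Theorem \ref{Thm::Keythm} by means of the classical interior and Dirichlet regularity theory for second order divergence-form elliptic equations, following the same template as the proofs of Propositions \ref{Prop::Key::ExistPDE} and \ref{Prop::Key::RegularityPDE}, but now carried out in the $\Co^{s+1}$-scale for an arbitrary $s\ge s_0$.

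Part \ref{Item::Quant::SchauderSupofB} is immediate. Applying Theorem \ref{Thm::Keythm} with $\alpha=s_0$, $\beta=s_0+1$, the conclusion \eqref{Eqn::Keythm::Conclusion} gives $\sum_i\|\eta^i-dy^i\|_{\Co^{s_0+1}}\le c^{-1}\sum_i(\|\lambda^i-dx^i\|_{\Co^{s_0}}+\|d\lambda^i\|_{\Co^{s_0}})$ for $c=c(n,s_0,s_0+1)$; since $\|B\|_{C^0(\Ball^n;\Mbb^{n\times n})}\lesssim_n\sum_i\|\eta^i-dy^i\|_{\Co^{s_0+1}}$, it suffices to choose $c'=c'(n,s_0)\le c$ so small that the right-hand side is $\le\tfrac14$. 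I would further shrink $c'$, still depending only on $n$ and $s_0$, so that $\sqrt{\det g}\,g^{ij}$ and $\sqrt{\det h}\,h^{ij}$ are uniformly close to $\delta^{ij}$ (Lemma \ref{Lemma::Key::TaylorExpansionofRiemMetric} and Remark \ref{Rmk::Key::TaylorExpansionofRiemMetric}), hence uniformly elliptic with constants depending only on $n$; this is exactly what will keep the Schauder constants below independent of $c'$.

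For \ref{Item::Quant::QuantExt}, recall that $R=F-\id$ solves the divergence-form equation \eqref{Eqn::Key::PDEforR} with $R|_{\partial\Ball^n}=0$. Under the extra hypothesis $A\in\Co^s$ with $\|A\|_{\Co^s}\lesssim_nM'$, Lemma \ref{Lemma::Key::TaylorExpansionofRiemMetric} shows that the coefficients $\sqrt{\det g}\,g^{ij}$ lie in $\Co^s$ and the right-hand sides $\sum_i\partial_{x^j}(\sqrt{\det g}\,g^{ij}a_i^k)$ are divergences of $\Co^s$ fields, all with norms $\lesssim_{n,M'}1$, while the operator is uniformly elliptic with constants depending only on $n$. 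The classical Schauder/Dirichlet regularity theory for divergence-form elliptic equations in the Zygmund--H\"older scale (e.g.\ \cite[Theorem 8.34]{GilbargTrudinger}, \cite[Proposition 4.1]{TaylorPDE3}, bootstrapped for $s\ge1$) then yields $R\in\Co^{s+1}(\Ball^n;\R^n)$ with $\|R\|_{\Co^{s+1}}\le K'=K'(n,s,s_0,M')$; crucially, since the variable coefficients are handled directly by the Schauder estimate, no smallness beyond that already fixed in \ref{Item::Quant::SchauderSupofB} is used. As $\|R\|_{\Co^{s_0+1}}$ is small, $F$ is a $\Co^{s+1}$-diffeomorphism and, by the inverse-function estimate \cite[Lemma 5.9]{StovallStreetII}, its inverse $\Phi$ satisfies $\|\Phi\|_{\Co^{s+1}(\Ball^n;\R^n)}\le K'$.

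For \ref{Item::Quant::QuantReg} I would argue as in Proposition \ref{Prop::Key::RegularityPDE}. First, using $F\in\Co^{s+1}$ from \ref{Item::Quant::QuantExt} together with the hypothesis $d\lambda^k\in\Co^s$, one gets $d\eta^k=d(F_*\lambda^k)\in\Co^s$ with $\|d\eta^k\|_{\Co^s}\lesssim_{M'}\|d\lambda^k\|_{\Co^s}$; this is the quantitative form of Proposition \ref{Prop::FuncRn::dOfLowRegularityForm}, obtained by re-running the paraproduct construction of Proposition \ref{Prop::FuncRevis::QuantdOfLowRegularityForm} \ref{Item::FuncRevis::QuantdOfLowRegularityForm::3} with the diffeomorphism merely bounded (not small) in $\Co^{s+1}$, which the estimates of Lemma \ref{Lemma::FuncRevis::ParaProdResults} accommodate. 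Next, the explicit formula \eqref{Eqn::Key::AtoB::Proof1} for the coefficients $b_j^k$ of $\eta^k$ in terms of $\nabla\Phi$ and $a_i^k\circ\Phi$, combined with Lemmas \ref{Lemma::FuncRevis::PushForwardFuncSpaces} and \ref{Lemma::FuncSpace::Product}, gives $B\in\Co^s(\Ball^n;\Mbb^{n\times n})$ with $\|B\|_{\Co^s}\lesssim_{M'}1$, and (since $R$ is harmonic near $\partial\Ball^n$) $B|_{\partial\Ball^n}$ is smooth, in particular in $\Co^{s+1}(\partial\Ball^n)$. Finally, $B$ solves \eqref{Eqn::Key::PDEforB}, i.e.\ it is the fixed point of the operator $\Tc_B$ of \eqref{Eqn::Key::RegularityPDE::DefofContMap}; $\Tc_B$ is a contraction on the small ball $\Xs_{s_0+1,\xi_B}$, so the iterates $\Tc_B^{(m)}[0]$ converge to $B$ in $\Co^{s_0+1}$ while remaining in $\Xs_{s_0+1,\xi_B}$, hence staying small in $C^0$. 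Because each $\Rc_i^k$ vanishes to second order at the origin, the tame product estimates give $\|\Tc_B[f]-\Tc_B[0]\|_{\Co^{s+1}}\le\tfrac12\|f\|_{\Co^{s+1}}$ for $f\in\Xs_{s_0+1,\xi_B}$; together with $\|\Tc_B[0]\|_{\Co^{s+1}}\lesssim_{M'}\|B\|_{\Co^{s+1}(\partial\Ball^n)}+\sum_l\|d\eta^l\|_{\Co^s}\lesssim_{M'}1<\infty$, an induction then shows $\|\Tc_B^{(m)}[0]\|_{\Co^{s+1}}\le2\|\Tc_B[0]\|_{\Co^{s+1}}$ for all $m$; since the Zygmund--H\"older ball of fixed radius is closed under $\Co^{s_0+1}$-convergence, $B\in\Co^{s+1}$ with $\|B\|_{\Co^{s+1}(\frac34\Ball^n;\Mbb^{n\times n})}\lesssim_{M'}1$.

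The routine parts are \ref{Item::Quant::SchauderSupofB} and, once the framework is set up, \ref{Item::Quant::QuantExt}. The main obstacle is \ref{Item::Quant::QuantReg}, and more precisely the bookkeeping needed to keep the smallness threshold $c'$ depending on $n$ and $s_0$ only (rather than also on $s$ and $M'$) while still closing the $\Co^{s+1}$ estimate: the contraction-type inequality $\|\Tc_B[f]-\Tc_B[0]\|_{\Co^{s+1}}\le\tfrac12\|f\|_{\Co^{s+1}}$ involves $s$-dependent product and elliptic constants, so one must arrange that these only ever multiply quantities that are small in the fixed norm $\Co^{s_0}$ (or $C^0$), never in $\Co^{s+1}$. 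I expect this to be handled, as in Proposition \ref{Prop::Key::RegularityPDE} and Lemma \ref{Lemma::Key::AtoB} \ref{Item::Key::AtoB::3}, by always measuring the perturbative smallness at the $\Co^{s_0}$-level and by performing the passage from $\Co^{\sigma}$ to $\Co^{\sigma+1}$ in steps, so that the attendant constants stay uniformly controlled; carrying this out carefully in the low-regularity range $s_0\le 1$, where several of the intervening products are only borderline defined (cf.\ Lemma \ref{Lemma::FuncSpace::Product} and the use of $\Df$ in \eqref{Eqn::Key::RegularityPDE::DefofContMap}), is the technically delicate point.
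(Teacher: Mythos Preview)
Your treatment of \ref{Item::Quant::SchauderSupofB} and \ref{Item::Quant::QuantExt} is essentially the paper's, with one cosmetic difference: for \ref{Item::Quant::QuantExt} the paper splits $\Ball^n$ into an interior region (where it invokes interior Schauder estimates for \eqref{Eqn::Key::PDEforR}) and the annulus $\Ball^n\setminus\tfrac34\Ball^n$ (where $R$ is harmonic because $\supp A\subset\tfrac12\Ball^n$, so regularity is free). Your one-shot Dirichlet estimate is fine too. For the bound on $d\eta^i$ in $\Co^s$ you invoke the paraproduct machinery of Proposition \ref{Prop::FuncRevis::QuantdOfLowRegularityForm}; this is unnecessary here since $s>0$ makes $d\lambda^i$ continuous, and the paper simply writes $d\eta^i=\Phi^*d\lambda^i$ componentwise (see \eqref{Eqn::Quant::Schauder::Tmp5}) and applies Lemma \ref{Lemma::FuncSpace::Product}.

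For \ref{Item::Quant::QuantReg} there is a real difference, and the obstacle you flag is the correct one, but your proposed fix does not close. In the contraction scheme of Proposition \ref{Prop::Key::RegularityPDE}, the inequality $\|\Tc_B[f]-\Tc_B[0]\|_{\Co^{s+1}}\le\tfrac12\|f\|_{\Co^{s+1}}$ requires $\|f\|_{C^0}$ small relative to the product and Dirichlet constants at level $s+1$; those constants genuinely grow with $s$, so no choice of $c'=c'(n,s_0)$ can enforce this uniformly. Bootstrapping ``from $\Co^\sigma$ to $\Co^{\sigma+1}$ in steps'' does not help either: writing $\Lap B=\nabla^2\Rc(B)+\codiff d\eta$ and knowing $B\in\Co^\sigma$ gives $\Rc(B)\in\Co^\sigma$, hence $\Lap B\in\Co^{\sigma-2}$, hence $B\in\Co^\sigma$ again---no gain without an absorption that again costs $s$-dependent smallness.

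The paper avoids this by \emph{linearising}. Since $B\in\Co^{s_0+1}\subset C^1$ (from Proposition \ref{Prop::Key::RegularityPDE}), one can differentiate $\codiff_{\R^n}\eta^k=\sum_i\partial_{y^i}\Rc_i^k(B)$ to obtain, with $\widetilde\Rc_{il}^{kj}:=\partial\Rc_i^k/\partial v_j^l$ (vanishing at $0$),
\[
\sum_i\partial_{y^i}b_i^k+\sum_{i,j,l}\widetilde\Rc_{il}^{jk}(B)\,\partial_{y^i}b_j^l=0,
\]
which together with the equations $\partial_{y^j}b_k^i-\partial_{y^k}b_j^i=d\eta^i(\partial_{y^j},\partial_{y^k})$ forms a \emph{first-order linear} overdetermined system $\Ec u+\Lc_B u=g_B$ in $u=B$, where $\Ec u=(du^i,\codiff_{\R^n}u^i)_i$ satisfies $\Ec^*\Ec=\Lap$, hence is elliptic. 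Classical elliptic theory (the paper cites \cite[Proposition A.1]{StovallStreetII}) then says: there is $\gamma=\gamma(\Ec)=\gamma(n)$ such that if $\sum\|\widetilde\Rc_{il}^{jk}(B)\|_{L^\infty}<\gamma$, then $u\in\Co^{s+1}$ with norm controlled by $\|u\|_{\Co^{s_0+1}}$, $\|g_B\|_{\Co^s}$, and the $\Co^s$ norms of the coefficients. The point is that the smallness threshold $\gamma$ depends only on $n$, because it is measured in $L^\infty$; and $\|\widetilde\Rc(B)\|_{L^\infty}\lesssim\|B\|_{C^0}\le\tfrac14$ is already arranged in \ref{Item::Quant::SchauderSupofB}. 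The $s$-dependence lands entirely in the final constant $K'$, where it is allowed. This single device---passing from the second-order nonlinear fixed-point equation to a first-order linear system with $L^\infty$-small lower-order perturbation---is exactly what your argument is missing.
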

Informally Proposition \ref{Prop::Quant::Schauder} \ref{Item::Quant::QuantReg} is saying $\sum_{i=1}^n\|\eta^i\|_{\Co^{s+1}}\lesssim_{\{s\}}1$.

Note that we require $\|\lambda\|_{\Co^{s_0}}$ and $\|d\lambda\|_{\Co^{s_0}}$ to be small (bounded by the constant $c$ from Theorem \ref{Thm::Keythm}) in the assumption of
Proposition \ref{Prop::Quant::Schauder}. 
However,  by taking $M'$ large in Proposition \ref{Prop::Quant::Schauder}, we can allow $\|\lambda\|_{\Co^s}$ and $\|d\lambda\|_{\Co^{s}}$ to be large for $s>s_0$.

\begin{proof}
For \ref{Item::Quant::SchauderSupofB}, let $c=c(n,s_0,s_0+1)>0$ be the original constant in Theorem \ref{Thm::Keythm}. By \eqref{Eqn::Keythm::Conclusion} we see that for any $c'\in(0,c]$, $\sum_{i=1}^n\|\lambda^i-dx^i\|_{\Co^{s_0}}+\|d\lambda^i\|_{\Co^{s_0}}<c'$ implies $\sum_{i=1}^n\|F_*\lambda^i-dy^i\|_{\Co^{s_0+1}(\Ball^n;T^*\Ball^n)}\le\frac{c'}c$.

Recall the notation $F_*\lambda=(I+B)dy$. Since $\|B\|_{C^0}\lesssim_{s_0}\|B\|_{\Co^{s_0+1}}\approx_{s_0}\sum_{i=1}^n\|F_*\lambda^i-dy^i\|_{\Co^{s_0+1}}$, by choosing $c'\in(0,c]$ small enough we can ensure $\|B\|_{C^0(\Ball^n;\Mbb^{n\times n})}<\frac14$.

To prove \ref{Item::Quant::QuantExt} and \ref{Item::Quant::QuantReg}, we choose $c'(n,s_0)\in (0,c]$ 
as follows.

    Let $c_1=c_1(n,s_0,s_0+1)>0$, $c_2=c_2(n,s_0,s_0+1)>0$, and $c_3=c_3(n,s_0,s_0+1)>0$ be the constants in Proposition \ref{Prop::Key::ExistPDE}, Lemma \ref{Lemma::Key::AtoB}, and Proposition \ref{Prop::Key::RegularityPDE}, respectively.
    
    In the proof of Theorem \ref{Thm::Keythm} we see that if $\sum_{i=1}^n\|\lambda^i\|_{\Co^{s_0}}+\|d\lambda^i\|_{\Co^{s_0}}<\frac1{2n^2}\min(c_1,c_2c_3)$ then $B\in\Co^{s_0+1}(\Ball^n;\Mbb^{n\times n})\subset C^1(\Ball^n;\Mbb^{n\times n})$. We are going to find a smaller constant $c_3'(n,s_0)\in(0,c_3)$ and then take $c'\le\frac1{2n^2}\min(c_1,c_2c_3')$.
    

    Recall {$\Rc_i^k(B)=\sum_{j=1}^n(\sqrt{\det h}h^{ij}-\delta^{ij})b_j^k$, $1\le i,k\le n$  in \eqref{Eqn::Key::RegularityPDE::EqnasCalR} (see \eqref{Eqn::Key::Riemannianmetric2} for $h^{ij}$ and $\sqrt{\det h}$) are rational functions which are finite near the origin such that $\Rc_i^k(B)=O(|B|^2)$ near $B=0\in\Mbb^{n\times n}$ (see \eqref{Eqn::Key::Riemannianmetric1}, \eqref{Eqn::Key::Riemannianmetric2}, and Lemma \ref{Lemma::Key::TaylorExpansionofRiemMetric})}. So when $B\in C^1$, we can write $\partial_{y^i}\Rc_i^k(B)=\sum_{j,l=0}^n\widetilde\Rc_{il}^{kj}(B)\cdot\partial_{y^i}b_j^l$ where \begin{equation*}
        \widetilde\Rc_{il}^{kl}(v):=\frac{\partial\Rc_i^k}{\partial v_j^l}(v),\quad\text{for }1\le i,j,k,l\le n,\quad \text{defined for }v\in \Mbb^{n\times n}\text{ closed to }0.
    \end{equation*}
    So $\widetilde\Rc_{il}^{kl}(0)=0$ since $|\Rc_i^k(v)|\lesssim|v|^2$ for small $v$. The equation \eqref{Eqn::Key::RegularityPDE::EqnasCalR} can be rewritten as 
    \begin{equation}\label{Eqn::Quant::NewEqnforB1}
        \sum_{i=1}^n\frac{\partial }{\partial y^i}b_i^k+\sum_{i,j,l=0}^n\widetilde\Rc_{il}^{jk}(B)\frac{\partial }{\partial y^i}b_j^l=0\quad\text{in }\Ball^n_y,\quad k=1,\dots,n.
    \end{equation}
    
    
    By the conclusions of Proposition \ref{Prop::Key::RegularityPDE} we have $B\in\Co^{s_0+1}\subset C^1$. And by the assumptions of Proposition \ref{Prop::Key::RegularityPDE} combined
    with \eqref{Eqn::Quant::NewEqnforB1}, we know $u=B$ is a $C^1$-solution to the following system of equations in $u=(u_i^j)_{n\times n}$:
    \begin{equation}\label{Eqn::Quant::NewEqnforB2}
        \begin{aligned}
            &\frac{\partial u_k^i}{\partial y^j}-\frac{\partial u_j^i}{\partial y^k}=d\eta^i\Big(\Coorvec{y^j},\Coorvec{y^k}\Big),&i=1,\dots,n,\quad1\le j<k\le n.
            \\
            &\sum_{i=1}^n\frac{\partial }{\partial y^i}u_i^k+\sum_{i,j,l=0}^n\widetilde\Rc_{il}^{jk}(B)\frac{\partial }{\partial y^i}u_j^l=0,& k=1,\dots,n.
        \end{aligned}
    \end{equation}
    \eqref{Eqn::Quant::NewEqnforB2} is of the form $$\Ec u+\Lc_Bu=g_B,$$
    where $\Ec:C^\infty(\Ball^n;\R^{n^2})\to C^\infty(\Ball^n;\R^{n\frac{n^2-n+2}2})$ is a first order constant linear differential operator that does not depend on $B$, and $\Lc_B$ is a first order linear differential operator with coefficients comes from $\widetilde\Rc_{il}^{jk}(B)$. Here, $g_B$ is the vector-valued function which is the right hand side of \eqref{Eqn::Quant::NewEqnforB2}; i.e., $g_B=(d\eta^1,\dots,d\eta^n,0)$.
    
    If we write $u^i=\sum_{j=1}^nu^i_jdy^j$, $i=1,\dots,n$, then we see that $\Ec (u^i_j)=(du^i,\codiff_{\R^n}u^i)_{i=1}^n$. So $\Ec^*\Ec=d\codiff+\codiff d=\Lap$ is elliptic, which implies that $\Ec$ is an  elliptic operator.
    
    By classical elliptic theory (see \cite[Proposition A.1]{StovallStreetII}) there is a $\gamma=\gamma(\Ec)>0$ such that if $\sum_{i,j,k,l=1}^n\|\widetilde\Rc_{il}^{jk}(B)\|_{L^\infty(\Ball^n)}<\gamma$ then $\widetilde\Rc_{il}^{jk}(B),g_B\in\Co^s$ implies $u\in\Co^{s+1}(\Ball^n;\R^{n^2})$ with $\|u\|_{\Co^{s+1}}\lesssim_{n,s_0,s,\|\eta^i\|_{\Co^s},\|d\eta^i\|_{\Co^s}}\|u\|_{\Co^{s_0+1}}+\|g_B\|_{\Co^{s_0}}$. 
    
    Thus, for any $\tilde\sigma>0$ there is $C'_0=C'_0(n,s_0,s,c_3,\gamma,\tilde\sigma)>0$ that does not depend on $B$, such that
    \begin{equation}\label{Eqn::Quant::Schauder::Tmp0}
    \begin{gathered}
        \text{ if } \sum_{i,j,k,l=1}^n\|\widetilde\Rc_{il}^{jk}(B)\|_{L^\infty(\Ball^n)}<\gamma\quad\text{and}\quad \sum_{i=1}^n\|\eta^i-dy^i\|_{\Co^s(\Ball^n;T^*\Ball^n)}+\|d\eta^i\|_{\Co^s(\Ball^n;\wedge^2T^*\Ball^n)}<\tilde\sigma,
        \\\text{ then }\|B\|_{\Co^{s+1}(\frac34\Ball^n;\Mbb^{n\times n})}\le C'_0(n,s_0,s,c_3,\gamma,\tilde\sigma).
    \end{gathered}
    \end{equation}
    
    When $\|B\|_{L^\infty}$ suitably small we have $$\sum_{i,j,k,l=1}^n\|\widetilde\Rc_{il}^{jk}(B)\|_{L^\infty}\lesssim_{\Rc}\|B\|_{L^\infty}\lesssim \sum_{k=1}^n\|\eta^k-dy^k\|_{L^\infty}\lesssim_{s_0}\sum_{k=1}^n\|\eta^k-dy^k\|_{\Co^{s_0}}.$$
    So we can take a $c_3'\in(0,c_3)$ (which still only depends on $n,s_0$) such that
    \begin{equation}
        \sum_{k=1}^n\|\eta^k-dy^k\|_{\Co^{s_0}}+\|d\eta^k\|_{\Co^{s_0}}<c_3'\quad\Rightarrow\quad\sum_{i,j,k,l=1}^n\|\widetilde\Rc_{il}^{jk}(B)\|_{L^\infty(\Ball^n)}<\gamma.
    \end{equation}
    
    Using the same proof as Theorem \ref{Thm::Keythm} in Section \ref{Section::ProofKey}, take $c'=\frac1{2n^2}\min(c_1,c_2c_3')$ and we see that $\sum_{k=1}^n\|\lambda^k-dx^k\|_{\Co^{s_0}}+\|d\lambda^k\|_{\Co^{s_0}}<c'$ implies $\sum_{k=1}^n\|\eta^k-dy^k\|_{\Co^{s_0}}+\|d\eta^k\|_{\Co^{s_0}}<c_3'$ and $\|F-\id\|_{\Co^{s_0+1}(\Ball^n;\R^n)}+\sum_{i=1}^n\|\eta^i-dy^i\|_{\Co^{s_0+1}(\Ball^n;\R^n)}\le1$.
    
    We then prove \ref{Item::Quant::QuantExt} and \ref{Item::Quant::QuantReg} using this constant $c'$.
    
    \medskip
    \ref{Item::Quant::QuantExt}: Recall by assumption $\supp A\subsetneq\frac12\Ball^n$, so $\Lap R\big|_{\Ball^n\backslash\frac12\Ball^n}=0$ and thus similar to \eqref{Eqn::Key::ExistPDE::TmpBoundEstR} we have \begin{equation}\label{Eqn::Quant::Schauder::Tmp1}
        \|R\|_{\Co^{s+1}(\Ball^n\backslash\frac34\Ball^n;\R^n)}\lesssim_{s,s_0}\|R\|_{\Co^{s_0+1}(\Ball^n;\R^n)}\lesssim_{s_0}\|A\|_{\Co^{s_0}}.
    \end{equation}
    
    On the other hand by classical interior Schauder estimates we know that
    \begin{equation}\label{Eqn::Quant::Schauder::Tmp2}
        \|R\|_{\Co^{s+1}(\frac45\Ball^n)}\le C\sum_{i,j,k=1}^n\|\sqrt{\det g}g^{ij}a_i^k\|_{\Co^s(\Ball^n)},
    \end{equation}
    where $C$ is a constant that only depends on $n,s$, the upper bounds of $\|I+A\|_{C^0(\Ball^n;\Mbb^{n\times n})},\|(I+A)^{-1}\|_{C^0(\Ball^n;\Mbb^{n\times n})}$ and $\|\sqrt{\det g}g^{ij}\|_{\Co^s(\Ball^n)}$.  For the precise form of the interior Schauder's estimate we use, see, for example, \cite[Corollary 2.28]{XavierPDE} for $s>1,s\notin\Z$ and \cite[Theorem 8.32]{GilbargTrudinger} for $0<s<1$. The proof for $s\in\Z_+$ is similar to these. 
    
    In Proposition \ref{Prop::Key::ExistPDE} we chose $c_1$ small so that $\|A\|_{C^0}<\frac12$; thus $\|I+A\|_{C^0(\Ball^n;\Mbb^{n\times n})}$ and $\|(I+A)^{-1}\|_{C^0(\Ball^n;\Mbb^{n\times n})}$ are already uniformly bounded. And since $\sqrt{\det g}g^{ij}a_i^k$ and $\sqrt{\det g}g^{ij}$ are all polynomials of the components of $A$, by Lemma \ref{Lemma::FuncSpace::Product} their $\Co^s$-norms are bounded by a constant depending only on the upper bound of $\|A\|_{\Co^s}$. 
    Therefore combining \eqref{Eqn::Quant::Schauder::Tmp1} and \eqref{Eqn::Quant::Schauder::Tmp2}, since $F=\id+R$, we have
    \begin{equation}\label{Eqn::Quant::Schauder::Tmp3}
        \|F\|_{\Co^{s+1}(\Ball^n)}\le C_1'(n,s,s_0,c_1,M'),
    \end{equation}
    for some $C_1'>0$ that only depends on $n,s,s_0$ and an upper bound for $\|A\|_{\Co^s}$. Since $\|A\|_{\Co^s}\le M'$, $C_1'$ does not
    depend on $\|A\|_{\Co^s}$, just on $M'$.
    
    By  Proposition \ref{Prop::Key::ExistPDE} \ref{Item::Key::ExistPDE::2}, we have $\Phi=F^{-1}:\Ball^n\xrightarrow{\sim}\Ball^n$ and $\|\Phi\|_{\Co^{s_0+1}(\Ball^n;\R^n)}<c_1^{-1}$, where $c_1=c_1(n,s_0,s_0+1)$ is the constant from Proposition \ref{Prop::Key::ExistPDE}. So $\inf\limits_{x\in\Ball^n}|\det(\nabla F)(x)|$ is bounded  below by a constant depending only on $n,s_0,c_1$. Applying \cite[Lemmas 5.9 and 5.8]{StovallStreetII} with $\|F\|_{\Co^{s+1}(\Ball^n)}\le C_1'$ and $\|A\|_{\Co^s(\Ball^n;\Mbb^{n\times n})}+\sum_{i=1}^n\|d\lambda^i\|_{\Co^s}<M'$ we get $\Phi\in\Co^{s+1}(\Ball^n;\R^n)$ with
    \begin{equation}\label{Eqn::Quant::Schauder::Tmp4}
        \|\Phi\|_{\Co^{s+1}(\Ball^n;\R^n)}+\|A\circ\Phi\|_{\Co^s(\Ball^n;\Mbb^{n\times n})}+\sum_{i,j,k=1}^n\mleft\|\mleft(d\lambda^i\big(\tfrac\partial{\partial x^j},\tfrac\partial{\partial x^k}\big)\mright)\circ\Phi\mright\|_{\Co^s(\Ball^n)}\le C_2',
    \end{equation}
    for some $C_2'>0$ that only depends on $n,s,s_0,C_1'$ and the upper bound of $\|A\|_{\Co^s}$. Since $\|A\|_{\Co^s}\le M'$ and $C_1'$ depends only
    on $n,s,s_0,c_1$ and $M'$,  the same is true of $C_2'$, i.e.
    $C_2'=C_2'(n,s,s_0,c_1,M')$. Taking $\zeta$ such that $\zeta^{-1}\ge C_2'$ we complete the proof of \ref{Item::Quant::QuantExt}.
    
    \medskip
    \ref{Item::Quant::QuantReg}: By a direct computation (also see \eqref{Eqn::Key::AtoB::Proof1}) we have 
    \begin{equation}\label{Eqn::Quant::Schauder::Tmp5}
        B=\nabla\Phi-I+(A\circ\Phi)\nabla\Phi,\quad d\eta^i=\Phi^*d\lambda^i=\sum_{1\le j<k\le n}\mleft(\mleft(d\lambda^i\big(\tfrac\partial{\partial x^j},\tfrac\partial{\partial x^k}\big)\mright)\circ\Phi\mright)\cdot d\phi^j\wedge d\phi^k.
    \end{equation}
    Applying Lemma \ref{Lemma::FuncSpace::Product} to \eqref{Eqn::Quant::Schauder::Tmp5} and using  \eqref{Eqn::Quant::Schauder::Tmp4} we can find a $C'_3=C'_3(n,s,s_0,c_1,M')$ such that,
    \begin{equation}\label{Eqn::Quant::Schauder::Tmp6}
        \|B\|_{\Co^s(\Ball^n;\Mbb^{n\times n})}+\sum_{i=1}^n\|d\eta^i\|_{\Co^s(\Ball^n;\wedge^2T^*\Ball^n)}<C'_3(n,s,s_0,c_1,M').
    \end{equation}
    
    Applying \eqref{Eqn::Quant::Schauder::Tmp0} with $\tilde\sigma=C'_3$, where we recall that $\gamma$ depends only on $n$, we see that $C'_0(n,s_0,s,c_3',\gamma,C'_3)$ is a constant depending only on $n,s_0,s,M'$.
    
    Take $K'=C_2'(n,s,s_0,c_1,M')+C'_0(n,s,s_0,c_3',\gamma,C'_3)$, since $c_1$ and $c_3'$ are constants that only depend on $n,s_0$ we know $K'=K'(n,s,s_0,M')$ depends only on $n,s,s_0,M'$, which completes the proof of \ref{Item::Quant::QuantReg}.
\end{proof}

The proof of Proposition \ref{Prop::Key::Scaling} gives a similar regularity estimate:

\begin{lemma}\label{Lemma::Quant::ScalingReg}
Let $\alpha,\beta\in[\alpha,\alpha+1],\mu_0,\tilde c,M>0$ be as in Proposition \ref{Prop::Key::Scaling}. Let $s\ge\alpha$, $\tilde M>1$, there is a $\tilde K=\tilde K(n,\alpha,\beta,s,\mu_0,\tilde c,M,\tilde M)>0$ that satisfies the following:

Let $\theta^1,\dots,\theta^n\in\Co^\alpha(\mu_0\Ball^n;T^*\R^n)$ be as in the assumptions of Proposition \ref{Prop::Key::Scaling}. Suppose in addition to these assumptions  we have $\theta^1,\dots,\theta^n\in\Co^s$, $d\theta^1,\dots,d\theta^n\in\Co^s$ with estimate 
\begin{equation}\label{Eqn::Quant::ScalingReg::AssumptionBdd}
    \sum_{i=1}^n\|\theta^i\|_{\Co^s(\mu_0\Ball^n;T^*\R^n)}+\|d\theta^i\|_{\Co^s(\mu_0\Ball^n;\wedge^2T^*\R^n)}<\tilde M.
\end{equation}
Then $\lambda^1,\dots,\lambda^n$ constructed in Proposition \ref{Prop::Key::Scaling} satisfy $\lambda^1,\dots,\lambda^n\in\Co^s$, $d\lambda^1,\dots,d\lambda^n\in\Co^s$ with estimate 
\begin{equation*}
    \sum_{i=1}^n\|\lambda^i\|_{\Co^s(\Ball^n;T^*\R^n)}+\|d\lambda^i\|_{\Co^s(\Ball^n;\wedge^2T^*\R^n)}<\tilde K.
\end{equation*}
\end{lemma}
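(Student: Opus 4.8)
The plan is to revisit the construction of $\lambda^1,\dots,\lambda^n$ in the proof of Proposition~\ref{Prop::Key::Scaling} and to carry along $\Co^s$-norms in place of the $\Co^\alpha$- and $\Co^{\beta-1}$-norms tracked there. The point is that $\lambda^i=\lambda^i_{\kappa_0}$ is assembled from $\theta^i$, from the auxiliary $1$-forms $\rho^i$ of that proof, from the fixed cut-offs $\chi_0,\chi_1$, and from the scaling parameter $\kappa_0=\kappa_0(\alpha,\beta,\mu_0,\tilde c,M)\in(0,\mu_0]$; none of these objects depends on $s$ or on $\tilde M$, so the \emph{same} $\lambda^i$ is what must be estimated, now in the stronger norm.

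First I would upgrade the estimate on $\rho^i$. Recall $\rho^i=\tilde\rho^i-\tilde\rho^i\big|_0$ with $\tilde\rho^i=\Green\ast\codiff\bigl(\chi_0\cdot d\theta^i+d\chi_0\wedge\theta^i\bigr)$, which is globally defined on $\R^n$ because $\codiff d(\chi_0\theta^i)$ is supported in $\supp\chi_0\Subset\mu_0\Ball^n$. Under the additional hypothesis \eqref{Eqn::Quant::ScalingReg::AssumptionBdd}, Lemma~\ref{Lemma::FuncSpace::Product} gives $\chi_0\cdot d\theta^i+d\chi_0\wedge\theta^i\in\Co^s(\mu_0\Ball^n)$ with norm $\lesssim_{n,s,\mu_0}\tilde M$, hence $\codiff$ of it lies in $\Co^{s-1}_c(\mu_0\Ball^n)$, and Lemma~\ref{Lemma::FuncRevis::NewtonianBoundedness} (with $\gamma=s-1$, using Lemma~\ref{Lemma::FuncRevis::GreensOp}) yields $\|\rho^i\|_{\Co^{s+1}(\mu_0\Ball^n;T^*\R^n)}\lesssim_{n,s,\mu_0}\tilde M$. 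Writing $\theta^i=dx^i+\sum_j a^i_j\,dx^j$ and $\rho^i=\sum_j b^i_j\,dx^j$ as in \eqref{Eqn::Key::Scaling::ThetaRhoAB}, and using $\theta^i\big|_0=dx^i\big|_0$ and $\rho^i\big|_0=0$, we thus have $a^i_j\in\Co^s(\mu_0\Ball^n)$ and $b^i_j\in\Co^{s+1}(\mu_0\Ball^n)$ with $a^i_j(0)=b^i_j(0)=0$ and both norms $\lesssim_{n,s,\mu_0}\tilde M$.

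Next I would run the coordinate formulas \eqref{Eqn::Key::ScalingProofLambdaCoordinate}. For $\lambda^i=\lambda^i_{\kappa_0}=dx^i+\sum_j\chi_1(x)\,a^i_j(\kappa_0 x)\,dx^j$, Lemma~\ref{Lemma::FuncSpace::Product} together with Lemma~\ref{Lemma::Key::ScalingLemma} applied to $a^i_j$ (using $a^i_j(0)=0$ and $\kappa_0\le\mu_0$) gives $\|\lambda^i-dx^i\|_{\Co^s(\Ball^n;T^*\R^n)}\lesssim_{n,s,\mu_0}\|\chi_1\|_{\Co^s}\,\kappa_0^{\min(s,1/2)}\sum_j\|a^i_j\|_{\Co^s(\mu_0\Ball^n)}$. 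For $d\lambda^i$, I use the identity $d\lambda^i_{\kappa_0}=d\tau^i_{\kappa_0}$ from \eqref{Eqn::Key::ScalingTmp2}, so that $\|d\lambda^i\|_{\Co^s}=\|d\tau^i_{\kappa_0}\|_{\Co^s}\lesssim_s\|\tau^i_{\kappa_0}\|_{\Co^{s+1}}$, and then estimate the two summands of $\tau^i_{\kappa_0}=\sum_j\bigl(\chi_1(x)\,b^i_j(\kappa_0 x)\,dx^j+\Green\ast\codiff\bigl(a^i_j(\kappa_0 x)\,d\chi_1\wedge dx^j\bigr)\bigr)$ separately: the first via Lemmas~\ref{Lemma::FuncSpace::Product} and \ref{Lemma::Key::ScalingLemma} applied to $b^i_j\in\Co^{s+1}$, the second via Lemma~\ref{Lemma::FuncRevis::NewtonianBoundedness} (again $\gamma=s-1$, since $\codiff$ of the $\Co^s$ $2$-form $a^i_j(\kappa_0 x)\,d\chi_1\wedge dx^j$ is a $\Co^{s-1}$ $1$-form) followed by Lemma~\ref{Lemma::Key::ScalingLemma} applied to $a^i_j$. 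This gives $\|d\lambda^i\|_{\Co^s}\lesssim_{n,s,\mu_0}\sum_j\bigl(\|b^i_j\|_{\Co^{s+1}(\mu_0\Ball^n)}+\|a^i_j\|_{\Co^s(\mu_0\Ball^n)}\bigr)$. Summing over $i$, feeding in the $\rho^i$-bound from the previous step and \eqref{Eqn::Quant::ScalingReg::AssumptionBdd}, and collecting the constants into $\tilde K=\tilde K(n,\alpha,\beta,s,\mu_0,\tilde c,M,\tilde M)$ completes the proof.

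There is no genuine obstacle here beyond bookkeeping: the $\lambda^i$ are given by explicit formulas, and each ingredient is already in hand --- the $\Co^s$ product estimate (Lemma~\ref{Lemma::FuncSpace::Product}), the two-derivative gain of the Newtonian potential on compactly supported forms (Lemma~\ref{Lemma::FuncRevis::NewtonianBoundedness}), and the scaling bound for functions vanishing at the origin (Lemma~\ref{Lemma::Key::ScalingLemma}). The two points meriting a little care are that the extra derivative for $d\lambda^i$ really originates from $d\theta^i\in\Co^s$ passing through $\Green\ast\codiff$ --- so the $\rho^i$-estimate must be phrased in $\Co^{s+1}$, not $\Co^s$ --- and that the powers of $\kappa_0$ generated by Lemma~\ref{Lemma::Key::ScalingLemma} are harmless since $\kappa_0$ is a fixed positive number depending only on $(\alpha,\beta,\mu_0,\tilde c,M)$; the hypothesis $\tilde M>1$ is used only to absorb into $\tilde M$ the additive constant arising from $\theta^i=dx^i+\sum_j a^i_j\,dx^j$.
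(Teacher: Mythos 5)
Your proposal is correct and follows essentially the same route as the paper's own proof: both re-examine the explicit formulas for $\rho^i$, $\lambda^i_{\kappa_0}$, and $\tau^i_{\kappa_0}$ from Proposition \ref{Prop::Key::Scaling}, upgrade the bound on $\rho^i$ to $\Co^{s+1}$ via Lemma \ref{Lemma::FuncRevis::NewtonianBoundedness}, and estimate $d\lambda^i$ through the identity $d\lambda^i_{\kappa_0}=d\tau^i_{\kappa_0}$ together with the product and scaling lemmas. The only cosmetic difference is that you invoke Lemma \ref{Lemma::Key::ScalingLemma} to control the compositions with $\phi_{\kappa_0}$, whereas the paper simply absorbs these into constants depending on the fixed $\kappa_0$; both are fine since $\kappa_0$ depends only on $(n,\alpha,\beta,\mu_0,\tilde c,M)$.
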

\begin{proof}
In the proof of Proposition \ref{Prop::Key::Scaling}, we construct $\lambda^1,\dots,\lambda^n$ as follows: For $i=1,\dots,n$,
\begin{gather}
    \tilde\rho^i:=\Green\ast\codiff (\chi_0\cdot d\theta^i+d\chi_0\wedge \theta^i),\quad\rho^i:=\tilde\rho^i-(\tilde\rho^i\big|_0),
    \\
    \lambda^i:=dx^i+\tfrac1{\kappa_0}\chi_1\cdot\phi_{\kappa_0}^*(\theta^i-dx^i),\quad\tau^i:=\tfrac1{\kappa_0}\chi_1\cdot(\phi_{\kappa_0}^*\rho^i)+\tfrac1{\kappa_0}\Green\ast\codiff\left(d\chi_1\wedge \phi_{\kappa_0}^*(\theta^i-dx^i)\right).
\end{gather}
Here $\chi_0\in C_c^\infty(\mu_0\Ball^n)$ satisfies $\chi_0\big|_{\frac{\mu_0}2\Ball^n}\equiv1$; $\chi_1\in C_c^\infty(\frac12\Ball^n)$ satisfies $\chi_1\big|_{\frac13\Ball^n}\equiv1$; $\Green$ is the Newtonian potential; $\codiff$ is the codifferential operator; $\kappa_0=\kappa_0(n,\alpha,\beta,\mu_0,\tilde c,M)>0$ is the scaling constant and $\phi_{\kappa_0}(x)=\kappa_0x$. 

By assumption $\theta^i,d\theta^i\in\Co^s$ with bound \eqref{Eqn::Quant::ScalingReg::AssumptionBdd}. Since $\phi_{\kappa_0}$ is a scaling map depending only on $\kappa_0$, we have
\begin{equation}\label{Eqn::Quant::ScalingReg:RegEst1}
    \|\lambda^i\|_{\Co^s(\Ball^n;T^*\R^n)}\lesssim_{\kappa_0,\mu_0,\chi_1}1+\|\theta^i-dx^i\|_{\Co^s(\mu_0\Ball^n;T^*\R^n)}\lesssim_{\kappa_0,\chi_1}\tilde M\quad i=1,\dots,n.
\end{equation}
Applying Lemma \ref{Lemma::FuncRevis::NewtonianBoundedness} (see also \eqref{Eqn::Key::ScalingTmp2}), with the same argument as \eqref{Eqn::Key::Scaling::RhoFormEstCompute}, we have for $i=1,\dots,n$, $$\|\rho^i\|_{\Co^{s+1}(\mu_0\Ball^n;T^*\R^n)}\lesssim_{s,\mu_0,\chi_0}\tilde M.$$
Also, by a direct estimate,
$$\|\tau^i\|_{\Co^{s+1}(\mu_0\Ball^n;T^*\R^n)}\lesssim_{\kappa_0,s,\mu_0,\chi_1}\|\rho^i\|_{\Co^{s+1}}+\|\theta^i-dx^i\|_{\Co^s}\lesssim_{s,\kappa_0,\mu_0,\chi_0,\chi_1}\tilde M.$$
In  \eqref{Eqn::Key::ScalingTmp2} it is shown that $d\lambda^i=d\tau^i$ and therefore,
\begin{equation}\label{Eqn::Quant::ScalingReg:RegEst2}
    \|d\lambda^i\|_{\Co^{s+1}(\mu_0\Ball^n;\wedge^2T^*\R^n)}=\|d\tau^i\|_{\Co^{s+1}(\mu_0\Ball^n;\wedge^2T^*\R^n)}\lesssim_s\|\tau^i\|_{\Co^{s+1}(\mu_0\Ball^n;T^*\R^n)}\lesssim_{s,\kappa_0,\mu_0,\chi_0,\chi_1}\tilde M.
\end{equation}

Combining \eqref{Eqn::Quant::ScalingReg:RegEst1} and \eqref{Eqn::Quant::ScalingReg:RegEst2}, since $\chi_0$ and $\chi_1$ are fixed cut-off functions and $\kappa_0=\kappa_0(n,\alpha,\beta,\mu_0,\tilde c,M)$, we get $\tilde K=\tilde K(n,s,\mu_0,\kappa_0,\tilde M)=\tilde K(n,\alpha,\beta,s,\mu_0,\tilde c,M,\tilde M)$ as desired.
\end{proof}

We can now prove Proposition \ref{Prop::Quant} by applying Proposition \ref{Prop::Quant::Schauder} and Lemma \ref{Lemma::Quant::ScalingReg}.
\begin{proof}[Proof of Proposition \ref{Prop::Quant}]
Let $\theta^1,\dots,\theta^n$ be the dual basis to $Y_1,\dots,Y_n$ on $B^n(0,\mu_0)$. Write $\theta=[\theta^1,\dots,\theta^n]^\top$ as $\theta=(I+B)dx$ where $B=(I+A)^{-1}-I$ and $dx=[dx^1,\dots,dx^n]^\top$. 

Clearly $B(0)=0$ because $\nabla\Phi_0(0)=I$. So
\begin{equation}\label{Eqn::Quant::Det(I+A)}
    \|(I+A)^{-1}\|_{C^0(B^n(0,\mu_0);\Mbb^{n\times n})}\le\sum_{j=0}^\infty\|A\|_{C^0(B^n(0,\mu_0);\Mbb^{n\times n})}^j\le\sum_{j=0}^\infty2^{-j}\le2,\text{ implying }\inf\limits_{|x|<\mu_0}|\det(I+A(x))|\ge 2^{-n}.
\end{equation} By assumption
$\|I+A\|_{\Co^{s_0}(B^n(0,\mu_0);\Mbb^{n\times n})}\le\|I\|_{\Co^{s_0}(B^n(0,\mu_0);\Mbb^{n\times n})}+\|A\|_{\Co^{s_0}(B^n(0,\mu_0);\Mbb^{n\times n})}\lesssim_{n,s_0,\mu_0,M_0}1$. Applying
\cite[Lemma 5.7]{StovallStreetII} along with \eqref{Eqn::Quant::Det(I+A)}, we have $\|I+B\|_{\Co^{s_0}(B^n(0,\mu_0);\Mbb^{n\times n})}\lesssim_{n,s_0,\mu_0,M_0}1$, which means
\begin{equation}\label{Eqn::Quant::BddofTheta}
    \exists \widehat M_1=\widehat M_1(n,s_0,\mu_0,M_0)>0,\quad\text{such that }\sum_{i=1}^n\|\theta^i\|_{\Co^{s_0}(B^n(0,\mu_0);T^*\R^n)}\le \widehat M_1.
\end{equation}

By \eqref{Eqn::ProofThm::BLemma::Tmp}, $[X_i,X_j]=\sum_{k=1}^nc_{ij}^kX_k$ implies that 
\begin{equation}\label{Eqn::Quant::Tmp0}
    d((\Phi_0)_*\theta^k)=\sum_{1\le i<j\le n}c_{ij}^k((\Phi_0)_*\theta^i)\wedge((\Phi_0)_*\theta^j), \quad\text{so}\quad d\theta^k=\sum_{1\le i<j\le n}\tilde c_{ij}^k\theta^i\wedge\theta^j,\quad k=1,\dots,n.
\end{equation}
Note that we cannot say $[Y_i,Y_j]=\sum_{k=1}^n\tilde c_{ij}^kY_k$ since we cannot define $[Y_i,Y_j]$ when $s\in(0,\frac12]$, while $d\theta^k$ and $\tilde c_{ij}^k\theta^i\wedge\theta^j$ in \eqref{Eqn::Quant::Tmp0} are defined due to Proposition \ref{Prop::FuncRn::LieOnManiofold} \ref{Item::FuncRn::WedgeProdCont} with the equality holding in the sense of distributions.

So by Lemma \ref{Lemma::FuncSpace::Product} we have 
\begin{equation}\label{Eqn::Quant::BddDTheta}
    \sum_{k=1}^n\|d\theta^k\|_{\Co^{s_0}(B^n(0,\mu_0);T^*\R^n)}\lesssim_{s_0,\mu_0}\sum_{i,j,k=1}^n\|\tilde c_{ij}^k\|_{\Co^{s_0}}\|\theta^i\|_{\Co^{s_0}}\|\theta^j\|_{\Co^{s_0}}\lesssim_{n,s_0,\mu_0}M_0^2\widehat M_1^2\lesssim_{n,s_0,\mu_0,M_0}1.
\end{equation}
In other words,
\begin{equation}\label{Eqn::Quant::BddofDTheta}
    \exists \widehat M_2=\widehat M_2(n,s_0,\mu_0,M_0)>0,\quad\text{such that }\sum_{i=1}^n\|d\theta^i\|_{\Co^{s_0}(B^n(0,\mu_0);T^*\R^n)}\le \widehat M_2.
\end{equation}

Applying {Lemma \ref{Lemma::Quant::ScalingReg}}
with $\alpha=s_0$, $\beta=s_0+1$, $M=\widehat M_1+\widehat M_2$, $\mu_0=\mu_0$ and $\tilde c=c'$, where $\widehat M_1$ is in \eqref{Eqn::Quant::BddofTheta}, $\widehat M_2$ is in \eqref{Eqn::Quant::BddofDTheta} and $c'=c'(n,s_0)$ is the constant in Proposition \ref{Prop::Quant::Schauder}, we can find $\kappa_0=\kappa_0(n,s_0,\mu_0,M_0)\in(0,\mu_0]$ and 1-forms $\lambda^1,\dots,\lambda^n\in\Co^{s_0}(\Ball^n;T^*\Ball^n)$ that satisfy the assumptions
of Proposition \ref{Prop::Quant::Schauder} with constant $c'$, that is
\begin{enumerate}[parsep=-0.3ex,label=(\alph*)]
    \item $\lambda^1,\dots,\lambda^n$ span the tangent space at every point in $\Ball^n$.
    \item $\supp(\lambda^i-dx^i)\Subset\frac12\Ball^n$ for $i=1,\dots,n$.
    \item\label{Item::Quant::ScalingCondition} $\lambda^i\big|_{\frac13\Ball^n}=\frac1{\kappa_0}\cdot(\phi_{\kappa_0}^*\theta^i)\big|_{\frac13\Ball^n}$ for $i=1,\dots,n$. Here $\phi_{\kappa_0}:\Ball^n\to B^n(0,\mu_0)$, $\phi_{\kappa_0}(x)=\kappa_0\cdot x$.
    \item\label{Item::Quant::SizeOfLambdaD} $\sum_{i=1}^n(\|\lambda^i-dx^i\|_{\Co^{s_0}}+\|d\lambda^i\|_{\Co^{s_0}})\le c'$
\end{enumerate}

By Proposition \ref{Prop::Quant::Schauder} with this $c'$ (see Theorem \ref{Thm::Keythm}, with $\alpha=s_0$ and $\beta=s_0+1$), we can find a map $F:\Ball^n\xrightarrow{\sim}\Ball^n$, such that $F(\frac13\Ball^n)\supseteq B^n(F(0),\frac16)$ and by endowing the codomain of $F$ with standard coordinate system $y=(y^1,\dots,y^n)$,
\begin{equation}\label{Eqn::Quant::BddofFDelta}
    \|F-\id\|_{\Co^{s_0+1}(\Ball^n;\R^n)}+\|F_*\lambda-dy\|_{\Co^{s_0+1}(\Ball^n;\Mbb^{n\times n})}\le c'^{-1}\sum_{i=1}^n\left(\|\lambda^i-dy^i\|_{\Co^{s_0}(\Ball^n;\R^n)}+\|d\lambda^i\|_{\Co^{s_0}(\Ball^n;\wedge^2T^*\Ball^n)}\right).
\end{equation}

Write $F_*\lambda=:(I+\widehat B)dy$. Note that by condition \ref{Item::Quant::SizeOfLambdaD}, the right hand side of \eqref{Eqn::Quant::BddofFDelta} is bounded by $1$ and therefore $\|F_*\lambda-dy\|_{\Co^{s_0+1}(\Ball^n;\Mbb^{n\times n})}\le1$, and
there is a $C_1=C_1(n,s_0)>0$ such that
\begin{equation}\label{Eqn::Quant::BddofHatB}
\|I+\widehat B\|_{\Co^{s_0+1}(\Ball^n;\Mbb^{n\times n})}\le C_1(n,s_0). 
\end{equation}
And by Proposition \ref{Prop::Quant::Schauder} \ref{Item::Quant::SchauderSupofB} we have $\|\widehat B\|_{C^0(\Ball^n;\Mbb^{n\times n})}<\frac14$. So 
\begin{equation}\label{Eqn::Quant::(I+B)^-1}
   \textstyle |(I+\widehat B(F(0)))^{-1}|_{\Mbb^{n\times n}}\le\|(I+\widehat B)^{-1}\|_{C^0(\Ball^n;\Mbb^{n\times n})}\le\sum_{k=0}^\infty(1/4)^k=\frac43.
\end{equation}

{Define an affine linear map 
\begin{equation}\label{Eqn::Quant::DefPsi}
    \psi(t):=(I+\widehat B(F(0)))^{-1}\cdot\tfrac t9+F(0),\quad t\in\Ball^n.
\end{equation} Note that by \eqref{Eqn::Quant::(I+B)^-1} we have $\psi(\Ball^n)\subseteq\frac43\cdot\frac19\Ball^n+F(0)\subset B^n(F(0),\frac16)$.

Define $\Phi_1:\Ball^n\to B^n(0,\mu_0)$ by
\begin{equation}\label{Eqn::Quant::DefPhi1}
    \Phi_1(t):=\phi_{\kappa_0}\circ F^{-1}\circ\psi(t)=\kappa_0\cdot F^{-1}\mleft((I+\widehat B(F(0)))^{-1}\cdot\tfrac t9+F(0)\mright),\quad t\in\Ball^n.
\end{equation}
Here $\Phi_1$ is well-defined because $\psi(\Ball^n)\subset B^n(F(0),\frac16)$, $B^n(F(0),\frac16)\subseteq F(\frac13\Ball^n)$ and $\phi_{\kappa_0}(\frac13\Ball^n)\subset\mu_0\Ball^n$. Clearly $\Phi_1(0)=\kappa_0\cdot F^{-1}(F(0))=0$.}

By condition \ref{Item::Quant::ScalingCondition}, $\lambda^i\big|_{\frac13\Ball^n}=\frac1{\kappa_0}\cdot(\phi_{\kappa_0}^*\theta^i)\big|_{\frac13\Ball^n}$, and the fact that $F^{-1}\circ\psi(\Ball^n)\subset\frac13\Ball^n$, we have
\begin{equation}\label{Eqn::Quant::Phi1*Theta}
\begin{aligned}
    (\Phi_1^*\theta)(t)&=\kappa_0\psi^*(F_*\lambda)=\kappa_0\psi^*((I+\widehat B(y))dy)=\kappa_0\mleft(I+\widehat B(\psi(t))\mright)d\psi(t)
    \\
    &=\tfrac{\kappa_0}9\mleft(I+\widehat B(\psi(t))\mright)\cdot \mleft(I+\widehat B(F(0))\mright)^{-1}dt.
\end{aligned}
\end{equation}

Since $\Phi_1^*Y_1,\dots,\Phi_1^*Y_n$ and $\Phi_1^*\theta^1,\dots,\Phi_1^*\theta^n$ are dual basis to each other, we can write
\begin{equation}\label{Eqn::Quant::EqnofPhi*Y}
    \Phi_1^*Y=:\tfrac 9{\kappa_0}\cdot(I+\widehat A)\tfrac\partial{\partial t},\quad\text{where }\widehat A(t)=(I+\widehat B(F(0)))\cdot(I+\widehat B(\psi(t)))^{-1}-I,\quad t\in\Ball^n.
\end{equation}

Taking $t=0$ in \eqref{Eqn::Quant::EqnofPhi*Y}, since $\psi(0)=F(0)$, we get $\widehat A(0)=0$.

Let $\widehat K:=\frac9{\kappa_0}$, since $\kappa_0=\kappa_0(n,s_0,\mu_0,M_0)$, we have $\widehat K=\widehat K(n,s_0,\mu_0,M_0)$ is as desired for \ref{Item::Quant::Phi1Exist}.

Since $\|\widehat B\|_{C^0}<\frac14$ and using the  power series $\widehat A(t)=(I+\widehat B(F(0)))\cdot\sum_{j=1}^\infty\widehat B(\psi(t))^j$, we have
\begin{equation}\label{Eqn::Quant::SupofHatA}
    \|\widehat A\|_{C^0(\Ball^n;\Mbb^{n\times n})}\le |(I+\widehat B(F(0)))|_{\Mbb^{n\times n}}\sum_{j=1}^\infty\|\widehat B\|_{C^0}^j<\tfrac54\sum_{j=1}^\infty(\tfrac14)^j=\tfrac53<\frac12,\quad \|I+\widehat A\|_{C^0(\Ball^n;\Mbb^{n\times n})}<\frac32.
\end{equation}

{This finishes the proof of \eqref{Eqn::Quant::Khat}.

To prove \eqref{Eqn::Quant::K0} we need to find the constant $K_0$.}

Applying \cite[Lemma 5.7]{StovallStreetII} to \eqref{Eqn::Quant::SupofHatA}, \eqref{Eqn::Quant::EqnofPhi*Y}, and \eqref{Eqn::Quant::BddofHatB}, we see that 
there is a $C_2=C_2(n,s_0,C_1)=C_2(n,s_0,\mu_0,M_0)>0$ such that 
\begin{equation}\label{Eqn::Quant::BddHatACs}
    \|I+\widehat A\|_{\Co^{s_0+1}(\Ball^n;\Mbb^{n\times n})}\le C_2(n,s_0,\mu_0,M_0).
\end{equation}

Since $F$ is constructed in Proposition \ref{Prop::Key::ExistPDE} (see Remark \ref{Rmk::Key::ExistPDE:FisNeeded}), by Proposition \ref{Prop::Key::ExistPDE} \ref{Item::Key::ExistPDE::2} we have $\|F^{-1}\|_{\Co^{s_0+1}}\le c_1^{-1}$ where $c_1=c_1(n,s_0,s_0+1)$ is the constant in Proposition \ref{Prop::Key::ExistPDE}. Since by \eqref{Eqn::Quant::DefPhi1}, $\Phi_1$ is an affine transform of $F^{-1}$, we have $\|\Phi_1\|_{\Co^{s_0+1}(\Ball^n;\R^n)}\lesssim_{n,s_0,\kappa_0}\|F^{-1}\|_{\Co^{s_0+1}(\Ball^n;\R^n)}$. So we can find a $C_3=C_3(n,s_0,\mu_0)>0$ such that $\|\Phi_1\|_{\Co^{s_0+1}(\Ball^n;\R^n)}\le C_3$.

Taking $K_0=\max(C_2+\|I\|_{\Co^{s_0+1}(\Ball^n;\Mbb^{n\times n})},C_3)$, we get \eqref{Eqn::Quant::K0} which completes the proof of \ref{Item::Quant::Phi1Exist}.

\bigskip
We now focus on the proof of  \ref{Item::Quant::Phi1Reg}, 
where we assume that additionally we have \eqref{Eqn::Quant::AssumptionReg}. By \cite[Lemma 5.7]{StovallStreetII} along with \eqref{Eqn::Quant::Det(I+A)}, we have $\|(I+A)^{-1}\|_{\Co^s}\lesssim_{n,s,\mu_0,M_0,M_1}1$, i.e. $\|\theta^i\|_{\Co^s}\lesssim_{n,s,\mu_0}1$. By \eqref{Eqn::Quant::Tmp0} with the same argument as \eqref{Eqn::Quant::BddDTheta}, we get $\|d\theta^i\|_{\Co^s}\lesssim_{n,s,\mu_0,M_0,M_1}1$. In other words, where exists a $\tilde M=\tilde M(n,s,\mu_0,M_0,M_1)>0$ such that
\begin{equation*}
    \sum_{i=1}^n\big(\|\theta^i\|_{\Co^s(B^n(0,\mu_0),T^*\R^n)}+\|d\theta^i\|_{\Co^s(B^n(0,\mu_0),\wedge^2T^*\R^n)}\big)\le \tilde M.
\end{equation*}

By Lemma \ref{Lemma::Quant::ScalingReg}, the 1-forms $\lambda^1,\dots,\lambda^n\in\Co^{s_0}(\Ball^n;T^*\Ball^n)$ constructed above are all $\Co^s$ and satisfy $d\lambda^1,\dots,d\lambda^n\in\Co^{s}$ with estimate
\begin{equation}\label{Eqn::Quant::BddLambdaReg}
    \sum_{i=1}^n\big(\|\lambda^i\|_{\Co^s(\Ball,T^*\R^n)}+\|d\lambda^i\|_{\Co^s(\Ball^n,\wedge^2T^*\R^n)}\big)\le \tilde K(n,s,s_0,\mu_0,M_0,M_1),
\end{equation}
where $\tilde K=\tilde K(n,s,s_0,\mu_0,M_0,M_1)>0$ is the constant obtained in Lemma \ref{Lemma::Quant::ScalingReg}.

By Proposition \ref{Prop::Quant::Schauder} with assumption \eqref{Eqn::Quant::BddLambdaReg} (i.e. $M'=\tilde K$ in its assumption), we have $F\in\Co^{s+1}(\Ball^n;\R^n)$, $F_*\lambda^1,\dots,F_*\lambda^n\in\Co^{s+1}$, and moreover there is a $C_4=C_4(n,s_0,s,\mu_0,M_0,M_1)>0$ (which is the $K'$ in the conclusion of Proposition \ref{Prop::Quant::Schauder}) that does not depend on $F$ and $\lambda^1,\dots,\lambda^n$, such that 
\begin{equation}\label{Eqn::Quant::RegF1Lambda}
    \|F^{-1}\|_{\Co^{s+1}(\Ball^n;\R^n)}+\|F_*\lambda\|_{\Co^{s+1}(\frac34\Ball^n;\Mbb^{n\times n})}\le C_4(n,s_0,s,\mu_0,M_0,M_1).
\end{equation}

Since we have $B^n(F(0),\frac16)\subseteq F(\frac13\Ball^n)\cap\frac34\Ball^n$ from Theorem \ref{Thm::Keythm}, combining \eqref{Eqn::Quant::RegF1Lambda} and \eqref{Eqn::Quant::Phi1*Theta} we can find a $C_5=C_5(n,s_0,s,\mu_0,M_0,M_1)>0$ such that
\begin{equation}\label{Eqn::Quant::RegPhi*Theta}
    \|\Phi_1\|_{\Co^{s+1}(\Ball^n;\R^n)}+\|\Phi_1^*\theta\|_{\Co^{s+1}(\Ball^n;\Mbb^{n\times n})}\le C_5(n,s_0,s,\mu_0,M_0,M_1).
\end{equation}

Applying \cite[Lemma 5.7]{StovallStreetII} on \eqref{Eqn::Quant::RegPhi*Theta}, \eqref{Eqn::Quant::EqnofPhi*Y} and \eqref{Eqn::Quant::SupofHatA} we see that $\|I+\widehat A\|_{\Co^{s+1}}\lesssim_{n,s_0,s,\mu_0,M_0,M_1}1$. So there is a $C_6=C_6(n,s_0,s,\mu_0,M_0,M_1)$ such that
\begin{equation}\label{Eqn::Quant::BddHatAReg}
    \|\widehat A\|_{\Co^{s+1}(\Ball^n;\Mbb^{n\times n})}\le C_6(n,s_0,s,\mu_0,M_0,M_1).
\end{equation}

Take $K_1=\max(C_5,C_6)$, we get \eqref{Eqn::Quant::K1} which completes the proof of \ref{Item::Quant::Phi1Reg}.
\end{proof}

Taking $\Phi=\Phi_1\circ\Phi_0$ nearly completes the proof of Theorem \ref{Thm::Quant} (see also \cite[Theorem 2.14]{StovallStreetII}) except we have not established
the injectivity of $\Phi$, since we have only shown $\Phi_0$ is a local $C^1$-diffeomorphism rather than a global $C^1$-diffeomorphism onto its image. This problem can be resolved through the next result:

\begin{lemma}\label{Lemma::Quant::InjLemma}
Let $s_0,\mu_0,M_0>0$ be as in Proposition \ref{Prop::Quant}. Then there is a $\mu_1=\mu_1(n,s_0,\mu_0,M_0)\in(0,1]$ depending only on $n,s_0,\mu_0,M_0$ and satisfying the following:

If $C^1$-vector fields $X_1,\dots,X_n$ on $\Manifold$ that satisfy the assumptions of Proposition \ref{Prop::Quant} with addition that\footnote{Also see the quantity $\eta>0$ in \cite[Section 3.2]{StovallStreetI}.}:
\begin{itemize}
    \item Let $U:=\Phi_0\circ\Phi_1(\Ball^n)\subseteq\Manifold$. For any point $q\in U$ and $\mu\in(0,\mu_0]$, if the exponential $t\mapsto e^{t\cdot X}(q)$ is defined for $t\in B^n(0,\mu)$, then $e^{t\cdot X}(q)\neq q$ holds for $t\in B^n(0,\mu)\backslash\{0\}$.
\end{itemize}
Then $\Phi_0\circ\Phi_1$ is injective in $B^n(0,\mu_1)$. Moreover $\Phi_0\circ\Phi_1\big|_{B^n(0,\mu_1)}:B^n(0,\mu_1)\to\Manifold$ is $C^2$-diffeomorphism onto its image.
\end{lemma}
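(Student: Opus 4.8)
The plan is to transport the whole question to the pulled-back picture on $\Ball^n$, where, thanks to $\Phi_1$, the relevant vector fields are $C^1$ with uniformly controlled norms. Write $W:=\Phi_0\circ\Phi_1:\Ball^n\to\Manifold$ and $Z_j:=W^*X_j=\Phi_1^*Y_j$ for $1\le j\le n$. By Proposition~\ref{Prop::Quant}\ref{Item::Quant::Phi1Exist}, $Z_1,\dots,Z_n\in\Co^{s_0+1}(\Ball^n;\R^n)\subseteq C^1$, they span $T\Ball^n$ at every point, $[Z_1,\dots,Z_n]^\top=\widehat K(I+\widehat A)\nabla$ with $\widehat A(0)=0$, $\|\widehat A\|_{C^0}\le\tfrac12$, and $\|\widehat A\|_{\Co^{s_0+1}}+\|\Phi_1\|_{\Co^{s_0+1}}\le K_0$, all bounds depending only on $n,s_0,\mu_0,M_0$. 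In particular $\Phi_1$ is a $\Co^{s_0+1}$-diffeomorphism onto its image and $\Phi_0$ is a local $C^1$-diffeomorphism (Lemma~\ref{Lemma::Quant::Phi0}), so $W$ is a local $C^1$-diffeomorphism; and since $Z_j\in C^1$ the canonical map $\Psi(t):=e^{t\cdot Z}(0)$ is defined and $C^1$ on a ball $B^n(0,\mu_1'')$ whose radius is controlled by the $C^1$-size of $Z$, hence by $n,s_0,\mu_0,M_0$.

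First I would record the flow-intertwining identity. For fixed $t$, if $E(r)$ solves $\frac{d}{dr}E(r)=r\sum_jt^jZ_j(E(r))$, $E(0)=0$, then $\frac{d}{dr}W(E(r))=r\sum_jt^j\,dW(E(r))\cdot Z_j(E(r))=r\sum_jt^jX_j(W(E(r)))$ because $dW\cdot Z_j=X_j\circ W$; since $W(E(0))=p$, uniqueness for this ODE gives $W(E(r))=e^{rt\cdot X}(p)$, i.e.\ $W\circ\Psi=\Phi_0$ and, more generally, $W\circ e^{t\cdot Z}=e^{t\cdot X}\circ W$ wherever both sides make sense. Combined with the local injectivity of $\Phi_0$, the relation $W\circ\Psi=\Phi_0$ forces $\Phi_1\circ\Psi=\id$ near $0$, so $\Psi=\Phi_1^{-1}$ near $0$ and is a $\Co^{s_0+1}$-diffeomorphism onto a neighbourhood of $0$ containing a controlled ball $B^n(0,\delta_0)$. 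The same identity transfers the no-return hypothesis: if $e^{t\cdot Z}(w)=w$ for some $w$ in the image of $\Psi$ and some $0<|t|<\mu_0$, then applying $W$ gives $e^{t\cdot X}(q)=q$ with $q:=W(w)\in U$, contradicting the assumption. Hence $Z_1,\dots,Z_n$ satisfy, near $0$, exactly the no-return property used in the classical study of canonical coordinates.

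With this in hand, the heart of the proof is to rerun the injectivity argument for canonical coordinates — the argument behind \cite[Proposition~9.15]{StovallStreetI} — for the $C^1$ system $Z_1,\dots,Z_n$ on $\Ball^n$, rather than for $\Phi_0$ directly. The obstruction to applying \cite[Proposition~9.15]{StovallStreetI} to $\Phi_0$ was that $\Phi_0^*X_j=Y_j$ need not be $C^1$ when $s_0\le1$; after composing with $\Phi_1$ the fields $Z_j$ are $C^1$, with uniformly controlled norms, and are in the normalized form $[Z]^\top=\widehat K(I+\widehat A)\nabla$, $\widehat A(0)=0$. One argues: (i) a return of $\Phi_0$ along a single ray produces, via the one-parameter group property of the flow in the variable $\sigma=r^2/2$, a nonzero period $<\mu_0$ of the flow through the corresponding base point, contradicting no-return; (ii) the non-parallel case is handled by the topological (lifting/degree) argument of \cite[\S9.3]{StovallStreetI}, now legitimate because $W$ is a local $C^1$-diffeomorphism and $Z\in C^1$ (so integral curves are unique and the flow depends continuously on its data with controlled Lipschitz constants). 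This yields a radius $\mu_1=\mu_1(n,s_0,\mu_0,M_0)\in(0,1]$ on which $\Phi_0\circ\Phi_1$ is injective; being also a local $C^1$-diffeomorphism, $W|_{B^n(0,\mu_1)}$ is then a homeomorphism onto its open image in $\Manifold$, and the $C^2$ conclusion follows from the regularity already available — $W|_{B^n(0,\mu_1)}=\Phi_0\circ\Psi^{-1}$ with $\Psi$ a $\Co^{s_0+1}$-diffeomorphism and $\Phi_0,\Phi_1$ controlled — feeding injectivity back into the ODE/elliptic regularity of $\Phi_0$ and $\Phi_1$.

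I expect the main obstacle to be step (ii): executing the non-parallel case of the injectivity argument using only the $C^1$-structure of $Z$ gained from $\Phi_1$ (and not the $C^2$-structure available when $s_0>1$), while keeping the injectivity radius $\{s_0\}$-admissible. A secondary technical point is the bookkeeping needed to shuttle between the three maps $\Phi_0$, $\Phi_1$ and $\Psi=\Phi_1^{-1}$ and to check that ``$C^2$-diffeomorphism onto its image'' survives the reduction.
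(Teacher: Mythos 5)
Your proposal is correct and follows essentially the same route as the paper: both reduce to the observation that $Z=(\Phi_0\circ\Phi_1)^*X=\Phi_1^*Y=\widehat K(I+\widehat A)\nabla$ is $C^1$ with $\{s_0\}$-admissibly bounded norms and nondegenerate determinant, transfer the no-return hypothesis, and then run the injectivity mechanism of \cite[Proposition 9.15]{StovallStreetI}, concluding the $C^2$-diffeomorphism claim from the $C^1$-ness of both $X$ and $(\Phi_0\circ\Phi_1)^*X$. The one difference is that you propose to re-execute the parallel/non-parallel case analysis of that proposition by hand (and flag its non-parallel case as the main obstacle), whereas the paper simply cites \cite[Proposition 9.15]{StovallStreetI} as a black box once its $C^1$ hypotheses are verified for $\Phi_0\circ\Phi_1$, so the difficulty you anticipate does not actually need to be confronted.
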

\begin{rmk}\label{Rmk::Quant::Phi0isInj}
By Proposition \ref{Prop::Quant::Schauder}, $\Phi_1:\Ball^n\to B^n(\mu_0)$ is a $\Co^{s_0+1}$-diffeomorphism onto its image and satisfies $\Phi_1(0)=0$. 
By \eqref{Eqn::Quant::Khat}, we have $\Phi_1^*(I+A)\nabla=\widehat K(I+\widehat A)\nabla$ where $\|A\|_{C^0},\|\widehat A\|_{C^0}\le\frac12$, so   
\begin{equation}\label{Eqn::Quant::InjLemma::PfTmp}
    \|(\nabla\Phi_1)^{-1}\|_{C^0(\Ball^n;\Mbb^{n\times n})}=\|\Phi_1^*\nabla\|_{C^0(\Ball^n;\Mbb^{n\times n})}\le \widehat K\|I+\widehat A\|_{C^0(\Ball^n;\Mbb^{n\times n})}\|(I+A)^{-1}\|_{C^0(\mu_0\Ball^n;\Mbb^{n\times n})}\le 3\widehat K.
\end{equation}
Here $\widehat K$ is an $\{s_0\}$-admissible constant. 
\eqref{Eqn::Quant::InjLemma::PfTmp} implies $\Phi_1(B^n(0,r))\supseteq B^n(0,3\widehat Kr)$ for all $r\in(0,1]$, 
therefore Lemma \ref{Lemma::Quant::InjLemma} tells us that $\Phi_0\big|_{B^n((3\widehat K)^{-1}\mu_1)}:B^n\left(0,(3\widehat K)^{-1}\mu_1\right)\to\Manifold$ is injective.
\end{rmk}
\begin{proof}

By Proposition \ref{Prop::Quant}, we have $Y=\Phi_0^*X=(I+A)\Coorvec x$ and $\Phi_1^*Y=\widehat K(I+\widehat A)\Coorvec t$ are such that $\|A\|_{C^0(B^n(\mu_0);\Mbb^{n\times n})},\|\widehat A\|_{C^0(\Ball^n;\Mbb^{n\times n})}\le\frac12$ and $\widehat K\lesssim_{n,s_0,\mu_0,M_0}1$ ($\widehat K$ is an $\{s_0\}$-admissible constant).


Clearly $\|(I+\widehat A)^{-1}\|_{C^0(\Ball^n;\Mbb^{n\times n})}\le2$, so
\begin{equation*}
    \inf\limits_{t\in\Ball^n}\det(\Phi_1^*Y)(t)\ge\widehat K\|(I+\widehat A)^{-1}\|_{C^0(\Ball^n;\Mbb^{n\times n})}^{-n}\ge2^{-n}\widehat K\gtrsim_{n,s_0,\mu_0,M_0}1.
\end{equation*}

On the other hand by \eqref{Eqn::Quant::K0} we have $$\|\Phi_1^*Y\|_{C^1(\Ball^n;\Mbb^{n\times n})}\lesssim_{n,s_0}\|\Phi_1^*Y\|_{\Co^{s_0+1}(\Ball^n;\Mbb^{n\times n})}=\widehat K\|I+\widehat A\|_{\Co^{s_0+1}(\Ball^n;\Mbb^{n\times n})}\le\widehat K(1+K_0)\lesssim_{n,s_0,\mu_0,M_0}1.$$ 
{And by \eqref{Eqn::Quant::AssumptionM0}}
we have $$\sum_{i,j,k=1}^n\|(\Phi_0\circ\Phi_1)^*c_{ij}^k\|_{C^0(\Ball^n)}\le\sum_{i,j,k=1}^n\|c_{ij}^k\|_{C^0(\Manifold)}\le M_0\lesssim_{M_0}1.$$ Therefore applying \cite[Proposition 9.15]{StovallStreetI} to the map $\Phi_0\circ\Phi_1(t)=e^{t\cdot\Phi_1^*Y}(0)$ we can find a $\mu_1>0$ that depends only on $n,s_0,\mu_0$  such that $\Phi_0\circ\Phi_1$ is injective on $B^n(\mu_1)$.

Since $(X_1,\dots,X_n)$ and $(\Phi_0\circ\Phi_1)^*(X_1,\dots, X_n)$ are both $C^1$ and span their respective tangent spaces at every point, we know $\Phi_0\circ\Phi_1|_{B^n(0,\mu_1)}$ is a  $C^2$-map with non-degenerate tangent map at every point in the domain. Since we
have also shown it is injective, we conclude it is a $C^2$-diffeomorphism onto its image.
\end{proof}

By combining Lemma \ref{Lemma::Quant::InjLemma}, Propositions \ref{Prop::Quant}, and \ref{Prop::Quant::Schauder}, we can prove Theorem \ref{Thm::Quant}:
\begin{proof}[Proof of Theorem \ref{Thm::Quant}]
    As mentioned before, once we establish \cite[Theorem 2.14]{StovallStreetII}
    for $s,s_0>0$, the same follows for \cite[Theorem 4.5]{StreetSubHermitian}.
    Thus, we prove only \cite[Theorem 2.14]{StovallStreetII} for $s,s_0>0$.
    
    Fix $x_0\in\Manifold$. Since the results \cite[Theorem 2.14 (a), (b), and (c)]{StovallStreetII} do not depend on $s_0$ and $s$, we do not need to change their proof.

    Recall, we have reordered $X_1,\ldots, X_q$ such that \eqref{Eqn::Quant::DensityBound}
    holds with $j_1^0=1,\ldots, j_n^0=n$.
    Set $X_{J_0}:=(X_1,\dots,X_n)$. 
    Let $\Phi_0(t):=e^{t_1X_1+\dots+t_nX_n}(x_0)$. By Lemma \ref{Lemma::Quant::Phi0} we can find a $0$-admissible constant $\mu_0$ such that $\Phi_0:B^n(\mu_0)\to\Manifold$ is a local $C^1$-diffeomorphism. And moreover by writing $Y_i:=\Phi_0^*X_i$ for $i=1,\dots,q$ and $Y_{J_0}=:(I+A)\nabla$, we have $\|A\|_{C^0(B^n(\mu_0);\Mbb^{n\times n})}\le\frac12$, $\|A\|_{\Co^s(B^n(\mu_0);\Mbb^{n\times n})}\lesssim_{\{s\}}1$. And we can find $(b_k^l)_{1\le l\le n<k\le q}$ and $(\tilde c_{ij}^k)_{i,j,k=1}^n$ such that $Y_k=\sum_{l=1}^nb_k^lY_l$, $\Phi_0^*[X_i,X_j]=\sum_{l=1}^n\tilde c_{ij}^lY_l$ for $1\le i,j\le n<k\le q$ with $\sum_{l=1}^n\sum_{k=n+1}^q\|b_k^l\|_{\Co^{s+1}(B^n(\mu_0))}+\sum_{i,j,l=1}^n\|\tilde c_{ij}^l\|_{\Co^s(B^n(\mu_0))}\lesssim_{\{s\}}1$.
    
    Let $\Phi_1$ be the map given in Proposition \ref{Prop::Quant}, and let $\mu_1>0$ be the constant (which is $\{s_0\}$-admissible) from Lemma \ref{Lemma::Quant::InjLemma}. We define $\Phi:B^n(1)\to\Manifold$ by
    \begin{equation}\label{Eqn::Quant::FinalProof:DefPhi}
        \Phi(t):=\Phi_0\circ\Phi_1(\mu_1\cdot t).
    \end{equation}
    
    By Lemma \ref{Lemma::Quant::InjLemma}, $\Phi$ is a $C^2$-diffeomorphism onto its image and \cite[Theorem 2.14 (d), (e) and (g)]{StovallStreetII} follow.
    
    {By \eqref{Eqn::Quant::Khat} and \eqref{Eqn::Quant::FinalProof:DefPhi}, (where we use $X=X_{J_0}$ and $Y=Y_{J_0}$ in Proposition \ref{Prop::Quant}), we have $(\Phi_0\circ\Phi_1)^*X_{J_0}=\Phi_1^*Y_{J_0}=\widehat K(1+\widehat A)\Coorvec t$, so}
    \begin{equation}\label{Eqn::Quant::FinalProof::PullbackX}
        \Phi^*X_{J_0}(t)=\tfrac{\widehat K}{\mu_1}(I+\widehat A(\mu_1\cdot t))\nabla,\quad t\in B^n(1).
    \end{equation}
    Note that $\tfrac{\widehat K}{\mu_1}$ is bounded by a $\{s_0\}$-admissible constant and by Proposition \ref{Prop::Quant} \ref{Item::Quant::Phi1Exist} we have $\widehat A(0)=0$ and $\|\widehat A(\mu_1\cdot )\|_{C^0(\Ball^n;\Mbb^{n\times n})}\le\frac12$ and \cite[Theorem 2.14 (h) and (i)]{StovallStreetII} follow. In particular we have
    \begin{equation*}
        \tfrac{\widehat K}{2\mu_1}\dist_{\Phi^*X_{J_0}}(t_1,t_2)\le\dist_\nabla(t_1,t_2)\le \tfrac{3\widehat K}{2\mu_1}\dist_{\Phi^*X_{J_0}}(t_1,t_2),\quad\forall t_1,t_2\in\Ball^n.
    \end{equation*}
    
    Since $|t_1-t_2|=\dist_\nabla(t_1,t_2)$, taking pushforward of $\Phi$ we get $B_{X_{J_0}}(p,\tfrac{2\mu_1}{3\widehat K})\subseteq\Phi(\Ball^n)\subseteq B_{X_{J_0}}(p,\tfrac{2\mu_1}{\widehat K})$.
    
    {By Lemma \ref{Lemma::Quant::Phi0} \ref{Item::Quant::Phi0::LinDep}, we have $Y_k=\sum_{l=1}^nb_k^lY_l$ for $n+1\le k\le q$ such that $\|b_k^l\|_{C^0(B^n(\mu_0))}\lesssim_{s_0}\|b_k^l\|_{\Co^{s_0+1}(B^n(\mu_0))}\lesssim_{\{s_0\}}1$. So on the set $\Phi_1(B^n(\mu_0))$ (note that $\Phi_0\big|_{\Phi_1(B^n(\mu_0))}$ is injective), there is a $\{s_0\}$-admissible constant $C_1>0$ such that 
    \begin{equation*}
        \dist_{Y_{J_0}}(x_1,x_2)\le C_1\dist_{Y}(x_1,x_2),\quad\forall x_1,x_2\in \Phi_1(B^n(\mu_0)).
    \end{equation*}
    Taking pushforward of $\Phi_0$ we get $B_X(p,\tfrac{2\mu_1}{3C_1\widehat K})\subseteq B_{X_{J_0}}(p,\tfrac{2\mu_1}{3\widehat K})$. This proves \cite[Theorem 2.14 (f)]{StovallStreetII}.} 
    
    Combining \eqref{Eqn::Quant::FinalProof::PullbackX} and \eqref{Eqn::Quant::BddHatAReg}, since $\tfrac{\widehat K}{\mu_1}$ is $\{s_0\}$-admissible which is $\{s\}$-admissible, we get \cite[Theorem 2.14 (j)]{StovallStreetII}.
    
    Finally the proof of \cite[Theorem 2.14 (k) and (l)]{StovallStreetII} is the same as in \cite[Section 7]{StovallStreetII}.
\end{proof}




\bibliographystyle{amsalpha}

\bibliography{regvfs}

\center{\it{University of Wisconsin-Madison, Department of Mathematics, 480 Lincoln Dr., Madison, WI, 53706}}

\center{\it{street@math.wisc.edu, lyao26@wisc.edu}}

\center{MSC 2020:  57R25 (Primary), 35R05 and 53C17 (Secondary)}

\end{document}